
\documentclass[preprint,aop]{imsart}

\RequirePackage{amsthm,amsmath,amsfonts,amssymb}
\RequirePackage[numbers]{natbib}
\RequirePackage[colorlinks,citecolor=blue,urlcolor=blue]{hyperref}
\RequirePackage{graphicx}
\RequirePackage{subfigure}
\RequirePackage{mathrsfs}
\RequirePackage{float}
\RequirePackage{bm}
\RequirePackage{dsfont}

\startlocaldefs
\numberwithin{equation}{section}
\theoremstyle{definition}
\newtheorem{defi}{Definition}[section]
\newtheorem{rmk}[defi]{Remark}

\theoremstyle{plain}
\newtheorem{lemma}[defi]{Lemma}
\newtheorem{coro}[defi]{Corollary}
\newtheorem{thm}[defi]{Theorem}
\newtheorem{prop}[defi]{Proposition}


\endlocaldefs

\begin{document}

\begin{frontmatter}
\title{Non-existence of several random fractals in Brownian motion and Brownian loop soup}
\runtitle{Non-existence of random fractals}

\begin{aug}
\author[A]{\fnms{Yifan}~\snm{Gao}\ead[label=e1]{gaoyifan75@westlake.edu.cn}},
\author[B]{\fnms{Xinyi}~\snm{Li}\ead[label=e2]{xinyili@bicmr.pku.edu.cn}}
\author[C]{\fnms{Runsheng}~\snm{Liu}\ead[label=e3]{liurunsheng@pku.edu.cn}}
\and
\author[D]{\fnms{Wei}~\snm{Qian}\ead[label=e4]{weiqian0@hku.hk}}
\address[A]{Institute for Theoretical Sciences, Westlake University\printead[presep={,\ }]{e1}}

\address[B]{Beijing International Center for Mathematical Research, Peking University\printead[presep={,\ }]{e2}}

\address[C]{School of Mathematical Sciences, Peking University\printead[presep={,\ }]{e3}}

\address[D]{Department of Mathematics, University of Hong Kong\printead[presep={,\ }]{e4}}
\end{aug}

\begin{abstract}
We develop a unified approach to establish non-existence of three types of random fractals: (1) pioneer triple points of planar Brownian motion, answering an open question in \cite{BW96}, (2) pioneer double cut points of planar and three-dimensional Brownian motions, and (3) double points on boundaries of clusters of the planar Brownian loop soup at the critical intensity, answering an open question in \cite{Qian21}. These fractals have a common feature that they are associated with an intersection or disconnection exponent which yields a Hausdorff dimension ``exactly zero''.
\end{abstract}

\begin{keyword}[class=MSC]
\kwd[Primary ]{60J65}
\kwd[; secondary ]{28A80}
\end{keyword}

\begin{keyword}
\kwd{Brownian motions}
\kwd{Brownian loop soup}
\kwd{random fractals}
\kwd{intersection exponents}
\kwd{disconnection exponents}
\end{keyword}

\end{frontmatter}

\section{Introduction}\label{sec:intro}
In the study of planar Brownian motion, Lawler, Schramm and Werner have established in the seminal works \cite{LSW01,LSW01a,LSW01b,LSW02} a family of Brownian intersection and disconnection exponents, using Schramm-Loewner evolutions (SLE), confirming earlier conjectures of Duplantier and Kwon \cite{DK88}. These values in turn determine Hausdorff dimensions of a class of random fractals inside planar Brownian motion. For example, the dimension of the frontier (i.e., outer boundary) of Brownian motion is $4/3=2-\xi(2)$ \cite{Law96a}, and the dimension of cut points in Brownian motion is $3/4=2-\xi(1,1)$ \cite{Law96}, where $\xi(\cdot)$ (resp.\ $\xi(\cdot,\cdot)$) is the Brownian disconnection (resp.\ intersection) exponent. These exponents also allow one to rule out the existence of certain subsets, such as triple points on the frontier \cite{BW96}, because $\xi(6)>2$. The derivation of the aforementioned Hausdorff dimensions mainly relies on the following intuition: standing on $x$, a frontier point (resp.\ cut point) of Brownian motion, the different portions of the Brownian motion to and away from $x$ (also called ``arms'' below) do not disconnect $x$ from infinity (resp.\ do not intersect).
\par
In two special cases, the Brownian disconnection exponent $\xi(5)$ and intersection exponent $\xi(2,1)$ are exactly equal to $2$. The exponents $\xi(5)$ and $\xi(1,2)$ are respectively associated with pioneer triple points and pioneer double cut points.
Therefore, these two fractals have dimension ``exactly zero'', but this is not sufficient to determine whether they exist.
\par
Let us list a few more examples of fractals in Brownian motion in dimensions two or three which have dimension ``exactly zero''. Some of these fractals do not exist: In \cite{DEKT57}, it is shown that three-dimensional Brownian motion does not admit triple points. Later on, it is shown in \cite{DEK61} that one-dimensional Brownian motion does not admit points of increase; see also \cite{Kni81,Ber83,Ade85,Bur90,Ber91,Per96,BB99} for various simplified proofs. The work \cite{LM92} shows that no one-sided $\pi/2$-cone points emerge in planar Brownian motion, in any direction. The monograph \cite{BB99} confirms non-existence of cut lines for planar Brownian motion in any direction. By contrast, there are also examples of zero-dimensional fractal subsets of Brownian motion that do exist. They include (local) maxima of one-dimensional Brownian motion, two-sided cone points of planar Brownian motion with cone angle $\pi$, tips of Brownian beads \cite{Vir03}, and cut (hyper)-planes of Brownian motion in $\mathbb{R}^{d}$, $d\geq3$ \cite{BB99}.
\par
In this paper, we focus on three types of random fractals of ``zero dimension'': pioneer triple points (PTP) of planar Brownian motion, pioneer double cut points (PDCP) of planar and three-dimensional Brownian motions, and double points on boundaries of clusters of the planar Brownian loop soup at the critical intensity, and develop a unified approach to establish their non-existence.
\subsection{PTP and PDCP of Brownian Motion}
We begin with pioneer triple points. Given a planar Brownian motion $(W_t)_{0\leq t<\infty}$, we call $z\in\mathbb{R}^{2}$ a {\it pioneer triple point} (PTP) for $W$ if there exist
$t_{1}<t_{2}<t_{3}$, such that
\[
\mbox{$W_{t_{1}}=W_{t_{2}}=W_{t_{3}}=z$ and $z$ is on the frontier of $W[0,t_{3}]$.}
\]
As mentioned above, Lawler, Schramm and Werner \cite{LSW01} showed that the Hausdorff dimension of the Brownian frontier is $2-\xi(2)=4/3$. Kiefer and M\"{o}rters \cite{KM10} showed that the Hausdorff dimension of double points on the Brownian frontier is $(1+\sqrt{97})/24=2-\xi(4)$. Burdzy and Werner \cite{BW96} showed that there is no triple point on the Brownian frontier by proving that $2-\xi(6)<0$. By a similar argument, one can also deduce that the Hausdorff dimension of pioneer triple points is $2-\xi(5)=0$. In the same work, Burdzy and Werner then conjectured that such points do not exist. M\"{o}rters and Peres also listed this question among the  ``selected open questions'' in their book {\it Brownian motion}; see \cite[p.\ 385]{MP10}. Our first result confirms this almost 30-year old conjecture.
\begin{thm}\label{ptp}
Almost surely, there do not exist pioneer triple points on $(W_t)_{0\leq t < \infty}$.
\end{thm}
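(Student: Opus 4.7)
My plan is a first-moment covering argument combined with a multi-scale refinement that extracts an extra polynomial factor beyond the critical exponent $\xi(5) = 2$. By translation and countable union, it suffices to rule out pioneer triple points inside a fixed bounded domain $D$. For $\delta = 2^{-n}$, let $\mathcal{G}_\delta = \delta \mathbb{Z}^2 \cap D$, and for $z \in \mathcal{G}_\delta$ let $E_z^\delta$ be the event that the $\delta$-box centred at $z$ contains a PTP. If $t_1 < t_2 < t_3$ are the three visits to such a triple point, the path segments $W[0, t_1]$, $W[t_1, t_2]$, $W[t_2, t_3]$ respectively contribute one, two and two disjoint ``arms'' crossing every annulus $B(z, r) \setminus B(z, \delta)$ for $\delta \leq r \leq 1$, for a total of five arms that must not disconnect $z$ from $\infty$ by the pioneer condition. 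The Lawler-Schramm-Werner disconnection-exponent analysis \cite{LSW01,LSW01a,LSW01b,LSW02} combined with standard quasi-multiplicativity then yields $\mathbb{P}(E_z^\delta) \leq C \delta^{\xi(5)} (\log \delta^{-1})^{C'} = C \delta^2 (\log \delta^{-1})^{C'}$, which when summed over the $\asymp \delta^{-2}$ grid points gives only a logarithmic bound on $\mathbb{E}[N_\delta] := \sum_{z} \mathbb{P}(E_z^\delta)$ and does not vanish.

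The key refinement is to introduce, at each dyadic intermediate scale $r_k = 2^{-k}$ with $1 \leq k \leq n$, an auxiliary event $F_z^k$ (a ``spoiler'') measurable with respect to the trajectory of $W$ in the annulus $B(z, 2 r_k) \setminus B(z, r_k)$, satisfying (i) $F_z^k \cap E_z^\delta = \emptyset$, i.e.\ the configuration prescribed by $F_z^k$ forbids the existence of a PTP in the $\delta$-box, and (ii) conditionally on the five-arm event $\mathcal{A}_k$ in that annulus, $\mathbb{P}(F_z^k \mid \mathcal{A}_k) \geq c > 0$, uniformly in $k$. Using the Markov property of $W$ at dyadic scales to decouple the $F_z^k$'s across scales and the quasi-multiplicativity of the five-arm event, the inclusion $E_z^\delta \subseteq \bigcap_k (F_z^k)^c$ then yields
\[
\mathbb{P}(E_z^\delta) \leq C \delta^{\xi(5)} (\log \delta^{-1})^{C'} (1-c)^n \leq C \delta^{2 + \beta}
\]
with $\beta = -\log_2(1-c) > 0$, and summing over $z \in \mathcal{G}_\delta$ gives $\mathbb{E}[N_\delta] \leq C \delta^\beta \to 0$. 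Borel-Cantelli along the sequence $\delta_n = 2^{-n}$ then shows that almost surely no PTP lies in $D$.

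\textbf{Main obstacle.} The crux of the argument is the construction of the spoilers $F_z^k$ and the verification of the uniform conditional lower bound (ii). A natural candidate is that, in the annulus $B(z, 2 r_k) \setminus B(z, r_k)$, one of the five arms is forced into an explicit geometric configuration (such as a subannular loop or a transversal crossing) which, together with the constraints imposed by the other four arms, is incompatible with the simultaneous realisation of the triple-point visits and the pioneer pathway at smaller scales. Producing such a configuration with uniformly positive conditional probability will require a careful description of the conditional law of the five Brownian arms inside the annulus, likely via a decomposition into independent Brownian excursions and the use of conformal invariance. The equality $\xi(5) = 2$ is precisely what makes the extra factor $(1-c)^n$ beat the polylogarithmic errors from quasi-multiplicativity. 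The same scheme, with $\xi(5)$ replaced by $\xi(1,2)$ for the pioneer double cut points or by the analogous critical exponent for the loop soup, is expected to yield the unified proofs of the other two non-existence results announced in the abstract.
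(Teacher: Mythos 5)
Your scheme is genuinely different from the paper's (which runs a conditional negative-moment argument rather than a refined first moment), but it has a gap at exactly the point you flag as the ``main obstacle'', and the gap is not merely technical: spoilers $F_z^k$ with the properties you require cannot exist. The reason is the matching \emph{lower} bound in Lemma \ref{fme}: the probability that a fixed $n$-box is $\delta$-good (visited three times with macroscopic inter-visit excursions and not disconnected from infinity by the path run until the third visit) is $\asymp 2^{-2n}$, with the lower bound supplied by the separation lemma (Lemma \ref{sep PTP}). If your spoilers were incompatible with this good-box event, the product bound $\prod_k \mathbb{P}(\mathcal{A}_k)(1-c)$ would force $\mathbb{P}(S\in\mathfrak{S}_{n})=O(2^{-(2+\beta)n})$, a contradiction. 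Hence each $F_z^k$ would have to be compatible with the good-box event yet rule out an \emph{exact} PTP in the box; but the difference between ``the box is good'' and ``the box contains an exact PTP'' is entirely a matter of whether the approximate configuration refines to all scales \emph{below} $2^{-n}$ (cf.\ Remark \ref{rm:gb}), and no event measurable with respect to the trajectory in an annulus at a scale $r_k\in[\delta,1]$ can decide that. So the per-scale factor $(1-c)$ is simply not available: $\mathbb{E}[\#\mathfrak{S}_n]\asymp 1$, it does not tend to $0$, and no first-moment bound at a fixed scale $\delta$ can close the argument.

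The mechanism that actually works, and the one the paper uses, is orthogonal to yours: one accepts that the expected number of good boxes is of order one and instead shows that good boxes \emph{cluster} --- conditioned on $S\in\mathfrak{S}_n$, with probability $1-O(n^{-\gamma})$ there are at least $\lambda\sqrt{n}$ good boxes (Proposition \ref{cgs}), proved by counting good boxes in $\sim\sqrt{n}$ concentric annuli around $S$ and showing the counts are nearly independent across annuli via a carefully constructed filtration and a tilted measure excluding rare bad events. Feeding this into the identity $\mathbb{P}(\#\mathfrak{S}_{n}\geq1)=\sum_{S}\mathbb{E}\bigl[(\#\mathfrak{S}_{n})^{-1}\,\big|\,S\in\mathfrak{S}_{n}\bigr]\mathbb{P}(S\in\mathfrak{S}_{n})$ yields $\mathbb{P}(\#\mathfrak{S}_{n}\geq1)=O(n^{-\gamma})$, which is what kills the PTPs. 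If you wanted to salvage a first-moment flavour you would have to place spoilers at scales below $\delta$ and send $\delta\to0$ jointly, at which point you are re-deriving the clustering statement; there is no shortcut through the scales above $\delta$.
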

We now turn to the second example, which concerns the random fractals related to the intersection exponents $\xi(1,2)$ and $\xi_{[3]}(1,2)$, respectively for Brownian motions in  2D and 3D. Let $(W_t)_{0\leq t < \infty}$ be a $d$-dimensional Brownian motion, $d=2,3$. We say a point $z\in\mathbb{R}^{d}$ is a {\it pioneer double cut point} (PDCP) with respect to $W$, if there exist $t_{1}<t_{2}$, such that
\[
\mbox{$W_{t_{1}}=W_{t_{2}}=z$, and $W[0,t_{1})\cap W(t_{1},t_{2})=\varnothing$.} 
\]
Much earlier than the derivation of all 2D Brownian exponents, Lawler \cite{Law89} has introduced a simple argument to deduce one special case of intersection exponents for both $2$- and $3$-dimensional Brownian motions. More precisely, he deduced  that $\xi(1,2)=2$ and $\xi_{[3]}(1,2)=1$ (see also \cite[Chapter 3.5]{Law13} or \cite[Chapter 10.2]{LL10} for simplified arguments, and Burdzy and Lawler \cite{BL90} for showing that the corresponding exponents for simple random walks are the same as for Brownian motions).
By similar arguments as in \cite{KM10,Law96}, these explicit values imply that the Hausdorff dimension of pioneer double cut points is $2-\xi(1,2)=0$ in 2D, and $3-2-\xi_{[3]}(1,2)=0$ in 3D. It is then very natural to ponder upon the (non-)existence of such points. Our next theorem shows that almost surely PDCP do not exist.
\begin{thm}\label{pdcp}
Almost surely, there do not exist pioneer double cut points on $(W_t)_{0\leq t < \infty}$.
\end{thm}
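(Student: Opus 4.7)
The plan is to establish non-existence via a first-moment argument upgraded with a polylogarithmic improvement, which is forced by the fact that the Hausdorff dimension is exactly zero. By scaling, countable union over time windows, and localisation, it will be enough to show that almost surely no PDCP arises from a pair $(t_1,t_2) \in [1,2]^2$ with $W_{t_1}=W_{t_2}$ lying in a fixed compact $K \subset \mathbb{R}^d$, $d \in \{2,3\}$. For $r = 2^{-n}$, I cover $K$ with $O(r^{-d})$ balls $B(z,r)$ centred on an $r$-grid, and introduce the approximate PDCP event
\[
\mathcal{A}_r(z) = \Big\{\exists\, 1 \le t_1 < t_2 \le 2 :\ W_{t_1}, W_{t_2} \in B(z,r),\ W[0,t_1) \cap W(t_1,t_2) \subset B(z,r)\Big\}.
\]
A genuine PDCP in $K$ forces $\mathcal{A}_r(z)$ to hold for some grid point $z$ at every scale; if $N_r$ counts such $z$, it is enough to prove $\mathbb{E}[N_{r_n}] \to 0$ fast enough to apply Borel--Cantelli along $r_n = 2^{-n}$.

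The main input is a bound on $\mathbb{P}(\mathcal{A}_r(z))$ derived from the intersection exponents. On $\mathcal{A}_r(z)$, I split the middle excursion $W[t_1,t_2]$ at a time of maximal distance from $z$ and time-reverse the two halves; combined with the time-reversal of $W[0,t_1]$ near $z$, this yields a one-versus-two non-intersection configuration of Brownian arcs emerging from a neighbourhood of $z$ that must remain disjoint until exit of a macroscopic annulus around $z$. The values $\xi(1,2) = 2$ in 2D and $\xi_{[3]}(1,2) = 1$ in 3D, combined with hitting-probability estimates for $B(z,r)$ (logarithmic in 2D, polynomial in 3D), should give the clean power bound $\mathbb{P}(\mathcal{A}_r(z)) \le C\,r^d$. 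Summed over the grid, this is only $O(1)$: consistent with the vanishing Hausdorff dimension, but not enough to conclude.

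The hard part---and, I expect, the essence of the unified method of the paper---is squeezing out a further $(\log(1/r))^{-c}$ factor in the first-moment bound. My plan is to exploit the fact that on $\mathcal{A}_r(z)$ the one-versus-two configuration must survive on every dyadic annulus between scales $r$ and $1$. On each such annulus one can design a constant-probability ``spoiling'' event, for instance a Brownian excursion crossing the annulus in a topological class incompatible with non-intersection, whose absence is enforced by $\mathcal{A}_r(z)$. A separation lemma for Brownian paths conditioned on non-intersection should render these spoiling events at different scales nearly independent, and the simultaneous failure of $\Theta(\log(1/r))$ of them then delivers the sought $(\log(1/r))^{-c}$ decay. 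Once this refined bound is in place, $\mathbb{E}[N_r] \le C(\log(1/r))^{-c}$, and Markov together with Borel--Cantelli along $r_n = 2^{-n}$ rules out PDCP in $K$. The principal technical challenge is calibrating the spoiling events so as to be genuinely decorrelated across scales in both 2D and 3D; the same mechanism should simultaneously power Theorem \ref{ptp} and the loop-soup statement, in line with the unified approach advertised in the abstract.
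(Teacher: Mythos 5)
There is a genuine gap, and it sits exactly where you locate the ``hard part''. Your plan is to upgrade the \emph{one-point} estimate to $\mathbb{P}(\mathcal{A}_r(z))\le C\,r^{d}(\log(1/r))^{-c}$ and then sum. But for the correctly calibrated approximate-PDCP event (two visits to $B(z,r)$ with macroscopic arms and the non-intersection condition), the one-point probability satisfies a matching \emph{lower} bound of order $r^{d}$ (this is Lemma \ref{fme} of the paper, proved via the separation lemma): the exponent $\xi(1,2)=2$ in 2D, resp.\ $1+\xi_{[3]}(1,2)=2$ arms plus one hitting cost in 3D, is attained up to constants, with no room for a polylogarithmic improvement. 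Your ``spoiling event'' mechanism also does not produce the decay you want: if on each of $\Theta(\log(1/r))$ dyadic annuli there were a constant-probability event whose occurrence is forbidden on $\mathcal{A}_r(z)$, and these were nearly independent, the simultaneous failure would cost $(1-p)^{\Theta(\log(1/r))}=r^{c'}$ --- a \emph{polynomial}, not polylogarithmic, correction --- which would contradict the exact value of the exponent. So no calibration of spoiling events can rescue the first-moment route; the obstruction is structural, not technical.

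The paper extracts the polylogarithmic factor from a different place: the \emph{clustering} of good boxes rather than the rarity of a single one. Writing $\mathfrak{S}_n$ for the set of good $2^{-n}$-boxes, one has
\[
\mathbb{P}(\#\mathfrak{S}_{n}\geq1)=\sum_{S}\mathbb{E}\bigl[(\#\mathfrak{S}_{n})^{-1}\,\big|\,S\in\mathfrak{S}_{n}\bigr]\,\mathbb{P}(S\in\mathfrak{S}_{n}),
\]
and since $\sum_S\mathbb{P}(S\in\mathfrak{S}_n)=O(1)$, it suffices to show that conditioned on a fixed box being good, there are at least $c\sqrt{n}$ good boxes except with probability $O(n^{-\gamma})$ (Proposition \ref{cgs12}). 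This is done by counting good boxes in $\sqrt{n}$ concentric annuli around $S$, applying Paley--Zygmund in each annulus via conditional first and second moments, and --- this is where all the technical work goes --- constructing a filtration (after excluding low-probability ``bad events'' such as back-tracking excursions and runaway bridges) under which these annulus counts are close enough to independent that the failure probabilities multiply across $\Theta(\sqrt n)$ scales. Your instinct that a separation lemma and near-independence across dyadic scales are the engine is right, but they must be applied to the conditional law given one good box, to show that good boxes come in large clusters; the unconditional one-point function cannot be improved. Finally, a small remark on the conclusion: since a genuine PDCP forces a good box at \emph{every} scale, one only needs $\mathbb{P}(\#\mathfrak{S}_n\ge 1)\to 0$ along $n$, not a summable bound, so Borel--Cantelli (which would require $c>1$) is not needed.
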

\subsection{Double Points on Boundaries of Critical Brownian Loop-Soup Clusters}
Our third result concerns double points on boundaries of critical Brownian loop-soup clusters. The Brownian loop soup was introduced in \cite{LW04}, as a Poisson point process with intensity $\mathbf{c}$ times a certain Brownian loop measure (see Section~\ref{subsec:bls} for more details).
Sheffield and Werner \cite{SW12} showed that the Brownian loop soup has a phase transition at the critical intensity $\mathbf{c}=1$: when $\mathbf{c}>1$, $\Gamma_{0}$ a.s.\ has only one single cluster; when $0<\mathbf{c}\leq1$, $\Gamma_{0}$ a.s.\ has infinitely many clusters. Furthermore, for $\mathbf{c}\in (0,1]$,  outer boundaries of outermost clusters in the Brownian loop soup are distributed as a \textit{conformal loop ensemble (CLE)}. 
\par
In \cite{Qian21}, the fourth author introduced generalized disconnection exponents for the Brownian loop soup with intensity $\mathbf{c}\in(0,1]$, gave an explicit formula (see \eqref{gde value}) for their values and conjectured that the Hausdorff dimension of simple (resp.\ double) points on boundaries of loop-soup clusters is $2-\xi_{\mathbf{c}}(2)$ (resp.\ $2-\xi_{\mathbf{c}}(4)$).
This conjecture is proved by three of the authors of this work in \cite{GLQ22}. In the critical $\mathbf{c}=1$ case, the Hausdorff dimension of double points on boundaries of loop-soup clusters is $2-\xi_{1}(4)=0$. Whether such points exist was raised as an open question in \cite{Qian21}, and our third result answers this question negatively.
\par
In order to state our result precisely, we first recall some notation in \cite{GLQ22}. Let $\Gamma_{0}$ be a Brownian loop soup with intensity $\mathbf{c}=1$ in the unit disc $\mathbb D$. For every cluster $K$ in $\Gamma_{0}$, let $\partial K$ be the collection of boundaries of $K$. Let $\mathcal{D}$ be the set of double points in $\Gamma_{0}$ (the points that are visited at least twice in total by all 
the loops in $\Gamma_0$), and a point in $\mathcal{D}\cap\partial K$ will be called a \textit{boundary double point} (BDP) of the Brownian loop soup $\Gamma_0$.
\begin{thm}\label{bdp}
Let $\Gamma_{0}$ be a Brownian loop soup with intensity $\mathbf{c}=1$ in $\mathbb D$. Almost surely, for any cluster $K$ in $\Gamma_{0}$, we have $\partial K\cap\mathcal{D}=\varnothing$.
\end{thm}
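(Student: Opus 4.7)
The plan is a first-moment covering argument based on the critical four-arm estimate $\xi_{1}(4)=2$ of \cite{GLQ22}, boosted by a multi-scale bootstrapping that should parallel the strategy used for Theorems~\ref{ptp} and~\ref{pdcp}. I would first cover $\mathbb{D}$ by $O(\varepsilon^{-2})$ balls $B(z_{i},\varepsilon)$ and introduce an approximate BDP event $A_{\varepsilon}(z)$ encoding: (i) two distinct loop-excursions of $\Gamma_{0}$ visit $B(z,\varepsilon)$ and the loops carrying them lie in a common cluster $K$; and (ii) two disjoint crossings of the annulus $B(z,1/2)\setminus B(z,\varepsilon)$ by loops outside $K$ witness the complement of $K$ reaching $B(z,\varepsilon)$ (so that some $w\in B(z,\varepsilon)\cap\partial K$ can be chosen). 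Any BDP in $\mathbb{D}$ forces $A_{\varepsilon}(z_{i})$ for some grid centre, so the theorem reduces by Borel--Cantelli to showing $\sum_{i}\mathbb{P}(A_{\varepsilon}(z_{i}))\to 0$ along $\varepsilon=2^{-n}$.

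The estimates of \cite{GLQ22} give $\mathbb{P}(A_{\varepsilon}(z))\asymp\varepsilon^{\xi_{1}(4)}=\varepsilon^{2}$ up to subpolynomial factors, so the raw first moment is only $O(1)$ and the argument requires a polynomial improvement. To obtain it, I would split $B(z,1/2)\setminus B(z,\varepsilon)$ into dyadic subannuli $A_{k}(z)=B(z,2^{-k})\setminus B(z,2^{-k-1})$, $k=1,\ldots,K\sim\log_{2}\varepsilon^{-1}$, and combine two ingredients: quasi-multiplicativity of the four-arm event across these scales in $\Gamma_{0}$, inherited from \cite{GLQ22}; and, within each $A_{k}$, a \emph{blocking event} $B_{k}$ that depends only on loops of $\Gamma_{0}$ of diameter in a fixed dyadic window and fully contained in $A_{k}$, has conditional probability at least $\delta>0$, and is incompatible with the per-annulus four-arm event. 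The natural candidate for $B_{k}$ is the presence, inside $A_{k}$, of a mesoscopic loop of $\Gamma_{0}$ encircling $z$ that hooks onto both would-be cluster arms and thereby envelops $z$ by $K$, preventing the outer crossings from reaching $B(z,\varepsilon)$. Because mesoscopic loops in disjoint dyadic windows are independent, iterating $B_{k}$ over the $K$ scales should yield
\[
\mathbb{P}(A_{\varepsilon}(z))\leq (1-\delta)^{K}\cdot \varepsilon^{2-o(1)}\leq \varepsilon^{2+c}
\]
for some $c>0$, which defeats the $\varepsilon^{-2}$ counting.

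The main obstacle will be making the per-scale blocking step rigorous: cluster membership is a global property of $\Gamma_{0}$, so conditioning on the configuration outside $A_{k}$ freezes a great deal of information, and one must show that mesoscopic loops inside $A_{k}$ still have enough conditional freedom to produce $B_{k}$ with probability uniformly bounded below, independently of $\varepsilon$, $k$, and the conditioning. I expect this to be handled by the conformal restriction and exploration tools developed in \cite{Qian21,GLQ22}, together with a careful induction on scales to control the dependence between nested four-arm events, analogous to the quasi-multiplicativity arguments for Brownian intersection exponents. The two peripheral issues---BDPs arising from a single self-intersecting loop, and the adjustments needed near $\partial\mathbb{D}$---should reduce to the same core inequality without essential new ideas.
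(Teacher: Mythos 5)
Your overall reduction (cover by $\varepsilon$-boxes and bound the probability of an approximate BDP event) is the right starting point, but the core of your argument --- improving the one-point estimate to $\mathbb{P}(A_{\varepsilon}(z))\le\varepsilon^{2+c}$ --- cannot work, and the mechanism you propose for it is not a valid probabilistic step. First, the natural approximating event (the $\delta$-good box event of Definition~\ref{gs4}: four crossings by the loop together with non-disconnection of $S$ from infinity by its cluster) satisfies a matching \emph{lower} bound $\mathbb{P}(S\in\mathfrak{S}_{n})\ge c_{1}2^{-2n}$ (Lemma~\ref{fme}, from \cite{GLQ22}); the exponent $\xi_{1}(4)=2$ is sharp and no polynomial improvement of the one-point bound is available. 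Second, your displayed inequality multiplies two different upper bounds for the same event. If $B_{k}$ is incompatible with the four-arm event $F_{k}$ in the annulus $A_{k}$ and has conditional probability at least $\delta$, all you learn is $\mathbb{P}(F_{k}\mid\cdot)\le 1-\delta$; since $F_{k}\cap B_{k}^{c}=F_{k}$, the factor $(1-\delta)$ does not reduce $\mathbb{P}(F_{k})$, and it cannot be multiplied on top of the quasi-multiplicative product whose per-scale factor $\asymp 2^{-\xi_{1}(4)}$ is already smaller than $1-\delta$. Finally, your witnessing condition (ii) is not forced by $z\in\partial K$: the complement of $K$ accumulating at $z$ does not require loops of other clusters to cross down to $B(z,\varepsilon)$, so the covering reduction would break for the event as you define it.

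The paper's route is genuinely different (and is the common framework for all three theorems): it accepts that the expected number of good boxes is only $O(1)$ and instead proves a conditional clustering statement. Writing $\mathbb{P}(\#\mathfrak{S}_{n}\ge1)=\sum_{S}\mathbb{E}[(\#\mathfrak{S}_{n})^{-1}\mid S\in\mathfrak{S}_{n}]\,\mathbb{P}(S\in\mathfrak{S}_{n})$, it suffices to show $\mathbb{E}[(\#\mathfrak{S}_{n})^{-1}\mid S\in\mathfrak{S}_{n}]=O(n^{-\gamma})$, which follows from Proposition~\ref{cgs4}: conditioned on $S$ being good, with probability $1-O(n^{-\gamma})$ there are at least $\lambda\sqrt{n}$ good boxes. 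This is obtained from conditional first and second moment bounds plus Paley--Zygmund in each of $\asymp\sqrt{n}$ concentric annuli around $S$, together with a carefully built filtration (excluding certain bad events of negligible conditional probability) that makes the counts in successive annuli almost independent. If you wish to proceed, you should replace the per-scale blocking idea by this size-biasing/negative-moment argument.
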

We now briefly mention an implication of Theorem \ref{bdp} on the decomposition of critical Brownian loop-soup clusters. 
Werner and the fourth author have shown in \cite{QW19,MR3896865} that one can decompose a critical Brownian loop-soup cluster into two independent parts: a Brownian loop soup in the interior of the cluster, and a Poisson point process of Brownian excursions, where the latter one arises from decomposing loops that touch the outer boundary of the cluster. It is an interesting question (see \cite[Section 5.4]{QW19}) what extra randomness is involved when one hooks excursions back into Brownian loops. 
One possible source of randomness comes from double points of the loop soup on the outer boundary of a cluster, since one can rewire loops at these double points \cite{MR3618142}.
Our result (Theorem \ref{bdp}) shows that there is no extra randomness coming from this source. On the other hand, recently, Lehmkuehler, Werner and the fourth author have shown in \cite{LQW24} that there exists other essential randomness in this hooking procedure.
\subsection{Strategy of the Proof}
Proofs of Theorems \ref{ptp}, \ref{pdcp} and \ref{bdp} share the same framework, hence we focus on the PTP case (Theorem \ref{ptp}) to illustrate the main strategy.
\par
More concretely, it suffices to show that there is no PTP on $W[0,\tau_{\mathbb{D}}]$, where $W$ is a planar Brownian motion started at $0$ and $\tau_{\mathbb{D}}$ is its first exit time of the unit disc. We then further reduce the problem to the non-existence of $\delta$-PTP for any $\delta>0$ (i.e., PTP's that have all five arms traveling a macroscopic distance at least $\delta$) in the annulus $(1-\iota)\mathbb{D}\setminus \iota\mathbb{D}$ for some $0<\iota<1/2$ (we will also say ``in the bulk''). We use $\mathfrak{S}_{n}$ to denote the collection of ``good boxes'' which, heuristically speaking, are squares of side length $2^{-n}$ in the bulk which contains an ``approximate'' $\delta$-PTP. The exact definition of good boxes will be given at the beginning of Section \ref{sec:moment}. It suffices to show that (see \eqref{eq:E0u} in the proof of Theorem~\ref{ptp}) for some $c>0$
\begin{equation}\label{one good square}
\mathbb{P}(\#\mathfrak{S}_{n}\geq 1)=O(n^{-c}).
\end{equation}

Our proof of \eqref{one good square} is based on the intuition that it is very likely to find many good boxes near a typical good box. In order to carry out this argument, one should work with  the ``planted'' conditional law $\mathbb{P}(\cdot|\widetilde{S}=S)$ (``$S$ being a {\it typical} good box''), where $S$ is a box in the bulk and $\widetilde{S}$ is uniformly sampled in $\mathfrak{S}_{n}$. However,
in practice, in the case of PTP, we are only able to work with the conditional law $\mathbb{P}(\cdot|S\in\mathfrak{S}_{n})$ (``$S$ being a good box''). Our solution is to consider the conditional negative first moment. We have
\begin{equation}\mathbb{P}(\#\mathfrak{S}_{n}\geq1)=\sum_{S}\mathbb{P}\Big(\widetilde{S}=S,\, \#\mathfrak{S}_{n}\geq1\Big)=\sum_{S}\mathbb{E}\Big[(\#\mathfrak{S}_{n})^{-1}\Big|S\in\mathfrak{S}_{n}\Big]\mathbb{P}(S\in\mathfrak{S}_{n}),\end{equation}
where the summation is over all boxes $S$ of side length $2^{-n}$ in the bulk. 
Note that $\mathbb{P}(S\in\mathfrak{S}_{n})$ is of order $2^{-2n}$, since PTP should have dimension exactly $0$.
Therefore, in order to get \eqref{one good square}, we only need to show that
\begin{equation}\label{-1 moment bound}\mathbb{E}\Big[(\#\mathfrak{S}_{n})^{-1}\Big|S\in\mathfrak{S}_{n}\Big]=O(n^{-c}),\end{equation}
where the estimate is uniform in $S$. It then boils down to the following large deviation bound:
\begin{equation}\label{concentrate}
\mathbb{P}\Big(\#\mathfrak{S}_{n}\leq c'\sqrt{n}\Big|S\in\mathfrak{S}_{n}\Big)\leq n^{-c}\quad\mbox{ for some $c,c'>0$.}\end{equation}
\par
In order to obtain \eqref{concentrate}, we count the number of good boxes in the following annuli around $S$.
Write $L:=[\sqrt{n}]$. For $1\leq i\leq L/2$, let $\mathcal{A}(S,2^{-n+iL},2^{-n+(i+1)L})$ be the annulus concentric with $S$ of radii $2^{-n+iL}$ and $2^{-n+(i+1)L}$, and let $N_{i}$ be the number of good boxes in this annulus. 
By (conditional) first moment estimate, we can obtain $\mathbb{E}[N_{i}|S\in\mathfrak{S}_{n}]\geq cL$, and by (conditional) second moment estimate, we can obtain $\mathbb{E}[N_{i}^2|S\in\mathfrak{S}_{n}]\leq c'iL^{2}$. Therefore, by Paley-Zygmund inequality, we have 
\begin{align}\label{eq:mm1}
\mathbb{P}(N_{i}\geq c'L|S\in\mathfrak{S}_{n})\geq c/i.
\end{align}
\par
We now claim that conditioned on $S\in\mathfrak{S}_{n}$, the quantities $N_{i}$ are ``almost independent'', which allows us to bound $\mathbb{P}\Big(\#\mathfrak{S}_{n}\leq c'\sqrt{n}\Big|S\in\mathfrak{S}_{n}\Big)$ by the product of probabilities $\mathbb{P}(N_{i}\leq c'L|S\in\mathfrak{S}_{n})$. To this end, we construct a probability space equipped with a filtration $(\mathscr{F}_i)_{1\le i\le L/2}$ that $(N_{i})_{1\leq i\leq L/2}$ is adapted to, and then argue that the moment bound \eqref{eq:mm1} still holds for each $i$ when we further condition on $\mathscr{F}_{i-1}$, thanks to a certain Markov property of the configuration. This is the main technical difficulty in our proof.  It is natural to use the five arms associated with PTP in $\mathfrak{S}_n$, stopped upon exiting the balls of radii $2^{-n+(i+1)L}$ concentric with $S$ for $1\leq i\leq L/2$, to generate a filtration. However, such a filtration cannot guarantee the Markov property of the configuration across these successive annuli, due to the possibility of some ``bad events''. For instance, the five arms can possibly go back and forth across multiple annuli. To fix this issue, we introduce a delicate path decomposition using stopping times and carefully choose what to include in the filtration. We also introduce a slightly tilted probability measure that assigns zero measure to some ``bad events'' which occur with tiny (conditional) probabilities (in the original measure) to ensure adaptedness. We refer the reader to Section \ref{sec:ptp} for a more precise explanation and the explicit construction of this filtration.
\par
Our method can be applied to discrete counterparts as well. As an example, we consider pioneer triple points of a random walk. Let $X$ be a random walk on $\mathbb{Z}^{2}$. A point $z\in X[0,N]$ is called a $\delta$-macroscopic PTP, if there exist $\delta N\leq t_{1}<t_{2}<t\leq N$ with $t_{2}-t_{1},t-t_{2}\geq\delta N$, such that $X_{t_{1}}=X_{t_{2}}=X_{t}=z$ and $z$ is on the outer boundary of $X[0,t]$. In a similar fashion, we can show that for some $c>0$, 
\begin{equation}
\mathbb{P}(\mbox{$X[0,N]$ contains a $\delta$-macroscopic PTP})=O((\log N)^{-c}).    
\end{equation}
\subsection{Structure of the Paper}
We now briefly discuss the structure of the paper. In Section \ref{sec:prelim}, we introduce notation and provide some preliminary results. In Section \ref{sec:moment}, we define good boxes with respect to each type of special points and derive various moment bounds. In Sections \ref{sec:ptp}-\ref{sec:bdp}, we will prove Theorems \ref{ptp}, \ref{pdcp} and \ref{bdp} respectively. As their proofs are rather similar, we will only give a detailed proof of Theorem \ref{ptp} in Section \ref{sec:ptp}, and will be quite brief for the proofs of  Theorems \ref{pdcp} and \ref{bdp}  (except at places where non-trivial modifications are needed).

\section{Notation and Preliminaries}\label{sec:prelim}
In this section, we introduce some general notation, and then briefly discuss Brownian path measures, Brownian loop soups, intersection and (generalized) disconnection exponents and separation lemmas for various non-intersection and non-disconnection events.
\subsection{Notation}\label{subsec:notation}
\textbf{Constants:} Lower-case constants such as $c,c',\widetilde{c},c_{1},c_{2},\ldots$ are defined locally. Upper-case symbols such as $C,C_{0},C_{1},\ldots$ stand for constants that will stay fixed within each \textit{section}. Dependence on other parameters will be listed at their first appearance. Throughout, all constants are positive.
\par
\smallskip
\noindent \textbf{Comparison between quantities:} For two functions $f,g:\mathbb{R}_{+}\to\mathbb{R}_{+}$, we write $f(x)\asymp g(x)$ if there exist $c_{1},c_{2}$, such that $f(x)\leq c_{1}g(x)$ and $g(x)\leq c_{2}f(x)$ for all $x$.
\par
We say two probability measures $P,\;Q$ are \textit{uniformly equivalent}, if they are mutually absolutely continuous to each other, and there exists a universal constant $c\in(1,\infty)$, such that 
\[c^{-1}\leq\mathrm{d}P/\mathrm{d}Q\leq c.\]
\par
In the following, we say two events $A,B$ are \textit{almost independent} under a probability measure $\mathbb{P}$, if there exists a universal constant $c$, such that 
\[c^{-1}\,\mathbb{P}(A)\mathbb{P}(B)\leq\mathbb{P}(A\cap B)\leq c\,\mathbb{P}(A)\mathbb{P}(B).\]
We say an event $A$ is \textit{almost independent} to a $\sigma$-algebra $\mathscr{F}$ with respect to a probability measure $\mathbb{P}$, if there exists a universal constant $c$, such that 
\[c^{-1}\mathbb{P}(A)\leq\mathbb{P}(A|\mathscr{F})\leq c\mathbb{P}(A).\]
\textbf{Disc and annulus:} Let $\mathcal{B}(z,r):=\{x\in\mathbb{R}^{d}:|x-z|\leq r\}$ be the disc in $\mathbb{R}^{d}$ with centre $z$ and radius $r$, and we use $\mathbb{D}$ to denote the $d$-dimensional unit disc (ball). Let $\mathcal{A}(z,r_{1},r_{2})$ denote the annulus with centre $z$, inner radius $r_{1}$ and outer radius $r_{2}$. For an annulus $\mathcal{A}$, let $\partial_{i}\mathcal{A}$ (resp.\ $\partial_{o}\mathcal{A}$) be the inner (resp.\ outer) boundary of $\mathcal{A}$. Moreover, for a set $S$ with barycentre $z$, let $\mathcal{B}(S,r):=\mathcal{B}(z,r)$ and $\mathcal{A}(S,r_{1},r_{2}):=\mathcal{A}(z,r_{1},r_{2})$.
\par
Since we often deal with dyadic scales, we also write 
\[\mathcal{D}_{n}(z):=\mathcal{B}(z,2^{n}),\quad \mathcal{C}_{n}(z):=\partial\mathcal{D}_{n}(z).\]
We also write $\mathcal{D}_{n}(S):=\mathcal{D}_{n}(z)$ and $\mathcal{C}_{n}(S):=\mathcal{C}_{n}(z)$ for a set $S$ with barycentre $z$ and write $\mathcal{D}_{n}$ (resp.\ $\mathcal{C}_{n}$) for $\mathcal{D}_{n}(0)$ (resp.\ $\mathcal{C}_{n}(0)$).
\par
\smallskip
\noindent \textbf{Dyadic boxes:} When $d=2$, let
\[\mathcal{S}_{n}^{i,j}:=(i2^{-n},(i+1)2^{-n})\times(j2^{-n},(j+1)2^{-n}),\quad i,j\in\mathbb{Z}\]
and when $d=3$, let
\[\mathcal{S}_{n}^{i,j,k}:=(i2^{-n},(i+1)2^{-n})\times(j2^{-n},(j+1)2^{-n})\times(k2^{-n},(k+1)2^{-n}),\quad i,j,k\in\mathbb{Z}\]
denote $n$-boxes of side length $2^{-n}$ in $\mathbb{R}^{d}$.
Let $\mathcal{S}_{n}$ be the collection of $\mathcal{S}_{n}^{i,j}$ (resp.\ $\mathcal{S}_{n}^{i,j,k}$). An element of $\mathcal{S}_{n}$ will be also called an $n$-box.
\par
For a set $A\subset\mathbb{R}^{d}$ and $S\in\mathcal{S}_{n}$, we use $S\prec A$ to denote the event that $\mathcal{D}_{-n}(S)\subset A$.
\par
\smallskip
\noindent \textbf{Distance:} For two disjoint centrally symmetric sets $A,B\in\mathbb{R}^{d}$, write $\mathrm{dist}(A,B)$ for the distance between the centres of $A,B$.
\par
\smallskip
\noindent \textbf{Curves and (stopping) times:} For a time-parameterized curve $\gamma=\gamma(t)$, $(t\in[0,T])$, and sets $A_{1},\ldots,A_{n}\subset\mathbb{R}^{d}$, define
\[\tau_{\gamma}(A_{1}):=\inf\{t:\gamma(t)\in A_{1}\},\quad\tau_{\gamma}(A_{1},\ldots,A_{n}):=\inf\{t\geq\tau(A_{1},\ldots,A_{n-1}):\gamma(t)\in A_{n}\}.\]
We also define the last hitting time
\[\sigma_{\gamma}(A_{1}):=\sup\{t:\gamma(t)\in A_{1}\}.\]
We also write
\[\tau_{\gamma}(\tau'_{\gamma},A):=\inf\{t\geq\tau'_{\gamma}:\gamma(t)\in A\}\]
for some (random) time $\tau'_{\gamma}$. Note that for a deterministic $A$ and a random $\gamma$, $\tau_{\gamma}(\tau'_{\gamma},A)$ is a stopping time (with respect to the natural filtration of $\gamma$) if $\tau'_{\gamma}$ is a stopping time. We abbreviate $\tau_{\gamma}$ (resp.\ $\sigma_{\gamma}$) as $\tau$ (resp.\ $\sigma$) if no confusion arises.
\par
For a time-parameterized curve $\gamma=\gamma(t)$, let $\gamma^{\mathcal{R}}$ be its time-reversal.
\par
\smallskip
\noindent \textbf{Openings:} This definition is only relevant when $d=2$.
\par
For an annulus $\mathcal{A}=\mathcal{A}(x,s,r)$ and a set $\mathcal{U}$, let $\mathcal{O}(\mathcal{U},\mathcal{A})$ stand for the  \textit{opening} of $\mathcal{U}$ with respect to $\mathcal{A}$, which is the collection of connected components of $\mathcal{A}\backslash\mathcal{U}$ that intersect both $\partial\mathcal{B}(x,s)$ and $\partial\mathcal{B}(x,r)$.
\par
Moreover, for $v\in(s,r)$, let $\mathcal{O}(\mathcal{U},\mathcal{A},v)$ be the \textit{cut set} of $\mathcal{O}(\mathcal{U},\mathcal{A})$ on $\partial\mathcal{B}(x,v)$ defined as follows.
First, let $\mathcal{G}$ be the collection of continuous curves $\gamma=(\gamma(t))_{0\le t\le1}$ in $\mathcal{A}$, such that $\gamma(0)\in\partial\mathcal{B}(x,s),\gamma(1)\in\partial\mathcal{B}(x,r)$ and $\gamma\cap\mathcal{U}=\varnothing$. We then define 
\[\mathcal{O}_{0}:=\{z\in\partial\mathcal{B}(x,v):\exists\gamma\in\mathcal{G},\ \mbox{such that $\tau_{\gamma}(\partial\mathcal{B}(x,v))=z$}\}.\]
We now define
\[\mathcal{O}(\mathcal{U},\mathcal{A},v):=\{z\in\mathcal{O}_{0}:\exists\gamma\in\mathcal{G},\ \mbox{$\tau_{\gamma}(\partial\mathcal{B}(x,v))=z$ and $\gamma(\tau_{\gamma}(\partial\mathcal{B}(x,v)),1]\cap\mathcal{O}_{0}=\varnothing$}\}.\]
Note that $\mathcal{O}(\mathcal{U},\mathcal{A},v)$ is a minimal collection of points such that every continuous path from $\partial\mathcal{B}(x,s)$ to $\partial\mathcal{B}(x,r)$ avoiding $\mathcal{U}$ must pass through one such point when it first hits $\partial\mathcal{B}(x,v)$.
For example, in the definition above, the cut set on $\partial\mathcal{B}(x,v)$ does not include ``dead ends''; see Figure \ref{fig_open}.
\begin{figure}[b]
\centering
\begin{subfigure}
\centering
\includegraphics[width=4.5cm]{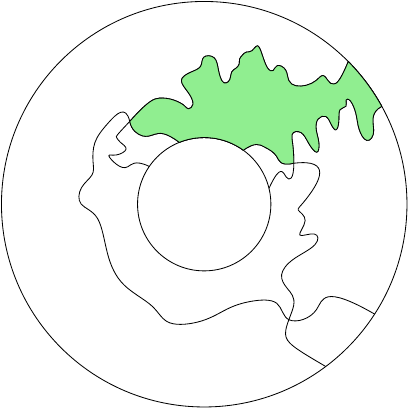}
\end{subfigure}
\begin{subfigure}
\centering
\includegraphics[width=4.5cm]{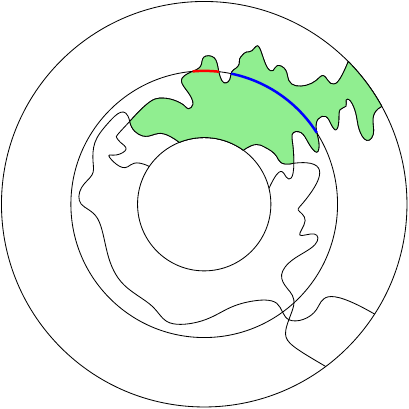}
\end{subfigure}
\caption{\small $\mathcal{U}$ is the union of the curves. The opening is in green, the cut set on the intermediate scale is in blue and the ``dead end'' is in red.}
\label{fig_open}
\end{figure}
\par
For later use, we also need to define the cut set with respect to a connected set in a general annulus. Let $D_{1}\subseteq D_{2}\subseteq D_{3}$ be three discs (possibly with different centres). Let $K_{0}$ be a connected set in $D_{3}\backslash D_{1}$. Let $\mathcal{G}'$ be the collection of continuous curves $\gamma=(\gamma(t))_{0\leq t\leq 1}$ in $D_{3}\backslash D_{1}$, such that $\gamma(0)\in\partial D_{1},\gamma(1)\in\partial D_{3}$ and $\gamma\subseteq K_{0}$. Let $\mathcal{O}'_{0}:=\{z\in\partial D_{2}:\exists\gamma\in\mathcal{G}',\ \tau_{\gamma}(\partial D_{2})=z$\}. We then define the cut set of $K_{0}$ on $\partial D_{2}$ (with respect to the general annulus $D_{3}\backslash D_{1}$) as the following collection of points: 
\[\{z\in\mathcal{O}'_{0}:\exists\gamma\in\mathcal{G}',\ \tau_{\gamma}(\partial D_{2})=z,\ \gamma(\tau_{\gamma}(\partial D_{2}),1]\cap\mathcal{O}'_{0}=\varnothing\}.\]
\subsection{Brownian Motion and Path Measures}\label{subsec:path_measure}
We write $\mathbb{P}^{x}$ for the probability measure of the Brownian motion started from $x$, and write $\mathbb{P}$ for $\mathbb{P}^{0}$ for brevity. We  write $\mathbb{P}^{x_{1},\ldots,x_{n}}$ for the product measure of $n$ independent Brownian motions started from $x_{1},\ldots,x_{n}$, respectively.
\par 
Let $D$ be a smooth domain. If $x,y$ are two distinct points in $\overline{D}$, let $\mu_{x,y}^{D}$ denote the measure on Brownian paths from $x$ to $y$ in $\overline{D}$, as defined in \cite{LW04}.
When $x, y$ are interior points of $D$, we have $|\mu_{x,y}^{D}|=G^{D}(x,y)$, where $G^{D}(\cdot,\cdot)$ is the Green's function in $D$. When one (resp.\ both) of the endpoints $x,y$ are on $\partial D$, $|\mu_{x,y}^{D}|$ is given by the (resp.\ boundary) Poisson kernel. For $x\in\partial D$, the \textit{Brownian bubble measure} $\mu_{D}^{\mathrm{bub}}(x)$ is the $y\to x$ limit of the interior-to-boundary measure $\mu^D_{y,x}$ for $y\in D$ with an appropriate normalizing factor; see \cite{LW04} for more details.
\par
Fix a smooth domain $D$ and an interior point $x\in D$. Let $W$ be a Brownian motion started from $x\in D$ and stopped upon reaching $\partial D$. The law of this process, denoted by $\mathbb{P}_{D}^{x}$, can be decomposed:
\[\mathbb{P}_{D}^{x}=\int_{\partial D}\mu_{x,y}^{D}\,\mathrm{d}y\]
where $\mu_{x,y}^{D}$ is the interior-to-boundary measure. Here and below, $\mathrm{d}y$ represents integration with respect to arc length if $d=2$ and area if $d=3$.
\par
We now recall the definition of Brownian excursions in an annulus. Let $W_{t}$ be a Brownian motion started from 0. For $r_2>r_1>0$, let $\sigma_{r_{1}}$ be the last time that $W_{t}$ hits $\partial\mathcal{B}(0,r_{1})$ before $\tau_{r_{2}}$ (the first hitting time of $\partial\mathcal{B}(0,r_{2})$). Then we say that $Y_{t}=W_{t+\sigma_{r_{1}}}(t\in[0,\tau_{r_{2}}-\sigma_{r_{1}}))$ has the law of a {\it Brownian excursion} in $\mathcal{A}(0,r_{1},r_{2})$. We can also characterize the Brownian excursion as an integral of the boundary-to-boundary measure. Let 
\[\mu_{r_{1},r_{2}}=\int_{\partial\mathcal{B}(0,r_{1})}\int_{\partial\mathcal{B}(0,r_{2})}\mu_{x,y}^{\mathcal{A}(0,r_{1},r_{2})}\,\mathrm{d}x\,\mathrm{d}y.\]
Then, its normalized version $\mu_{r_{1},r_{2}}^{\#}:=\mu_{r_{1},r_{2}}/|\mu_{r_{1},r_{2}}|$ gives the law of a Brownian excursion in the annulus $\mathcal{A}(0,r_{1},r_{2})$.
\medskip

We now turn to the first- and last-exit decomposition of Brownian path measures. The first-exit decomposition is a direct consequence of the strong Markov property, and we refer readers to \cite[Section 5.2]{Law05} for the last-exit decomposition.
Throughout, we use the symbol $\oplus$ to denote the natural concatenation of paths or measures (the latter can be induced from the former).
\begin{lemma}\label{last exit}
Let $D',D$ be two domains such that $D'\subset D$. For $x\in D'$ and $y\in\partial D$, we have
$$
\mu^{D}_{x,y}=\int_{\partial D'}\mu_{x,z}^{D'}\oplus\mu_{z,y}^{D}\,\mathrm{d}z,
$$
and
\[\mu^{D}_{x,y}=\int_{\partial D'}\mu_{x,z}^{D}\oplus\mu_{z,y}^{D\backslash D'}\,\mathrm{d}z.\]
\end{lemma}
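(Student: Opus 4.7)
The plan is to prove the two identities separately. The first (first-exit decomposition) follows from a direct application of the strong Markov property. The second is the classical last-exit decomposition, for which I would quote \cite[Section 5.2]{Law05} as the paper itself does, but which can also be derived from the first identity by time-reversal.

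For the first identity, I would apply the strong Markov property at $\tau := \tau_\gamma(\partial D')$, the first hitting time of $\partial D'$. Because $x \in D'$ and $y \in \partial D$, we have $y \notin \overline{D'}$, so every continuous path in the support of $\mu^D_{x,y}$ reaches $\partial D'$ at an a.s.\ finite time $\tau$ strictly before terminating at $y$. Splitting the path at $\tau$: conditional on $\gamma(\tau) = z \in \partial D'$, the pre-$\tau$ piece is a Brownian motion started at $x$ and killed upon exiting $D'$, which (as an unnormalized measure on paths with fixed exit point $z$) is exactly $\mu_{x,z}^{D'}$ with mass equal to the Poisson kernel $H^{D'}(x,z)$. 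The post-$\tau$ piece is an independent Brownian motion from $z$ in $\overline{D}$ reaching $y$, with unnormalized measure $\mu_{z,y}^{D}$. Concatenating and integrating over $z \in \partial D'$ produces the first displayed formula, and the mass identity $H^D(x,y) = \int_{\partial D'} H^{D'}(x,z) H^D(z,y)\,dz$ is encoded automatically.

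For the second identity, the last-exit decomposition of \cite[Section 5.2]{Law05} gives the result directly: one splits the path at the last visit $\sigma$ to $\overline{D'}$, the pre-$\sigma$ factor is an $h$-transformed Brownian motion whose integrated-over-endpoint measure is $\mu_{x,z}^D$ (with mass $G^D(x,z)$), and the post-$\sigma$ factor lies in $D \backslash \overline{D'}$ and has measure $\mu_{z,y}^{D \backslash D'}$. For a self-contained derivation, I would instead apply the first-identity argument above to $\mu^D_{y,x}$, which admits an analogous first-entrance decomposition at the first hitting of $\overline{D'}$ (the path from $y \in \partial D$ must cross $\partial D'$ to reach $x \in D'$), and then invoke the time-reversibility of Brownian paths, which sends $\mu^D_{y,x} \mapsto \mu^D_{x,y}$, $\mu_{y,z}^{D \backslash D'} \mapsto \mu_{z,y}^{D \backslash D'}$, and $\mu_{z,x}^D \mapsto \mu_{x,z}^D$. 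The main (and only) obstacle is bookkeeping: one must verify that the mass normalizations (Green's functions for interior-to-interior pieces, Poisson or boundary Poisson kernels for pieces with boundary endpoints) are exchanged consistently under both the Markov decomposition and the time-reversal, but these are standard facts treated in \cite{LW04, Law05}, so I would simply invoke those references.
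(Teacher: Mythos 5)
Your proposal is correct and matches the paper's treatment: the paper likewise obtains the first identity as a direct consequence of the strong Markov property (splitting at the first hitting of $\partial D'$) and simply cites \cite[Section 5.2]{Law05} for the last-exit decomposition. The additional time-reversal derivation you sketch is a standard equivalent route and introduces no gap.
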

By the Harnack principle, we have the following useful result for Brownian motions. Later, we will also use straightforward generalizations of this result (such as Brownian motions with some random starting points, conditional Brownian motions, and Brownian excursions).
\begin{lemma}\label{lem:harnack}
Let $W$ be a Brownian motion in $\mathbb{R}^{d}$, started from $x\in\partial\mathcal{D}_{0}$. 
For any $r>0$, the law of $W(\tau_{\partial\mathcal{D}_{r}})$ is uniformly equivalent to the uniform distribution on $\partial\mathcal{D}_{r}$, where the implicit constants only depend on $r$ (independent of $x$). 
\end{lemma}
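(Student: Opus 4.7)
The plan is to identify the law of $W(\tau_{\partial\mathcal{D}_r})$ with the harmonic measure (i.e.\ Brownian exit distribution) from the ball $\mathcal{D}_r$ started at $x$, and then to bound its density against the uniform surface measure. Since $x\in\partial\mathcal{D}_0$ lies strictly inside $\mathcal{D}_r$ (because $2^r>1$ or, if $r\le 0$, the Brownian motion still exits $\mathcal{D}_r$ in the obvious sense after first re-entering), the first hitting time of $\partial\mathcal{D}_r$ coincides with the exit time of the ball $\mathcal{D}_r$. Thus the law of $W(\tau_{\partial\mathcal{D}_r})$ has a density against the uniform (surface) measure on $\partial\mathcal{D}_r$, given explicitly by the Poisson kernel
$$P_{2^r}(x,y)=c_d\,\frac{2^{2r}-|x|^2}{|x-y|^d},\qquad y\in\partial\mathcal{D}_r,$$
for $d=2,3$, where $c_d$ is a dimensional constant.

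With $|x|=1$ fixed, the numerator equals $2^{2r}-1$, a quantity depending only on $r$. Since $x\in\partial\mathcal{D}_0$ and $y\in\partial\mathcal{D}_r$, the triangle inequality gives $|x-y|\in[|2^r-1|,2^r+1]$, so the denominator is also pinched between positive constants depending only on $r$. Hence $P_{2^r}(x,y)$ is bounded above and below by positive constants $c(r)^{-1}$ and $c(r)$ uniformly in $x\in\partial\mathcal{D}_0$ and $y\in\partial\mathcal{D}_r$, which is exactly the claimed uniform equivalence.

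Alternatively, one can bypass the explicit kernel by invoking the Harnack inequality directly: for each fixed $y\in\partial\mathcal{D}_r$, the function $x\mapsto P_{2^r}(x,y)$ is positive and harmonic on $\mathcal{D}_r$, hence its values on the compact set $\partial\mathcal{D}_0\subset\mathcal{D}_r$ differ by a multiplicative factor depending only on $r$. Comparing with the choice $x=0$, for which rotational symmetry forces the exit distribution to be exactly uniform on $\partial\mathcal{D}_r$, yields the conclusion. There is no genuine obstacle in this proof; the only point to verify is that the constants depend solely on $r$ (and not on the particular $x$ or $y$), which follows immediately from the rotational invariance of the distances appearing in the Poisson kernel.
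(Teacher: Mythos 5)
Your proof is correct and takes essentially the approach the paper intends: the paper states this lemma without proof, attributing it to the Harnack principle, which is exactly your alternative argument (comparison with the rotationally symmetric case $x=0$), while your main argument via the explicit Poisson kernel of the ball $\mathcal{B}(0,2^r)$ is an equivalent direct computation. (The parenthetical about $r\le 0$ is unnecessary, since the lemma assumes $r>0$ so that $x\in\partial\mathcal{D}_0$ lies strictly inside $\mathcal{D}_r$.)
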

\subsection{The Brownian Loop Soup}\label{subsec:bls}
In this subsection, we give a brief review on the Brownian loop soup. The \textit{Brownian loop measure} $\mu^{\mathrm{loop}}$ is defined in \cite[Section 4]{LW04} by
\begin{equation}\label{loop measure}
\mu^{\mathrm{loop}}=\int_{\mathbb{R}^{2}}\int_{0}^{\infty}\frac{1}{2\pi t^{2}}\mu^{\#}(z,z;t)\,\mathrm{d}t\,\mathrm{d}z,
\end{equation}
where $\mu^{\#}(z,z;t)$ is the probability measure on Brownian bridges from $z$ to itself with duration $t$. The \textit{Brownian loop soup} (in the whole plane) with intensity $\mathbf{c}>0$ is a Poisson point process with intensity $\frac{\mathbf{c}}{2}\mu^{\mathrm{loop}}$, which is denoted by $\Gamma$ below. 
For a domain $D$, let $\Gamma_{D}$ be the collection of loops in $\Gamma$ which are fully contained in $D$. By the restriction property, $\Gamma_{D}$ has the same distribution as a Brownian loop soup in $D$, that is, a PPP with intensity $\frac{\mathbf{c}}{2}\mu_{D}^{\mathrm{loop}}$, where $\mu_{D}^{\mathrm{loop}}$ is $\mu^{\mathrm{loop}}$ restricted to the loops that are fully contained in $D$.
In the following, we write $\Gamma_{0}:=\Gamma_{\mathbb{D}}$ for the Brownian loop soup in the unit disc.

\subsection{Intersection and Disconnection Exponents}
In this subsection, we recall the definition and basic properties of classical intersection and disconnection exponents and their generalized versions.
\subsubsection*{Brownian Exponents}
We begin with intersection exponent of Brownian motions; see e.g.\ \cite{Law98,LSW01b}.
\begin{defi}\label{ie real}
Assume that $d=2,3$. Let $X_{0},X_{1},\ldots,X_{k}$ be independent Brownian motions in $\mathbb{R}^{d}$ started uniformly from $\mathcal{C}_0$, and stopped upon reaching $\partial\mathcal{B}(0,e^{r})$.
Write
\[Z_{r,d}=Z_{r,d}(X_{1},\ldots,X_{k}):=\mathbb{P}(X_{0}\cap(\cup_{j=1}^{k}X_{j})=\varnothing|X_{1},\ldots,X_{k}).\]
For all $\lambda>0$, let $a_{r,d}=a_{r,d}(\lambda):=\mathbb{E}[Z_{r,d}^{\lambda}]$. Then, the following limit can be shown to exist.
\[\xi_{[d]}(k,\lambda):=-\log\lim_{r\to\infty}(a_{r,d})^{1/r},\]
which is called the intersection exponent. 
\end{defi}
In fact, the existence of $\xi_{[d]}(k,\lambda)$ follows from a simple sub-multiplicative argument (Fekete's lemma). Since we will mostly work in two dimensions, for brevity, we write $\xi(\cdot,\cdot):=\xi_{[2]}(\cdot,\cdot)$ below. 
Thanks to a series of pioneer works by Lawler, Schramm and Werner \cite{LSW01a,LSW01b,LSW02,LSW02a}, the explicit values of 2D intersection exponents are rigorously established now (conjectured by Duplantier and Kwon in \cite{DK88}):
\begin{equation}\label{ie value}
\xi(k,\lambda)=\frac{(\sqrt{24k+1}+\sqrt{24\lambda+1}-2)^{2}-4}{48}.
\end{equation}
However, most 3D exponents remain unknown, except for $\xi_{[3]}(1,2)=1$ as already explained above Theorem~\ref{pdcp}.

The following theorem gives up-to-constants bounds on $a_{r,d}(\lambda)$, which has been derived in \cite{Law98,LSW02b}. Throughout this section, results hold for both $d=2,3$ if the dimension is not specified.
\begin{thm}\label{a_r}
For any $\lambda_{0}>0$, there exist $c_{1},c_{2}>0$ such that for any $\lambda\in(0,\lambda_{0}]$ and $r\geq2$, we have
\[c_{1}e^{-r\xi_{[d]}(k,\lambda)}\leq a_{r,d}(\lambda)\leq c_{2}e^{-r\xi_{[d]}(k,\lambda)}.\]
\end{thm}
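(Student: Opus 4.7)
The plan is to establish a two-sided quasi-multiplicativity estimate
\[
c_1 \, a_{r,d}(\lambda)\, a_{s,d}(\lambda) \,\leq\, a_{r+s,d}(\lambda) \,\leq\, c_2 \, a_{r,d}(\lambda)\, a_{s,d}(\lambda), \qquad r,s \geq 1,
\]
with constants uniform in $\lambda \in (0,\lambda_0]$. Once this is in hand, combined with the definition of $\xi_{[d]}(k,\lambda)$ as the exponential decay rate of $a_{r,d}(\lambda)$, a standard Fekete-type argument applied to $b_r := -\log a_{r,d}(\lambda)$ yields $b_r = r\,\xi_{[d]}(k,\lambda) + O(1)$ and hence the desired comparison $a_{r,d}(\lambda) \asymp e^{-r\xi_{[d]}(k,\lambda)}$.

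For the upper bound I would apply the strong Markov property to each of the independent paths $X_0, X_1, \dots, X_k$ at their first hitting times of $\partial \mathcal{B}(0, e^r)$. Writing $\mathcal{F}_r$ for the $\sigma$-algebra generated by the initial segments up to these stopping times, and observing that non-intersection on scale $e^{r+s}$ forces both non-intersection inside $\mathcal{B}(0, e^r)$ and non-intersection of the outer segments in $\mathcal{A}(0, e^r, e^{r+s})$, one obtains $Z_{r+s,d} \leq Z_{r,d} \cdot \widetilde{Z}_{s,d}$, where $\widetilde{Z}_{s,d}$ is a conditional non-intersection probability for the outer segments. Jensen-type manipulation with the power $\lambda$ and the Harnack principle (Lemma~\ref{lem:harnack}), together with Brownian scaling, let us bound $\mathbb{E}[\widetilde{Z}_{s,d}^\lambda \mid \mathcal{F}_r]$ by $C\,a_{s,d}(\lambda)$ uniformly over the starting configuration on $\partial \mathcal{B}(0, e^r)$. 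Taking expectation gives $a_{r+s,d}(\lambda) \leq C\, a_{r,d}(\lambda)\, a_{s,d}(\lambda)$.

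The lower bound is the delicate part and will be the main obstacle. The naive concatenation of scale-$r$ and scale-$s$ non-intersection configurations can fail because the exit points on $\partial \mathcal{B}(0, e^r)$ may be arbitrarily close, making it costly to continue the paths disjointly. The standard remedy is a \emph{separation lemma}: conditionally on non-intersection inside $\mathcal{B}(0, e^r)$, with probability uniformly bounded away from $0$ (in $r$ and in $\lambda \in (0,\lambda_0]$), the $k+1$ exit points are ``well-separated'' on $\partial \mathcal{B}(0, e^r)$, in the sense that one can fit disjoint deterministic tubes connecting them to $\partial \mathcal{B}(0, e^{2r})$, say. The proof of such a lemma proceeds inductively over dyadic scales, simultaneously controlling the non-intersection probability and the separation quality, and is referenced in Section~\ref{sec:prelim} of the paper. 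Granting it, one forces the paths to cross a well-separated annulus through disjoint tubes (at uniform probabilistic cost), then restarts independent continuations from nearly uniform starting points via Harnack and concatenates a scale-$s$ configuration to obtain $a_{r+s,d}(\lambda) \geq c\, a_{r,d}(\lambda)\, a_{s,d}(\lambda)$. The careful bookkeeping of the $\lambda$-th power of the conditional non-intersection probability, rather than of a plain probability, is the subtle point where uniformity in $\lambda \in (0,\lambda_0]$ must be verified throughout the separation argument.
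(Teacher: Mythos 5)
The paper does not prove Theorem~\ref{a_r}; it cites it directly from \cite{Law98,LSW02b}, and your outline (two-sided quasi-multiplicativity via the strong Markov property and Harnack for the upper bound, a separation lemma for the lower bound, then a Fekete-type argument to upgrade the exponential rate to up-to-constants bounds) is exactly the standard argument given in those references. Your proposal is correct and, modulo the technical bookkeeping of the $\lambda$-th power that you rightly flag, matches the source's approach.
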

We also record the following up-to-constants bounds of non-intersection probability of Brownian motions as a direct corollary of Theorem \ref{a_r}.
\begin{coro}\label{intersect prob}
Let $X_{1},\ldots,X_{k},Y_{1},\ldots,Y_{l}$ be $(k+l)$ independent Brownian motions started uniformly from $\mathcal{C}_0$, and stopped upon reaching $\partial\mathcal{B}(0,e^{r})$. We have
\[\mathbb{P}\bigg(\Big(\bigcup_{i=1}^{k}X_{i}\Big)\cap\Big(\bigcup_{j=1}^{l}Y_{j}\Big)=\varnothing\bigg)\asymp e^{-r\xi_{[d]}(k,l)}.\]
\end{coro}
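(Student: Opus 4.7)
The plan is to recognize the non-intersection probability as exactly $a_{r,d}(l)$ and then quote Theorem \ref{a_r}. The key observation is the tautological fact that positive-integer moments of $Z_{r,d}$ coincide with non-intersection probabilities for additional independent Brownian motions; in particular, the content of the corollary is essentially already packaged into Theorem \ref{a_r}, with the only task being to unravel the definition of $a_{r,d}(\lambda)$ in the special case $\lambda=l\in\mathbb{Z}_{>0}$.

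More concretely, I would first fix the $X$-configuration. By the definition in \ref{ie real}, $Z_{r,d}(X_1,\ldots,X_k)$ is the conditional probability that an independent Brownian motion $X_0$, started uniformly on $\mathcal{C}_0$ and stopped upon reaching $\partial\mathcal{B}(0,e^r)$, avoids $\bigcup_{i=1}^k X_i$. Since $Y_1,\ldots,Y_l$ are i.i.d.\ with the same law as $X_0$ and jointly independent of $(X_i)_{i=1}^k$, the conditional events $\{Y_j\cap\bigcup_i X_i=\varnothing\}$ are conditionally independent with common conditional probability $Z_{r,d}(X_1,\ldots,X_k)$, so that
\[
\mathbb{P}\Big(\big(\textstyle\bigcup_{i=1}^k X_i\big)\cap\big(\textstyle\bigcup_{j=1}^l Y_j\big)=\varnothing\,\Big|\,X_1,\ldots,X_k\Big)=Z_{r,d}(X_1,\ldots,X_k)^l.
\]
Taking expectation gives $\mathbb{E}[Z_{r,d}^l]=a_{r,d}(l)$, i.e.\ the unconditional non-intersection probability equals $a_{r,d}(l)$ exactly.

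Finally, I would invoke Theorem \ref{a_r} with $\lambda=l$ and $\lambda_0=l$ (so $\lambda\in(0,\lambda_0]$), which immediately delivers the matching upper and lower bounds $c_1 e^{-r\xi_{[d]}(k,l)}\le a_{r,d}(l)\le c_2 e^{-r\xi_{[d]}(k,l)}$, as desired. There is no substantive obstacle here: Theorem \ref{a_r} was formulated for all real $\lambda>0$ precisely so that the integer-$\lambda$ case, which carries this clean probabilistic interpretation via Fubini, follows by a one-line evaluation. The only subtlety worth flagging is that the right-hand side $e^{-r\xi_{[d]}(k,l)}$ tends to $0$, so the event in question must be read as the non-intersection event (the statement as written, with ``$\neq\varnothing$'', is incompatible with the two-dimensional case, where intersection occurs with probability tending to $1$).
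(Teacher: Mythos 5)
Your proposal is correct and is precisely the intended argument: the paper states this as a direct corollary of Theorem \ref{a_r} without proof, and the identification $\mathbb{E}[Z_{r,d}^{l}]=a_{r,d}(l)$ via conditional independence of the $Y_{j}$'s given $X_{1},\ldots,X_{k}$ is exactly the one-line unravelling the authors have in mind. You are also right that the ``$\neq\varnothing$'' in the statement is a typo for ``$=\varnothing$'', as confirmed by the surrounding text referring to the ``non-intersection probability''.
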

\par
For later use, we also recall the following setup using Brownian excursions; see e.g.\ \cite{LSW02b}.
\begin{lemma}\label{ni prob}
Let $X_{1},\ldots,X_{k},Y_{1},\ldots,Y_{l}$ be $(k+l)$ independent Brownian excursions in $\mathcal{A}(0,1,e^r)$ with one endpoint uniformly sampled on $\mathcal{C}_0$ and another endpoint on $\partial\mathcal{B}(0,e^{r})$. We then have
\[\mathbb{P}\bigg(\Big(\bigcup_{i=1}^{k}X_{i}\Big)\cap\Big(\bigcup_{j=1}^{l}Y_{j}\Big)=\varnothing\bigg)\asymp r^{k+l}e^{-r\xi_{[d]}(k,l)}.\]
\end{lemma}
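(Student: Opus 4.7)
The plan is to reduce Lemma~\ref{ni prob} to the Brownian motion estimate from Corollary~\ref{intersect prob} via a limiting procedure: approximate each Brownian excursion by a Brownian motion conditioned on not returning to the inner boundary, and track how both the conditioning probability and the non-intersection event behave as the starting radius approaches $1$.

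More concretely, I would first represent each excursion as the $\epsilon\downarrow 0$ limit of a Brownian motion $W_i^{\epsilon}$ starting uniformly on $\{|x|=1+\epsilon\}$, stopped at $\partial\mathcal{B}(0,e^{r})$, and conditioned on the event $B_i^{\epsilon}=\{W_i^{\epsilon}\text{ does not touch }\overline{\mathcal{B}(0,1)}\}$. Then
\[
\mathbb{P}\bigl((\cup_{i}X_{i})\cap(\cup_{j}Y_{j})=\varnothing\bigr)=\lim_{\epsilon\downarrow 0}\frac{\mathbb{P}\bigl(\text{non-intersection}\cap\bigcap_i B_i^{\epsilon}\bigr)}{\prod_i\mathbb{P}(B_i^{\epsilon})}.
\]
By a classical optional stopping argument applied to the harmonic functions $\log|x|$ (in $d=2$) and $1/|x|$ (in $d=3$), one obtains $\mathbb{P}(B_i^{\epsilon})\asymp\epsilon/r$ in $d=2$ and $\mathbb{P}(B_i^{\epsilon})\asymp\epsilon$ in $d=3$.

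For the numerator, Corollary~\ref{intersect prob} (combined with Lemma~\ref{lem:harnack} to handle the slightly shifted starting sphere) gives that the non-intersection probability of the $W_i^{\epsilon}$'s alone is $\asymp e^{-r\xi_{[d]}(k,l)}$, uniformly for small $\epsilon$. The remaining task is then to show that, conditional on non-intersection, the joint probability of the events $B_i^{\epsilon}$ exceeds the unconditional product $\prod_i \mathbb{P}(B_i^{\epsilon})$ by a factor $\asymp r^{k+l}$. This ``conditional boost'' is the heart of the argument and can be established using the separation lemma (stated later in this section): conditional on non-intersection, the BMs reach the intermediate sphere $\{|x|=e\}$ at uniformly well-separated points with constant conditional probability, and from these well-separated configurations the probability that each of the $k+l$ BMs independently avoids $\overline{\mathcal{B}(0,1)}$ on its passage from $\{|x|=1+\epsilon\}$ to $\{|x|=e\}$ can be computed by a direct harmonic-measure estimate in the thin annulus $\mathcal{A}(0,1,e)$.

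The main obstacle is precisely this last step, i.e., establishing the $r^{k+l}$ enhancement uniformly across dimensions. The intuition is that conditioning on macroscopic non-intersection forces the BMs into a tubular configuration that pushes them outward away from the inner boundary, producing the $r^{k+l}$ enhancement. This is particularly delicate in $d=3$, where the unconditional probability of avoiding $\overline{\mathcal{B}(0,1)}$ is $\asymp \epsilon$ (independent of $r$), so the $r^{k+l}$ factor must come entirely from the interplay between the non-intersection event and the harmonic-measure estimate rather than from the $r$-scaling of the conditioning probability itself.
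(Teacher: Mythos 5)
Your reduction to conditioned Brownian motions is the standard route (the paper does not prove this lemma itself but refers to \cite{LSW02b}; for $d=2$ your outline is essentially that argument), and your computations of $\mathbb{P}(B_i^{\epsilon})$ are correct. The genuine gap is exactly the step you flag as the ``main obstacle'': the $r^{k+l}$ conditional boost you are trying to establish does not exist in $d=3$, so that step cannot be carried out ``uniformly across dimensions''. Run your own scheme to the end. Decomposing each path at its first hit of $\{|x|=e\}$, the cost of avoiding $\overline{\mathcal{B}(0,1)}$ on the initial passage is $\asymp\epsilon$ per path in both dimensions, and, conditionally on non-intersection (via the separation lemma and a back-tracking estimate of the type in Lemma~\ref{local}), the avoidance after radius $e$ costs only a constant. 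Hence the numerator is $\asymp\epsilon^{k+l}e^{-r\xi_{[d]}(k,l)}$ in both dimensions. Dividing by your denominator gives $r^{k+l}e^{-r\xi(k,l)}$ when $d=2$, because there $\mathbb{P}(B_i^{\epsilon})\asymp\epsilon/r$: the factor $r$ per path comes entirely from the recurrence of planar Brownian motion (unconditioned avoidance of the unit disc across $r$ logarithmic scales costs $1/r$, whereas under the non-intersection conditioning it is free after a bounded number of scales). When $d=3$ it gives only $e^{-r\xi_{[3]}(k,l)}$, because $\mathbb{P}(B_i^{\epsilon})\asymp\epsilon$ with no $r$-dependence. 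There is no mechanism that could multiply $\mathbb{P}(\bigcap_i B_i^{\epsilon})$ by $r^{k+l}$ upon conditioning in $d=3$: by transience, away from a bounded neighbourhood of the unit sphere the excursion law and the Brownian law are uniformly equivalent in three dimensions, so the two non-intersection probabilities in Corollary~\ref{intersect prob} and in this lemma can differ only by constants.

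This is a defect of the statement as printed rather than of your method: the polynomial factor is a two-dimensional phenomenon, and the paper's own later applications confirm this --- in the proof of Lemma~\ref{no extra crossing12} the three-dimensional non-intersection probability of the excursions is taken to be $\asymp 2^{-n}$ (no factor $n^{3}$), while the planar one carries the factor $n^{3}$. So for $d=2$ you should complete the argument as outlined, the main remaining substantive point being the constant-cost avoidance of $\overline{\mathcal{B}(0,1)}$ after radius $e$ under the non-intersection conditioning; for $d=3$ you should prove (and use) the estimate without the polynomial correction rather than search for an enhancement that is not there.
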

We then turn to disconnection exponents of planar Brownian motion.
\begin{defi}\label{de}
Let $W_{1},\ldots,W_{k}$ be $k$ independent planar Brownian motions started uniformly from $\mathcal{C}_0$, and stopped upon reaching $\partial\mathcal{B}(0,e^{r})$. Denote by $V_{k}$ the event that the origin is not disconnected from infinity by the union of these paths. Then the Brownian disconnection exponent $\xi(k)$ is defined by 
\[\xi(k):=-\log\lim_{r\to\infty}(\mathbb{P}(V_{k}))^{1/r},\]
where the limit exists by Fekete's lemma.
\end{defi}
By Theorem~\ref{a_r}, it follows immediately that $\mathbb{P}(V_{k})\asymp e^{-r\xi(k)}$ and $\xi(k)=\lim_{\lambda\downarrow0}\xi(k,\lambda)$. Hence, the Brownian disconnection exponent is given by 
\begin{equation}\label{eq:bde}
\xi(k)=\frac{(\sqrt{24k+1}-1)^{2}-4}{48}.
\end{equation}
In particular, $\xi(5)=2$.
\subsubsection*{Generalized Exponents}
We now discuss the generalized intersection and disconnection exponents defined in \cite{Qian21} using generalized radial restriction measures. It is later shown in \cite{GLQ22} (using the uniqueness of generalized radial restriction measures from \cite{CG25}) that the original definition is equivalent to the following one, similar to Definitions \ref{ie real} and \ref{de}, by taking an independent Brownian loop soup with intensity $\mathbf{c}\in (0,1]$ into consideration.
\begin{defi}\label{gie}
Let $X_{0},X_{1},\ldots,X_{k}$ be independent Brownian motions started uniformly from $\mathcal{C}_0$, and stopped upon reaching $\partial\mathcal{B}(0,e^{r})$. Let $\Gamma$ be a Brownian loop soup with intensity $\mathbf{c}\in (0,1]$ in the plane, and $\Gamma_{r}$ be the collection of loops in $\Gamma$ which are fully contained in $\mathcal{B}(0,e^{r})$. We write $\widetilde{X}_{i}$ for the union of $X_{i}$ and the clusters in $\Gamma_{r}\backslash\Gamma_{0}$ that it intersects. Let $\Pi_{r}$ be the $\sigma$-algebra generated by $X_{1},\ldots,X_{k},\Gamma_{r}\backslash\Gamma_{0}$. Define
\[\widetilde{Z}_{r}:=\mathbb{P}(X_{0}\cap(\cup_{j=1}^{k}\widetilde{X}_{j})=\varnothing|\Pi_{r}).\]
For all $\lambda>0$, let $p(\mathbf{c},k,r,\lambda)=\mathbb{E}[\widetilde{Z}_{r}^{\lambda}]$. Then the generalized intersection exponent $\xi_{\mathbf{c}}(k,\lambda)$ is defined by
\[\xi_{\mathbf{c}}(k,\lambda):=-\log\lim_{r\to\infty}(p(\mathbf{c},k,r,\lambda))^{1/r}.\]
\end{defi}
See \cite[Theorem 3.1]{GLQ22} for details. We obtain the definition of generalized disconnection exponents by taking the $\lambda\to0$ limit in Definition \ref{gie}. It is shown in \cite[Theorem 3.9]{GLQ22}  that the following definition is the same as the original definition in \cite{Qian21}.
\begin{defi}\label{gde}
Recall the notation in Definition \ref{gie}. Let
\[p(\mathbf{c},k,r,0)=\mathbb{P}(\widetilde{Z}_{r}>0)=\lim_{\lambda\to0+}p(\mathbf{c},k,r,\lambda).\]
Then the generalized disconnection exponent $\xi_{\mathbf{c}}(k)$ is the unique exponent such that
\[p(\mathbf{c},k,r,0)\asymp e^{-r\xi_{\mathbf{c}}(k)}.\]
\end{defi}
In the setting of Brownian excursions, we have
\begin{lemma}[{\cite[Theorem 3.8]{GLQ22}}]\label{gnd prob}
Let $X_{1},\ldots,X_{k}$ be $k$ independent Brownian excursions in $\mathcal{A}(0,1, e^r)$ with one endpoint uniformly sampled on $\mathcal{C}_0$ and another endpoint on $\partial\mathcal{B}(0,e^{r})$. Recall the notation $\Gamma_{r}$ in Definition \ref{gie}, we have
\[\mathbb{P}(\widetilde{V}_{k})\asymp r^{k}e^{-r\xi_{\mathbf{c}}(k)},\]
where $\widetilde{V}_{k}$ is the event that the union of $X_{i}$, $i=1,\ldots,k$ and $\Gamma_{r}\backslash\Gamma_{0}$ does not disconnect the origin from infinity.
\par
Moreover, we can take $\mathbf{c}=0$ in this lemma, namely we do not add a Brownian loop soup, and $\xi_{0}(k)=\xi(k)$ is the Brownian disconnection exponent in \eqref{eq:bde}.
\end{lemma}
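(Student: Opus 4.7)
The target estimate converts the Brownian-motion statement $p(\mathbf{c},k,r,0)\asymp e^{-r\xi_{\mathbf{c}}(k)}$ from Definition \ref{gde} (where each path is a Brownian motion started uniformly on $\mathcal{C}_0$ and stopped at $\partial\mathcal{B}(0,e^r)$) into a boundary-to-boundary excursion estimate, picking up a factor $r^k$ that matches the difference in normalization between the two setups. By an explicit computation using the Poisson kernel of the annulus $\mathcal{A}(0,1,e^r)$ (equivalently, via conformal invariance to a cylinder of length $r$, where the escape probability from the inner circle to the outer circle is $\asymp 1/r$), one shows $|\mu_{1,e^r}|\asymp 1/r$. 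Writing
\[ \mathbb{P}(\widetilde V_k) \;=\; |\mu_{1,e^r}|^{-k}\cdot \mu_{1,e^r}^{\otimes k}[\widetilde V_k], \]
it then suffices to prove the non-normalized comparison $\mu_{1,e^r}^{\otimes k}[\widetilde V_k]\asymp e^{-r\xi_{\mathbf{c}}(k)}$. The plan is parallel to the proof of the classical Lemma \ref{ni prob}, with additional work to accommodate the loop soup $\Gamma_r\setminus\Gamma_0$.

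For the upper bound $\mu_{1,e^r}^{\otimes k}[\widetilde V_k]\lesssim e^{-r\xi_{\mathbf{c}}(k)}$, apply the last-exit decomposition (Lemma \ref{last exit}) with $D=\mathcal{B}(0,e^r)$ and $D'=\mathbb D$ to each Brownian motion $W_i$ in Definition \ref{gde}: write $W_i = W_i^{\mathrm{in}}\oplus W_i^{\mathrm{ex}}$, where $W_i^{\mathrm{ex}}$ is the portion after the last visit to $\mathcal{C}_0$ and has law (after integrating over endpoints) equivalent to $\mu_{1,e^r}$. Since $V_k$ is implied by $\widetilde V_k$ evaluated on the excursion parts \emph{together with} the condition that the inner residuals $W_i^{\mathrm{in}}$ do not create an additional disconnecting loop inside $\mathbb D$, one obtains $p(\mathbf{c},k,r,0)\gtrsim \mu_{1,e^r}^{\otimes k}[\widetilde V_k]\cdot P_{\mathrm{res}}$, where $P_{\mathrm{res}}$ is a uniformly positive lower bound on the conditional probability, given $\widetilde V_k$, that no $W_i^{\mathrm{in}}$ winds around the origin. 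The latter uses a separation lemma (to be stated in Section \ref{sec:prelim}) guaranteeing that, conditional on $\widetilde V_k$, the excursion endpoints on $\mathcal{C}_0$ are well-separated with uniform positive probability, allowing each inner residual (a Brownian bubble attached at $\mathcal{C}_0$) to independently avoid winding around $0$ with positive probability.

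For the lower bound $\mu_{1,e^r}^{\otimes k}[\widetilde V_k]\gtrsim e^{-r\xi_{\mathbf{c}}(k)}$, one runs the construction in reverse: start with $k$ Brownian excursions producing $\widetilde V_k$ via an iterative scale-by-scale construction (as in the sub-multiplicative proof defining $\xi_{\mathbf{c}}(k)$), and at the inner boundary attach independent Brownian bubbles inside $\overline{\mathbb D}$ to recover a Brownian-motion configuration. Provided the resulting augmented configuration also satisfies $V_k$ (which, again by the separation lemma, happens with uniform positive probability when the excursions are well-separated at $\mathcal{C}_0$), this yields $\mu_{1,e^r}^{\otimes k}[\widetilde V_k]\gtrsim p(\mathbf{c},k,r,0)\asymp e^{-r\xi_{\mathbf{c}}(k)}$. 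The treatment of the loop soup is essentially automatic: $\Gamma_r\setminus\Gamma_0$ is the same object in both the Brownian-motion and Brownian-excursion setups, and the non-disconnection event is monotone in the trace added, so the preceding comparison carries through after an appropriate conditioning on $\Gamma_r\setminus\Gamma_0$. The case $\mathbf{c}=0$ is the specialization with no loop soup. The main technical obstacle in the whole argument is the separation-based quantitative comparison in Step (upper) and Step (lower) — once the relevant separation lemmas are in place, the remaining estimates reduce to standard applications of Lemma \ref{last exit} and the Harnack principle (Lemma \ref{lem:harnack}).
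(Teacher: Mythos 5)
The paper does not prove this lemma at all: it is quoted verbatim from \cite[Theorem 3.8]{GLQ22}, so there is no in-paper argument to compare yours against. Judged on its own terms, your skeleton is right — the identity $\mathbb{P}(\widetilde V_k)=|\mu_{1,e^r}|^{-k}\,\mu_{1,e^r}^{\otimes k}[\widetilde V_k]$ with $|\mu_{1,e^r}|\asymp 1/r$ is exactly where the factor $r^k$ comes from, and reducing to $\mu_{1,e^r}^{\otimes k}[\widetilde V_k]\asymp e^{-r\xi_{\mathbf{c}}(k)}$ is the correct intermediate target — but both comparison steps contain a genuine error.

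The error is in your description of the inner residuals. In the last-exit decomposition of Lemma \ref{last exit} with $D=\mathcal{B}(0,e^r)$, $D'=\mathbb{D}$, the piece $W_i^{\mathrm{in}}$ before the last visit to $\mathcal{C}_0$ has law $\mu^{\mathcal{B}(0,e^r)}_{x,z}$: it is a path in the \emph{large} disc, not a bubble confined to $\overline{\mathbb D}$. Since $|\mu^{\mathcal{B}(0,e^r)}_{x,z}|=G^{\mathcal{B}(0,e^r)}(x,z)\asymp r$ while the mass of paths staying in $\mathcal{B}(0,e)$ is $\asymp 1$, this residual reaches $\mathcal{C}_{r/2}$ with probability of constant order, and its conditional probability of not disconnecting $0$ from $\infty$ is $\asymp 1/r$ per path — \emph{not} uniformly positive. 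If $P_{\mathrm{res}}$ really were a constant, your argument would yield $p(\mathbf{c},k,r,0)\gtrsim c\,\mathbb{P}(\widetilde V_k)$ (the marginal law of the excursion parts under the BM measure is the \emph{normalized} $\mu^{\#\otimes k}_{1,e^r}$, not $\mu^{\otimes k}_{1,e^r}$), hence $\mathbb{P}(\widetilde V_k)\lesssim e^{-r\xi_{\mathbf{c}}(k)}$, contradicting the lemma itself; your write-up avoids this only by silently replacing $\mathbb{P}(\widetilde V_k)$ with $\mu^{\otimes k}_{1,e^r}[\widetilde V_k]$, which is smaller by exactly the factor $r^{k}$ you are trying to produce. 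The correct mechanism is that the residuals cost $\asymp r^{-k}$, and this must be proved as a \emph{two-sided} bound: $p\gtrsim r^{-k}\mathbb{P}(\widetilde V_k)$ gives the upper bound on $\mathbb{P}(\widetilde V_k)$, and $p\lesssim r^{-k}\mathbb{P}(\widetilde V_k)$ gives the lower bound. Your lower-bound paragraph does not supply the latter: "good excursions plus good bubbles imply $V_k$" only lower-bounds $p$, which is useless for lower-bounding $\mathbb{P}(\widetilde V_k)$; you would instead need either the uniform upper bound $\mathbb{P}(\text{residuals do not disconnect}\mid \text{excursions})\lesssim r^{-k}$, or a direct iterative/separation construction carried out entirely in the excursion setting (as in \cite{LSW02b} for Lemma \ref{ni prob} and in \cite{GLQ22} for the loop-soup version).
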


The value of generalized disconnection exponents, derived in \cite{Qian21}, is given through the following formula:
\begin{equation}\label{gde value}\xi_{\mathbf{c}}(k)=\frac{1}{48}\left(\left(\sqrt{24k+1-\mathbf{c}}-\sqrt{1-\mathbf{c}}\right)^{2}-4(1-\mathbf{c})\right).\end{equation}
The generalized disconnection exponents were then used in \cite{GLQ22} to compute the Hausdorff dimension of multiple points on boundaries of Brownian loop-soup clusters. 
\begin{thm}[{\cite[Theorem 1.1]{GLQ22}}]
Let $\Gamma_{0}$ be a Brownian loop soup in $\mathbb{D}$ with intensity $\mathbf{c}\in(0,1]$. We say that a point is a simple (resp.\ double, n-tuple) point of $\Gamma_{0}$, if it is visited at least once (resp.\ twice, n times) by the loop(s) in $\Gamma_{0}$. The set of simple (resp.\ double) points is denoted by $\mathcal{S}$ (resp.\ $\mathcal{D}$). Then the following holds almost surely.
\par
For any cluster $K$ in $\Gamma_{0}$, we have
\[\dim_{\mathrm{H}}(\partial K\cap\mathcal{S})=2-\xi_{\mathbf{c}}(2),\quad\dim_{\mathrm{H}}(\partial K\cap\mathcal{D})=2-\xi_{\mathbf{c}}(4),\]
where $\dim_{\mathrm{H}}(\cdot)$ stands for the Hausdorff dimension.
\end{thm}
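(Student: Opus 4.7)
The plan is to match an upper and a lower bound on $\dim_{\mathrm{H}}(\partial K\cap\mathcal{S})$ and $\dim_{\mathrm{H}}(\partial K\cap\mathcal{D})$ via the moment method, taking the generalized non-disconnection probability estimate of Lemma \ref{gnd prob} as the main input. The key geometric translation is the following: a point $z\in\partial K\cap\mathcal{S}$ (resp.\ $\partial K\cap\mathcal{D}$) is the endpoint of $k=2$ (resp.\ $k=4$) ``loop arms'' -- the half-loops of the $k/2$ loops in $K$ that pass through $z$ -- and the union of these arms together with the other loops in the cluster must fail to disconnect $z$ from $\partial\mathbb{D}$. By the construction of the generalized exponents through Brownian loop-soup restriction, this non-disconnection event is precisely of the type $\widetilde V_k$ whose probability is estimated in Lemma \ref{gnd prob}.

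For the upper bound, I would, for each $S\in\mathcal{S}_n$ inside a macroscopic sub-annulus of $\mathbb{D}$, define a ``good-box'' event: (i) $k/2$ loops of $\Gamma_0$ meet $\mathcal{D}_{-n}(S)$ and (ii) the resulting arm-plus-cluster configuration does not disconnect $S$ from $\partial\mathbb{D}$. Integrating over the basepoints of the distinguished loops using \eqref{loop measure} and applying Lemma \ref{gnd prob} on the annulus $\mathcal{A}(S,2^{-n},1)$ gives
\[\mathbb{P}(S\text{ is good})\asymp 2^{-n\xi_{\mathbf{c}}(k)}\,n^{k},\]
so the expected number of good $n$-boxes is of order $2^{n(2-\xi_{\mathbf{c}}(k))+o(n)}$. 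Summing over dyadic scales and invoking Borel--Cantelli then yields $\dim_{\mathrm{H}}\le 2-\xi_{\mathbf{c}}(k)$ almost surely, for the corresponding set.

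For the lower bound, I would apply Frostman's energy criterion. For each $s<2-\xi_{\mathbf{c}}(k)$, I would construct a sequence of random measures $\mu_n$ supported on the $2^{-n}$-thickening of $\partial K\cap\mathcal{S}$ (resp.\ $\partial K\cap\mathcal{D}$) whose total masses remain bounded from below with positive probability, and whose $s$-energies $\iint |x-y|^{-s}\,d\mu_n(x)\,d\mu_n(y)$ are uniformly bounded in expectation. The crucial quantitative input is a two-point estimate of the form
\[\mathbb{P}\bigl(x,y\text{ both good at scale }\varepsilon\bigr)\lesssim \varepsilon^{2\xi_{\mathbf{c}}(k)}\,|x-y|^{-\xi_{\mathbf{c}}(k)}\,\operatorname{polylog}(|x-y|^{-1}),\]
for $\varepsilon\ll |x-y|\ll 1$, obtained by decomposing the arm configuration into two small annuli of radius $|x-y|/2$ around $x$ and $y$ and a large annulus surrounding both, applying Lemma \ref{gnd prob} with $k$ strands on each piece. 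A weak subsequential limit of $(\mu_n)$ then provides a nontrivial measure on the target set with finite $s$-energy, and sending $s\uparrow 2-\xi_{\mathbf{c}}(k)$ closes the dimension bound.

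The main obstacle will be the quasi-multiplicativity (``separation lemma'') underpinning the two-point estimate, extended to the arm-plus-loop-soup configuration of Lemma \ref{gnd prob}. Concretely, one must show that conditionally on well-separated arms and a controlled loop-soup cluster structure inside an annulus, the configuration can be glued to an independent one outside at a cost polynomial in the scale ratio; this requires analyzing how macroscopic loops of $\Gamma$ crossing the annulus are absorbed into the cluster without affecting the disconnection status. A secondary technical issue is the reduction of the geometric event ``$z\in\partial K\cap\mathcal{S}$'' (resp.\ ``$\partial K\cap\mathcal{D}$'') to an arm event with exactly $k=2$ (resp.\ $k=4$) strands: one must a priori rule out the accumulation of higher-multiplicity points on $\partial K$ and control the precise topology of the cluster near its boundary, so that only loops actually contributing to the boundary arm count are retained.
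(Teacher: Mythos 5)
This theorem is not proved in the present paper at all: it is quoted verbatim from \cite{GLQ22}, whose ingredients (the excursion--bridge decomposition, the separation lemma for generalized non-disconnection, and the first/second moment bounds for good boxes) are exactly the ones your outline describes. Your plan --- a first-moment covering argument for the upper bound and a Frostman/second-moment argument with a quasi-multiplicative two-point estimate for the lower bound, both driven by the estimate of Lemma~\ref{gnd prob} --- is essentially the same approach as the cited source, and the obstacles you flag (the separation lemma in the presence of the loop soup, and the reduction of the boundary-point event to a $k$-arm event) are indeed the substantive technical content there.
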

Since $\xi_{1}(4)=2$ by \eqref{gde value}, the Hausdorff dimension of double points on boundaries of the critical Brownian loop-soup clusters is 0.

\subsection{Separation Lemmas}\label{subsec:sep}
We now turn to the separation lemmas for non-intersection, non-disconnection or generalized non-disconnection events, which are the key ingredients for various up-to-constants moment bounds.
We omit their proofs as they are either directly cited from the literature (see e.g.\ \cite{Law96,Law96a,LSW02b,KM10,LV12,HLLS22}) or trivial generalizations of existing ones.
\par
We begin with the separation lemma for non-disconnecting events. We first recall an explicit definition of  the separation event from \cite{KM10}, which we refer to as $\alpha$-nice configurations in this work.
\begin{defi}\label{nice}
Let $\gamma_{s}^{(j)}:0\leq s\leq\tau^{(j)}$, $j=1,2,\ldots,k$, be planar curves started from the boundary of a fixed annulus $A$ (with centre $x$) and stopped upon reaching the opposite boundary. This configuration of curves is called $\alpha$-nice if
\par
(1)\;$\{\gamma_{s}^{(j)}:0\leq s\leq\tau^{(j)}\}\backslash A\subset\mathcal{B}\big(\gamma_{0}^{(j)},\alpha|\gamma_{0}^{(j)}-x|\big)$, and
\par
(2)\;the following set does not disconnect $x$ from infinity:
\[\bigcup_{j=1}^{k}\big\{\gamma_{s}^{(j)}:0\leq s\leq\tau^{(j)}\big\}\cup\bigg[\bigcup_{j=1}^{k}\mathcal{B}\big(\gamma_{0}^{(j)},\alpha|\gamma_{0}^{(j)}-x|\big)\bigg]\cup\bigg[\bigcup_{j=1}^{k}\mathcal{B}\big(\gamma_{\tau^{(j)}}^{(j)},\alpha|\gamma_{\tau^{(j)}}^{(j)}-x|\big)\bigg].\]
\end{defi}
With this definition, we now state a standard version of the separation lemma for non-disconnection of Brownian motions.
\begin{lemma}[{\cite[Lemma 3.6]{KM10}}]\label{sep PTP}
Fix an integer $k$. For $r_{1}<r_{2}$,  let $X_{i}$, $i=1,2,\ldots,k$, be independent Brownian motions started uniformly from either $\partial\mathcal{B}(0,e^{r_{1}})$ or $\partial\mathcal{B}(0,e^{r_{2}})$ and stopped upon hitting the opposite boundary of the annulus $\mathcal{A}(0,e^{r_{1}},e^{r_{2}})$. Let $p(r_{1},r_{2},k)$ be the probability that the union of $X_{i}$ does not disconnect $\partial\mathcal{B}(0,e^{r_{1}})$ from infinity, and $p_{\alpha}(r_{1},r_{2},k)$ be the probability that the paths form an $\alpha$-nice configuration. Then there exist $c,c'>0$, and $\alpha_{0}>0$, such that for any $r_{1}<r_{2}$ and $\alpha\in[0,\alpha_{0}]$,
\[c\alpha^{k}e^{(r_{1}-r_{2})\xi(k)}\leq p_{\alpha}(r_{1},r_{2},k)\leq p(r_{1},r_{2},k)\leq c'e^{(r_{1}-r_{2})\xi(k)}.\]
\end{lemma}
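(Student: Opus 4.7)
The upper bound $p_\alpha(r_1, r_2, k) \le p(r_1, r_2, k) \le c' e^{(r_1-r_2)\xi(k)}$ follows immediately: the first inequality is by definition of $\alpha$-niceness, and the second is the standard up-to-constants bound on the non-disconnection probability in an annulus, which is the $\lambda \to 0^+$ analogue of Theorem \ref{a_r} (equivalently, Lemma \ref{gnd prob} with $\mathbf{c} = 0$ combined with Harnack comparison between Brownian motions and Brownian excursions). Only the lower bound $p_\alpha \ge c\alpha^k e^{(r_1-r_2)\xi(k)}$ requires real work. The plan is to first establish a \emph{qualitative} separation event $S_{\alpha_0}$ — with $\alpha_0$ a small fixed constant independent of $\alpha$ — which requires that at both boundaries $\partial\mathcal{B}(0, e^{r_i})$ the $k$ last-exit and first-entrance points lie in pairwise $\alpha_0 e^{r_i}$-separated arcs with surrounding room, to prove the intermediate bound
\begin{equation*}
\mathbb{P}\bigl(\text{non-disconnection in } \mathcal{A}(0, e^{r_1}, e^{r_2}) \cap S_{\alpha_0}\bigr) \ge c_0\, e^{(r_1-r_2)\xi(k)}
\end{equation*}
with $c_0$ independent of $r_1, r_2$, and then to upgrade $S_{\alpha_0}$ to $\alpha$-niceness by an independent endpoint-forcing step that costs a factor $\alpha^k$.

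The intermediate bound would be proved by an inductive argument over dyadic sub-annuli. Split $[r_1, r_2]$ into $O((r_2-r_1)/K)$ blocks of log-width $K$, with $K$ a large constant. For each block, assuming the paths are $\alpha_0$-separated upon entering, show via the Markov property and the Harnack principle (Lemma \ref{lem:harnack}) that with conditional probability at least $c\, e^{-K\xi(k)}$ the paths (i) do not disconnect the origin from infinity inside the block, and (ii) are still $\alpha_0$-separated upon exiting. The key point is that starting from $\alpha_0$-separated entry points, one can independently redirect each Brownian path through its own disjoint macroscopic corridor with uniformly positive conditional probability, so separation is preserved without paying a decaying factor. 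Iterating over all blocks and matching against the trivially sub-multiplicative upper bound on $p$ pins the constant uniformly in the number of blocks. The base case at the starting boundary uses that the initial uniform distribution is $\alpha_0$-separated with uniformly positive probability; separation at the opposite boundary is then obtained by running the same inductive argument backward via the last-exit decomposition (Lemma \ref{last exit}).

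To upgrade $S_{\alpha_0}$ to $\alpha$-niceness, condition on $S_{\alpha_0}$ and on the path configuration inside $\mathcal{A}(0, e^{r_1+1}, e^{r_2-1})$, and for each of the $k$ paths independently replace the segments outside this sub-annulus with Brownian increments forced to stay inside the prescribed ball of radius $\alpha|\gamma_0^{(j)} - x|$ around the target endpoint. Each such forcing event has probability at least $c\alpha$ by a standard Brownian-in-corridor hitting estimate (Beurling-type), yielding the factor $\alpha^k$ and thereby the claimed lower bound. The main obstacle is the inductive ``separation persists'' step: one has to recover $\alpha_0$-separation at each block's exit without a $K$-dependent multiplicative loss, even when the entry configuration is only marginally separated, which typically requires a careful choice of corridors together with a second-moment-type control inside each block. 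The two-sided nature of the statement (starting from either boundary) adds bookkeeping via Lemma \ref{last exit} but is routine once the forward induction is in place.
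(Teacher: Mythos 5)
First, a point of comparison: the paper does not prove Lemma \ref{sep PTP} at all --- it is quoted from \cite[Lemma 3.6]{KM10}, and Section 2.5 explicitly states that the proofs of all separation lemmas are omitted as being in the literature. So your proposal can only be measured against the standard proof there. Your outer structure is right: the two upper bounds are indeed immediate (the second from Theorem \ref{a_r} as $\lambda\downarrow0$), and the final endpoint-forcing step, costing a factor $\alpha$ per path to upgrade a fixed $\alpha_{0}$-separated configuration to an $\alpha$-nice one, is the standard and correct way to produce the $\alpha^{k}$ factor.

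The genuine gap is in your intermediate bound. As written, each block of log-width $K$ contributes a conditional probability at least $c\,e^{-K\xi(k)}$ of crossing without disconnection \emph{and} exiting $\alpha_{0}$-separated; iterating over $m=(r_{2}-r_{1})/K$ blocks yields $c^{m}e^{(r_{1}-r_{2})\xi(k)}$, and $c^{m}$ decays exponentially in $r_{2}-r_{1}$, i.e.\ it strictly worsens the exponent rather than giving a uniform constant. "Matching against the sub-multiplicative upper bound" cannot repair this: submultiplicativity bounds $p$ from above and says nothing about the ratio between your product lower bound and $p$. The true content of the separation lemma is the uniform estimate $\mathbb{P}(\text{well-separated at exit}\mid\text{non-disconnection})\ge c$, and its proof is not a block-by-block lower bound but a self-improvement iteration: one quantifies the separation quality $\epsilon$ of the current configuration and proves (a) from an $\epsilon$-separated configuration the paths become $\delta_{0}$-separated after a further (possibly $\epsilon$-dependent) scale increment with conditional probability at least $c\,\epsilon^{a}$, and (b) the probability of surviving that increment while \emph{remaining} badly separated is smaller by a definite power of $\epsilon$, so that summing over the first scale at which good separation is achieved yields a constant independent of $r_{2}-r_{1}$; quasi-multiplicativity and the sharp constant in the exponent then follow from the separation lemma, not the other way around. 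You correctly identify this persistence step as "the main obstacle," but the corridor/second-moment remedy you gesture at is not the mechanism that resolves it, and without estimates of type (a) and (b) the argument does not close.
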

We then turn to the separation lemma for non-intersection of Brownian motions. We define the $\alpha$-nice configuration in a similar fashion as follows.
\begin{defi}\label{nice12}
Let $\gamma_{s}^{(j)}:0\leq s\leq\tau^{(j)}$, $j=1,2,\ldots,p$ and $\eta_{s}^{(k)}:0\leq s\leq\tau^{(k)}$, $k=1,2,\ldots,q,$ be planar curves started on the boundary of a fixed annulus $A$ (with centre $x$) and stopped upon reaching the opposite boundary circle. This configuration of curves is called $\alpha(p,q)$-nice if
\par
(1)\;$\{\gamma_{s}^{(j)}:0\leq s\leq\tau^{(j)}\}\backslash A\subset\mathcal{B}(\gamma_{0}^{(j)},\alpha|\gamma_{0}^{(j)}-x|)$, and
\par
(2)\;the set
\[\bigcup_{j=1}^{p}\big\{\gamma_{s}^{(j)}:0\leq s\leq\tau^{(j)}\big\}\cup\bigg[\bigcup_{j=1}^{p}\mathcal{B}\big(\gamma_{0}^{(j)},\alpha|\gamma_{0}^{(j)}-x|\big)\bigg]\cup\bigg[\bigcup_{j=1}^{p}\mathcal{B}\big(\gamma_{\tau^{(j)}}^{(j)},\alpha|\gamma_{\tau^{(j)}}^{(j)}-x|\big)\bigg]\]
and the set
\[\bigcup_{k=1}^{q}\big\{\eta_{s}^{(k)}:0\leq s\leq\tau^{(k)}\big\}\cup\bigg[\bigcup_{k=1}^{q}\mathcal{B}\big(\eta_{0}^{(k)},\alpha|\eta_{0}^{(k)}-x|\big)\bigg]\cup\bigg[\bigcup_{k=1}^{q}\mathcal{B}\big(\eta_{\tau^{(k)}}^{(k)},\alpha|\eta_{\tau^{(k)}}^{(k)}-x|\big)\bigg]\]
do not intersect.
\end{defi}
We now state the separation lemma for non-intersection events for Brownian motions, and we refer readers to \cite[Section 3]{Law96} for the proof of a similar one.
\begin{lemma}\label{sep PDCP}
Fix integers $k,l$. For $r_{1}<r_{2}$, let $X_{i}$, $i=1,2,\ldots,k$ and $Y_{j}$, $j=1,2,\ldots,l$, be independent Brownian motions started uniformly from either $\partial\mathcal{B}(0,e^{r_{1}})$ or $\partial\mathcal{B}(0,e^{r_{2}})$ and stopped upon hitting the opposite boundary of the annulus $\mathcal{A}(0,e^{r_{1}},e^{r_{2}})$. Let $q(r_{1},r_{2},k,l)$ be the probability that the union of $X_{i}$ and the union of $Y_{j}$ do not intersect, and $q_{\alpha}(r_{1},r_{2},k,l)$ be the probability that the paths form an $\alpha(k,l)$-nice configuration. Then there exist $c,c'>0$, and $\alpha_{0}>0$, such that for any $r_{1}<r_{2}$ and $\alpha\in[0,\alpha_{0}]$,
\[c\alpha^{k+l}e^{(r_{1}-r_{2})\xi(k,l)}\leq q_{\alpha}(r_{1},r_{2},k,l)\leq q(r_{1},r_{2},k,l)\leq c'e^{(r_{1}-r_{2})\xi(k,l)}.\]
\end{lemma}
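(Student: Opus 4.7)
The upper bound $q(r_1,r_2,k,l)\le c'e^{(r_1-r_2)\xi(k,l)}$ is essentially immediate: the non-intersection event implies in particular that the two packets of paths, after the first one has been revealed, must avoid one another, so up to a Harnack-type adjustment of the starting distributions it is bounded by $a_{r_2-r_1,d}(\lambda)$ for integer $\lambda=l$ (or by the analogous quantity in Corollary \ref{intersect prob}), and Theorem \ref{a_r} gives the desired estimate. The nontrivial direction is the lower bound $q_\alpha(r_1,r_2,k,l)\ge c\alpha^{k+l}e^{(r_1-r_2)\xi(k,l)}$.

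For the lower bound, the plan is the now-standard two-step strategy going back to Lawler's work \cite{Law96}. First, I would prove a \emph{separation estimate}: conditional on the non-intersection event and on the paths reaching the outer boundary $\partial\mathcal{B}(0,e^{r_2})$, the collection of terminal points $\{\gamma_{\tau^{(j)}}^{(j)}\}_j$ and $\{\eta_{\tau^{(k)}}^{(k)}\}_k$ lies in a ``well-separated'' configuration (meaning: the $X$-endpoints are at distance $\ge\rho_0 e^{r_2}$ from the $Y$-endpoints, and similarly for the inner endpoints relative to the starts) with probability at least some universal $c_0>0$. Second, given such a well-separated configuration at the two ends, one extends each path by a small arc: for each $X$-path one forces it to travel from its endpoint to the opposite boundary while remaining inside $\mathcal{B}(\gamma_{\tau^{(j)}}^{(j)},\alpha|\gamma_{\tau^{(j)}}^{(j)}-x|)$, and similarly for the $Y$-paths, using disjoint corridors made possible by the well-separation. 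Each such constrained crossing has probability $\asymp\alpha$ by Beurling estimates and the Harnack principle (Lemma \ref{lem:harnack}), so the joint cost is $\asymp\alpha^{k+l}$, yielding the $\alpha$-nice configuration.

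The heart of the argument is the separation estimate itself, and this is the step I expect to be the main obstacle. I would follow the usual sub-multiplicative induction: let $\widetilde q(r_1,r_2)$ be the probability of non-intersection \emph{with} the additional requirement of well-separation at the outer scale, and $q(r_1,r_2)$ the bare non-intersection probability. On the one hand, sub-multiplicativity gives
\begin{equation*}
q(r_1,r_3)\le C\,q(r_1,r_2)\,q(r_2,r_3),
\end{equation*}
while on the other hand, chaining well-separated configurations at the outer boundary of $\mathcal{A}(0,e^{r_1},e^{r_2})$ to fresh paths in $\mathcal{A}(0,e^{r_2},e^{r_3})$ (the well-separation providing disjoint ``safe'' tubes into which the chaining can be routed) gives
\begin{equation*}
\widetilde q(r_1,r_3)\ge c\,\widetilde q(r_1,r_2)\,\widetilde q(r_2,r_3).
\end{equation*}
Combined with the matching asymptotics $q(r_1,r_2)\asymp e^{(r_1-r_2)\xi(k,l)}$ from Theorem \ref{a_r}, this forces $\widetilde q(r_1,r_2)\asymp q(r_1,r_2)$, i.e.\ well-separation occurs with uniformly positive conditional probability. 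The technical nuisance is to set up the chaining so that the ``safe tubes'' at scale $e^{r_2}$ are disjoint for the two packets and are wide enough to absorb an $O(1)$-scale freedom in the Brownian motion's starting point; this is exactly where one exploits the fact that well-separation is a condition stable under small perturbations, and where one borrows the coupling/restart argument of \cite{Law96} (with the minor adaptation that now two disjoint packets, rather than just one, must be kept apart). Once this is done, applying the $\alpha$-cost final extension described above completes the lower bound.
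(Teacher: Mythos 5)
The paper does not actually prove this lemma; it is stated without proof with a pointer to \cite[Section 3]{Law96}, so there is no internal argument to compare against. Your overall two-step strategy (a separation estimate at the two ends, followed by an $\alpha$-cost extension of each path into its landing ball, using the disjoint corridors that well-separation provides) is indeed the standard and correct route, and the upper bound via Theorem \ref{a_r} is fine.

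However, the mechanism you propose for the key separation estimate has a genuine gap. The three facts you invoke --- sub-multiplicativity $q(r_1,r_3)\le C\,q(r_1,r_2)\,q(r_2,r_3)$, super-multiplicativity $\widetilde q(r_1,r_3)\ge c\,\widetilde q(r_1,r_2)\,\widetilde q(r_2,r_3)$, and the up-to-constants asymptotics $q(r_1,r_2)\asymp e^{(r_1-r_2)\xi(k,l)}$ --- do \emph{not} force $\widetilde q\asymp q$. For instance, the pair $q(r)=e^{-\xi r}$, $\widetilde q(r)=e^{-2\xi r}$ satisfies all three relations (with $C=c=1$ and $\widetilde q\le q$) yet fails the conclusion; super-multiplicativity only yields $\widetilde q(r)\ge e^{-\beta r}$ for \emph{some} $\beta$, with no control tying $\beta$ to $\xi$. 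The actual content of Lawler's separation lemma is an iterative ``improve-or-die'' argument: one introduces a quantitative separation parameter for the configuration at each scale and shows (i) from any configuration, conditioned on surviving one more scale, the separation improves to a fixed level with uniformly positive probability, and (ii) the probability of surviving $m$ consecutive scales while remaining poorly separated decays at a strictly faster exponential rate than $e^{-\xi m}$. Summing over the last scale at which separation is achieved then gives $q(r)\le C\widetilde q(r)$. This bootstrapping step is the heart of the proof and is not recoverable from the multiplicativity relations plus Theorem \ref{a_r}; note also that in the standard development the up-to-constants bounds of Theorem \ref{a_r} are themselves \emph{consequences} of the separation lemma, so leaning on them to derive it risks circularity unless one is careful about which inputs are taken from \cite{Law98,LSW02b}. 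Since you explicitly defer to the ``coupling/restart argument of \cite{Law96}'' for the technical core, the fix is available, but as written the deduction ``this forces $\widetilde q\asymp q$'' is not valid.
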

We now cite from \cite[Lemma 3.5]{GLQ22} the separation lemma for generalized non-disconnection events of Brownian motions.
Note that \cite{GLQ22} uses the language of extremal distances for nice configurations and separation lemmas, while here we translate them into the language of non-disconnection probabilities to suit our needs.
\begin{lemma}\label{sep BDP}
Recall \cite[Definition 3.7]{GLQ22} for the $\alpha$-separated event for generalized non-disconnection of Brownian motions. Fix an integer $k>0$. Let $X_{i}$, $i=1,2,\ldots,k$, be independent Brownian motions started uniformly from either $\partial\mathcal{B}(0,e^{r_{1}})$ or $\partial\mathcal{B}(0,e^{r_{2}})$ and stopped upon hitting the opposite boundary of the annulus $\mathcal{A}(0,e^{r_{1}},e^{r_{2}})$. Let $\Gamma$ be an independent Brownian loop soup in $\mathcal{B}(0,e^{r_{2}})$ with intensity $\mathbf{c}$. Let $p(r_{1},r_{2},k,\mathbf{c})$ be the probability that the union of $X_{i}$'s and $\Gamma$ do not disconnect $\partial\mathcal{B}(0,e^{r_{1}})$ from infinity, and $p_{\alpha}(r_{1},r_{2},k,\mathbf{c})$ be the probability that the paths are $\alpha$-separated. Then there exists $c',c''>0$, and $\alpha_{0}>0$, such that for any $r_{1}<r_{2}$ and $\alpha\in[0,\alpha_{0}]$,
\[c'\alpha^{k}e^{(r_{1}-r_{2})\xi_{\mathbf{c}}(k)}\leq p_{\alpha}(r_{1},r_{2},k,\mathbf{c})\leq p(r_{1},r_{2},k,\mathbf{c})\leq c''e^{(r_{1}-r_{2})\xi_{\mathbf{c}}(k)}.\]
\end{lemma}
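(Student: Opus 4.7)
The upper bound $p(r_1,r_2,k,\mathbf{c})\le c''e^{(r_1-r_2)\xi_{\mathbf{c}}(k)}$ is essentially a restatement of Definition~\ref{gde}, once one invokes the Harnack-type comparison of Lemma~\ref{lem:harnack} to handle the uniform starting/ending distributions on the two boundary circles and a standard Fekete-type sub-multiplicative argument to upgrade the exponential limit to an up-to-constants bound (the sub-multiplicativity holds because splitting $\mathcal{A}(0,e^{r_1},e^{r_2})$ into two concentric sub-annuli and conditioning on the hitting distribution of an intermediate circle yields the product of non-disconnection probabilities, with a multiplicative loss controlled by Harnack). Thus the content is really the middle inequality: non-disconnection implies $\alpha$-separation with cost only $\alpha^k$.

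For the lower bound $p_\alpha\geq c'\alpha^k e^{(r_1-r_2)\xi_{\mathbf{c}}(k)}$, I would follow the classical Lawler-type separation strategy (cf.\ \cite{Law96,Law96a,KM10}), pushed into the loop-soup setting as in \cite{GLQ22}. The first step is to prove a \emph{weak separation lemma}: conditionally on non-disconnection at scale $r$ (i.e.\ the union of the paths and of the clusters of $\Gamma$ meeting them does not disconnect), there is a uniform positive probability that the configuration at the inner boundary $\partial\mathcal{B}(0,e^r)$ is already in a ``mildly separated'' state—meaning the cut set $\mathcal{O}(\cdot,\mathcal{A},e^r)$ splits into $k$ components with well-separated representative points on $\partial\mathcal{B}(0,e^r)$. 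The proof is by contradiction: if the weak separation failed with probability tending to one along some sequence of scales, one could decouple two annuli and iterate, leading via Harnack to a strictly better exponential decay rate than $\xi_{\mathbf{c}}(k)$ in Definition~\ref{gde}, contradicting the definition of $\xi_{\mathbf{c}}(k)$.

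Given the weak separation lemma, I would then concatenate the annulus $\mathcal{A}(0,e^{r_1},e^{r_2})$ using sub-annuli $\mathcal{A}(0,e^{r_1+j},e^{r_1+j+1})$, applying the weak separation lemma at each end. The outer end is handled symmetrically by time-reversal. Once both ends are in the weakly separated state, a bounded amount of additional ``forcing'' is needed to push the paths into the $\alpha$-separated configuration of \cite[Definition 3.7]{GLQ22}: at each end, one steers each of the $k$ Brownian motions into a thin tube of width $\alpha$ around a prescribed point, costing a factor $\alpha^k$ by standard Brownian estimates (essentially $k$ independent applications of the gambler's ruin / explicit hitting estimates). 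At this stage one also needs to ensure that no loop of $\Gamma$ with diameter larger than $\alpha$ attaches to the tube region and destroys separation; by the local finiteness of the loop soup at macroscopic scales, the probability of the bad event (a macroscopic loop landing in the separating annulus) is bounded away from $1$, so an extra conditioning on the absence of such loops costs only a constant factor.

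The main obstacle is precisely this loop-soup step. In the pure Brownian case (Lemmas~\ref{sep PTP} and \ref{sep PDCP}), separation is purely geometric: the endpoints on each circle just need to be angularly spread out. Here, however, non-disconnection is defined for the union of the paths with the clusters of $\Gamma$ they join, so even an $\alpha$-separation of the path endpoints can be ruined by a single loop that bridges two distinct clusters or by a cascade of small loops. The way to handle this is the same as in \cite[Lemma 3.5]{GLQ22}: formulate separation in terms of the cut set of paths \emph{together with} the loop clusters, and carry the absence of ``bridging loops'' through the inductive step by exploiting the restriction property of the loop soup (loops fully contained in a sub-annulus are independent of those crossing it), so that the separation estimate can be iterated scale by scale without picking up a loop-soup correction beyond the universal constants already present in $\xi_{\mathbf{c}}(k)$.
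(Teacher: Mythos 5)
This lemma is not proved in the paper at all: as announced at the start of Section~2.5, the authors cite it directly from \cite[Lemma 3.5]{GLQ22} (merely translating from the language of extremal distance into non-disconnection probabilities), so there is no in-paper argument to compare against. Your overall architecture — trivial middle inequality, upper bound from the up-to-constants estimate in Definition~\ref{gde} plus Harnack/last-exit decomposition to handle the boundary-to-boundary setup, and a lower bound via ``weak separation at both ends, then force into $\alpha$-tubes at cost $\alpha^k$'' with the loop-soup bridging issue handled through cluster-level cut sets and the restriction property — is indeed the strategy of \cite[Section 4]{GLQ22}, so the skeleton is sound.

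However, there is a genuine gap at the heart of the argument: your proof of the weak separation lemma by contradiction does not work as stated. First, it is circular in spirit — the up-to-constants estimate $p(\mathbf{c},k,r,0)\asymp e^{-r\xi_{\mathbf{c}}(k)}$ that you would contradict is itself derived \emph{from} the separation lemma in \cite{GLQ22} (up-to-constants bounds follow from quasi-multiplicativity, which needs separation); even granting it as a citation, the logic fails. Knowing that, conditionally on non-disconnection up to scale $r$, the configuration is badly separated with probability close to $1$ does not by itself force a strictly better exponential rate: a badly separated configuration still continues to be non-disconnecting over the next $r$ scales with probability of order $\epsilon\cdot e^{-r\xi_{\mathbf{c}}(k)}$ for some configuration-dependent $\epsilon>0$, so one application of ``bad at scale $r$'' yields no contradiction. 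The actual mechanism (in Lawler's argument and in \cite{GLQ22}) is a scale-by-scale induction on a quantitative separation quality $\Delta(r)$: one proves that from a configuration of quality $\Delta$, either the quality jumps to a fixed level at the next scale with probability bounded below, or the conditional probability of surviving that scale is penalized by a factor depending on $\Delta$, and these penalties compound so that staying badly separated for many scales is strictly more costly than the main term. Without this iterative improvement estimate (and its loop-soup version, where the quality must be measured for the paths together with the clusters they meet), the lower bound $p_\alpha\ge c'\alpha^k e^{(r_1-r_2)\xi_{\mathbf{c}}(k)}$ is not established.
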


\medskip

Some arguments of this work requires separation lemmas with initial configurations, which we now state without proof below.
\par
We first consider the non-disconnection case: let $W_{1},\ldots,W_{k}$ be independent planar Brownian motions started from $x_{1},\ldots,x_{k}\in\mathcal{C}_{0}$ and stopped upon reaching $\mathcal{C}_{j}$, and let $\tau_{i}$ be the first hitting time of $\mathcal{C}_{j}$ with respect to $W_{i}$. Let $V\subset\mathcal{D}_{0}$ be a closed set, which we also refer to as ``initial configurations'', regardless whether it is actually the trace of some curves or not. We require that $V$ does not disconnect $\mathcal{C}_{-n}$ from infinity. We then define the $\alpha$-nice-at-the-end event:
\[\mathrm{End}_{j,1}^{\alpha}:=\left\{ \begin{array}{c}
    \text{The union of $V$ and $W_{i}[0,\tau_{i}],\mathcal{B}(W_{i}(\tau_{i}),\alpha 2^{j})$ for}\\ 
    \text{all $1\le i\le k$ does not disconnect $\mathcal{C}_{0}$ from infinity}
 \end{array}	\right\}.\]
We also write
\[A_{j,1}:=\{\mbox{The union of $V$ and $W_{i}[0,\tau_{i}]$ for all $1\le i\le k$ does not disconnect $\mathcal{C}_{-n}$ from infinity}\}.\]
We then have
\begin{lemma}\label{sep PTP-1}
There exists $c=c(k,\alpha)>0$, such that for all $n\ge 1$ and all $j>0$,
\[\mathbb{P}^{x_{1},\ldots,x_{k}}(\mathrm{End}_{j,1}^{\alpha}|A_{j,1})\geq c.\]
Moreover, the same result holds if $W_{i}$'s are Brownian excursions.
\end{lemma}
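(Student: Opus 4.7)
The plan is to reduce Lemma~\ref{sep PTP-1} to the standard separation lemma (Lemma~\ref{sep PTP}) via a two-scale decomposition, showing that the initial configuration $V$ influences only the innermost portion of the paths and is essentially invisible to the separation at the outer endpoints.

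First I would verify the inclusion $\mathrm{End}_{j,1}^{\alpha}\subseteq A_{j,1}$, which reduces the desired inequality to $\mathbb{P}(\mathrm{End}_{j,1}^{\alpha})\ge c\,\mathbb{P}(A_{j,1})$. The inclusion follows easily: on $\mathrm{End}_{j,1}^\alpha$ one has a curve from $\mathcal{C}_0$ to infinity avoiding $V\cup\bigcup_i W_i$ (after dropping the endpoint balls); combined with the hypothesis that $V\subset\mathcal{D}_0$ does not disconnect $\mathcal{C}_{-n}$ from infinity, together with the fact that the paths $W_i$ lie in $\mathcal{A}(0,1,2^j)$ and hence avoid $\mathcal{D}_0$, one can extract a curve from $\mathcal{C}_{-n}$ to $\mathcal{C}_0$ avoiding $V\cup\bigcup_i W_i$, and concatenating witnesses $A_{j,1}$.

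Next I would fix a large constant $m$, set $\sigma_i:=\tau_{W_i}(\mathcal{C}_{j-m})$, and decompose $W_i=W_i^{\mathrm{in}}\oplus W_i^{\mathrm{out}}$ at $\sigma_i$. Let $\mathcal{F}:=\sigma(W_i^{\mathrm{in}}:1\le i\le k)$; by the strong Markov property, the outer parts $W_i^{\mathrm{out}}$ are conditionally independent Brownian motions from the deterministic points $W_i(\sigma_i)\in\mathcal{C}_{j-m}$. I would then write $A_{j,1}=A^{\mathrm{in}}\cap A^{\mathrm{out}}$ and $\mathrm{End}_{j,1}^\alpha=A^{\mathrm{in}}\cap\mathrm{End}^{\mathrm{out},\alpha}$, where $A^{\mathrm{in}}\in\mathcal{F}$ is the inner non-disconnection event for $V\cup\bigcup_i W_i^{\mathrm{in}}$ in $\mathcal{D}_{j-m}$, and $A^{\mathrm{out}}$ (resp.\ $\mathrm{End}^{\mathrm{out},\alpha}$) requires the outer parts not to disconnect the resulting cut set on $\mathcal{C}_{j-m}$ from $\mathcal{C}_j$ (resp.\ additionally to form an $\alpha$-nice configuration at the $\mathcal{C}_j$ endpoints). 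The target bound is the conditional ratio estimate
\[\mathbb{P}(\mathrm{End}^{\mathrm{out},\alpha}\mid\mathcal{F})\ge c\,\mathbb{P}(A^{\mathrm{out}}\mid\mathcal{F})\qquad\text{on }A^{\mathrm{in}},\]
which integrates against $\mathbf{1}_{A^{\mathrm{in}}}$ to yield the lemma. The upper bound $\mathbb{P}(A^{\mathrm{out}}\mid\mathcal{F})\lesssim e^{-m\xi(k)}$ follows from Lemma~\ref{sep PTP} applied to the outer annulus $\mathcal{A}(0,2^{j-m},2^j)$, with the inner configuration acting as an extra obstacle that can only lower the non-disconnection probability; the matching lower bound $\mathbb{P}(\mathrm{End}^{\mathrm{out},\alpha}\mid\mathcal{F})\gtrsim \alpha^k e^{-m\xi(k)}$ comes from the one-sided separation part of Lemma~\ref{sep PTP}, which requires separation only at the outer boundary $\mathcal{C}_j$.

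The hard part will be dealing with two technical complications in invoking Lemma~\ref{sep PTP} at the outer annulus: (a) the starting points $W_i(\sigma_i)$ are deterministic rather than uniformly distributed on $\mathcal{C}_{j-m}$, and (b) the event $A^{\mathrm{out}}$ is defined with respect to the possibly complicated cut set produced by the inner configuration, rather than the whole inner circle as in Lemma~\ref{sep PTP}. For (a), I would invoke the Harnack principle (Lemma~\ref{lem:harnack}) after running each outer path for one extra sub-scale, so that its position becomes comparable to the uniform law on an intermediate circle. For (b), I would rely on the robustness of the iterative proof of Lemma~\ref{sep PTP} in the literature (in the spirit of \cite{KM10}), which accommodates an arbitrary closed initial obstruction at the cost of absorbing constants; the non-emptiness of the cut set on $\mathcal{C}_{j-m}$ is exactly what is guaranteed on $A^{\mathrm{in}}$, and the $\alpha$-nice configuration at the $\mathcal{C}_j$ end automatically connects this cut set to infinity through one of its ``corridors''. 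The extension to Brownian excursions requires no substantive change because the entry distribution of a Brownian excursion onto any concentric circle is comparable to uniform by Harnack, so the same two-scale decomposition and conditional ratio bound apply verbatim.
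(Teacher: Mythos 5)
The paper states Lemma~\ref{sep PTP-1} without proof (it is introduced with ``which we now state without proof below'', deferring to \cite{Law95,LV12,KM10,HLLS22}), so there is no in-paper argument to compare against; I am assessing your proposal on its own terms.

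The central step of your two-scale reduction does not close. On $A^{\mathrm{in}}$ you need the uniform lower bound $\mathbb{P}(\mathrm{End}^{\mathrm{out},\alpha}\mid\mathcal{F})\gtrsim\alpha^{k}e^{-m\xi(k)}$, i.e.\ a constant depending only on $m,k,\alpha$, and this is false: the cut set that the inner configuration leaves on $\mathcal{C}_{j-m}$ can be an arc of arbitrarily small length $\varepsilon$, with all $k$ exit points $W_{i}(\sigma_{i})$ crowded at its endpoints. In that case even the bare probability that the outer segments fail to disconnect this arc from $\mathcal{C}_{j}$ is of order $(\varepsilon 2^{-(j-m)})^{\xi(k)}$, which tends to $0$ with $\varepsilon$; the Harnack patch does not help because the paths can swallow the tiny opening already during the first sub-scale. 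What is true is that the \emph{ratio} of the separated probability to the non-disconnection probability is uniformly bounded below — but that is precisely a separation lemma with initial configuration for the outer annulus, so the reduction is circular, and your fallback of ``relying on the robustness of the iterative proof in the literature'' amounts to citing the statement being proven. A genuine proof must run the multi-scale quality-improvement iteration of \cite{Law95,KM10,HLLS22}: assign a separation quality to the configuration at each dyadic scale between the inner circle and $\mathcal{C}_{j}$, show that conditionally on surviving one more scale the quality improves to a definite level with uniformly positive probability regardless of how bad it currently is, and sum over the first scale at which the configuration becomes well-separated. Two secondary issues: the factorizations $A_{j,1}=A^{\mathrm{in}}\cap A^{\mathrm{out}}$ and $\mathrm{End}_{j,1}^{\alpha}=A^{\mathrm{in}}\cap\mathrm{End}^{\mathrm{out},\alpha}$ are not exact, since the outer segments may return below $\mathcal{C}_{j-m}$ and close the inner opening; and your argument for $\mathrm{End}_{j,1}^{\alpha}\subseteq A_{j,1}$ uses that the $W_{i}$ stay in $\mathcal{A}(0,1,2^{j})$, which holds for the excursion version but not for Brownian motions stopped upon reaching $\mathcal{C}_{j}$.
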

For the non-intersection separation lemma with initial conditions, we omit the precise statement here for sake of brevity. We refer the reader to \cite[Lemma 3.2]{LV12} for the three-dimensional case, and \cite[Lemma 3.4]{Law95} for the two-dimensional case; see also \cite[Section 3]{HLLS22} for multiple variants of 2D and 3D separation lemmas.
\par
For the generalized non-disconnection separation lemma, we can similarly define the $\alpha$-separated-at-the-end event, by considering only the ``landing zones'' at the end of the path in \cite[Definition 3.7]{GLQ22}. Again, we omit the precise statement here and refer readers to \cite[Section 4]{GLQ22} for the explicit form of separation lemma and its proof.
\section{Good Boxes and Moment Estimates}\label{sec:moment}
In this section, we introduce the notion of various (random) ``good boxes'', which are used to  approximate the exceptional sets of points under consideration. We then derive moment bounds for such boxes. Since Sections \ref{sec:ptp}-\ref{sec:bdp} are parallel to each other, we will use the same symbols (e.g.\ $\mathfrak{S}_n$) to denote good boxes (and other related events) for the different types of exceptional sets.

\subsection{Definition of Good Boxes}
We start by defining good boxes for pioneer triple points of 2D Brownian motion $W_{t}$ started from $0$ and stopped upon exiting $\mathbb{D}$. This definition is only used in Section~\ref{sec:ptp}.
\begin{defi}\label{gs}
Fix $\delta>0$. Let $N(\delta)$ be the smallest integer $n$ such that $2^{-n}<\delta/16$. For $n\geq N(\delta)$, let $\mathfrak{S}_{n}=\mathfrak{S}_{n}(\delta)$ stand for the collection of $\delta$-good boxes, i.e.\ the set of all $S\in\mathcal{S}_{n}$ with the following properties:
\par
(1) $S$ is visited three times by $W_{t}$ and the trajectories between successive visits reach distance at least $(\delta-2^{-n-1/2})$ to the centre of $S$. Using the notation in Section~\ref{subsec:notation}, that is,
\begin{equation}\label{eq:Tc}
\mathcal{T}(S):=\tau(S,\partial\mathcal{B}(S,\delta-2^{-n-1/2}),S,\partial\mathcal{B}(S,\delta-2^{-n-1/2}),S)<\tau_{\mathbb{D}}.
\end{equation}
We use $V_{\delta}^{\mathrm{PTP}}(S)$ to denote this event.
\par
(2) $\mathcal{D}_{-n}(S)$ is not disconnected from infinity by the path $\{W_{s}:0\leq s\leq\mathcal{T}(S)\}$.
\end{defi}
\begin{rmk}\label{rm:gb}
We have introduced the ``$-2^{-n-1/2}$'' term in Definition~\ref{gs} to ensure the following fact: for two dyadic boxes $S\in \mathcal{S}_n$ and $T\in\mathcal{S}_{n-1}$ with $S\subset T$, if $S$ is $\delta$-good, then so is $T$. Furthermore, we observe that if a point $x$ is contained in every member of a decreasing sequence of $\delta$-good boxes, then $x$ is a pioneer triple point. The same observations also hold for pioneer double cut points of Brownian motion and boundary double points of the Brownian loop soup later. 
\end{rmk}
\par
We now give the definition of good boxes for pioneer double cut points (in both 2D and 3D) for a Brownian motion $W_{t}$ started from $0$ and stopped upon exiting $\mathbb{D}$. This definition is only used in Section~\ref{sec:pdcp}.
\begin{defi}\label{gs12}
Fix $\delta>0$. Let $N(\delta)$ be the smallest integer $n$ such that $2^{-n}<\delta/16$. For $n\geq N(\delta)$, let $\mathfrak{S}_{n}=\mathfrak{S}_{n}(\delta)$ stand for the collection of $\delta$-good boxes, i.e., the set of all $S\in\mathcal{S}_{n}$ with the following properties:
\par
(1) $S$ is visited twice by $W_{t}$ and the trajectories between successive visits reach distance at least $(\delta-2^{-n-1/2})$ to the centre of $S$. More precisely,
\[\mathcal{T}(S):=\tau(S,\partial\mathcal{B}(S,\delta-2^{-n-1/2}),S)<\tau_{\mathbb{D}}.\]
We use $V_{\delta}^{\mathrm{PDCP}}(S)$ to denote this event.
\par
(2) $W[0,\tau(\mathcal{D}_{-n}(S))]$ and $W[\tau(S,\mathcal{C}_{-n}(S)),\mathcal{T}(S)]$ do not intersect.
\par
For $d=2$, we additionally require that:
\par
(3) There exist two different connected components of $\mathcal{B}(S,\delta-2^{-n-1/2})\setminus W[0,\mathcal{T}(S)]$ such that both of them intersect $\mathcal{D}_{-n}(S)$ and are connected to infinity.
\end{defi}
\begin{rmk}
We add the additional condition (3) in the 2D case for the following technical reason. In order to control the size of the intermediate ``bridges'' in Lemma~\ref{bridge12}, we apply the non-disconnection probabilities in the 2D case, where the condition (3) is used. In the 3D case, such an estimate can be simply obtained from transience of 3D Brownian motion.
\end{rmk}
Let us now define good boxes in the BDP case, which will be used in Section~\ref{sec:bdp}.
Note that a double point can be visited by a single loop or two different loops. We will in fact prove non-existence of the first type of BDP (i.e.\ visited by a single loop), and argue that the proof for the second type is very similar.

In the following, we recall the definition of $\delta$-good boxes visited by a single loop $\gamma$ in \cite{GLQ22}, and discuss in Remark \ref{rm:twoloops} the definition of $\delta$-good boxes visited by two different loops.
Recall the bubble measure and the loop measure introduced in Sections~\ref{subsec:path_measure} and~\ref{subsec:bls}, respectively.

\begin{defi}\label{gs4}
Fix $\iota\in(0,1/2)$. Let
\begin{equation}\label{eq:mu_iota}
    \mu_{\iota}:=\frac{1}{2\pi}\int_{0}^{2\pi}\int_{\iota}^{1-\iota}\mu_{\mathcal{B}(0,r)}^{\mathrm{bub}}(re^{i\theta})(\cdot\,;\mathrm{diam}(\gamma)>\iota/2)\,r\,\mathrm{d}r\,\mathrm{d}\theta, \quad \mu_{\iota}^{\#}:=\mu_{\iota}/|\mu_{\iota}|,
\end{equation}
where $\mu_{\mathcal{B}(0,r)}^{\mathrm{bub}}(re^{i\theta})(\cdot\, ;\mathrm{diam}(\gamma)>\iota/2)$ stands for the bubble measure restricted to loops with diameter greater than $\iota/2$. Note that the total mass $|\mu_{\iota}|$ is finite due to this restriction, and hence $\mu_{\iota}^{\#}$ is well defined. 
Let $\gamma$ be a (rooted) Brownian loop sampled according to $\mu_{\iota}^{\#}$, and let $t_{\gamma}$ be its duration. 

Let $N(\delta)$ be the smallest integer $n$ such that $2^{-n}<\delta/16$. For $n\geq N(\delta)$, let $\mathfrak{S}_{n}=\mathfrak{S}_{n}(\delta)$ be the collection of $\delta$-good boxes, i.e.\ the set of all $S\in\mathcal{S}_{n}$ with the following properties:
\par
(1) $S$ is visited twice by $\gamma$ and the path makes an excursion of distance at least $\delta-2^{-n-1/2}$ between visits. More precisely,
\[\tau_{\gamma}(S,\partial\mathcal{B}(S,\delta-2^{-n-1/2}),S)<t_{\gamma},\]
and we use $V_{\delta}^{\mathrm{BDP}}(S)$ to denote this event.
\par
(2) $S$ is not disconnected from infinity by $\widetilde{\gamma}$ (the cluster of $\{\gamma\}\cup\Gamma_{0}$ which contains $\gamma$). 
\end{defi}
\begin{rmk}\label{rm:twoloops}
We can also define good boxes which are visited by two different loops $\gamma_{1},\gamma_{2}$ independently sampled according to $\mu_{\iota}^\#$. In this case, conditions (1) and (2) are replaced by
\par
(1') $S$ is visited by both $\gamma_{1}$ and $\gamma_{2}$.
\par
(2') $\gamma_{1},\gamma_{2}$ belong to the same cluster in $\Gamma_{0}$, and $S$ is not disconnected from infinity by the cluster of $\{\gamma_{1},\gamma_{2}\}\cup\Gamma_{0}$ that contains $\gamma_{1},\gamma_{2}$.
\end{rmk}
\subsection{Moment Bounds for Good Boxes}
We now turn to moment bounds for good boxes in the previous subsection. As bounds for all three cases take the same form, we will state them in a unified manner\footnote{When cited later, we will specify which case (e.g., ``Lemma \ref{fme} for PTP'') we are referring to.}. For proofs, we will discuss in detail the PTP case and be quite brief for PDCP and BDP cases, as they follow from similar arguments if we replace some ingredients appropriately; e.g., Lemma \ref{sep PTP} for PTP will be replaced by Lemmas \ref{sep PDCP} (for PDCP) and \ref{sep BDP} (for BDP).
\par
Throughout this section, we fix the constants $\iota\in(0,1/2)$, $0<\delta\leq\iota/2$, and assume that $S\in\mathcal{S}_{n}$ and $S\subset\mathcal{A}(0,\iota,1-\iota)$. We allow constants to depend on $\iota$ and $\delta$, but they should be independent of $n$ and $S$.
\begin{lemma}\label{fme}
Recall that $\mathfrak{S}_{n}$ stands for the collection of $\delta$-good boxes in Definitions \ref{gs}, \ref{gs12} and \ref{gs4} respectively for three cases. Let $d$ be the dimension of the space, which is $2$ or $3$ ($d=3$ is only relevant in the PDCP case). There exist $c_{1},c_{2}>0$, such that for all $n\geq N(\delta)$ and any dyadic box $S\in\mathcal{S}_{n}$ such that $S\subset\mathcal{A}(0,\iota,1-\iota)$, for all three cases we have
\[c_{1}2^{-dn}\leq\mathbb{P}(S\in\mathfrak{S}_{n})\leq c_{2}2^{-dn}.\]
\end{lemma}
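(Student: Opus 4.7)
I will treat the three cases in parallel, focusing on PTP. In each setting, the event $\{S\in\mathfrak{S}_n\}$ forces the underlying path(s) to make a fixed number of crossings of the annulus $\mathcal{A}(S,2^{-n},\delta)$ together with a non-disconnection or non-intersection property at that scale; these requirements are governed by the exponents $\xi(5)=2$ for PTP (five crossings producing a $5$-arm non-disconnection event), $\xi_{[d]}(1,2)$ for PDCP, and $\xi_1(4)=2$ for BDP. Combined with the hitting and macroscopic contributions, these exponents produce the asserted $2^{-dn}$ scaling.

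\textbf{Upper bound.} Using the first/last-exit decomposition (Lemma~\ref{last exit}) at the intermediate scale $\partial\mathcal{B}(S,\delta/2)$, I would split the path $W[0,\mathcal{T}(S)]$ (resp.\ the relevant part of $\gamma$ in the BDP case) into macroscopic pieces outside $\mathcal{B}(S,\delta/2)$, arm pieces crossing the annulus $\mathcal{A}(S,2^{-n},\delta/2)$, and local pieces inside $\mathcal{D}_{-n}(S)$. Conditioning on the entry/exit points on $\partial\mathcal{B}(S,\delta/2)$ via the strong Markov property, and using Harnack (Lemma~\ref{lem:harnack}) to identify the endpoint distributions with the uniform laws up to multiplicative constants, the arm pieces become comparable to independent Brownian excursions with uniform endpoints. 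Their joint non-disconnection (resp.\ non-intersection, generalized non-disconnection) probability is then bounded by $c\,2^{-n\xi}$ via the upper bound in the appropriate separation lemma (Lemma~\ref{sep PTP}, \ref{sep PDCP}, or \ref{sep BDP}). The residual factors coming from the hitting of $\mathcal{D}_{-n}(S)$ from afar, the local exits from $\mathcal{D}_{-n}(S)$, and the inter-arm bridges outside $\mathcal{B}(S,\delta/2)$ supply the remaining scaling; in the 3D PDCP case, the two transient hits of $\mathcal{D}_{-n}(S)$ contribute an additional $2^{-2n}$ via the Green's function, which combines with $\xi_{[3]}(1,2)=1$ to produce $2^{-3n}$.

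\textbf{Lower bound.} I would construct an explicit favorable scenario by prescribing that each arm crossing of $\mathcal{A}(S,2^{-n},\delta/2)$ realizes an $\alpha$-nice configuration (Definition~\ref{nice} and its analogues from Section~\ref{subsec:sep}) with entry and exit points landing in designated windows on the two boundary circles, so that consecutive pieces concatenate with positive probability by Harnack. The lower bound from the separation lemma gives the annulus event with probability $\geq c\,\alpha^{k}\,2^{-n\xi}$; the macroscopic, local and inter-arm pieces then succeed with positive probability (uniformly in $S,n$) by a short Harnack argument at each scale; multiplying yields the matching lower bound $\mathbb{P}(S\in\mathfrak{S}_n)\geq c_1\,2^{-dn}$.

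\textbf{Main obstacle.} The central technical point is to cleanly reduce a single temporally-connected path to quasi-independent excursion pieces at the annulus scale and to correctly account for all polynomial-in-$n$ factors, so that no logarithmic corrections survive. I would handle this by decomposing at the intermediate scale $\partial\mathcal{B}(S,\delta/2)$ and invoking the strong Markov property together with Harnack to refresh the endpoint distributions on the interface, so that the independent-excursion separation lemmas apply directly. The separation lemmas with initial configurations (Lemma~\ref{sep PTP-1} and its analogues mentioned in Section~\ref{subsec:sep}) will be crucial for stitching pieces together on the lower-bound side. In the BDP setting, one additionally expresses $\mathbb{P}(S\in\mathfrak{S}_n)$ via the loop measure $\mu_\iota^\#$ coupled with the independent loop soup $\Gamma_0$ and applies Lemma~\ref{sep BDP} for the generalized non-disconnection event in the annulus; the case of two distinct loops from Remark~\ref{rm:twoloops} is handled analogously.
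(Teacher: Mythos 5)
Your plan is essentially the argument the paper itself relies on: the paper does not prove Lemma \ref{fme} from scratch but points to \cite[Lemma 3.2]{KM10} for PTP (with $\xi(4)$ replaced by $\xi(5)=2$) and \cite[Lemma 5.3]{GLQ22} for BDP, and those proofs proceed exactly by the excursion--bridge decomposition at the intermediate scale $\partial\mathcal{B}(S,\delta/2)$, strong Markov plus Harnack to decouple the arms, and the separation lemmas for the lower bound. So the approach is the right one and matches the source.

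One point where your sketch is imprecise and where the actual bookkeeping is delicate: once you identify the five arm pieces with independent Brownian \emph{excursions} of $\mathcal{A}(S,2^{-n},\delta/2)$ (pinned at both boundaries), the correct non-disconnection estimate is Lemma \ref{gnd prob} (with $\mathbf{c}=0$), which gives $\asymp n^{5}2^{-2n}$, not $c\,2^{-n\xi}=c\,2^{-2n}$ as you write. The missing factor $n^{-5}$ must come from the bridges: the probability of hitting $\mathcal{D}_{-n}(S)$ three times from macroscopic distance contributes $\asymp n^{-3}$, and the requirement that the two inner bridges $X^{1},X^{3}$ (conditioned to hit $S$) do not disconnect $S$ contributes an additional $(n^{-1})^{2}$ (this is \cite[Lemma 2.13]{GLQ22}, used in the paper's own proof of Lemma \ref{delta+ good box}, see \eqref{delta+2}). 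It is precisely this exact cancellation of the polynomial factors that makes the answer $\asymp 2^{-2n}$ with no logarithmic correction; you flag it as the main obstacle but do not exhibit the cancellation, and as literally stated your upper-bound step would either overcount or undercount by a power of $n$ depending on whether the arms are treated as excursions or as boundary-started Brownian motions. The same care is needed in 3D, where the two hits of $\mathcal{D}_{-n}(S)$ contribute $2^{-2n}$ via the Green's function (no logarithms there) and combine with $\xi_{[3]}(1,2)=1$ to give $2^{-3n}$, as you correctly note.
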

The proof of Lemma~\ref{fme} for the PTP case is almost the same as the proof of \cite[Lemma~3.2]{KM10}, except that we need to replace $\xi(4)$ in \cite[Lemma 3.2]{KM10} by $\xi(5)=d=2$ here. The result for the BDP case was obtained in \cite[Lemma 5.3]{GLQ22}. Bounds for the PDCP case follow from similar path decomposition (and the corresponding version of separation lemma (Lemma \ref{sep PDCP}) additionally for the lower bound) as the other two cases. We omit the details for brevity.
\par
We now turn to the second moment.
\begin{lemma}\label{sme}
For all three cases, there exist $c_{1},c_{2}>0$, such that for all $n\geq N(\delta)$, any $m\in\mathbb{Z}^+$ such that $m\le n-2$ and $2^{-m}\ll\delta$, and any dyadic boxes $S,T\subset\mathcal{A}(0,\iota,1-\iota)$ such that $S,T\in\mathcal{S}_{n}$ with $\mathrm{dist}(S,T)\in [2^{-m-1},2^{-m}]$, we have
\[c_{1}2^{d(m-2n)}\leq\mathbb{P}(S,T\in\mathfrak{S}_{n})\leq c_{2}2^{d(m-2n)}.\]
\end{lemma}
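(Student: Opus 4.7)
The target bound $\mathbb{P}(S,T\in\mathfrak{S}_n)\asymp 2^{d(m-2n)}$ decomposes naturally into three scale contributions. The macroscopic factor $\asymp 2^{-dm}$ corresponds to a single ``good box'' at scale $2^{-m}$ encompassing the pair $\{S,T\}$, while the two microscopic factors $\asymp 2^{-d(n-m)}$ each correspond to the refined good-box structure around $S$ and around $T$ across the scales from $2^{-m}$ down to $2^{-n}$. Multiplying these three contributions yields $2^{-dm}\cdot 2^{-2d(n-m)}=2^{d(m-2n)}$. My plan is to make this scale decomposition rigorous: the upper bound via the strong Markov property, the lower bound via the separation lemmas.

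For the upper bound, I would first invoke Remark~\ref{rm:gb} (monotonicity of goodness under passage to larger dyadic scales): the event $\{S,T\in\mathfrak{S}_n\}$ is contained in the event that the dyadic $m$-box $U$ containing $S$ (either equal or adjacent to the $m$-box of $T$, since $\mathrm{dist}(S,T)\asymp 2^{-m}$) is itself $\delta$-good at scale $m$, which by Lemma~\ref{fme} applied at scale $m$ has probability $\asymp 2^{-dm}$. Conditionally on this macroscopic configuration, I would apply the strong Markov property at the entries of the Brownian path into $\mathcal{B}(S,2^{-m+c})$ and $\mathcal{B}(T,2^{-m+c})$: by Lemma~\ref{lem:harnack}, the conditional distributions of these entry points are comparable to uniform ones, so the residual events---that within each such smaller ball, the Brownian path realizes the required visits-and-arm structure at the finer scale $2^{-n}$---are each bounded by $\asymp 2^{-d(n-m)}$ via a further application of Lemma~\ref{fme} at scale ratio $n-m$. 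The two microscopic contributions factorize up to constants because the balls $\mathcal{B}(S,2^{-m-c})$ and $\mathcal{B}(T,2^{-m-c})$ are disjoint and at distance comparable to $2^{-m}$.

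For the lower bound, I would construct an explicit favorable configuration via the separation lemmas (Lemmas~\ref{sep PTP}, \ref{sep PDCP}, \ref{sep BDP}) and their initial-condition variants such as Lemma~\ref{sep PTP-1}. Concretely, I would first produce $\alpha$-nice arms realizing the macroscopic good-box event at scale $m$ with endpoints well-separated on $\partial\mathcal{B}(z_0,2^{-m+c})$, where $z_0$ is the barycenter of $\{S,T\}$. Then, using these $\alpha$-separations as admissible initial data, I would patch in independent favorable microscopic configurations inside $\mathcal{B}(S,2^{-m+c})$ and inside $\mathcal{B}(T,2^{-m+c})$. Each such patching contributes a factor $\asymp 2^{-d(n-m)}$, and the three scale contributions multiply because of a strong Markov decomposition at the corresponding stopping times.

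The main technical obstacle lies in the upper-bound bookkeeping: the macroscopic arms of the good $m$-box $U$ must be identified with specific sub-arms of the Brownian trajectory visiting $S$ and $T$ at possibly interleaved times, and one must sum over all such orderings while applying Lemma~\ref{last exit} or the strong Markov property carefully to avoid over-counting. In the 2D PDCP case, condition~(3) of Definition~\ref{gs12} is needed to control the size of intermediate excursion ``bridges'' via non-disconnection probabilities; in 3D this is automatic from transience. For BDP, the Brownian loop replaces the open path, so one works with the rooted loop measure $\mu_\iota^{\#}$ from~\eqref{eq:mu_iota}, but this is a routine modification of the same scheme.
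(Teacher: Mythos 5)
Your proposal is correct and follows essentially the same route as the paper: the lower bound is obtained exactly as in the paper's proof, by decomposing the path into crossings of three disjoint annuli (one macroscopic annulus around the midpoint of $S$ and $T$, two microscopic ones $\mathcal{A}(S,2^{-n},2^{-m-2})$ and $\mathcal{A}(T,2^{-n},2^{-m-2})$), applying the separation lemmas to each group of crossings to produce the factor $2^{-dm}\cdot 2^{-2d(n-m)}$, and then confining the intermediate links to well-chosen tubes at constant cost. For the upper bound the paper simply cites the argument of \cite[Lemma 3.2]{KM10}, which is the same three-annulus product bound you describe; the one caution is that the factorization should be phrased via the disjoint crossing segments of the three annuli (which are uniformly equivalent to independent Brownian motions) rather than by literally conditioning on the event that the ambient $m$-box is $\delta$-good, since that event is not measurable with respect to the path outside the two small balls and so does not directly support the strong Markov chaining as stated.
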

For the PTP case, the upper bound in Lemma \ref{sme} follows from similar arguments as the proof of \cite[Lemma~3.2]{KM10}. For the BDP case, the upper bound has been proved in \cite[Lemma~5.3]{GLQ22}. The proof of the upper bound for the PDCP case is very similar to those for the PTP and BDP cases, so we omit details. In the following, we prove the lower bound for the PTP case, and omit the proof of the other two cases.
\begin{proof}[Proof of the lower bound in Lemma \ref{sme} for PTP]
Let $z$ be the midpoint of centres of $S$ and $T$. Note that if 
\[\mathcal{T}:=\tau(S,T,\partial\mathcal{B}(z,3\delta/4),T,S,\partial\mathcal{B}(z,3\delta/4),S,T)<\tau_{\mathbb{D}},\]
and both of $\mathcal{D}_{-n}(S)$ and $\mathcal{D}_{-n}(T)$ are not disconnected from infinity by the path $\{W_{s}:0\leq s\leq\mathcal{T}\}$, then we have $S,T\in\mathfrak{S}_{n}$. Therefore, we only need to show a corresponding lower bound for this event.
For this, we consider three disjoint annuli:
\[\mathcal{A}_{0}:=\mathcal{A}(z,2^{-m+3},3\delta/4),\ \mathcal{A}_{1}:=\mathcal{A}(S,2^{-n},2^{-m-2}),\ \mathcal{A}_{2}:=\mathcal{A}(T,2^{-n},2^{-m-2}).\]
We then consider an increasing sequence of stopping times with respect to $W_{t}$: let
\[\tau_{1}:=\tau(\partial_{i}\mathcal{A}_{1}),\ \tau_{2}:=\tau(\tau_{1},\partial_{i}\mathcal{A}_{2}),\ \tau_{3}:=\tau(\tau_{2},\partial_{o}\mathcal{A}_{0}),\ \tau_{4}:=\tau(\tau_{3},\partial_{i}\mathcal{A}_{2}),\ \tau_{5}:=\tau(\tau_{4},\partial_{i}\mathcal{A}_{1}),\]
\[\tau_{6}:=\tau(\tau_{5},\partial_{o}\mathcal{A}_{0}),\ \tau_{7}:=\tau(\tau_{6},\partial_{i}\mathcal{A}_{1}),\ \tau_{8}:=\tau(\tau_{7},\partial_{i}\mathcal{A}_{2}),\ \tau_{9}:=\tau(\tau_{8},T).\]
For $i=1,2,\ldots,9$, let $\beta_{i}=W[\tau_{i-1},\tau_{i}]$ (letting $\tau_{0}=0$). We then further decompose $\beta_{i}$ into ``crossings'' in annulus $\mathcal{A}_{j}$ and ``links'' between ``crossings''. As an example, we decompose $\beta_{1}$ into \[\beta_{1}:=\lambda_{1}\oplus\eta_{1}\oplus\lambda_{2}\oplus\eta_{2},\]
where $\lambda_{i}$'s are ``links'' and $\eta_{i}$'s are ``crossings''. We have similar decomposition for $\beta_{i}$, $i=2,\ldots,8,$ such that
\[\beta_{i}:=\lambda_{2i-1}\oplus\eta_{2i-1}\oplus\lambda_{2i}\oplus\eta_{2i},\]
and $\beta_{9}:=\lambda_{17}$ is a ``link'' itself; see Figure \ref{fig_ST_decompose}.
\begin{figure}[ht]
\centering
\includegraphics[width=9cm]{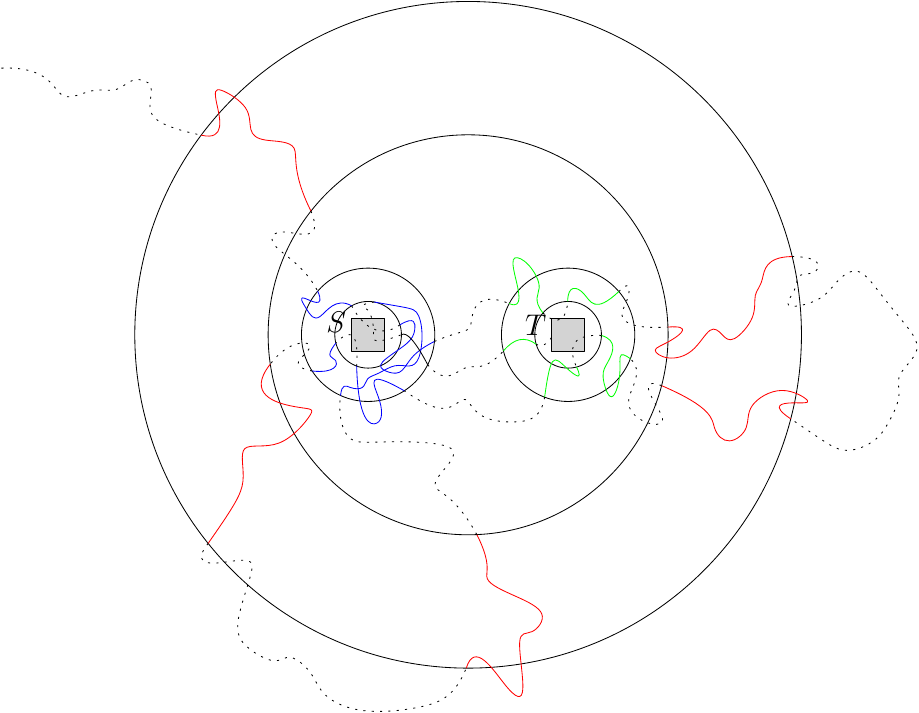}
\caption{The ``links'' and ``crossings'' in the aforementioned decomposition, where ``links'' are dotted. $\eta_{1},\eta_{6},\eta_{7},\eta_{12},\eta_{13}$ are in red. $\eta_{2},\eta_{3},\eta_{10},\eta_{11},\eta_{14}$ are in blue. $\eta_{4},\eta_{5},\eta_{8},\eta_{9},\eta_{16}$ are in green.}
\label{fig_ST_decompose}
\end{figure}
We then consider the following three events (for some fixed $\alpha>0$): 
\par
($F_{1}$)\quad $\eta_{2},\eta_{3},\eta_{10},\eta_{11},\eta_{14}$ form an $\alpha$-nice configuration;
\par
($F_{2}$)\quad $\eta_{4},\eta_{5},\eta_{8},\eta_{9},\eta_{16}$ form an $\alpha$-nice configuration;
\par
($F_{3}$)\quad $\eta_{1},\eta_{6},\eta_{7},\eta_{12},\eta_{13}$ form an $32\alpha$-nice configuration. 
\par
Thanks to \cite[Lemma 3.3]{KM10}, these curve segments are uniformly equivalent to (see the definition in Section~\ref{subsec:notation}) independent Brownian motions (starting from the boundary of annulus and stopped upon the first time reaching the opposite boundary). Therefore, by Lemma \ref{sep PTP} we have
\[\mathbb{P}(F_{1}\cap F_{2}\cap F_{3})\asymp 2^{-2m}\cdot(2^{2m-2n})^{2}=2^{2m}2^{-4n}.\]
Conditioned on $F_{1}\cap F_{2}\cap F_{3}$, there exist some well-chosen ``tubes'' (see Figure \ref{fig2} for an illustration) such that if we force the ``links'' to stay inside corresponding tubes then the desired event $\{S,T\in\mathfrak{S}_{n}\}$ occurs. Moreover, the aspect ratio of these tubes can be chosen to be some small constant, and thus the probability of ``links'' staying in respective tubes is bounded from below by some universal constant. Therefore we conclude that
\[\mathbb{P}(S,T\in\mathfrak{S}_{n})\geq c2^{2m}2^{-4n},\]
which completes the proof.
\end{proof}
\begin{figure}[H]
\centering
\includegraphics[width=4cm]{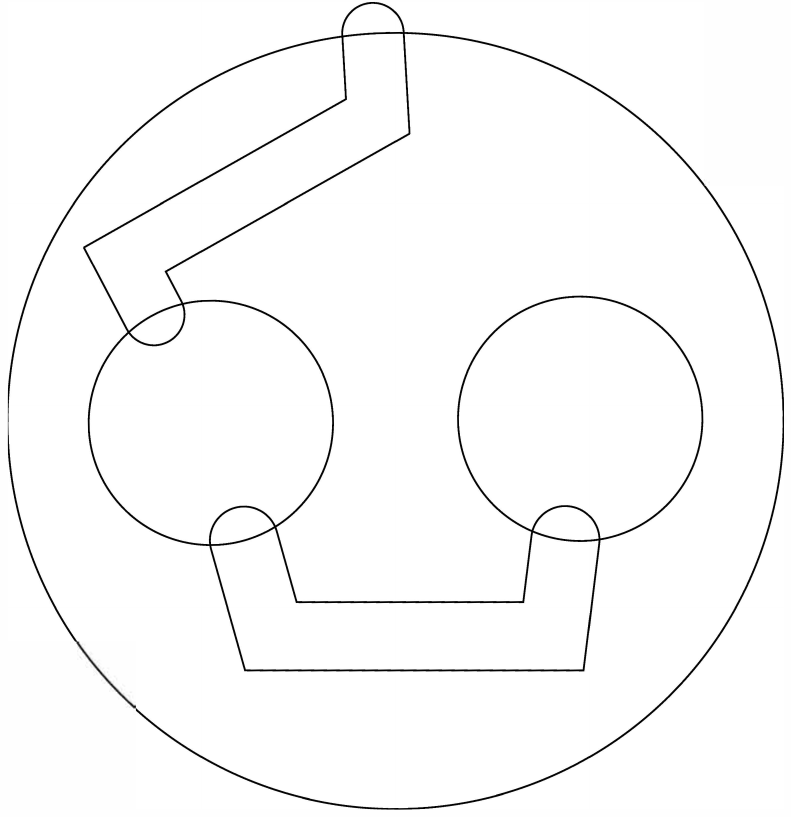}
\caption{\small An example of ``tubes'' (in the intermediate region), where the outer circle is $\mathcal{D}_{-m+3}(z)$, and two inner circles are $\mathcal{D}_{-m-2}(S)$ and $\mathcal{D}_{-m-2}(T)$.}
\label{fig2}
\end{figure}
Finally, we give an upper bound for the third moment of $\delta$-good boxes.
\begin{lemma}\label{tme}
For all three cases, there exists $c>0$, such that for all $n\geq N(\delta)$ and any $n$-boxes $S,T,U\subset\mathcal{A}(0,\iota,1-\iota)$ such that $S,T,U\in\mathcal{S}_{n}$, we have
\[\mathbb{P}(S,T,U\in\mathfrak{S}_{n})\leq cD_{1}^{-d}D_{2}^{-d}2^{-3dn},\]
where 
\begin{center}
$D_{1}=\min\{\mathrm{dist}(S,T),\mathrm{dist}(T,U),\mathrm{dist}(U,S)\}$ and $D_{2}=\max\{\mathrm{dist}(S,T),\mathrm{dist}(T,U),\mathrm{dist}(U,S)\}$.     
\end{center}
\end{lemma}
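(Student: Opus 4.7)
The plan is to extend the path-decomposition strategy used in the upper bound of Lemma~\ref{sme} by inserting one additional layer of annular crossings. Without loss of generality, I will relabel the boxes so that $\mathrm{dist}(S,T)=D_1$ and $\mathrm{dist}(S,U)=D_2$; the triangle inequality then forces $\mathrm{dist}(T,U)\asymp D_2$ whenever $D_1\le D_2/2$, and the complementary case $D_1\asymp D_2$ will fall out as a degenerate sub-case of the same argument. Write $D_1\asymp 2^{-m_1}$ and $D_2\asymp 2^{-m_2}$ with $m_1\ge m_2$, and let $z$ denote the midpoint of the centres of $S$ and $T$.

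The key step is to set up three nested annular regions adapted to the geometry: an outer annulus with inner radius $\asymp D_2$ and outer radius $\asymp \delta$, centred so as to enclose all three boxes; a middle annulus centred at $z$, of inner radius $\asymp D_1$ and outer radius $\asymp D_2/8$, enclosing only $S$ and $T$; and three innermost annuli $\mathcal{A}_S,\mathcal{A}_T,\mathcal{A}_U$ about each individual box, of inner radius $2^{-n}$ and outer radii $\asymp D_1,D_1,D_2$, respectively. I then decompose the Brownian trajectory (or the single loop $\gamma$ in the BDP setting) by successive hitting and exit times of the boundaries of these annuli, organised so that each prescribed visit to $S$, $T$ or $U$ induces only a bounded number of crossings through each annulus. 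The strong Markov property at these stopping times, together with Lemma~\ref{lem:harnack} to replace endpoint distributions by the uniform one up to constants, factorises the joint probability into annular ``arm'' probabilities, each bounded via the appropriate separation lemma (Lemma~\ref{sep PTP}, \ref{sep PDCP} or \ref{sep BDP}) and exponent estimate (Corollary~\ref{intersect prob}, Lemma~\ref{ni prob} or \ref{gnd prob}). Since the relevant arm exponent equals $d$ in all three cases (in the 3D PDCP case, the combined ``hit-and-return'' cost plus the non-intersection exponent $\xi_{[3]}(1,2)=1$ effectively yields the $d=3$ power), the outer annulus contributes $\asymp D_2^{d}$, the middle annulus contributes $\asymp (D_1/D_2)^{d}$, and the three innermost annuli contribute $\asymp (2^{-n}/D_1)^{d}$, $(2^{-n}/D_1)^{d}$ and $(2^{-n}/D_2)^{d}$, respectively. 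The product of these five factors equals $D_1^{-d}D_2^{-d}\,2^{-3dn}$, as claimed. In the degenerate case $D_1\asymp D_2$, the middle annulus is absent and all three inner annuli have outer radius $\asymp D_1$; the product then reduces to $D_2^{d}\cdot(2^{-n}/D_1)^{3d}\asymp D_1^{-d}D_2^{-d}\,2^{-3dn}$, again consistent with the claim.

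The hard part will be the combinatorial bookkeeping of the path decomposition. Each defining event (three successive visits for PTP, two visits with non-intersection for PDCP, two visits by a single loop for BDP) prescribes a fixed number of returns to each individual box, but these returns can be interleaved with arbitrary oscillations of the trajectory between the middle and outer annuli. To obtain the sharp bound, I would choose the stopping times so that each prescribed visit contributes only a bounded number of annular crossings, while still keeping these crossings uniformly equivalent to independent Brownian excursions in the annulus. The initial-configuration variants of the separation lemmas (such as Lemma~\ref{sep PTP-1} and its analogues in the PDCP and BDP settings) would then be used to stitch the pieces together across successive annuli. Beyond this combinatorial setup, the PDCP and BDP cases follow from the PTP argument essentially verbatim once one substitutes the non-disconnection exponent bound with the appropriate non-intersection or generalised non-disconnection exponent bound.
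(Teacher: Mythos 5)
Your proposal follows essentially the same route as the paper's proof: the same nested annuli at scales $2^{-n}$, $D_1$, $D_2$ and $\delta$ (an outer annulus enclosing all three boxes, a middle one around the close pair when $D_1\ll D_2$, and one around each individual box), the same count of crossings per annulus, the same product of arm-probability upper bounds obtained from the strong Markov property, Lemma~\ref{lem:harnack} and the separation lemmas, and the identical two-case split according to whether $D_1$ and $D_2$ are comparable. The only remark worth making is that the ``combinatorial bookkeeping'' you flag as the hard part is harmless for an upper bound---one fixes one of the finitely many visit orderings and retains only the designated crossings, discarding any extra oscillations since they only shrink the event---so the initial-configuration separation lemmas you invoke for ``stitching'' are not actually needed here (they are only required for lower bounds such as \eqref{cme1}).
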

\begin{proof}
Since the proof in all three cases are almost the same, we only write down the PTP case. We separately consider the following two possibilities:
\par
Case 1: $D_{2}\geq 256D_{1}$. Without loss of generality, we assume that $D_{1}=\mathrm{dist}(T,U)$, $D_{2}=\mathrm{dist}(S,T)$. We first assume that $D_{2}<\delta/16$. Let $z$ be the midpoint of centres of $T,U$, and $y$ be the midpoint of $z$ and the centre of $S$. If $S,T,U\in\mathfrak{S}_{n}$, then for each $X\in\{S,T,U\}$, Brownian motion should visit the sets $(X,\partial\mathcal{B}(y,3\delta/4),X,\partial\mathcal{B}(y,3\delta/4),X)$ in order.
\par
Although there are (finitely) many different possibilities when we count the order of hittings of these boxes, we can derive the same probability upper bound for all cases (with possibly different constants) with basically the same reasoning. To illustrate, we consider the event $E_0$ that the path visits these sets in the following order:
\[(E_{0})\quad S\rightsquigarrow T\rightsquigarrow U\rightsquigarrow\partial\mathcal{B}(y,3\delta/4)\rightsquigarrow U\rightsquigarrow T\rightsquigarrow S\rightsquigarrow\partial\mathcal{B}(y,3\delta/4)\rightsquigarrow S\rightsquigarrow T\rightsquigarrow U.\]
We now write
\[\mathcal{A}_{1}:=\mathcal{A}(y,2D_{2},3\delta/4),\ \mathcal{A}_{2}:=\mathcal{A}(S,2^{-n},D_{2}/4),\ \mathcal{A}_{3}:=\mathcal{A}(z,2D_{1},D_{2}/4),\] 
\[\mathcal{A}_{4}:=\mathcal{A}(T,2^{-n},D_{1}/4),\ \mathcal{A}_{5}:=\mathcal{A}(U,2^{-n},D_{1}/4).\]
\par
We now consider a similar decomposition as in the proof of Lemma \ref{sme}. In this case, we only use conditions on ``crossings'' to obtain the desired upper bound. Note that on the event $\{S,T,U\in\mathfrak{S}_{n}\}$, each annulus $\mathcal{A}_{i}$ contains five crossings, whose union does not disconnect the inner boundary of the annulus from infinity. Using a similar argument as \cite[Lemma 3.3]{KM10}, crossings are uniformly equivalent to five independent Brownian motions. This combined with Lemma~\ref{sep PTP} gives that
\[\mathbb{P}(E_{0})\leq c(1/D_{2})^{-d}(D_{2}/2^{-n})^{-d}(D_{2}/D_{1})^{-d}(D_{1}/2^{-n})^{-2d}=cD_{1}^{-d}D_{2}^{-d}2^{-3dn},\]
which completes the proof of this case. If $D_{2}\geq\delta/16$, it suffices to consider crossings in disjoint annuli $\mathcal{A}_{2},\mathcal{A}_{3},\mathcal{A}_{4},\mathcal{A}_{5}$ to obtain the same estimate since $D_{2}\asymp1$, and the proof still follows.
\par
Case 2: $D_{2}<256 D_{1}$. Again, we first assume that $D_{2}<\delta/16$. If centres of $S,T,U$ form an acute triangle, let $y$ be the circumcentre of the triangle, and otherwise, let $y$ be the midpoint of the longest edge of the triangle. Similar to Case 1, if $S,T,U\in\mathfrak{S}_{n}$, then Brownian motion visits the sets $(X,\partial\mathcal{B}(y,3\delta/4),X,\partial\mathcal{B}(y,3\delta/4),X)$ in order, for each $X\in\{S,T,U\}$.
\par
We take the following event $E_0'$ for illustration:
\[(E'_{0})\quad S\rightsquigarrow T\rightsquigarrow U\rightsquigarrow\partial\mathcal{B}(y,3\delta/4)\rightsquigarrow U\rightsquigarrow T\rightsquigarrow S\rightsquigarrow\partial\mathcal{B}(y,3\delta/4)\rightsquigarrow S\rightsquigarrow T\rightsquigarrow U.\]
Considering the following four annuli 
\begin{equation}\label{annuli}\mathcal{A}(y,2D_{2},3\delta/4),\ \mathcal{A}(S,2^{-n},D_{1}/4),\ \mathcal{A}(T,2^{-n},D_{1}/4),\ \mathcal{A}(U,2^{-n},D_{1}/4),\end{equation}
and non-disconnection events for corresponding crossings in them respectively, we have
\[\mathbb{P}(E'_{0})\leq c'(1/D_{2})^{-d}(D_{1}/2^{-n})^{-3d}\leq c'D_{1}^{-d}D_{2}^{-d}2^{-3dn}\]
since $D_{2}\leq D_{1}<256 D_{2}$. For the case that $D_{2}\geq\delta/16$, note that $D_{1}\asymp D_{2}\asymp1$, we can easily deduce the same estimate by considering crossings in the last three annuli defined in \eqref{annuli}. We thus conclude the proof of the lemma.
\end{proof}
Lemmas \ref{fme}, \ref{sme} and \ref{tme} immediately imply the following conditional moment estimate.
\begin{coro}\label{cme}
There exist $c,c_{1},c_{2}>0$, such that for any $n$ and any $S,T,U\in\mathcal{S}_n$, we have:
\[c_{1}\mathrm{dist}(S,T)^{-d}2^{-dn}\leq\mathbb{P}(T\in\mathfrak{S}_{n}|S\in\mathfrak{S}_{n})\leq c_{2}\mathrm{dist}(S,T)^{-d}2^{-dn},\]
and
\[\mathbb{P}(T,U\in\mathfrak{S}_{n}|S\in\mathfrak{S}_{n})\leq cD_{1}^{-d}D_{2}^{-d}2^{-2dn},\]
where $d=2,3$ is the dimension of the space, and $D_{1},D_{2}$ are defined in Lemma \ref{tme}.
\end{coro}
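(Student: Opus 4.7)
The corollary is a direct consequence of dividing the moment bounds already proven, so the proof plan is essentially bookkeeping.

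For the first inequality, the plan is to apply the definition of conditional probability,
\[
\mathbb{P}(T\in\mathfrak{S}_{n}\mid S\in\mathfrak{S}_{n})
=\frac{\mathbb{P}(S,T\in\mathfrak{S}_{n})}{\mathbb{P}(S\in\mathfrak{S}_{n})},
\]
and then combine the two-box bound from Lemma~\ref{sme} with the one-box bound from Lemma~\ref{fme}. Given $S,T\in\mathcal{S}_n$ distinct, let $m$ be the unique integer with $\mathrm{dist}(S,T)\in[2^{-m-1},2^{-m}]$, so that $\mathrm{dist}(S,T)\asymp 2^{-m}$. When $m\le n-2$ and $2^{-m}\ll\delta$, Lemma~\ref{sme} yields $\mathbb{P}(S,T\in\mathfrak{S}_{n})\asymp 2^{d(m-2n)}\asymp\mathrm{dist}(S,T)^{-d}\,2^{-2dn}$, while Lemma~\ref{fme} yields $\mathbb{P}(S\in\mathfrak{S}_n)\asymp 2^{-dn}$; the quotient gives the claimed two-sided bound.

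For the regime not covered by Lemma~\ref{sme}, I would handle the two degenerate cases separately. If $\mathrm{dist}(S,T)\asymp 2^{-n}$ (i.e.\ $S$ and $T$ are neighboring boxes at the finest scale), then running the same path-decomposition and separation-lemma argument used in the lower bound of Lemma~\ref{sme}, but with only the outer non-disconnection annulus around the pair $S\cup T$ and two inner non-disconnection annuli around $S$ and $T$, gives $\mathbb{P}(S,T\in\mathfrak{S}_n)\asymp 2^{-2dn}$, which matches the advertised bound. If $\mathrm{dist}(S,T)\asymp 1$ (the "far apart" regime), the lower bound follows from inspecting the construction in Lemma~\ref{fme} applied independently in small sub-neighborhoods of $S$ and $T$ (at cost $2^{-2dn}$), and the upper bound follows from the FKG-type product bound $\mathbb{P}(S,T\in\mathfrak{S}_n)\le C\,\mathbb{P}(S\in\mathfrak{S}_n)\mathbb{P}(T\in\mathfrak{S}_n)\asymp 2^{-2dn}$, which can be read off from the separation-lemma construction by truncating the paths at a mesoscopic scale around each box.

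For the second inequality, the same strategy applies: write
\[
\mathbb{P}(T,U\in\mathfrak{S}_{n}\mid S\in\mathfrak{S}_{n})
=\frac{\mathbb{P}(S,T,U\in\mathfrak{S}_{n})}{\mathbb{P}(S\in\mathfrak{S}_{n})},
\]
and then plug the three-box upper bound $\mathbb{P}(S,T,U\in\mathfrak{S}_n)\le c\,D_1^{-d}D_2^{-d}2^{-3dn}$ from Lemma~\ref{tme} into the numerator and the lower bound $\mathbb{P}(S\in\mathfrak{S}_n)\ge c_1 2^{-dn}$ from Lemma~\ref{fme} into the denominator. This yields the stated bound with constant $c/c_1$. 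There is no real obstacle here; the only point requiring any care is checking that the statement of Lemma~\ref{sme} genuinely covers the range of scales one needs for the first inequality, which is handled by the degenerate-case discussion above.
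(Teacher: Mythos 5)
Your proposal is correct and follows essentially the same route as the paper, which states the corollary as an immediate consequence of dividing the joint moment bounds of Lemmas \ref{sme} and \ref{tme} by the first moment bound of Lemma \ref{fme}. Your extra discussion of the boundary regimes not literally covered by the hypotheses of Lemma \ref{sme} is reasonable bookkeeping that the paper leaves implicit.
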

\begin{rmk}
From the proof of Lemmas \ref{fme}, \ref{sme} and \ref{tme}, we can further show that the conditional moment estimate in Corollary \ref{cme} does not depend on the path configuration close to $S$ or away from $S$, i.e.\ the path configuration in $\mathcal{D}_{-n+i}(S)$ or outside $\mathcal{B}(S,\delta/2^{i})$, where $i$ is an integer to be determined. We refer readers to Lemma \ref{cme-f} for the precise statement.
\end{rmk}
\section{Non-Existence of Pioneer Triple Points}\label{sec:ptp}
In this section we give the proof of Theorem \ref{ptp}. Throughout this section, let $\iota\in(0,1/2)$, $0<\delta\leq\iota/2$, and assume $S\in\mathcal{S}_{n}$, $S\subset\mathcal{A}(0,\iota,1-\iota)$. We allow constants to depend on $\iota$ and $\delta$, but they should be independent of $n$ and $S$. When we talk about ``good boxes'', we mean $\delta$-good boxes (i.e.\ boxes in $\mathfrak{S}_n(\delta)$), unless we specify otherwise.
\par
As discussed in Section \ref{sec:intro}, the key is to prove the following large deviation bound, which is also the precise version of \eqref{concentrate}. Using this bound, we will prove Theorem \ref{ptp} in Section \ref{subsec:pf-ptp}.
\begin{prop}\label{cgs}
There exist constants $c,\lambda,\gamma>0$ such that for all $n\ge N(\delta)$,
\[\mathbb{P}(\#\mathfrak{S}_{n}(\delta)<\lambda\sqrt{n}|S\in\mathfrak{S}_{n}(\delta+2^{-\sqrt{n}}))\leq cn^{-\gamma}.\]
\end{prop}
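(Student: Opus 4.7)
I plan to follow the outline sketched in Section~\ref{sec:intro}: show that conditioning on $S \in \mathfrak{S}_n(\delta + 2^{-\sqrt{n}})$ forces many good boxes to appear in concentric annuli around $S$, so that the total count $\#\mathfrak{S}_n(\delta)$ is large with high probability. Write $L := [\sqrt{n}]$, and for $1 \le i \le L/2$ set
\[
\mathcal{A}_i := \mathcal{A}(S, 2^{-n+iL}, 2^{-n+(i+1)L}), \qquad N_i := \#\{T \in \mathfrak{S}_n(\delta) : T \subset \mathcal{A}_i\}.
\]
Since the $\mathcal{A}_i$ are disjoint, $\#\mathfrak{S}_n(\delta) \ge \max_i N_i$, so it suffices to show that some $N_i \ge c' L$ with high probability. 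The asymmetry between $\delta$ and $\delta + 2^{-\sqrt{n}}$ is crucial: as $2^{-\sqrt{n}}$ dominates the outer radius $2^{-n/2 + \sqrt{n}}$ of $\mathcal{A}_{L/2}$, the $\delta$-excursions of any candidate good box $T \subset \mathcal{A}_i$ with $i \le L/2$ fit safely inside the $(\delta + 2^{-\sqrt{n}})$-excursion range of $S$. This ensures that counting $T \in \mathfrak{S}_n(\delta)$ does not interfere with the conditioning $S \in \mathfrak{S}_n(\delta + 2^{-\sqrt{n}})$, enabling clean additive moment bounds.

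First I would establish conditional first- and second-moment bounds. By Corollary~\ref{cme} (after a routine check that the estimates extend to the tilted conditioning), one gets
\[
\mathbb{E}[N_i \mid S \in \mathfrak{S}_n(\delta+2^{-\sqrt{n}})] \asymp \sum_{T \subset \mathcal{A}_i} |T-S|^{-2}\, 2^{-2n} \asymp L,
\]
and, by splitting pairs $(T,U) \subset \mathcal{A}_i^2$ according to $\mathrm{dist}(T,U)$ and applying the three-point bound,
\[
\mathbb{E}[N_i^2 \mid S \in \mathfrak{S}_n(\delta+2^{-\sqrt{n}})] \le C\, i L^2,
\]
the factor $i$ arising from the logarithmic range of admissible pair distances. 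Paley--Zygmund then yields
\[
\mathbb{P}(N_i \ge c' L \mid S \in \mathfrak{S}_n(\delta+2^{-\sqrt{n}})) \ge c_0/i.
\]

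The hardest step will be to promote this one-annulus bound to a near-independent family across $i$. I would construct a filtration $(\mathcal{F}_i)_{0 \le i \le L/2}$ making each $N_i$ measurable with respect to $\mathcal{F}_i$ and such that
\[
\mathbb{P}(N_i \ge c' L \mid \mathcal{F}_{i-1},\, S \in \mathfrak{S}_n(\delta+2^{-\sqrt{n}})) \ge c_0/i
\]
holds for every $i$. The naive choice -- the five arms associated with the PTP stopped at first exit of $\mathcal{B}(S, 2^{-n+iL})$ -- fails because the arms may re-enter inner balls, destroying Markov compatibility across annuli. Following the sketch in the introduction, the remedy is two-fold: (i) a delicate path decomposition based on stopping times at crossings of several circles $\partial\mathcal{B}(S, 2^{-n+jL})$, recording in $\mathcal{F}_{i-1}$ only initial segments together with suitable ``last-exit'' data so that, via the strong Markov property, the configuration inside $\mathcal{A}_i$ becomes a fresh Brownian piece with prescribed entrance data; and (ii) a slightly tilted probability measure $\widetilde{\mathbb{P}}$ that zeroes out a family of bad events (e.g., too many back-and-forth crossings of intermediate circles), which occur with tiny conditional probability, in order to guarantee adaptedness. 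The cushion $2^{-\sqrt{n}}$ is what ensures that $\widetilde{\mathbb{P}}$ is uniformly equivalent to $\mathbb{P}(\cdot \mid S \in \mathfrak{S}_n(\delta + 2^{-\sqrt{n}}))$ on the good events. On these, the separation lemma with initial configurations (Lemma~\ref{sep PTP-1}), together with the annulus-by-annulus Markov property, supplies the uniform estimates needed to repeat the Paley--Zygmund argument conditionally. This construction -- and the verification that the tilt is bounded with separation surviving uniformly -- is the main technical obstacle, and is where most of the detailed work of Section~\ref{sec:ptp} will go.

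Finally, the tower property closes the argument:
\[
\mathbb{P}(\#\mathfrak{S}_n(\delta) < c'L \mid S \in \mathfrak{S}_n(\delta+2^{-\sqrt{n}}))
\le \prod_{i=1}^{L/2}(1-c_0/i)
\le C (L/2)^{-c_0}
\le C' n^{-c_0/2},
\]
yielding the proposition with $\lambda = c'$ and $\gamma = c_0/2$.
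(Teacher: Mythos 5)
Your outline follows the paper's proof of Proposition~\ref{cgs} essentially verbatim: the same annuli $\mathcal{A}_i$, the same conditional Paley--Zygmund step, the same filtration-plus-tilting idea, and the same product bound at the end. However, as written the proposal defers precisely the part that constitutes the proof: the explicit excursion--bridge decomposition of the path, the definition and probability control of the bad events (Lemmas~\ref{unique opening}--\ref{all good opening}), the geometric adaptedness argument (Lemma~\ref{adapted}), and the conditional moment bounds given the filtration (Lemma~\ref{cme-f}, which requires separation lemmas with initial configurations, not just Corollary~\ref{cme}). Naming these steps is not the same as carrying them out, and the adaptedness lemma in particular rests on a non-obvious planar topology argument about openings and cut sets.

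Two of the claims you do make need correction. First, the adaptedness statement ``each $N_i$ is $\mathcal{F}_i$-measurable'' is not achievable: one must replace $N_i$ by $\widetilde{N}_i$, the count of good boxes $T\prec\mathcal{A}_i$ whose third global visit occurs \emph{before} the third global visit of $S$ (the relation $T\lessdot S$); otherwise whether $T$ is good can depend on parts of the path not recorded in any finite stage of the filtration. Even then one only gets $\mathscr{F}_{i+3}$-measurability, which forces the subsampling $Z_i=\mathds{1}\{\widetilde{N}_{4i}\geq C_1L\}$ and a product over $i\le [L/8]$ rather than $[L/2]$ (harmless for the conclusion, but your displayed product is not the one that is actually proved). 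Second, the tilted measure $\overline{\mathbb{P}}_n$ is \emph{not} uniformly equivalent to $\mathbb{P}(\cdot\mid S\in\mathfrak{S}_n(\delta+2^{-\sqrt{n}}))$: the bad event $B_5$ concerning degenerate conditional openings has conditional probability only $O(n^{-1})$, not exponentially small. The correct closing step is to add the bad-event probabilities to the product bound, and this $O(n^{-1})$ term is one of the two contributions determining the final exponent $\gamma=\min\{1,C_2/2\}$. Moreover, showing that the conditional lower bound $C_2/i$ survives the passage from $\widetilde{\mathbb{P}}_n$ to $\overline{\mathbb{P}}_n$ given $\mathscr{F}_{i-1}$ (the content of the proof of Proposition~\ref{exist good box}) is itself a nontrivial comparison argument, not a consequence of uniform equivalence.
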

We briefly explain the idea of the proof and the structure of this section. Let $(N_{i})_{1\leq i\leq L/2}$ be the number of good boxes in $\mathcal{A}(S,2^{-n+iL},2^{-n+(i+1)L})$ where $L:=[\sqrt{n}]$. We want $(N_{i})_{1\leq i\leq L/2}$ to be adapted to some filtration under the probability measure \begin{equation}\label{conditioned on box}\widetilde{\mathbb{P}}_{n}:=\mathbb{P}(\cdot|S\in\mathfrak{S}_{n}(\delta+2^{-\sqrt{n}})).\end{equation}
Note that we require $S$ to be $(\delta+2^{-\sqrt{n}})$-good in order to ensure that good boxes near $S$ (i.e.\ within distance $2^{-\sqrt{n}}$ from $S$) are $\delta$-good. For adaptedness, we want the filtration to include information of the path configuration in $\mathcal{D}_{-n+mL}(S)$, for $0\leq m\leq L/2$. Hence we set (see \eqref{filtration} below for the rigorous definition)
\begin{equation*}
\mathscr{F}_{m}:=\sigma\left(X^{0},\;X^i,\;W^{i}[0,\tau_{i,mL}],\;W^{i}[\sigma_{i,n-K-L},\tau_{i,n-K}],\;i=1,\ldots,5\right),
\end{equation*}
where $K=-\log_{2}(\delta/2)$ (throughout, we will only consider values of $\delta$ for which $K$ is an integer), and $X^{i},W^{i}$'s are ``bridges'' and ``excursions'' decomposed from Brownian motion (see \eqref{excursion and bridge} for rigorous definitions), and
$\tau_{i,j}$ (resp.\ $\sigma_{i,j}$) are first (resp.\ last) hitting times of $\mathcal{C}_{-n+j}(S)$ with respect to $W^{i}$. 
\par
In order to ensure ``adaptedness'', i.e., the above filtration $\mathscr{F}_{m}$ contains all information of the path configuration in $\mathcal{D}_{-n+mL}(S)$, we need to first exclude some ``bad events'' (denoted by $B_i$'s throughout): back-tracking excursions, run-away bridges or multiple global ``openings''. Under a slightly tilted version of $\widetilde{\mathbb{P}}_{n}$ (that is, $\overline{\mathbb{P}}_n$ defined in \eqref{conditional measure}), in which we assign zero measure to bad events, we can obtain that $N_{i}$ is $\mathscr{F}_{(i+3)}$-measurable. We refer the reader to Section \ref{subsec:bad} for the rigorous construction of this filtration, and the precise characterization of ``bad events''.
\par
Once we complete the construction of the filtration, we will derive moment bounds of good boxes conditioned on this filtration. Proposition~\ref{cgs} finally follows from Paley-Zygmund inequality and these moment bounds. We refer the reader to Sections \ref{subsec:martingale}--\ref{subsec:pf-4.13} for details.
\par
Finally, we complete the proof of Theorem \ref{ptp} in Section \ref{subsec:pf-ptp}.
\subsection{Decomposition of the Brownian Path}\label{subsec:decompose}
We begin with excursion-bridge decomposition of Brownian motion, which has a similar flavour to the decomposition in \cite[Section 5.2]{GLQ22}. Recall the event $V_{\delta}^{\mathrm{PTP}}(S)$ and the stopping time $\mathcal{T}(S)$ in Definition~\ref{gs}. 
Below we will fix $S$ and write $\mathcal{T}=\mathcal{T}(S)$ for brevity.
On the event $V_{\delta}^{\mathrm{PTP}}(S)$, let 
\[u_{1}=\tau(\partial\mathcal{B}(S,\delta-2^{-n-1/2})).\]
For $i=1,2,3,4$, let 
\begin{equation}\label{decomposition}
\begin{split}
&v_{i}=\tau(u_{i},S),\quad u_{i+1}=\tau(v_{i},\partial\mathcal{B}(S,\delta-2^{-n-1/2})),\\
&s_{2i-1}=\sup\{t<v_{2i-1}:W_{t}\in\partial\mathcal{B}(S,\delta/2)\},\quad t_{2i-1}=\tau(s_{2i-1},\mathcal{C}_{-n}(S)),\\
&s_{2i}=\sup\{t<u_{2i+1}:W_{t}\in\mathcal{C}_{-n}(S)\},\quad t_{2i}=\tau(s_{2i},\partial\mathcal{B}(S,\delta/2)),\\
\end{split}
\end{equation}
and for $k=1,2$ let (recall that $\gamma^\mathcal{R}$ means the time reversal of $\gamma$)
\begin{equation}\label{excursion and bridge}
\begin{split}
&X^{0}=W[0,s_{1}],\ W^{1}=(W[s_{1},t_{1}])^{\mathcal{R}},\ X^{5}=W[t_{5},\mathcal{T}],\\
&X^{i}=W[t_{i},s_{i+1}],\ W^{2k}=W[s_{2k},t_{2k}],\ W^{2k+1}=(W[s_{2k+1},t_{2k+1}])^{\mathcal{R}}.\\
\end{split}
\end{equation}
See Figure \ref{fig_decompose} for an illustration of such a decomposition. We call $X^{i}$ ``bridge'' and $W^{i}$ ``excursion'' below. Note that all excursions are oriented from inside to outside in $\mathcal{A}(S,2^{-n},\delta/2)$.
\begin{figure}[b]
\centering
\includegraphics[width=10cm]{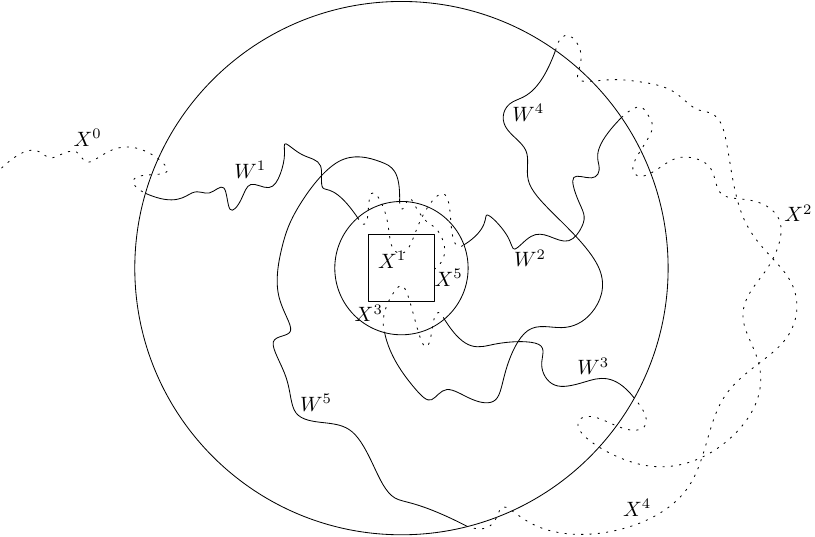}
\caption{\small The decomposition of the PTP case, where ``bridges'' are dotted.}
\label{fig_decompose}
\end{figure}
\par
The following lemma describes the law of $X^{i}$ and $W^{i}$ (see e.g. \cite[Lemma 5.4]{GLQ22}).
\begin{lemma}\label{ac excursions}
Given $S\in\mathcal{S}_{n}$, on the event $V_{\delta}^{\mathrm{PTP}}(S)$, the followings hold.
\par
(1) The joint law of $W^{1},\ldots,W^{5}$ is uniformly equivalent to the joint law of five independent Brownian excursions in $\mathcal{A}(S,2^{-n},\delta/2)$.
\par
(2) Conditioned on $X^{i}$, $i=0,1,\ldots,5$, $W^{1},\ldots,W^{5}$ are conditionally independent and are distributed as Brownian excursions with fixed starting and ending points (i.e.\ the normalized boundary-to-boundary measure in $\mathcal{A}(S,2^{-n},\delta/2)$).
\par
(3) Conditioned on $W(s_{i}),W(t_{i})$, $i=1,\ldots,5$, $X^{j}$ are conditionally independent of each other, and further conditionally independent of $W^{j}$. Moreover, if $i=1,3$ (resp.\ $2,4$), $X^{i}$ is distributed as a Brownian bridge in $\mathcal{B}(S,\delta-2^{-n-1/2})$ (resp.\ $\mathbb D\setminus S$) conditioned to hit $S$ (resp.\ $\partial\mathcal{B}(S,\delta-2^{-n-1/2})$).
\end{lemma}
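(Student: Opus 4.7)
The plan is to decompose the restricted path $W[0,\mathcal{T}]$ at the first-hitting stopping times $t_1,\ldots,t_5$ (via the strong Markov property) and at the last-exit times $s_1,\ldots,s_5$ (via the last-exit decomposition, Lemma \ref{last exit}), and then to prove (3), (2), (1) in that order: (3) is the fundamental factorisation, (2) is a direct consequence, and (1) requires an additional Harnack input to control the marginal distribution of the endpoints.

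For (3), conditioning on the tuple of endpoints $(W(s_i),W(t_i))_{i=1}^5$ and applying iteratively the strong Markov property at each $t_i$ and the last-exit decomposition at each $s_i$ factorises the law of the concatenated path $X^0\oplus W^1\oplus X^1\oplus\cdots\oplus W^5\oplus X^5$ (restricted to $V_\delta^{\mathrm{PTP}}(S)$) into the product of the laws of its eleven pieces. For each excursion $W^i$, the conditional distribution given its two endpoints is that of a Brownian bridge in $\mathcal{A}(S,2^{-n},\delta/2)$ with those endpoints, conditioned to stay in the annulus until its lifetime---this is exactly the normalised boundary-to-boundary excursion described in Section \ref{subsec:path_measure}. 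For each bridge $X^i$ with $i=1,3$, the piece runs between two consecutive visits to $\mathcal{C}_{-n}(S)$ that surround a visit to $S$, so it is a Brownian bridge in $\mathcal{B}(S,\delta-2^{-n-1/2})$ conditioned to hit $S$; the case $i=2,4$ is symmetric, with domain $\mathbb{D}\setminus S$ and hitting target $\partial\mathcal{B}(S,\delta-2^{-n-1/2})$. The claimed conditional independence among the bridges, and between the bridges and the excursions, is immediate from this factorisation.

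Part (2) is then a corollary of (3): the endpoints $(W(s_i),W(t_i))_{i=1}^5$ are measurable functions of $(X^0,\ldots,X^5)$ (each is an initial or terminal point of a bridge), so $\sigma(\text{endpoints})\subset\sigma(\text{bridges})$; combined with the conditional independence from (3), conditioning on the bridges produces the same law of $(W^1,\ldots,W^5)$ as conditioning on the endpoints alone, which is the product of normalised boundary-to-boundary excursions with the prescribed endpoints.

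Finally, for (1), I would use (3) to write the law of $(W^1,\ldots,W^5)$ under $\mathbb{P}(\cdot\,|V_\delta^{\mathrm{PTP}}(S))$ as a mixture of products of normalised boundary-to-boundary excursions, mixed according to the marginal distribution $\pi$ of the endpoints. Since the law of five independent Brownian excursions in $\mathcal{A}(S,2^{-n},\delta/2)$ has the same structure with mixing measure equal to the product of ten uniform (harmonic) measures, it suffices to show that $\pi$ is uniformly equivalent to this product. The main obstacle is establishing exactly this comparison: I would obtain it by iterating the Harnack principle (Lemma \ref{lem:harnack} and its variants for conditioned bridges) along each of the six bridge transitions and the initial segment $X^0$, using that the scales $2^{-n}$, $\delta/2$, $\delta-2^{-n-1/2}$ and $\mathrm{dist}(S,\partial\mathbb{D})\asymp 1$ are separated by macroscopic gaps of order $\asymp\delta$, so that each transition density factor is comparable to a universal constant with bounds uniform in $n$ and $S$.
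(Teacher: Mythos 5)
Your proposal is correct and follows the standard route: the paper itself gives no proof of this lemma, deferring to \cite[Lemma 5.4]{GLQ22}, and the argument there is exactly the excursion--bridge factorisation you describe (establish the conditional factorisation given the endpoint skeleton, deduce (2), and upgrade to the unconditioned uniform equivalence in (1) by comparing, via Harnack-type estimates, the joint endpoint law with the product of uniform measures). The only cosmetic point is that the $t_i$ are first-hitting times seeded at the last-exit times $s_i$ rather than genuine stopping times, so the factorisation is cleanest when organised around the true stopping times $v_i,u_i$, with the first-entrance and last-exit decompositions of Lemma \ref{last exit} applied inside each macroscopic segment to carve out the crossings; this changes nothing in substance.
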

Note that for technical reasons (see Remark~\ref{rm:gb}), in Proposition \ref{cgs} we have conditioned on $S\in\mathfrak{S}_{n}(\delta+2^{-\sqrt{n}})$. Therefore, we need the following lemma to show that a $\delta$-good box is also a $(\delta+2^{-\sqrt{n}})$-good box with high probability as $n\to\infty$.
\begin{lemma}\label{delta+ good box}
There exists $c=c(\iota,\delta)>0$ such that for any $n\ge N(\delta)$, and any $n$-box $S\subset\mathcal{A}(0,\iota,1-\iota)$, we have
\[\mathbb{P}(S\in\mathfrak{S}_{n}(\delta+2^{-\sqrt{n}})|S\in\mathfrak{S}_{n}(\delta))\geq1-c2^{-\sqrt{n}}.\]
\end{lemma}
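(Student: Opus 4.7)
The plan is as follows. Write $r_1:=\delta-2^{-n-1/2}$ and $r_2:=\delta+2^{-\sqrt{n}}-2^{-n-1/2}$. The event $\{S\in\mathfrak{S}_n(\delta)\}$ forces the two outer arms of $W[0,\mathcal{T}_\delta(S)]$ (the pieces of $W$ between successive visits to $S$) to reach $\partial\mathcal{B}(S,r_1)$, while $\{S\in\mathfrak{S}_n(\delta+2^{-\sqrt{n}})\}$ requires them to reach $\partial\mathcal{B}(S,r_2)$. The key observation is that if both outer arms of $W[0,\mathcal{T}_\delta(S)]$ already reach $\partial\mathcal{B}(S,r_2)$, then the same three visits to $S$ witness $(\delta+2^{-\sqrt{n}})$-goodness as well, $\mathcal{T}_{\delta+2^{-\sqrt{n}}}(S)=\mathcal{T}_\delta(S)$, and the non-disconnection condition in Definition \ref{gs}(2) is inherited verbatim. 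Thus it suffices to show that, conditional on $S\in\mathfrak{S}_n(\delta)$, each outer arm reaches $\partial\mathcal{B}(S,r_2)$ with probability at least $1-c\cdot 2^{-\sqrt{n}}$, and to conclude by a union bound.

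For a single outer arm, the strategy is to apply the strong Markov property at its first hitting time $\tau^*$ of $\partial\mathcal{B}(S,r_1)$, so that the continuation from $p:=W_{\tau^*}$ until the end of the arm is an unconditioned Brownian motion from $p$. The 2D gambler's-ruin formula in the annulus $\mathcal{A}(S,2^{-n},r_2)$ (using the harmonic function $\log|\cdot|$) gives
\[
\mathbb{P}^p\bigl(\tau_{\partial\mathcal{B}(S,r_2)}<\tau_S\bigr)=\frac{n\log 2+\log r_1}{n\log 2+\log r_2}=1-O\bigl(2^{-\sqrt{n}}/(\delta n)\bigr),
\]
since $\log(r_2/r_1)\le 2\cdot 2^{-\sqrt{n}}/\delta$. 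Hence the ``bad'' event $\{\tau_S<\tau_{\partial\mathcal{B}(S,r_2)}\}$ has probability $O(2^{-\sqrt{n}}/n)$, a factor $\asymp 2^{-\sqrt{n}}$ smaller than the scale $\mathbb{P}^p(\tau_S<\tau_{\partial\mathbb{D}})\asymp 1/n$ of the event one is conditioning on.

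The main technical step is to verify that the full conditioning in $\{S\in\mathfrak{S}_n(\delta)\}$ -- the arm returning to $S$ before $\partial\mathbb{D}$, the second outer arm also completing, and non-disconnection -- disturbs this free estimate only by a Harnack-controlled multiplicative constant. My plan is to work inside the excursion-bridge decomposition of Lemma \ref{ac excursions}(3): the long part of each outer arm is a Brownian bridge $X^{2k}$ in $\mathbb{D}\setminus S$ with fixed endpoints $a_1,a_2\in\partial\mathcal{B}(S,\delta/2)$, conditioned only on hitting $\partial\mathcal{B}(S,r_1)$. For such a bridge, the desired conditional probability rewrites, via the first-passage decomposition (Lemma \ref{last exit}), as
\[
\frac{\int_{\partial\mathcal{B}(S,r_2)}P^{\mathcal{A}(S,2^{-n},r_2)}(a_1,z)\,G^{\mathbb{D}\setminus S}(z,a_2)\,dz}{\int_{\partial\mathcal{B}(S,r_1)}P^{\mathcal{A}(S,2^{-n},r_1)}(a_1,z)\,G^{\mathbb{D}\setminus S}(z,a_2)\,dz}.
\]
The total Poisson mass $(n\log 2+\log(\delta/2))/(n\log 2+\log r)$ changes by a factor $1-O(2^{-\sqrt{n}}/n)$ as $r$ moves from $r_1$ to $r_2$, while a pointwise Harnack comparison of $G^{\mathbb{D}\setminus S}(z,a_2)$ on the two nearby circles (whose radial separation $r_2-r_1=2^{-\sqrt{n}}$ is negligible compared with $|z-a_2|\asymp\delta$) contributes a factor $1+O(2^{-\sqrt{n}}/\delta)$, together pinning the ratio at $1-O(2^{-\sqrt{n}})$ uniformly in $a_1,a_2$. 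I expect this Harnack-type comparison under the bridge conditioning to be the main obstacle, after which the union bound over the two outer arms completes the proof.
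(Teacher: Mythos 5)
Your proposal is correct and follows essentially the same route as the paper: both reduce the lemma to the event that one of the two outer bridges $X^{2},X^{4}$ from the excursion--bridge decomposition (Lemma \ref{ac excursions}(3)) reaches $\partial\mathcal{B}(S,\delta-2^{-n-1/2})$ but fails to reach $\partial\mathcal{B}(S,\delta+2^{-\sqrt{n}}-2^{-n-1/2})$, and bound this by a gambler's-ruin estimate of order $2^{-\sqrt{n}}/\delta$ per bridge. The one step to make explicit in your ``Harnack-controlled'' reduction is that the necessary events not constraining $X^{2},X^{4}$ (the paper's $H^{1}\cap H^{2}$) have probability comparable to $\mathbb{P}(S\in\mathfrak{S}_{n}(\delta))\asymp 2^{-2n}$, which uses Lemma \ref{fme} and is how the joint bound is converted into the conditional one.
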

\begin{proof}
Let $\varepsilon:=2^{-\sqrt{n}}$. Thanks to Lemma \ref{fme}, it suffices to show that
\[\mathbb{P}(S\in\mathfrak{S}_{n}(\delta),S\notin\mathfrak{S}_{n}(\delta+\varepsilon))\leq c2^{-2n}\cdot2^{-\sqrt{n}}.\]
We use the notation introduced in \eqref{excursion and bridge} and write $x$ for the centre of $S$. The event $\{S\in\mathfrak{S}_{n}(\delta)\}\cap\{S\notin\mathfrak{S}_{n}(\delta+\varepsilon)\}$ implies the following events:
\par
($H^{1}$)\quad $V_{\delta}^{\mathrm{PTP}}(S)$ occurs, and $X^{1},X^{3}$ do not disconnect $S$ from infinity.
\par
($H^{2}$)\quad The union of $W^{i}[\sigma_{i},\tau_{i}](i=1,\ldots,5)$ does not disconnect $\mathcal{D}_{-n}(x)$ from infinity, where $\sigma_{i}$ (resp.\ $\tau_{i}$) is the last hitting time of $\mathcal{C}_{-n+1}(S)$ (resp.\ first hitting time of $\partial\mathcal{B}(x,\delta/4)$) with respect to $W^{i}$.
\par
($H^{3}$)\quad At least one of the bridges $X^{2},X^{4}$ does not intersect $\mathcal{B}(x,\delta+\varepsilon-2^{-n-1/2})$.
\par
By Lemma~\ref{ac excursions} (2), on $V_{\delta}^{\mathrm{PTP}}(S)$ and conditioned on $X^{1},\ldots,X^{5}$, the joint law of $W^{i},\, i=1,\ldots,5$ is uniformly equivalent to that of five independent Brownian excursions with fixed starting and ending points. Therefore, the law of $W^{i}[\sigma_{i},\tau_{i}], i=1,\ldots,5$ is uniformly equivalent to that of five independent Brownian excursions, and we have by Lemma~\ref{gnd prob}
\begin{equation}\label{delta+1}
\mathbb{P}(H^{2}|H^{1})\asymp n^{5}2^{-2n}.
\end{equation}
By standard estimates on the hitting probability of Brownian motion and \cite[Lemma 2.13]{GLQ22}, we have
\begin{equation}\label{delta+2}
\mathbb{P}(H^{1})\leq cn^{-3}\cdot(n^{-1})^{2}=cn^{-5}.
\end{equation}
By Lemma \ref{ac excursions} (3), conditioned on $H^{1}$ and excursions $W^{1},\ldots,W^{5}$, bridges $X^{2},X^{4}$ are distributed as independent Brownian bridges in $\mathbb D\setminus S$, conditioned to hit $\partial\mathcal{B}(x,\delta-2^{-n-1/2})$. Using this description, we will show that
\begin{equation}\label{delta+7}
\mathbb{P}(H^{3}|H^{1},H^{2})\leq c'2^{-\sqrt{n}}.
\end{equation}
Assuming \eqref{delta+7}, and combining with \eqref{delta+1}, \eqref{delta+2}, we conclude that
\[\mathbb{P}(H^{1}\cap H^{2}\cap H^{3})=\mathbb{P}(H^{3}|H^{1},H^{2})\mathbb{P}(H^{2}|H^{1})\mathbb{P}(H^{1})\leq c''2^{-2n}\cdot 2^{-\sqrt{n}},\]
which completes the proof subject to the proof of \eqref{delta+7}.
\par
It remains to prove \eqref{delta+7}. We now assume that $X$ is a Brownian bridge in $\mathbb{D}\backslash S$ conditioned to hit $\partial\mathcal{B}(x,\delta-2^{-n-1/2})$ with end points $u,v\in\partial\mathcal{B}(x,\delta/2)$. Consider the part of $X$ from its first hitting of $\partial\mathcal{B}(x,\delta-2^{-n-1/2})$, denoted by $z$, to its first return of $\partial\mathcal{B}(x,3\delta/4)$. The law of this part is uniformly equivalent to a Brownian motion started from $z$ and stopped when it hits $\partial\mathcal{B}(x,3\delta/4)$. Hence, it avoids $\partial\mathcal{B}(x,\delta+\varepsilon-2^{-n-1/2})$ with probability of order $\varepsilon/\delta$ by Gambler's ruin estimate. Therefore, we have
\[\mathbb{P}_{u,v}^{\mathrm{bridge}}(X\ \mathrm{never\ exits}\ \mathcal{B}(x,\delta+\varepsilon-2^{-n-1/2}))\asymp\varepsilon/\delta.\]
Recall the description of $X^{2},X^{4}$ in Lemma \ref{ac excursions}. For $i=2,4$, we have
\[\mathbb{P}(\mbox{$X^{i}$ never exits $\mathcal{B}(x,\delta+\varepsilon-2^{-n-1/2}$})|\mbox{$u,v$ are the end points of $X^{i}$})\asymp\varepsilon/\delta.\]
Hence, we obtain \eqref{delta+7} as desired by taking expectation with respect to endpoints of $X^i$, and complete the proof.
\end{proof}
\subsection{Controlling the Probability of Bad Events}\label{subsec:bad}
As mentioned right after Proposition \ref{cgs}, we need to exclude a series of bad events that occur with very small probability, which is the main purpose of this subsection. Throughout this section, we set 
\begin{equation}\label{l-value}
L:=[\sqrt{n}].
\end{equation}
\par
By Lemma \ref{ac excursions} (1), excursions $W^{1},\ldots,W^{5}$ are uniformly equivalent to five independent Brownian excursions. We first prove a lemma for independent Brownian excursions.
\par
Recall the definition of openings in Section \ref{subsec:notation}. Our next lemma (together with Remark~\ref{only one opening}) shows that 5 non-disconnecting Brownian excursions admit a unique global opening in an annulus with high probability. Indeed, we will prove a slightly stronger result, showing that with high probability, multiple openings can only survive a few scales.
\par
Let $W_{i}$, $i=1,2,\ldots,5$, be five independent Brownian excursions in $\mathcal{A}(0,1,2^n)$ from $\mathcal{C}_{0}$ to $\mathcal{C}_{n}$. Let $E_{1}$ be the event that the union of $W_{i}$, $i=1,2,\ldots,5$ does not disconnect $\mathcal{C}_{0}$ from infinity. Let $\tau_{i}^{j}$ (resp.\ $\sigma_{i}^{j}$) be the first (resp.\ last) hitting time of $\mathcal{C}_{j}$ with respect to $W_{i}$. For any $j\leq n/2$, let $E_{2}=E_{2}(j)$ (resp.\ $E_{3}=E_{3}(j)$) be the event that there exists nonempty $A\subsetneq\{1,2,3,4,5\}$, such that the packets of Brownian excursions $\{W_{i}[0,\tau_{i}^{j}]:i\in A\}$ and $\{W_{i}[0,\tau_{i}^{j}]:i\in A^{c}\}$ do not intersect (resp.\ $\{W_{i}[\sigma_{i}^{n-j},\tau_{i}^{n}]:i\in A\}$ and $\{W_{i}[\sigma_{i}^{n-j},\tau_{i}^{n}]:i\in A^{c}\}$ do not intersect).
\begin{lemma}\label{unique}
 There exist $c$, such that for $m=2,3$ and $n\ge N(\delta)$,
\[\mathbb{P}(E_{m}(j)\cap E_{1})\leq cn^{10}2^{-j-2n}.\]
\end{lemma}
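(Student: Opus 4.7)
I would prove the bound for $E_2(j)$; the case $E_3(j)$ is analogous via time-reversal of Brownian excursions (which preserves their law up to swapping endpoints). A union bound over the at most $30$ non-trivial partitions $A \sqcup A^c$ of $\{1, \dots, 5\}$ reduces the problem to bounding $\mathbb{P}(E_{A, 2}(j) \cap E_1)$ for a fixed $A$ with $|A| = k \in \{1, 2, 3, 4\}$, where $E_{A, 2}(j)$ is the event that $\{W_i[0, \tau_i^j] : i \in A\}$ and $\{W_i[0, \tau_i^j] : i \in A^c\}$ do not intersect. Since $E_1$ implies the weaker event $E_1'$ that $\bigcup_i W_i[\tau_i^j, \tau_i^n]$ does not disconnect $\mathcal{C}_j$ from infinity (because $\bigcup_i W_i[\tau_i^j, \tau_i^n] \subseteq \bigcup_i W_i$), it suffices to bound $\mathbb{P}(E_{A, 2}(j) \cap E_1')$.

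The heart of the argument is a conditional decoupling at $\mathcal{C}_j$. Let $z_i := W_i(\tau_i^j)$ and write $\vec z = (z_1,\dots,z_5)$. By the strong Markov property applied to each of the five independent excursions, given $\vec z$ the inner portions $W_i[0, \tau_i^j]$ and the outer portions $W_i[\tau_i^j, \tau_i^n]$ are all conditionally independent. Since $E_{A, 2}(j)$ depends only on the inner portions while $E_1'$ depends only on the outer portions (and $\vec z$),
\begin{equation*}
\mathbb{P}(E_{A, 2}(j) \cap E_1') = \mathbb{E}\bigl[\mathbb{P}(E_{A, 2}(j) \mid \vec z)\,\mathbb{P}(E_1' \mid \vec z)\bigr].
\end{equation*}
Combining Lemma \ref{last exit} and Lemma \ref{lem:harnack} to compare the inner portions (integrated over $\vec z$) to five independent Brownian excursions in $\mathcal{A}(0, 1, 2^j)$, Lemma \ref{ni prob} applied to the non-intersection of $k$ against $5-k$ yields
\begin{equation*}
\mathbb{P}(E_{A, 2}(j)) \leq c\, j^5 \, 2^{-j\, \xi(k, 5-k)}.
\end{equation*}
On the typical (well-separated) $\vec z$, which, by the non-intersection separation lemma, carries essentially all of the mass of $E_{A, 2}(j)$, the outer portions are comparable to five independent Brownian excursions in $\mathcal{A}(0, 2^j, 2^n)$ from $\mathcal{C}_j$ to $\mathcal{C}_n$, so Lemma \ref{gnd prob} with $\mathbf{c} = 0$ (giving $\xi(5) = 2$) yields
\begin{equation*}
\mathbb{P}(E_1' \mid \vec z) \leq c\, (n-j)^5 \, 2^{-2(n-j)}.
\end{equation*}
Multiplying the two estimates,
\begin{equation*}
\mathbb{P}(E_{A, 2}(j) \cap E_1') \leq c\, j^5 (n-j)^5 \, 2^{-j\, \xi(k, 5-k) - 2(n-j)}.
\end{equation*}

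It then remains to check that $\xi(k, 5-k) > 3$ for every $k \in \{1, 2, 3, 4\}$. By the symmetry $\xi(k, \lambda) = \xi(\lambda, k)$, it suffices to verify $k \in \{1, 2\}$. From \eqref{ie value}, $\xi(1, 4) > 3$ is equivalent to $(3 + \sqrt{97})^2 > 148$, i.e., $\sqrt{97} > 7$, which holds since $97 > 49$; and $\xi(2, 3) > 3$ is equivalent to $(5 + \sqrt{73})^2 > 148$, i.e., $\sqrt{73} > 5$, which holds since $73 > 25$. Thus $\xi(k, 5-k) \geq \xi(1, 4) > 3$, so $j\, \xi(k, 5-k) \geq 3j$ and the bound becomes
\begin{equation*}
\mathbb{P}(E_{A, 2}(j) \cap E_1) \leq c\, j^5 (n-j)^5 \, 2^{-j - 2n} \leq c\, n^{10} \, 2^{-j - 2n},
\end{equation*}
as required.

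The main technical obstacle is the rigorous justification of the conditional decoupling, in particular controlling $\mathbb{P}(E_1' \mid \vec z)$ uniformly (up to constant factors) over those $\vec z$ contributing non-negligibly to $\mathbb{P}(E_{A, 2}(j))$, and justifying the comparison of the inner and outer portions to genuine independent Brownian excursions in sub-annuli with only polynomial losses. This requires combining Lemma \ref{last exit} (for the decomposition of each excursion at $\tau_i^j$), Lemma \ref{lem:harnack} (for passing between specified and uniform endpoints), and the non-intersection and non-disconnection separation lemmas with initial conditions (see Lemma \ref{sep PTP-1} and the discussion following Lemma \ref{sep BDP}). These ingredients are standard in the intersection-exponent literature, but their combination does require some care.
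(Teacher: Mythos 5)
Your overall skeleton is the same as the paper's: split each excursion at scale $j$, charge the inner portions with a five-packet non-intersection event (cost $2^{-\xi(k,5-k)j}\le 2^{-3j}$ since $\xi(1,4),\xi(2,3)>3$), charge the outer portions with a five-arm non-disconnection event (cost $2^{-\xi(5)(n-j)}=2^{-2(n-j)}$), and multiply. The exponent arithmetic is correct. However, the decoupling step as you have written it does not work, for two reasons. First, the outer portions $W_i[\tau_i^j,\tau_i^n]$ are \emph{not} comparable to Brownian excursions in $\mathcal{A}(0,2^j,2^n)$: after $\tau_i^j$ the path is only conditioned to reach $\mathcal{C}_n$ before $\mathcal{C}_0$, so it may re-enter $\mathcal{D}_j$ arbitrarily deeply, and whether $\vec z$ is well separated has no bearing on this. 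Second, and more seriously, your restriction to ``typical (well-separated) $\vec z$'' is not licensed by the separation lemma: Lemma \ref{sep PDCP} (and its analogues) says the separated event has probability \emph{at least a constant fraction} of the full event, not that the non-separated configurations carry negligible mass. On the atypical $\vec z$ you would be left with the trivial bound $\mathbb{P}(E_1'\mid\vec z)\le 1$, and $\mathbb{P}(E_{A,2}(j),\ \vec z\ \text{atypical})$ is only controlled by $cj^52^{-3j}$, which is far from sufficient.

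The paper's remedy is to avoid any restriction on $\vec z$ altogether: one replaces $E_1'$ by the weaker event that $\bigcup_i W_i[\sigma_i^{j+1},\tau_i^n]$ (the portions after the \emph{last} exit of $\mathcal{C}_{j+1}$) does not disconnect $\mathcal{C}_{j+1}$ from infinity. By the last-exit decomposition (Lemma \ref{last exit}) together with a Harnack-type uniform equivalence, conditionally on the inner portions $W_i[0,\tau_i^j]$ this family is uniformly equivalent to five independent Brownian excursions in $\mathcal{A}(0,2^{j+1},2^n)$ for \emph{every} configuration of the inner portions, so Lemma \ref{gnd prob} gives $\mathbb{P}(F^1\mid \text{inner})\le c(n-j-1)^52^{-2(n-j)}$ uniformly, and the product bound follows with no case analysis on $\vec z$. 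You cite Lemma \ref{last exit} as an ingredient in your closing paragraph, but your argument as structured routes through the ``typical $\vec z$'' step, which is the point that would fail; rewiring the proof through the last-exit cut at $\mathcal{C}_{j+1}$ is what makes it rigorous.
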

\begin{proof}
We give a proof for the case of $m=2$, and the proof for $m=3$ is the same by inversion and scaling invariance of Brownian excursions.
\par
The event $E_{2}\cap E_{1}$ implies the following two events:
\par
($F^{1}$) The union of $W_{i}[\sigma_{i}^{j+1},\tau_{i}^{n}](i=1,2,\ldots,5)$ does not disconnect $\mathcal{D}_{j+1}$ from infinity.
\par
($F^{2}$) There exists non-empty $A\subsetneq\{1,2,3,4,5\}$, such that \{$W_{i}[0,\tau_{i}^{j}], i\in A\}$ does not intersect with $\{W_{i}[0,\tau_{i}^{j}], i\in A^{c}\}$.
\par
By the strong Markov property, Lemma \ref{last exit} and \cite[Lemma 2.15]{GLQ22}, conditioned on $W_{i}[0,\tau_{i}^{j}]$, $i=1,2,\ldots,5$, the law of $W_{i}[\sigma_{i}^{j+1},\tau_{i}^{n}]$, $i=1,2,\ldots,5$, is uniformly equivalent to that of five independent Brownian excursions from $\mathcal{C}_{j+1}$ to $\mathcal{C}_{n}$. By Lemma \ref{gnd prob}, it therefore follows that 
\[\mathbb{P}(F^{1}|F^{2})\leq c(n-j-1)^{5}2^{-2(n-j)}.\]
Now, for all four cases $|A|=i$, $i=1,2,3,4$, in the annulus $\mathcal{A}(0,1,2^{j})$, $F^{2}$ implies the non-intersection event of $i$ Brownian excursions and another $(5-i)$ Brownian excursions. Since $\xi(1,4)>3$ and $\xi(2,3)>3$, by Lemma \ref{ni prob}, we have 
\[\mathbb{P}(F^{2})\leq cj^{5}2^{-\xi(i,5-i)j}\leq c'j^{5}2^{-3j}.\]
We thus conclude that 
\[\mathbb{P}(F^{1}\cap F^{2})\leq c''j^{5}(n-j-1)^{5}2^{-j-2n},\]
which completes the proof.
\end{proof}
We record the following geometrical observation.
\begin{lemma}\label{unique connected component}
On $E_{2}^{c}$, the opening $\mathcal{O}(U,\mathcal{A}(0,1,2^{j}))$ contains at most one connected component, where $U$ is the union of the trajectories of $W_{i}, i=1,2,\dots,5$. Similarly, on $E_{3}^{c}$, the opening $\mathcal{O}(U,\mathcal{A}(0,2^{n-j},2^{n}))$ contains at most one connected component.    
\end{lemma}
\begin{figure}[t]
\centering
\includegraphics[width=7cm]{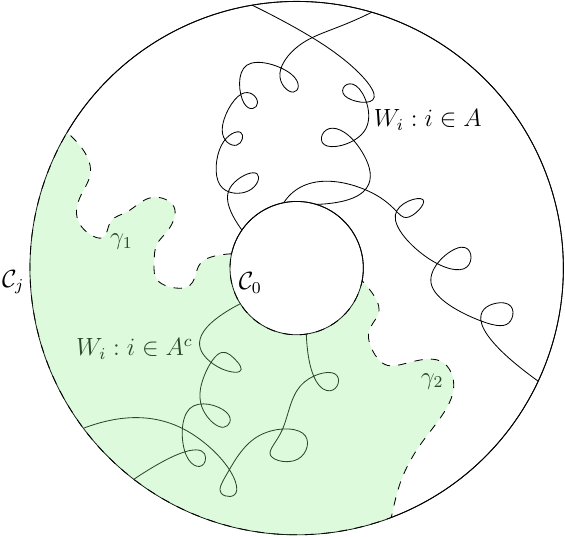}
\caption{\small Dashed curves are $\gamma_{1}$ and $\gamma_{2}$. $O_{1}$ is in green. There exists a non-empty $A\subsetneq\{1,\ldots,5\}$, such that $\{W_{i}:i\in A\}$ and $\{W_{i}:i\in A^{c}\}$ do not intersect before hitting $\mathcal{C}_{j}$.}
\label{fig_unique}
\end{figure}
\begin{proof}
It suffices to prove the first statement in the lemma. Suppose on the contrary that, on $E_{2}^{c}$, the opening $\mathcal{O}(U,\mathcal{A}(0,1,2^{j}))$ contains at least two connected components. Let $\gamma_{1},\gamma_{2}$ be two crossings that belong to different components (i.e.\ $\gamma_{i}$ is a continuous simple curve in $\mathcal{A}(0,1,2^{j})$ from $\mathcal{C}_{0}$ to $\mathcal{C}_{j}$ avoiding $U$ for $i=1,2$). Let $O_{1},O_{2}$ be the two disjoint domains surrounded by $\gamma_{1},\gamma_{2},\mathcal{C}_{0},\mathcal{C}_{j}$ (see Figure~\ref{fig_unique}). If $U\cap O_{1}=\varnothing$ or $U\cap O_{2}=\varnothing$, then $\gamma_{1}$ and $\gamma_{2}$ belong to the same connected component, which yields a contradiction. 
Now assume $U\cap O_{1}\neq\varnothing$ and $U\cap O_{2}\neq\varnothing$. 
Note that for any $i=1,\ldots,5$, if $W_{i}\cap O_{1}\neq\varnothing$ and $W_{i}\cap O_{2}\neq\varnothing$, then $W_{i}$ must hit either $\gamma_{1}$ or $\gamma_{2}$. Therefore, for each $i$, $W_{i}[0,\tau_{i}^{j}]$ belongs to either $O_{1}$ or $O_{2}$. Let $A:=\{i:W_{i}[0,\tau_{i}^{j}]\in O_{1}\}$, then $A,A^{c}\subsetneq\{1,2,3,4,5\}$, and $\{W_{i}[0,\tau_{i}^{j}]:i\in A\}$ and $\{W_{i}[0,\tau_{i}^{j}]:i\in A^{c}\}$ do not intersect, which contradicts with $E_{2}^{c}$, and concludes the proof.
\end{proof}
\begin{rmk}\label{only one opening}
Note that, on the event $E_{1}$, there is at least one connected component in $\mathcal{O}(U,\mathcal{A}(0,1,2^{n}))$. Hence, by Lemma \ref{unique connected component}, on the event $(E_{2}^{c}\cap E_{3}^{c})\cap E_{1}$, the opening $\mathcal{O}(U,\mathcal{A}(0,1,2^{n}))$ contains a unique connected component, and so do $\mathcal{O}(U,\mathcal{A}(0,1,2^{j}))$ and $\mathcal{O}(U,\mathcal{A}(0,2^{n-j},2^{n}))$. This observation will be useful in the proof of Lemma \ref{adapted}.
\end{rmk}
The next lemma shows that, the probability that five non-disconnecting Brownian excursions contain extra crossings through multiple scales decays exponentially with respect to the number of scales.
\begin{lemma}\label{local}
For $i=1,2,\ldots,5$, let $E_{i,j}$ be the event that $W_{i}$ returns to $\mathcal{D}_{jL}$ after hitting $\mathcal{C}_{(j+1)L}$. Then there exist $c,c'>0$, such that for any integer $j\leq [3L/4]$, and $i=1,2,\ldots,5$, we have
\[\mathbb{P}(E_{i,j}\cap E_{1})\leq ce^{-c'\sqrt{n}}\cdot2^{-2n}.\]
Let $E_{4}$ be the union of $(E_{i,j})_{1\le i\le 5,1\le j\le [3L/4]}$. Then by the union bound there exists $c''>0$ such that
\[\mathbb{P}(E_{4}\cap E_{1})\leq ce^{-c''\sqrt{n}}\cdot2^{-2n}.\]
\end{lemma}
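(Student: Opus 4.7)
The plan is to exploit the strict monotonicity of the Brownian disconnection exponent: from \eqref{eq:bde} we have $\xi(5)=2$ while $\xi(7)=35/12>2$. The key observation is that on $E_{i,j}\cap E_1$ the excursion $W_i$ must cross the annulus $A_{\mathrm{mid}}:=\mathcal{A}(0,2^{jL},2^{(j+1)L})$ at least three times---one outward crossing, one inward crossing (the return to $\mathcal{D}_{jL}$ forced by $E_{i,j}$), and a second outward crossing forced by the fact that $W_i$ must eventually hit $\mathcal{C}_n$---while each of the other four $W_k$, $k\ne i$, contributes at least one crossing. Thus $A_{\mathrm{mid}}$ carries at least seven non-disconnecting crossings instead of the usual five, and the ``baseline'' cost $2^{-\xi(5)L}=2^{-2L}$ in the middle annulus is replaced by $2^{-\xi(7)L}$, yielding the extra decay $2^{-(\xi(7)-\xi(5))L}\le e^{-c'\sqrt n}$ since $L=[\sqrt n]$.

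To make this precise I would introduce, for the distinguished $W_i$, the stopping times $T_1:=\tau_i^{jL}$, $T_2:=\tau(T_1,\mathcal{C}_{(j+1)L})$, $T_3:=\tau(T_2,\mathcal{C}_{jL})$ and $T_4:=\tau(T_3,\mathcal{C}_{(j+1)L})$ (with $T_3<\infty$ on $E_{i,j}$ and $T_4<\infty$ since $W_i$ terminates on $\mathcal{C}_n$), and for each $k\ne i$ the pair $S_1^{(k)}:=\tau_k^{jL}$, $S_2^{(k)}:=\tau(S_1^{(k)},\mathcal{C}_{(j+1)L})$. These seven crossings, together with the complementary pieces in $A_{\mathrm{in}}:=\mathcal{A}(0,1,2^{jL})$ and $A_{\mathrm{out}}:=\mathcal{A}(0,2^{(j+1)L},2^n)$, form a configuration whose union does not disconnect $\mathcal{C}_0$ from infinity on $E_1$.

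The strong Markov property at these stopping times, combined with a Harnack-type argument in the spirit of \cite[Lemma 2.15]{GLQ22}, shows that, conditionally on the stopping positions, the pieces in $A_{\mathrm{in}}$, $A_{\mathrm{mid}}$ and $A_{\mathrm{out}}$ are uniformly equivalent to independent Brownian crossings of the respective annuli with starting points uniform on the inner or outer boundary. Lemma \ref{sep PTP} then applies separately in each annulus with $k=5$, $7$ and $5$ crossings respectively, and after integrating over the stopping positions yields
\begin{equation*}
\mathbb{P}(E_1\cap E_{i,j})\le C\,2^{-\xi(5)\,jL}\cdot 2^{-\xi(7)\,L}\cdot 2^{-\xi(5)\,(n-(j+1)L)}\le C\,2^{-2n}\cdot 2^{-(\xi(7)-2)L}\le C\,e^{-c'\sqrt n}\,2^{-2n}.
\end{equation*}
The bound on $\mathbb{P}(E_4\cap E_1)$ then follows by a union bound over at most $5\cdot[3L/4]=O(\sqrt n)$ choices of $(i,j)$, which is absorbed into the exponential factor after slightly shrinking $c'$.

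The main technical obstacle will be the scale-factorisation of the joint non-disconnection probability, namely that the three annular probabilities multiply up to a universal constant. This is a standard ``quasi-multiplicativity'' argument (see \cite{LSW02b,KM10}): the upper bound in Lemma \ref{sep PTP} is uniform in the initial configuration of the crossings, so stratifying on the stopping positions and applying the strong Markov property yields the desired product bound. The only genuinely new ingredient compared with the classical set-up is the use of Lemma \ref{sep PTP} with $k=7$ in $A_{\mathrm{mid}}$ to accommodate the extra crossings of $W_i$ produced by $E_{i,j}$.
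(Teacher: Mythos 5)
Your proposal is correct and follows essentially the same route as the paper: decompose into inner/middle/outer annuli, note that $E_{i,j}$ forces seven (rather than five) non-disconnecting crossings of the middle annulus, apply the non-disconnection estimates with exponents $\xi(5),\xi(7),\xi(5)$ via the strong Markov property and Harnack-type uniform equivalence, and gain the factor $2^{-(\xi(7)-\xi(5))L}=2^{-\frac{11}{12}L}$, followed by a union bound over the $O(\sqrt n)$ pairs $(i,j)$. The only cosmetic difference is that the paper shrinks the middle annulus slightly (to radii $2^{jL+1/3}$ and $2^{(j+1)L-1/3}$) to cleanly implement the Harnack/quasi-multiplicativity step you describe.
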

\begin{proof}
Without loss of generality, we consider the case when $i=1$. Let $\tau_{i}(A)$ (resp.\ $\sigma_{i}(A)$) be the first (resp.\ last) hitting time of $A$ with respect to $W_{i}$, and we use $\tau_{i}$ to denote $\tau_{i}(\mathcal{C}_{n})$. On the event $E_{1,j}$, let
\begin{equation}\label{461}
t_{1}^{i}:=\tau_{i}(\mathcal{C}_{jL+1/3}),\ t_{2}^{i}:=\tau_{i}(\mathcal{C}_{(j+1)L-1/3}),
\end{equation}
and
\begin{equation}\label{decomposing extra crossings}
\begin{split}
&t_{3}^{1}:=\tau_{1}(\mathcal{C}_{(j+1)L},\mathcal{D}_{(j+1)L-1/3}),\quad t_{4}^{1}:=\tau_{1}(\mathcal{C}_{(j+1)L},\mathcal{D}_{jL+1/3})\\
&t_{5}^{1}:=\tau_{1}(\mathcal{C}_{(j+1)L},\mathcal{D}_{jL},\mathcal{C}_{jL+1/3}),\quad t_{6}^{1}:=\tau_{1}(\mathcal{C}_{(j+1)L},\mathcal{D}_{jL},\mathcal{C}_{(j+1)L-1/3});
\end{split}
\end{equation}
see Figure \ref{fig3} for an example of the decomposition above.
\begin{figure}[b]
\centering
\includegraphics[width=8cm]{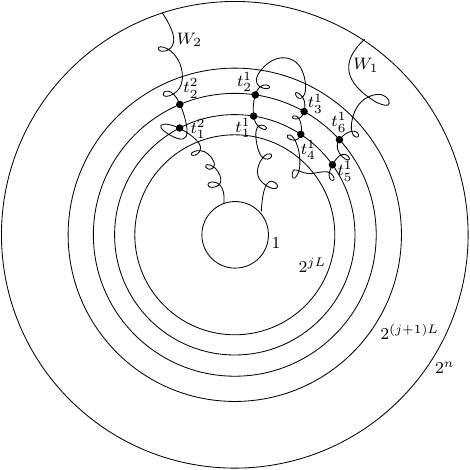}
\caption{\small An example of the decomposition on the event $E_{1,j}$. We omit $W_{3},W_{4},W_{5}$ in the figure.}
\label{fig3}
\end{figure}

Let 
\begin{equation}\label{463}
t'_{i}:=\tau_{i}(\mathcal{C}_{jL}),\quad t''_{i}:=\sigma_{i}(\mathcal{C}_{(j+1)L}),\quad i=1,2,\ldots,5.\end{equation}
Then $E_{1,j}\cap E_{1}$ implies the following three events:
\par
($F^{3}$) The union of $W_{i}[0,t'_{i}]$, $i=1,\ldots,5$, does not disconnect $\mathcal{D}_{0}$ from infinity,
\par
($F^{4}$) The union of $W_{i}[t''_{i},\tau_{i}]$, $i=1,\ldots,5$, does not disconnect $\mathcal{C}_{(j+1)L}$ from infinity.
\par
($F^{5}$) The union of $W_{i}(t)$, $t\in[t_{1}^{i},t_{2}^{i}]$, $i=1,\ldots,5$, $W_{1}(t)$, $t\in[t_{3}^{1},t_{4}^{1}]$, and $W_{1}(t)$, $t\in[t_{5}^{1},t_{6}^{1}]$ does not disconnect $\mathcal{C}_{jL+1/3}$ from infinity.

Thanks to Lemma \ref{gnd prob}, we have 
\[\mathbb{P}(F^{3})\leq c_{1}(jL)^{5}2^{-\xi(5)jL}.\]
By the strong Markov property and Lemma~\ref{lem:harnack}, conditioned on $W_{i}[0,t'_{i}]$, the joint law of $W_{i}[t_{1}^{i},t_{2}^{i}]$, $i=1,\ldots,5$, and $W_{1}[t_{3}^{1},t_{4}^{1}],W_{1}[t_{5}^{1},t_{6}^{1}]$ is uniformly equivalent to the joint law of seven independent Brownian motions, started uniformly from either $\mathcal{C}_{jL+1/3}$ or $\mathcal{C}_{(j+1)L-1/3}$ and stopped upon hitting the opposite boundary of $\mathcal{A}(0,2^{jL+1/3},2^{(j+1)L-1/3})$. Then by Lemma \ref{sep PTP} we have
\[\mathbb{P}(F^{5}|F^{3})\leq c_{2}2^{-\xi(7)L}.\]
By the strong Markov property, Lemmas~\ref{lem:harnack} and \ref{last exit}, conditioned on $W_{i}[0,t_{2}^{i}]$, $i=2,3,4,5$, and $W_{1}[0,t_{6}^{1}]$, the joint law of $W_{i}[t''_{i},\tau_{i}]$, $i=1,\ldots,5$, is uniformly equivalent to that of five independent Brownian excursions from $\mathcal{C}_{(j+1)L}$ to $\mathcal{C}_{n}$. Then by Lemma \ref{gnd prob}, we have
\[\mathbb{P}(F^{4}|F^{3},F^{5})\leq c_{3}(n-(j+1)L)^{5}2^{-\xi(5)(n-(j+1)L)}.\]
Therefore we conclude that
\[\mathbb{P}(E_{1,j}\cap E_{1})\leq\mathbb{P}(F^{3}\cap F^{4}\cap F^{5})\leq cn^{10}2^{-(\xi(7)-\xi(5))L}\cdot2^{-\xi(5)n}\leq cn^{10}2^{-\frac{11}{12}L}\cdot2^{-2n},\]
which completes the proof.
\end{proof}
As applications of Lemmas \ref{unique} and \ref{local}, the following two lemmas control the probability of ``bad events'' $B_{1},B_{2},B_{3}$ defined below with respect to $W^{i}$, ``excursions'' decomposed from a Brownian path in \eqref{excursion and bridge}. From now on, we write $\mathfrak{S}_{n}:=\mathfrak{S}_{n}(\delta)$ for brevity of notation.
\par
Recall that we let 
\begin{align}\label{delta-value}
K:=-\log_2(\delta/2)
\end{align}
be an integer. Later as we take $\delta\to 0$ in this section, we are letting $K\to \infty$ in $\delta=2^{-K+1}$.

Let $B_{1}$ (resp.\ $B_{2}$) be the event that there exists nonempty $A\subsetneq\{1,2,3,4,5\}$, such that $\{W^{i}:i\in A\}$ and $\{W^{i}:i\in A^{c}\}$ do not intersect before the first hitting of $\mathcal{C}_{-n+L}(S)$ (resp.\ after the last hitting of $\mathcal{C}_{-K-L}(S)$).
\begin{lemma}\label{unique opening}
There exist $c,c'>0$, such that
\begin{equation}\label{471}\mathbb{P}(B_{1}\cup B_{2}|S\in\mathfrak{S}_{n})\leq ce^{-c'\sqrt{n}}.\end{equation}
\end{lemma}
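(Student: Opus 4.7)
The plan is to reduce the estimate to Lemma \ref{unique} via the uniform equivalence from Lemma \ref{ac excursions}(1), and then normalize using Lemma \ref{fme}. By the union bound it suffices to show $\mathbb{P}(B_i\mid S\in\mathfrak{S}_n)\leq c\,e^{-c'\sqrt{n}}$ for $i=1,2$ separately. I focus on $B_1$; the treatment of $B_2$ is completely analogous, replacing the event $E_2$ by $E_3$ in Lemma \ref{unique}, both of which carry the same bound.

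The first step is a topological observation: because the excursions $W^1,\ldots,W^5$ lie entirely in $\mathcal{A}(S,2^{-n},\delta/2)$ and form a subset of the path $W[0,\mathcal{T}(S)]$, whenever they alone disconnect $\mathcal{C}_{-n}(S)$ from $\partial\mathcal{B}(S,\delta/2)$ inside this annulus, the whole path disconnects $\mathcal{D}_{-n}(S)$ from infinity. Contrapositively, condition (2) in Definition~\ref{gs} implies the event $E_1^{\mathrm{exc}}$: the union of $W^1,\ldots,W^5$ does not disconnect $\mathcal{C}_{-n}(S)$ from $\partial\mathcal{B}(S,\delta/2)$. Therefore
\begin{equation*}
\mathbb{P}(B_1,\,S\in\mathfrak{S}_n)\ \leq\ \mathbb{P}\bigl(V_\delta^{\mathrm{PTP}}(S)\bigr)\cdot\mathbb{P}\bigl(B_1\cap E_1^{\mathrm{exc}}\,\big|\,V_\delta^{\mathrm{PTP}}(S)\bigr).
\end{equation*}
Next, by Lemma \ref{ac excursions}(1), conditional on $V_\delta^{\mathrm{PTP}}(S)$ the joint law of $(W^1,\ldots,W^5)$ is uniformly equivalent to that of five i.i.d.\ Brownian excursions in $\mathcal{A}(S,2^{-n},2^{-K})$, with $K$ as in \eqref{delta-value}. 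After translating $S$ to the origin and rescaling by $2^n$, this annulus has log-width $n-K$; $B_1$ (concerning the first hitting of $\mathcal{C}_{-n+L}(S)$) matches $E_2(L)$, and $E_1^{\mathrm{exc}}$ matches $E_1$ in Lemma \ref{unique}.

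Applying Lemma \ref{unique} with $j=L$ and scale parameter $n-K$ therefore yields
\begin{equation*}
\mathbb{P}\bigl(B_1\cap E_1^{\mathrm{exc}}\,\big|\,V_\delta^{\mathrm{PTP}}(S)\bigr)\ \leq\ C(n-K)^{10}\,2^{-L-2(n-K)}\ \leq\ C'\,n^{10}\,2^{-\sqrt{n}}\,2^{-2n},
\end{equation*}
where $C'$ absorbs the $n$-independent factor $2^{2K}$ and $L=[\sqrt{n}]$. Combined with the trivial bound $\mathbb{P}(V_\delta^{\mathrm{PTP}}(S))\leq 1$ and the fact that $n^{10}\,2^{-\sqrt{n}}\leq c\,e^{-c'\sqrt{n}}$ for large $n$, this gives $\mathbb{P}(B_1,\,S\in\mathfrak{S}_n)\leq c\,e^{-c'\sqrt{n}}\cdot 2^{-2n}$. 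Dividing by $\mathbb{P}(S\in\mathfrak{S}_n)\asymp 2^{-2n}$ from Lemma \ref{fme} yields the claim for $B_1$, and the analogous argument for $B_2$ completes the proof via the union bound. No genuine obstacle arises in this strategy: Lemma \ref{unique} was tailored for precisely such applications, and the only slightly delicate point is the topological reduction above, which is automatic since any loop formed by the excursions inside the annulus already separates $\mathcal{D}_{-n}(S)$ from infinity.
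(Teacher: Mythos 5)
Your proposal is correct and follows essentially the same route as the paper's proof: bound $\mathbb{P}(B_i,\,S\in\mathfrak{S}_n)$ by the probability that the excursions fail to disconnect while exhibiting the splitting, transfer to five independent excursions via Lemma \ref{ac excursions}(1), invoke Lemma \ref{unique} with $j=L$, and divide by $\mathbb{P}(S\in\mathfrak{S}_n)\asymp 2^{-2n}$ from Lemma \ref{fme}. The only cosmetic difference is that you pass through the intermediate event of non-disconnection within the annulus before identifying it with $E_1$, whereas the paper uses the non-disconnection-from-infinity event directly; this changes nothing of substance.
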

\begin{proof}
Note that $S\in\mathfrak{S}_{n}$ implies that the union of $W^{1},\ldots,W^{5}$ does not disconnect $\mathcal{C}_{-n}(S)$ from infinity. By Lemma \ref{fme} for PTP, we have
\begin{equation}\label{fme delta+}
\mathbb{P}(S\in\mathfrak{S}_{n})\asymp 2^{-2n}.
\end{equation}
By Lemmas \ref{ac excursions} and \ref{gnd prob}, we have
\begin{equation}\label{nd excursion}
\mathbb{P}(W^{1},\ldots,W^{5}\ \mathrm{do\ not\ disconnect}\ \mathcal{C}_{-n}(S)\ \mathrm{from\ infinity})\asymp n^{5}2^{-2n}.
\end{equation}
Thanks to Lemmas \ref{unique} and \ref{ac excursions}, we conclude that
\begin{equation}\label{bad12}
\begin{split}
&\quad\ \mathbb{P}(B_{1}\cup B_{2},\ S\in\mathfrak{S}_{n})\\
&\leq\mathbb{P}(B_{1}\cup B_{2},\ W^{1},\ldots,W^{5}\ \mathrm{do\ not\ disconnect}\ \mathcal{C}_{-n}(S)\ \mathrm{from\ infinity})\\
&\leq c\mathbb{P}((\widetilde{E}_{2}\cup\widetilde{E}_{3})\cap\widetilde{E}_{1})\leq c'n^{10}2^{-L}2^{-2n},
\end{split}
\end{equation}
where $\widetilde{E}_{1},\widetilde{E}_{2},\widetilde{E}_{3}$ are events $E_{1},E_{2}(L),E_{3}(L)$ from Lemma \ref{unique} under suitable translation and rescaling (which maps $\mathcal{A}(0,1,2^{n-K})$ to $\mathcal{A}(S,2^{-n},2^{-K})$). Combining \eqref{fme delta+} and \eqref{bad12}, we obtain \eqref{471} as desired.
\end{proof}
For $i=1,2,\ldots,5$, let $E_{i,j}$ be the event that $W^{i}$ returns to $\mathcal{D}_{-n+jL}(S)$ after the first hitting of $\mathcal{C}_{-n+(j+1)L}(S)$. Let 
\[B_{3}:=\bigcup_{i=1}^{5}\bigcup_{j=1}^{[3L/4]}E_{i,j}.\]
\begin{lemma}\label{no extra crossing}
There exist $c,c'>0$, such that
\[\mathbb{P}(B_{3}|S\in\mathfrak{S}_{n})\leq ce^{-c'\sqrt{n}}\cdot2^{-2n}.\]
\end{lemma}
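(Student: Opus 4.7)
The plan is to follow the template of the proof of Lemma \ref{unique opening} almost verbatim, substituting Lemma \ref{local} in place of Lemma \ref{unique}. First, I would observe the inclusion
\[
B_3 \cap \{S \in \mathfrak{S}_n\} \subseteq B_3 \cap \bigl\{W^1,\ldots,W^5 \text{ do not disconnect } \mathcal{C}_{-n}(S) \text{ from infinity}\bigr\},
\]
which holds since being a good box forces the non-disconnection configuration on the excursions (condition (2) of Definition \ref{gs}, translated through the decomposition in \eqref{excursion and bridge}).

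Next, by Lemma \ref{ac excursions}(1), the joint law of $(W^1,\ldots,W^5)$ is uniformly equivalent to that of five independent Brownian excursions in $\mathcal{A}(S, 2^{-n}, \delta/2)$. After translating and rescaling by the affine map that sends this annulus to $\mathcal{A}(0, 1, 2^{n-K})$ (with $K=-\log_2(\delta/2)$ as in \eqref{delta-value}), the event on the right-hand side above becomes the image of $\widetilde{E}_4 \cap \widetilde{E}_1$, where $\widetilde{E}_4, \widetilde{E}_1$ are the events $E_4, E_1$ from Lemma \ref{local} applied with the parameter $n-K$ in place of $n$. Since $K=K(\delta)$ is a fixed constant and $[\sqrt{n-K}]$ differs from $L=[\sqrt{n}]$ by at most an $O(1)$ term, the range $j \leq [3L/4]$ in the definition of $B_3$ is contained in $j \leq [3[\sqrt{n-K}]/4]+O(1)$; the finitely many extra values of $j$ are handled by the single-scale bound in Lemma \ref{local}, which still yields $e^{-c'\sqrt{n}}$ decay.

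Applying Lemma \ref{local} (with $n-K$ in place of $n$) and the uniform equivalence above, we then obtain
\[
\mathbb{P}\bigl(B_3,\ W^1,\ldots,W^5 \text{ do not disconnect } \mathcal{C}_{-n}(S) \text{ from infinity}\bigr) \leq C\, e^{-c'\sqrt{n}} \cdot 2^{-2(n-K)} \leq C'\, e^{-c'\sqrt{n}} \cdot 2^{-2n},
\]
where $C'$ absorbs the $\delta$-dependent constant $2^{2K}$. Dividing by $\mathbb{P}(S \in \mathfrak{S}_n) \asymp 2^{-2n}$ from Lemma \ref{fme} for PTP then gives the desired conditional bound.

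There is no serious obstacle here: the lemma is essentially a corollary of Lemma \ref{local} combined with the absolute-continuity statement of Lemma \ref{ac excursions}(1) and the sharp first-moment bound of Lemma \ref{fme}. The only point requiring minor care is matching the index ranges after the rescaling, as noted above.
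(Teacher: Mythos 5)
Your proposal is correct and follows essentially the same route as the paper: bound the joint event by the non-disconnection event for the excursions, invoke Lemma \ref{ac excursions}(1) to pass to independent Brownian excursions, apply Lemma \ref{local} after rescaling $\mathcal{A}(0,1,2^{n-K})$ to $\mathcal{A}(S,2^{-n},2^{-K})$, and divide by $\mathbb{P}(S\in\mathfrak{S}_{n})\asymp2^{-2n}$ from Lemma \ref{fme}. Note that this argument (both yours and the paper's) actually yields the conditional bound $ce^{-c'\sqrt{n}}$ rather than $ce^{-c'\sqrt{n}}\cdot2^{-2n}$; the extra factor $2^{-2n}$ in the statement appears to be a typo (compare Lemma \ref{unique opening}), and the weaker bound is what is used later.
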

\begin{proof}
Thanks to Lemmas \ref{local} and \ref{ac excursions}, we have:
\begin{equation}\label{bad3}
\begin{split}
&\quad\ \mathbb{P}(B_{3},\ S\in\mathfrak{S}_{n})\\
&\leq\mathbb{P}(B_{3},\ W^{1},\ldots,W^{5}\ \mathrm{do\ not\ disconnect}\ \mathcal{C}_{-n}(S)\ \mathrm{from\ infinity})\\
&\leq c\mathbb{P}(\widetilde{E}_{4}\cap\widetilde{E}_{1})\leq ce^{-c'\sqrt{n}}2^{-2n},
\end{split}
\end{equation}
where $\widetilde{E}_{1},\widetilde{E}_{4}$ are events $E_{1},E_{4}$ from Lemma \ref{local} under suitable translation and rescaling (which maps $\mathcal{A}(0,1,2^{n-K})$ to $\mathcal{A}(S,2^{-n},2^{-K})$). Combining \eqref{fme delta+} and \eqref{bad3}, we complete the proof.
\end{proof}
Our next lemma controls the probability that ``bridges'' $X^{i}$'s cross multiple scales. We are going to show that such probability decays exponentially with respect to the number of scales.
\par
For $i=0,2,4$, let $E^{i}$ be the event that $X^{i}$ intersects $\mathcal{C}_{-K-L}(S)$, and for $i=1,3,5$, let $E^{i}$ be the event that $X^{i}$ intersects $\mathcal{C}_{-n+L}(S)$. Let
\[B_{4}:=\bigcup_{i=0}^{5}E^{i}.\]
\begin{lemma}\label{local bridge}
There exist $c,c'>0$, such that for any $i=0,\ldots,5$, we have:
\[\mathbb{P}(S\in\mathfrak{S}_{n},E^{i})\leq ce^{-c'\sqrt{n}}2^{-2n}.\]
Consequently, there exists $c''>0$, such that
\[\mathbb{P}(B_{4}|S\in\mathfrak{S}_{n})\leq c''e^{-c'\sqrt{n}}.\]
\end{lemma}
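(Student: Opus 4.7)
The strategy is to replay the argument of Lemmas~\ref{local} and~\ref{no extra crossing} but now for \emph{bridges} rather than excursions: on $\{S\in\mathfrak{S}_{n}\}\cap E^{i}$, an extra ``reach'' of the bridge $X^{i}$ forces two additional crossings of an annulus of width $L=[\sqrt{n}]$ around $S$, on top of the five crossings already contributed by the excursions $W^{1},\ldots,W^{5}$. Since $\xi(5)=2$ and $\xi(7)>\xi(5)$, the separation lemma (Lemma~\ref{sep PTP}) converts those two extra crossings into a gain of $2^{-(\xi(7)-\xi(5))L}=\exp(-c'\sqrt{n})$ relative to $\mathbb{P}(S\in\mathfrak{S}_{n})\asymp 2^{-2n}$, which produces the desired bound.

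For $i\in\{1,3,5\}$, recall from Lemma~\ref{ac excursions}(3) that, conditionally on the excursion endpoints, $X^{i}$ is a Brownian bridge confined to $\mathcal{B}(S,\delta-2^{-n-1/2})$ with both endpoints on $\mathcal{C}_{-n}(S)$ (or, for $i=5$, with one endpoint on $\mathcal{C}_{-n}(S)$ and the other in $S$) conditioned to visit $S$. On $E^{i}$ the bridge must reach $\mathcal{C}_{-n+L}(S)$ and return, producing (at least) two extra non-disconnecting crossings of the thin annulus $\mathcal{A}(S,2^{-n+1/3},2^{-n+L-1/3})$. Introducing stopping times in the spirit of \eqref{461}--\eqref{463} that decouple these crossings, applying the strong Markov property and the Harnack principle (Lemma~\ref{lem:harnack}), and then invoking Lemma~\ref{sep PTP} with seven non-disconnecting paths, the resulting seven-crossing configuration is bounded by $c\,2^{-\xi(7)L}$; meanwhile the complementary macroscopic annulus $\mathcal{A}(S,2^{-n+L-1/3},2^{-K-1/3})$ carries only five non-disconnecting excursion crossings and contributes $c\,n^{5}\,2^{-\xi(5)(n-K-L)}$ via Lemma~\ref{gnd prob}. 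The product is $c_{\delta}\,n^{5}\,2^{-2n-(\xi(7)-2)L}\leq c\,e^{-c'\sqrt{n}}\,2^{-2n}$, as required.

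The cases $i\in\{2,4\}$ are symmetric: $X^{i}$ is a Brownian bridge in $\mathbb{D}\setminus S$ with endpoints on $\mathcal{C}_{-K}(S)$ conditioned to hit $\partial\mathcal{B}(S,\delta-2^{-n-1/2})$, and on $E^{i}$ it dips inward to $\mathcal{C}_{-K-L}(S)$ before leaving, producing two extra non-disconnecting crossings of $\mathcal{A}(S,2^{-K-L+1/3},2^{-K-1/3})$, which the same separation/Harnack machinery controls. The case $i=0$ requires only a minor adjustment: here $X^{0}$ starts at the origin at macroscopic distance from $S$, and on $E^{0}$ the Brownian motion first enters $\mathcal{D}_{-K-L}(S)$, re-emerges past $\partial\mathcal{B}(S,\delta/2)$, and only later approaches $v_{1}$; Lemma~\ref{lem:harnack} absorbs the $O(1)$ origin-dependence into the constant. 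The second conclusion on $B_{4}$ then follows by a union bound over the six indices together with $\mathbb{P}(S\in\mathfrak{S}_{n})\asymp 2^{-2n}$ from Lemma~\ref{fme}.

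The main obstacle is verifying that the seven crossings of the inner annulus can legitimately be treated as uniformly equivalent to seven independent Brownian motions, despite originating from structurally different pieces: five excursions governed by Lemma~\ref{ac excursions}(1)--(2) and two crossings of a Brownian bridge described by Lemma~\ref{ac excursions}(3). This is handled exactly as in the proof of Lemma~\ref{local}: the decomposition into crossings and links at the chosen stopping times decouples the seven pieces, repeated use of Lemma~\ref{lem:harnack} redistributes their endpoints uniformly on the $1/3$-scale buffer circles, and Lemma~\ref{sep PTP} then applies verbatim.
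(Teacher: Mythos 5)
Your proposal is correct, but it takes a different route to the key exponential gain than the paper does. The paper's proof (given for $i=1$) never merges the bridge with the excursions into a seven-arm event: it first records that the five excursions alone satisfy the non-disconnection estimate $\mathbb{P}(F^{6})\asymp n^{5}2^{-2n}$, and then uses the conditional independence from Lemma~\ref{ac excursions}(3) to bound, \emph{separately}, the probability that the single bridge $X^{1}$ crosses the $L$-scale annulus $\mathcal{A}(S,2^{-n+1},2^{-n+L-1})$ without disconnecting $\mathcal{C}_{-n}(S)$ from infinity; this is a one-arm non-disconnection event costing $c\,2^{-\xi(1)L}=c\,2^{-L/4}$, and the two factors multiply. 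Your argument instead combines the two extra bridge crossings with the five excursion crossings into a seven-arm configuration in the inner annulus and extracts the gain from $\xi(7)-\xi(5)=11/12>0$, following the template of Lemma~\ref{local}. Both yield $e^{-c'\sqrt{n}}2^{-2n}$; the paper's factorization is lighter because it avoids having to decouple the bridge's outgoing and returning crossings from each other and from the excursions (the one genuinely delicate point in your version, which you correctly identify and resolve via the stopping-time decomposition, the strong Markov property and Lemma~\ref{lem:harnack}, exactly as in Lemma~\ref{local}), while your version has the mild advantage of reusing verbatim machinery already set up for the back-tracking excursions. Your treatment of the remaining indices $i=0,2,4$ and the concluding union bound via Lemma~\ref{fme} match the paper's (unwritten) ``the other cases are similar''.
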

\begin{proof}
We prove the lemma for $i=1$. Proofs for other cases are similar. On event $\{S\in\mathfrak{S}_{n}\}\cap E^{1}$, the following events occur:
\par
($F^{6}$)\quad The union of $W^{1},\ldots,W^{5}$ does not disconnect $\mathcal{C}_{-n}(S)$ from infinity.
\par
($F^{7}$)\quad $X^{1}$ intersects $\mathcal{C}_{-n+L}(S)$, and $X^{1}$ does not disconnect $\mathcal{C}_{-n}(S)$ from infinity.
\par
It has been shown in \eqref{nd excursion} that 
\[\mathbb{P}(F^{6})\asymp n^{5}2^{-2n}.\]
Thanks to Lemma \ref{ac excursions}, conditioned on the starting and ending points of $W^{1},\ldots,W^{5}$, $X^{1}$ is distributed as a Brownian bridge from $W(t_{1})$ to $W(s_{2})$ (recall \eqref{decomposition} for the definition of $t_{1}$ and $s_{2}$) conditioned on hitting $S$. On the event $F^{7}$, $X^{1}$ intersects $\mathcal{C}_{-n+L}(S)$. Note that the law of $X^{1}$ between the first hitting of $\mathcal{C}_{-n+1}(S)$ and the first hitting of $\mathcal{C}_{-n+L-1}(S)$ is uniformly equivalent to a Brownian motion started uniformly from $\mathcal{C}_{-n+1}(S)$ and stopped upon the first hitting of $\mathcal{C}_{-n+L-1}(S)$. By Lemma \ref{sep PTP}, we thus get
\[\mathbb{P}(F^{7}|W^{1},\ldots,W^{5})\leq c2^{-L\xi(1)}=c2^{-L/4}.\]
We then have
\[\mathbb{P}(F^{6}\cap F^{7})=\mathbb{P}(F^{6})\mathbb{P}(F^{7}|F^{6})\leq cn^{5}2^{-2n}2^{-L/4}.\]
Recalling that $L=[\sqrt{n}]$, we complete the proof.
\end{proof}
For $W^{i}$, $i=1,\ldots,5$, let $\tau_{i,j}$ denote the first hitting time of $\mathcal{C}_{-n+j}(S)$ with respect to $W^{i}$, and let $\sigma_{i,j}$ denote the last hitting time of $\mathcal{C}_{-n+j}(S)$ with respect to $W^{i}$. Recall the convention \eqref{delta-value} that we assume $\delta/2=2^{-K}$ for some integer $K$. We then define an increasing family of $\sigma$-fields
\begin{equation}\label{filtration}
\mathscr{F}_{m}:=\sigma\left(X^{0},\;X^i,\;W^{i}[0,\tau_{i,mL}],\;W^{i}[\sigma_{i,n-K-L},\tau_{i,n-K}],\;i=1,\ldots,5\right).
\end{equation}
Recall the event $V_{\delta}^{\mathrm{PTP}}(S)$ from Definition~\ref{gs}, and note that $\mathscr{F}_{m}$ is well-defined on $V_{\delta}^{\mathrm{PTP}}(S)$. Denoting by
\begin{equation}\label{eq:U_m}
U_{m}:=\left\{X^{0},\;X^i,\;W^{i}[0,\tau_{i,m}],\;W^{i}[\sigma_{i,n-K-L},\tau_{i,n-K}],\;i=1,\ldots,5\right\}    
\end{equation}
the collection of curves in consideration, the following lemma shows that the opening on each scale are expected to be not very bad, characterized by the non-disconnection probability up to the next scale. 
\par
For any $j\leq[3L/4]$, let $\widetilde{E}_{j}$ be the event that $U_{jL+1}$ does not disconnect $\mathcal{C}_{-n}(S)$ from infinity. Then, on the event $V_{\delta}^{\mathrm{PTP}}(S)$, the event $\mathbb{P}(\widetilde{E}_{j}|U_{jL})$ is a random variable in $\mathscr{F}_{j}$. Let
\begin{equation}\label{eq:barEj}
\overline{E}_{j}=\left\{\mathbb{P}(\widetilde{E}_{j}|V_{\delta}^{\mathrm{PTP}}(S),U_{jL})\leq n^{-10}\right\}, \quad B_{5}:=\bigcup_{j=1}^{[3L/4]}\overline{E}_{j}.
\end{equation}
\begin{lemma}\label{all good opening}
There exist $c$, such that for any integer $1\leq j\leq [3L/4]$, we have
\[\mathbb{P}(\overline{E}_{j},S\in\mathfrak{S}_{n})\leq cn^{-2}2^{-2n}.\]
Consequently, we have 
\[\mathbb{P}(B_{5}|S\in\mathfrak{S}_{n})\leq cn^{-1}.\]
\end{lemma}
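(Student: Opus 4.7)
The plan is to use the containment $\{S\in\mathfrak{S}_n\}\subseteq V_{\delta}^{\mathrm{PTP}}(S)\cap\widetilde{E}_j$ (adding curves can only create a disconnection, never destroy one) together with the tower property over $\sigma(U_{jL},V_{\delta}^{\mathrm{PTP}}(S))$. Setting $p_j:=\mathbb{P}(\widetilde{E}_j|V_{\delta}^{\mathrm{PTP}}(S),U_{jL})$ and $q_j:=\mathbb{P}(S\in\mathfrak{S}_n|V_{\delta}^{\mathrm{PTP}}(S),\widetilde{E}_j,U_{jL})$, I have $\mathbb{P}(S\in\mathfrak{S}_n|V_{\delta}^{\mathrm{PTP}}(S),U_{jL})=p_jq_j$, and by definition $p_j\le n^{-10}$ on $\overline{E}_j$. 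Therefore
\[
\mathbb{P}(\overline{E}_j,S\in\mathfrak{S}_n)=\mathbb{E}\bigl[\mathbf{1}_{\overline{E}_j}p_jq_j\mathbf{1}_{V_{\delta}^{\mathrm{PTP}}(S)}\bigr]\le n^{-10}\,\mathbb{E}\bigl[q_j\mathbf{1}_{V_{\delta}^{\mathrm{PTP}}(S)}\mathbf{1}_{p_j>0}\bigr],
\]
so the target $cn^{-2}2^{-2n}$ reduces to showing that the expectation on the right is at most $Cn^{8}\cdot 2^{-2n}$, with plenty of room to spare.

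Next, I would establish two uniform estimates. For $q_j$: given $V_{\delta}^{\mathrm{PTP}}(S)$, $\widetilde{E}_j$ and $U_{jL+1}$, Lemma~\ref{ac excursions}(3) identifies the five missing middle segments $\{W^i[\tau_{i,jL+1},\sigma_{i,n-K-L}]\}$ as conditionally independent Brownian bridges in $\mathcal{A}(S,2^{-n+jL+1},2^{-K-L})$, whose union must be non-disconnecting whenever $S\in\mathfrak{S}_n$. The (initial-data-free) upper-bound half of Lemma~\ref{sep PTP}, combined with Lemma~\ref{lem:harnack} to absorb the prescribed start/end points and with Lemma~\ref{gnd prob} to carry the excursion normalization, yields $q_j(u)\le C(n-jL)^5\cdot 2^{-2(n-jL)}$ uniformly in $u$ (using $\xi(5)=2$). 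For the indicator: $q_j>0\Rightarrow p_j>0\Rightarrow U_{jL}$ is non-disconnecting, and since only the five excursion prefixes $W^i[0,\tau_{i,jL}]$ live in the annulus $\mathcal{A}(S,2^{-n},2^{-n+jL})$, they alone must form a non-disconnecting configuration there. By Lemmas~\ref{ac excursions}(1) and~\ref{gnd prob} this conditional probability is at most $C(jL)^5\cdot 2^{-2jL}$, and combined with the standard planar Brownian $3$-visit estimate $\mathbb{P}(V_{\delta}^{\mathrm{PTP}}(S))\asymp n^{-3}$ we obtain $\mathbb{P}(V_{\delta}^{\mathrm{PTP}}(S),p_j>0)\le Cn^{-3}(jL)^5\cdot 2^{-2jL}$.

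Multiplying the two estimates and invoking AM--GM in the form $(n-jL)^5(jL)^5\le(n/2)^{10}$,
\[
\mathbb{E}\bigl[q_j\mathbf{1}_{V_{\delta}^{\mathrm{PTP}}(S)}\mathbf{1}_{p_j>0}\bigr]\le Cn^{-3}(n-jL)^5(jL)^5\cdot 2^{-2n}\le Cn^{7}\cdot 2^{-2n},
\]
and hence $\mathbb{P}(\overline{E}_j,S\in\mathfrak{S}_n)\le Cn^{-3}\cdot 2^{-2n}\le cn^{-2}\cdot 2^{-2n}$ uniformly in $j\in\{1,\ldots,[3L/4]\}$. The consequence on $B_5=\bigcup_j\overline{E}_j$ then follows from Lemma~\ref{fme} (giving $\mathbb{P}(S\in\mathfrak{S}_n)\asymp 2^{-2n}$) together with a union bound over the $[3L/4]\asymp\sqrt{n}$ values of $j$: $\mathbb{P}(B_5|S\in\mathfrak{S}_n)\le C\sqrt{n}\cdot n^{-2}\le cn^{-1}$.

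The main obstacle will be to make the upper bound on $q_j$ genuinely uniform in $u$: conditioning on the rare event $\widetilde{E}_j$ biases the endpoint distribution of the middle bridges at scale $2^{-n+jL+1}$ toward narrow openings, so one has to check that Lemma~\ref{sep PTP} still applies when the starting configuration is arbitrary rather than uniform on the boundary. This is handled by Lemma~\ref{lem:harnack}, which makes the hitting distribution on any circle uniformly equivalent to the uniform measure, together with the fact that the upper-bound side of Lemma~\ref{sep PTP} requires no niceness of initial data; integrating over the (biased) endpoint distribution can then only weaken the probability. A related minor subtlety when bounding $\mathbb{P}(U_{jL}\text{ non-disc}|V_{\delta}^{\mathrm{PTP}}(S))$ is that one is allowed to drop the non-disconnection constraints coming from the bridges and excursion tails inside $U_{jL}$, since they act at scales disjoint from $[2^{-n},2^{-n+jL}]$ and their removal only weakens the event, which is fine for an upper bound.
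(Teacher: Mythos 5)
Your overall strategy coincides with the paper's: isolate the $U_{jL}$-measurable part, cash in the defining inequality $p_j\le n^{-10}$ on $\overline{E}_j$, and pay for the remainder with first-moment non-disconnection estimates. The factorization $\mathbb{P}(S\in\mathfrak{S}_n\mid V_\delta^{\mathrm{PTP}}(S),U_{jL})=p_jq_j$ and the reduction to bounding $\mathbb{E}\bigl[q_j\mathbf{1}_{V_\delta^{\mathrm{PTP}}(S)}\mathbf{1}_{p_j>0}\bigr]$ are correct. The gap is quantitative and lies in your bound on $q_j$. Given $U_{jL+1}$, the unrevealed middle segments $W^i[\tau_{i,jL+1},\sigma_{i,n-K-L}]$ only cross the annulus $\mathcal{A}(S,2^{-n+jL+1},2^{-K-L})$, which spans $n-jL-K-L-1$ dyadic scales, not $n-jL$ of them; the best available estimate is therefore $q_j\lesssim 2^{-2(n-jL-K-L)}=2^{2(K+L)}\,2^{-2(n-jL)}$ (and for a well-separated configuration $u$ this order is actually attained, by the lower bound in Lemma~\ref{sep PTP}). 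Your claimed bound $q_j\le C(n-jL)^5 2^{-2(n-jL)}$ is thus off by a factor $2^{2L}=2^{2[\sqrt{n}]}$, which is super-polynomial and cannot be absorbed. The compensating factor $2^{-2L}$ must come from the tail pieces $W^i[\sigma_{i,n-K-L},\tau_{i,n-K}]$: these belong to $U_{jL}$, so on $\{p_j>0\}$ their union must already be non-disconnecting, and this constraint is worth an extra $L^5 2^{-2L}$ in $\mathbb{P}(V_\delta^{\mathrm{PTP}}(S),p_j>0)$. But this is precisely the constraint you declare you may drop in your closing ``minor subtlety'' remark. With both choices in place, your final estimate is really of order $n^{-10}\cdot n^{-3}(jL)^5\cdot 2^{2L}\cdot 2^{-2n}$ up to polynomial factors, i.e.\ larger than the target by roughly $2^{2\sqrt{n}}$, so the lemma does not follow as written.

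The repair is exactly the paper's event $F^{11}$ (see \eqref{f8+f11}): retain the non-disconnection of the five tails in $\mathcal{A}(S,2^{-K-L},2^{-K})$, an annulus disjoint from that of the prefixes, so that by Lemmas~\ref{ac excursions} and~\ref{gnd prob} one gets $\mathbb{P}(V_\delta^{\mathrm{PTP}}(S),\,p_j>0)\lesssim (jL)^5L^5\,2^{-2(j+1)L}$. Multiplying by the corrected $q_j\lesssim 2^{2(K+L)}2^{-2(n-jL)}$ and by $n^{-10}$ gives $\lesssim (jL)^5L^5\,n^{-10}\,2^{2K}\,2^{-2n}\le C_\delta n^{7.5}n^{-10}2^{-2n}\le cn^{-2}2^{-2n}$, as required. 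The rest of your argument (the use of Lemma~\ref{lem:harnack} to handle prescribed endpoints, the estimate $\mathbb{P}(V_\delta^{\mathrm{PTP}}(S))\asymp n^{-3}$, and the final union bound over $j\le[3L/4]$ divided by $\mathbb{P}(S\in\mathfrak{S}_n)\asymp 2^{-2n}$) is sound.
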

\begin{proof}
By considering different parts of excursions, we see that $S\in\mathfrak{S}_{n}$ implies the following events:
\par
($F^{8}$)\quad The union of $W^{i}[0,\tau_{i,jL}]$, $i=1,\ldots,5$, does not disconnect $\mathcal{C}_{-n}(S)$ from infinity.
\par
($F^{9}$)\quad The union of $W^{i}[0,\tau_{i,jL+1}]$, $i=1,\ldots,5$, does not disconnect $\mathcal{C}_{-n}(S)$ from infinity.
\par
($F^{10}$)\quad The union of $W^{i}[\tau_{i,jL+2},\tau_{i,n-K-L-1}]$, $i=1,\ldots,5$, does not disconnect $\mathcal{C}_{-n}(S)$ from infinity.
\par
($F^{11}$)\quad The union of $W^{i}[\sigma_{i,n-K-L},\tau_{i,n-K}]$, $i=1,\ldots,5$, does not disconnect $\mathcal{C}_{-n}(S)$ from infinity.
\par
By the strong Markov property, Lemmas \ref{ac excursions}, \ref{gnd prob} and \ref{last exit}, we have
\begin{equation}\label{f8+f11}
\mathbb{P}(F^{8}\cap F^{11})\asymp(jL)^{5}L^{5}2^{-2(j+1)L}\leq c_{1}n^{8}2^{-2(j+1)L}.
\end{equation}
By the definition of $\overline{E}_{j}$ and the fact that $F_{8},F_{11}$ are $\mathscr{F}_{j}$-measurable, we have
\begin{equation}\label{f9}
\mathbb{P}(\overline{E}_{j}\cap F^{9}|F^{8},F^{11})\leq n^{-10}.
\end{equation}
Then, by Lemma~\ref{lem:harnack} and the strong Markov property, conditioned on $U_{jL+1}$, the joint law of \\$W^{i}[\tau_{i,jL+2},\tau_{i,n-K-L-1}]$, $i=1,\ldots,5$, is uniformly equivalent to the joint law of five independent Brownian motions starting uniformly from $\mathcal{C}_{-n+jL+2}(S)$ and stopped upon reaching $\mathcal{C}_{-K-L-1}(S)$ conditioned not to hit $\mathcal{C}_{-n}(S)$. By Lemma~\ref{sep PTP}, it therefore follows that
\begin{equation}\label{f10}
\mathbb{P}(F^{10}|F^{8},F^{9},F^{11},\overline{E}_{j})\leq c_{2}2^{-2(n-K-(j+1)L)}.
\end{equation}
Combining \eqref{f8+f11}, \eqref{f9} and \eqref{f10}, we complete the proof of the lemma.
\end{proof}
By Lemma \ref{delta+ good box}, we can show that Lemmas~\ref{unique opening}--\ref{all good opening} still hold when we replace $\mathfrak{S}_{n}=\mathfrak{S}_{n}(\delta)$ by $\mathfrak{S}_{n}(\delta+2^{-\sqrt{n}})$.
\par
For $S\in\mathfrak{S}_{n}(\delta+2^{-\sqrt{n}})$ and any integer $1\leq i\leq[L/2]$, we write
\[\mathcal{A}_{i}:=\mathcal{A}(S,2^{-n+iL},2^{-n+(i+1)L})\]
for short and use $N_{i}$ to denote the number of $\delta$-good boxes in $\mathcal{A}_{i}$. For $T\in\mathfrak{S}_{n}(\delta)$, if 
\[\tau(T,\mathcal{C}_{-K}(S),T,\mathcal{C}_{-K}(S),T)<\tau(S,\mathcal{C}_{-K}(S),S,\mathcal{C}_{-K}(S),S),\]
we say $T$ is globally visited three times before the third global visit of $S$, which we denote by $T\lessdot S$. Recall that we use $T\prec\mathcal{A}_{i}$ to denote the event $\mathcal{D}_{-n}(T)\subset \mathcal{A}_{i}$  (see Section~\ref{subsec:notation}). Let $\widetilde{N}_{i}$ stand for the number of $\delta$-good boxes $T$ such that $T\prec\mathcal{A}_{i}$ and $T\lessdot S$.
\par
Our next lemma shows that $\widetilde{N}_{i}$ is ``almost'' adapted to $\mathscr{F}_{i}$ under the probability measure $\mathbb{P}(\cdot|S\in\mathfrak{S}_{n})$. More precisely, we have:
\begin{lemma}\label{adapted}
For any integer $1\leq i\leq L/2$, on the event $\{S\in\mathfrak{S}_{n}(\delta+2^{-\sqrt{n}})\}\cap(\cap_{j=1}^{5}B_{j}^{c})$, whether a box $T\prec\mathcal{A}_{i}$ is $\delta$-good and $T\lessdot S$ can be fully determined by the configuration $U_{(i+3)L}$ (see \eqref{eq:U_m}).
\end{lemma}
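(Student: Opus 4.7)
The plan is to show that on the event $E^{*}:=\{S\in\mathfrak{S}_{n}(\delta+2^{-\sqrt{n}})\}\cap\bigcap_{j=1}^{5}B_{j}^{c}$, every feature of $W[0,\mathcal{T}(S)]$ needed to decide the two conditions $T\in\mathfrak{S}_{n}(\delta)$ and $T\lessdot S$ is already encoded in $U_{(i+3)L}$. The argument is driven by geometric confinements of the pieces in the bridge--excursion decomposition from Section~\ref{subsec:decompose}, together with Remark~\ref{only one opening} to treat the global non-disconnection condition.

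For $1\leq i\leq L/2$ and $n$ large the inequalities $-n+L \leq -n+iL < -n+(i+2)L \leq -K-L$ give $\mathcal{A}_{i}\subset\mathcal{D}_{-n+(i+1)L}(S)\subset\mathcal{D}_{-n+(i+2)L}(S)\subset\mathcal{D}_{-K-L}(S)$ and $\mathcal{A}_{i}\cap\mathcal{D}_{-n+L}(S)=\varnothing$. On $B_{4}^{c}$ the inner bridges $X^{1},X^{3},X^{5}$ lie inside $\mathcal{D}_{-n+L}(S)$ (hence disjoint from $\mathcal{A}_{i}$), while the outer bridges $X^{0},X^{2},X^{4}$ stay outside $\mathcal{D}_{-K-L}(S)$ (hence outside $\mathcal{D}_{-n+(i+2)L}(S)$ entirely). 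On $B_{3}^{c}$, applied with $j'=i+2\leq\lfloor 3L/4\rfloor$, each excursion $W^{j}$ does not return to $\mathcal{D}_{-n+(i+2)L}(S)$ after $\tau_{j,(i+3)L}$. Combining these, the trace of $W[0,\mathcal{T}(S)]$ inside $\mathcal{D}_{-n+(i+2)L}(S)$ is precisely $X^{1}\cup X^{3}\cup X^{5}\cup\bigcup_{j=1}^{5}W^{j}[0,\tau_{j,(i+3)L}]$, all contained in $U_{(i+3)L}$.

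Since $T\subset\mathcal{A}_{i}$ and $S$ both lie inside $\mathcal{D}_{-n+(i+2)L}(S)$, every visit of $W[0,\mathcal{T}(S)]$ to $T$ or to $S$ is read off from $U_{(i+3)L}$; moreover all hits of $\mathcal{C}_{-K}(S)$ occur either at endpoints of the excursions or during outer bridges, also recorded in $U_{(i+3)L}$. In particular $T\lessdot S$ is $\sigma(U_{(i+3)L})$-measurable. For the excursion condition $V_{\delta}^{\mathrm{PTP}}(T)$, write $\widetilde{\mathcal{T}}(T):=\tau(T,\mathcal{C}_{-K}(S),T,\mathcal{C}_{-K}(S),T)$, and note $\mathrm{dist}(T,S)\leq 2^{-n+(i+1)L}\ll 2^{-\sqrt{n}}$, so $\mathcal{B}(T,\delta-2^{-n-1/2})\subset\mathcal{B}(S,\delta+2^{-\sqrt{n}}-2^{-n-1/2})$; since $S\in\mathfrak{S}_{n}(\delta+2^{-\sqrt{n}})$, each outer bridge $X^{2}$ and $X^{4}$ reaches $\partial\mathcal{B}(S,\delta+2^{-\sqrt{n}}-2^{-n-1/2})$ and thus crosses $\partial\mathcal{B}(T,\delta-2^{-n-1/2})$. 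Since visits to $T$ can only occur inside the excursions (the bridges being either too close to or too far from $S$), a short case analysis shows that any two $T$-visits separated by a hit of $\mathcal{C}_{-K}(S)$ must be separated by a traversal of $X^{2}$ or $X^{4}$, and hence by a hit of $\partial\mathcal{B}(T,\delta-2^{-n-1/2})$. Therefore $\mathcal{T}(T)=\widetilde{\mathcal{T}}(T)$ on $E^{*}$, and under $T\lessdot S$ we get $\mathcal{T}(T)\leq\mathcal{T}(S)$, so $V_{\delta}^{\mathrm{PTP}}(T)$ is also $\sigma(U_{(i+3)L})$-measurable.

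It remains to handle the non-disconnection of $\mathcal{D}_{-n}(T)$ from infinity by $W[0,\mathcal{T}(T)]$. By $B_{1}^{c}\cap B_{2}^{c}$ together with the rescaled form of Remark~\ref{only one opening} applied to $\mathcal{A}(S,2^{-n},2^{-K})$, the excursion system $\bigcup_{j=1}^{5}W^{j}$ admits a unique opening in this annulus, and in particular a unique cut set on $\mathcal{C}_{-n+(i+2)L}(S)$, which is detected from the initial excursion segments in $U_{(i+3)L}$. Outside $\mathcal{D}_{-K-L}(S)$, the path $W[0,\mathcal{T}(S)]$ consists of outer bridges, $X^{0}$, and the final excursion segments, all in $U_{(i+3)L}$; since $S$ itself is not disconnected from infinity, this unique opening continues through the outer region to $\partial\mathbb{D}$. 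Consequently the non-disconnection of $\mathcal{D}_{-n}(T)$ from infinity reduces to a purely local connectivity check of $\mathcal{D}_{-n}(T)$ to the unique cut set inside $\mathcal{D}_{-n+(i+2)L}(S)\setminus W[0,\mathcal{T}(T)]$, which is determined by $U_{(i+3)L}$. The main technical subtlety is rigorously propagating opening uniqueness from the innermost and outermost scales of the excursion annulus to the intermediate scale $\mathcal{C}_{-n+(i+2)L}(S)$, using the continuity of the excursion paths.
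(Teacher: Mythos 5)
Your setup — confining the inner bridges to $\mathcal{D}_{-n+L}(S)$ and the outer ones outside $\mathcal{D}_{-K-L}(S)$ via $B_4^c$, using $B_3^c$ to ensure the excursions do not re-enter $\mathcal{D}_{-n+(i+2)L}(S)$ after $\tau_{j,(i+3)L}$, and reading off the visit pattern and $T\lessdot S$ from $U_{(i+3)L}$ — matches the paper's argument, and your handling of the excursion-distance condition via $\mathcal{B}(T,\delta-2^{-n-1/2})\subset\mathcal{B}(S,\delta+2^{-\sqrt{n}}-2^{-n-1/2})$ is exactly why the lemma conditions on $(\delta+2^{-\sqrt{n}})$-goodness. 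However, the final paragraph contains a genuine gap: you assert that non-disconnection of $\mathcal{D}_{-n}(T)$ by $W[0,\mathcal{T}(T)]$ ``reduces to a purely local connectivity check'' of $\mathcal{D}_{-n}(T)$ to the unique cut set $\mathcal{O}_{i+2}$ inside $\mathcal{D}_{-n+(i+2)L}(S)$, but this reduction is precisely the hard content of the lemma and you do not prove it. The ``if'' direction is immediate; the problem is the ``only if'' direction. The uniqueness of the opening furnished by $B_1^c\cap B_2^c$ and Remark~\ref{only one opening} applies to the \emph{full} path $W[0,\mathcal{T}(S)]$, whereas $T$ only needs to avoid the \emph{shorter} path $W[0,\mathcal{T}(T)]$. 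A priori, $\mathcal{A}(S,2^{-n+(i+2)L},2^{-K})\setminus W[0,\mathcal{T}(T)]$ could contain an additional crossing component $\mathcal{K}$, disjoint from the global opening, through which $T$ escapes to infinity; its cut set $\mathcal{O}'$ on $\mathcal{C}_{-n+(i+2)L}(S)$ would be disjoint from $\mathcal{O}_{i+2}$, and then $T$ would be non-disconnected without being locally connected to $\mathcal{O}_{i+2}$.

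The paper rules this scenario out by an explicit topological argument: assuming such a component $\mathcal{K}$ exists, it builds curves $\gamma_1$ (through $\mathcal{O}_{i+2}$), $\gamma_2$ (through $\mathcal{O}'$) and, using $B_2^c$, a connecting arc $\gamma_3$ in the outer annulus, all avoiding the relevant portions of the path; these together with $\mathcal{C}_{-n+(i+1)L}(S)$ enclose a bounded domain $\mathfrak{D}$, and one of the excursions $W^{j_k}$ passing through an endpoint of $\mathcal{O}_{i+2}$ or $\mathcal{O}'$ is then trapped in $\mathfrak{D}$ on $B_3^c$, contradicting the fact that every $W^j$ terminates on $\mathcal{C}_{-K}(S)$. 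Without an argument of this kind (or some substitute), the measurability of the disconnection condition does not follow, so you should supply this step rather than flag it as a ``subtlety.'' Relatedly, you should also justify that the cut set of $W[0,\mathcal{T}(T)]$ on $\mathcal{C}_{-n+(i+2)L}(S)$ coincides with the one computed from the longer configuration $U_{(i+3)L}$, which again uses the confinement events.
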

In other words, $\widetilde{N}_{i}$ is $\mathscr{F}_{(i+3)}$-measurable under a slightly tilted probability measure
\begin{equation}\label{conditional measure}
\overline{\mathbb{P}}_{n}(\cdot):=\mathbb{P}(\cdot|S\in\mathfrak{S}_{n}(\delta+2^{-\sqrt{n}}),\cap_{j=1}^{5}B_{j}^{c}).
\end{equation}
\begin{proof}
On the event $S\in\mathfrak{S}_{n}(\delta+2^{-\sqrt{n}})$, the event $V_{\delta}^{\mathrm{PTP}}(S)$ occurs, so $U_{iL}$ and $\mathscr{F}_{i}$ are well-defined. 
\par
On $B_{1}^{c}\cap B_{2}^{c}$, openings $\mathcal{O}\left(U_{L},\mathcal{A}(S,2^{-n},2^{-n+L})\right)$ and $\mathcal{O}\left(U_{L},\mathcal{A}(S,2^{-K-L},2^{-K})\right)$ both contain only one connected component. Therefore $\mathcal{O}\left(W[0,\mathcal{T}],\mathcal{A}(S,2^{-n},2^{-K})\right)$ has a unique connected component, where $\mathcal{T}:=\mathcal{T}(S)$ is the third global visit of $S$ (see \eqref{eq:Tc}). On the event $B_{3}^{c}$, once we observe $U_{(i+3)L}$, we immediately know the full configuration of $W^{i}$ in $\mathcal{D}_{-n+(i+2)L}(S)$ since $W^{i}$ cannot return to $\mathcal{C}_{-n+(i+2)L}(S)$ after hitting $\mathcal{C}_{-n+(i+3)L}(S)$. On the event $B_{4}^{c}$, bridges $X^{i}$ are in either $\mathcal{D}_{-n+L}(S)$ or $(\mathcal{D}_{-K-L}(S))^{c}$, which will not affect the opening in $\mathcal{A}(S,2^{-n+iL},2^{-n+(i+3)L})$.
\par
For a typical $n$-box $T\prec\mathcal{A}_{i}$, we now determine whether $T\in\mathfrak{S}_{n}(\delta)$ and $T\lessdot S$ by the following procedure. First, whether $T$ is visited by three different $W^{i}$ is obviously $\mathscr{F}_{(i+3)}$-measurable. Second, we need to check whether $\mathcal{C}_{-n}(T)$ is not disconnected from infinity by the path until the third global visit of $T$. By the previous paragraph, we know that under $\overline{\mathbb{P}}_{n}$, the following cut set (see Section~\ref{subsec:notation})
\[\mathcal{O}_{i+2}:=\mathcal{O}\left(W[0,\mathcal{T}],\mathcal{A}(S,2^{-n},2^{-K}),2^{-n+(i+2)L}\right)\]
is $\mathscr{F}_{(i+3)}$-measurable, and it is a continuous arc on $\mathcal{C}_{-n+(i+2)L}(S)$. Thanks to the event $B_{3}^{c}\cap B_{4}^{c}$, the remaining part of the curve $W[\mathcal{T}(T),\mathcal{T}]$ is fully contained in $\mathcal{D}_{-n+(i+2)L}(S)$. Therefore, we have
\[\mathcal{O}_{i+2}=\mathcal{O}(W[0,\mathcal{T}(T)],\mathcal{A}(S,2^{-n},2^{-K}),2^{-n+(i+2)L}).\]
Note that $\mathrm{dist}(S,T)\leq 2^{-n+(i+1)L}<2^{-\sqrt{n}}$. On the event $\{S\in\mathfrak{S}_{n}(\delta+2^{-\sqrt{n}})\}$, we have $T\in\mathfrak{S}_{n}(\delta)$ if and only if $T$ is not disconnected by $W[0,\mathcal{T}(T)]$ from infinity.
\par
We now claim that $T$ is not disconnected by $W[0,\mathcal{T}(T)]$ from infinity if and only if there exists a continuous curve $\gamma=\gamma(t)$, $t\in[0,1]$, such that 
\[\gamma(0)\in\mathcal{C}_{-n}(T),\ \gamma(1)\in \mathcal{O}_{i+2},\ \gamma\subset\mathcal{D}_{-n+(i+2)L}(S),\ \gamma\cap U_{(i+3)L}\cap W[0,\mathcal{T}(T)]=\varnothing.\] 
The ``if'' direction is trivial. For the ``only if'' direction (see Figure \ref{fig_adapted1} for an illustration of various objects), suppose that $T$ is not disconnected by $W[0,\mathcal{T}(T)]$ from infinity and there does not exist any curve $\gamma$ as described above. In this case, there exists at least one connected component of $\mathcal{D}_{-K}(S)\backslash W[0,\mathcal{T}(T)]$ that intersects $\mathcal{C}_{-n}(T)$ and $\mathcal{C}_{-K}(S)$, denoted by $\mathcal{K}$. Denote by $\mathcal{O}'$ the cut set of $\mathcal{K}$ on $\mathcal{C}_{-n+(i+2)L}(S)$ (with respect to the general annulus $\mathcal{D}_{-K}(S)\backslash\mathcal{D}_{-n}(T)$), we then have $\mathcal{O}_{i+2}\cap\mathcal{O}'=\varnothing$, and $\mathcal{O}'$ is a continuous arc on $\mathcal{C}_{-n+(i+2)L}(S)$. We now focus on endpoints of  $\mathcal{O}_{i+2}$ and $\mathcal{O}'$, namely $P_{1},P_{2}$ and $P_{3},P_{4}$. For $k=1,2,3,4$, we assume that $P_{k}$ is visited by $W^{j_{k}}$. Note that there exist continuous curves $\gamma_{1},\gamma_{2}$, such that $\gamma_{1}$ starts from $\mathcal{C}_{-n}(S)$, passes through $\mathcal{O}_{i+2}$ and stops on $\mathcal{C}_{-K}(S)$, with $\gamma_{1}\cap W[0,\mathcal{T}]=\varnothing$, and $\gamma_{2}$ starts from $\mathcal{C}_{-n}(T)$, passes through $\mathcal{O}'$ and stops on $\mathcal{C}_{-K}(S)$, with $\gamma_{2}\cap W[0,\mathcal{T}(T)]=\varnothing$. Note that on $B_{2}^{c}$, the opening $\mathcal{O}(W[0,\mathcal{T}],\mathcal{A}(S,2^{-n+(i+2)L},2^{-K}))$ contains a unique connected component, which implies that there exists a continuous curve $\gamma_{3}$ in $\mathcal{A}(S,2^{-n+(i+2)L},2^{-K})$ that connects $\gamma_{1},\gamma_{2}$ and does not intersect $W[0,\mathcal{T}]$. We now consider the bounded domain $\mathfrak{D}$ enclosed by $\gamma_{1},\gamma_{2},\gamma_{3}$ and $\mathcal{C}_{-n+(i+1)L}$, and there exists $k$, such that the path $W^{j_{k}}$ started from $P_{j}$ cannot escape from $\mathfrak{D}$ on $B_{3}^{c}$, which yields a contradiction since all $W^{i}$ end at $\mathcal{C}_{-K}(S)$, and thus we complete the proof of the claim.
\begin{figure}[t]
\centering
\includegraphics[width=6cm]{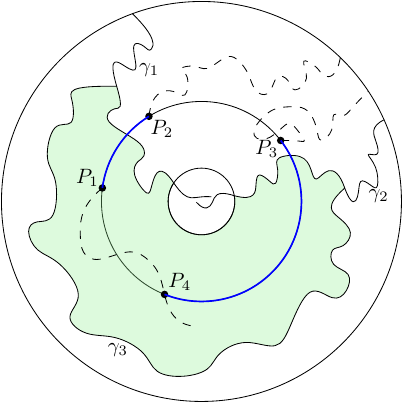}
\caption{\small An illustration for the proof of ``only if'' direction. The sets $\mathcal{O}_{i+2},\mathcal{O}'$ are in blue. Three discs in this figure are $\mathcal{C}_{-n+(i+1)L}(S)$, $\mathcal{C}_{-n+(i+2)L}(S)$ and $\mathcal{C}_{-K}(S)$. Paths $W^{i}$'s (after hitting $P_{j}$'s) are dashed. The path(s) passing through $P_{1}$ or $P_{4}$ are trapped in $\mathfrak{D}$ (which is in green).}
\label{fig_adapted1}
\end{figure}
\par
Finally, note that everything mentioned above is fully determined by $U_{(i+3)L}$, the result hence follows.
\end{proof}
\subsection{Proof of Proposition \ref{cgs}}\label{subsec:martingale}
\begin{proof}[Proof of Proposition \ref{cgs}]
Recall \eqref{delta-value} that $\delta/2=2^{-K}$ for some integer $K$. For $S\in\mathfrak{S}_{n}$, we recall the definition of $\mathcal{A}_{i},N_{i},\widetilde{N}_{i}$ above Lemma \ref{adapted}. It thus suffices to prove:
\begin{equation}\label{cgs1}
\widetilde{\mathbb{P}}_{n}\left(\sum_{i=1}^{[L/2]}\widetilde{N}_{i}\leq\lambda L\right)\leq cn^{-\gamma},
\end{equation}
where $\widetilde{\mathbb{P}}_{n}(\cdot)$ is defined in \eqref{conditioned on box}, and the constant $c$ depends on the choice of $K$ (i.e.\ $\delta$). By Lemma \ref{adapted}, $\widetilde{N}_{i}$ is $\mathscr{F}_{(i+3)}$-measurable under the conditional probability measure $\overline{\mathbb{P}}_{n}$.
\par
Next, we need the following proposition, which will be proved in the next section.
\begin{prop}\label{exist good box}
There exist $C_{1},C_{2}$, such that for any large $n$ and any $2\leq i\leq L/2$, we have
\begin{equation}\label{eq:exist good box}
\overline{\mathbb{P}}_{n}(\widetilde{N}_{i}\geq C_{1}L|\mathscr{F}_{i-1})\geq C_{2}/i.    
\end{equation}
\end{prop}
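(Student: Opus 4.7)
The plan is to prove Proposition \ref{exist good box} via a conditional Paley-Zygmund argument, relying on conditional first and second moment estimates of $\widetilde{N}_i$ given $\mathscr{F}_{i-1}$ under the tilted probability measure $\overline{\mathbb{P}}_n$. These estimates are the natural conditional extensions of Corollary \ref{cme} to the setting where the initial configuration at scale $2^{-n+(i-1)L}$ is prescribed by $\mathscr{F}_{i-1}$.

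For the conditional first moment, I would write
\[\overline{\mathbb{E}}_n[\widetilde{N}_i | \mathscr{F}_{i-1}] = \sum_{T \prec \mathcal{A}_i} \overline{\mathbb{P}}_n(T \in \mathfrak{S}_n,\, T \lessdot S | \mathscr{F}_{i-1}).\]
For each $T \prec \mathcal{A}_i$ with $\mathrm{dist}(S, T) \asymp r \in [2^{-n+iL}, 2^{-n+(i+1)L}]$, I expect the following conditional analog of Corollary \ref{cme}:
\[\overline{\mathbb{P}}_n(T \in \mathfrak{S}_n,\, T \lessdot S | \mathscr{F}_{i-1}) \asymp r^{-2} \cdot 2^{-2n}.\]
The proof of this estimate follows the path decomposition used in Lemmas \ref{fme} and \ref{sme}: one encodes the event into a three-arm configuration around $T$ (with three of the five excursions visiting $T$), non-disconnection events at intermediate scales between $T$ and $S$, and a non-disconnection condition across the scales containing $S$. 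The key technical point is that under $\overline{\mathbb{P}}_n$ the initial configuration specified by $\mathscr{F}_{i-1}$ at scale $2^{-n+(i-1)L}$ lies in the event $B_5^c$, so by Lemma \ref{all good opening} the conditional non-disconnection probability at the next scale is at least $n^{-10}$. This suffices for the initial-configuration separation lemma (Lemma \ref{sep PTP-1}) to upgrade the starting configuration to an $\alpha$-nice one at the cost of a bounded multiplicative factor, after which the standard moment-bound machinery can proceed. Summing over $T \prec \mathcal{A}_i$ and evaluating $\int_{2^{-n+iL}}^{2^{-n+(i+1)L}} r^{-2} \cdot 2\pi r \, dr \asymp L$ then yields $\overline{\mathbb{E}}_n[\widetilde{N}_i | \mathscr{F}_{i-1}] \geq c_0 L$ for some constant $c_0 > 0$.

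By a parallel argument for the three-body probability, I expect
\[\overline{\mathbb{P}}_n(T, U \in \mathfrak{S}_n,\, T, U \lessdot S | \mathscr{F}_{i-1}) \leq c_1 D_1^{-2} D_2^{-2} \cdot 2^{-4n},\]
where $D_1, D_2$ are the minimum and maximum pairwise distances among $\{S, T, U\}$. Integration over $T, U \in \mathcal{A}_i$ is dominated by pairs with $|T - U| \ll r_T$, which produce a logarithmic factor of order $iL$ from integration on dyadic scales down to $2^{-n}$, combined with an additional factor of $L$ from the radial integration of $r_T^{-2}$; this gives $\overline{\mathbb{E}}_n[\widetilde{N}_i^2 | \mathscr{F}_{i-1}] \leq c_2 i L^2$. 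The conditional Paley-Zygmund inequality then yields, with $C_1 := c_0/2$ and a suitable $C_2 > 0$,
\[\overline{\mathbb{P}}_n(\widetilde{N}_i \geq C_1 L | \mathscr{F}_{i-1}) \geq \frac{(\overline{\mathbb{E}}_n[\widetilde{N}_i | \mathscr{F}_{i-1}] - C_1 L)^2}{\overline{\mathbb{E}}_n[\widetilde{N}_i^2 | \mathscr{F}_{i-1}]} \geq \frac{C_2}{i}.\]

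The main obstacle is establishing the conditional moment estimates uniformly over the initial configurations admissible under $\mathscr{F}_{i-1}$. Whereas the unconditional estimates in Section \ref{sec:moment} start the curves from uniformly chosen points on a circle, here the initial data is a random configuration of five stopped excursions (together with the global bridges and far-end excursion pieces already recorded in $\mathscr{F}_{i-1}$), and one only knows it satisfies the $B_5^c$-type non-disconnection bound. Lemma \ref{sep PTP-1} is the central tool to absorb this irregularity: it allows one to replace the initial data by an $\alpha$-nice configuration at negligible cost, after which the path decomposition, the Harnack principle (Lemma \ref{lem:harnack}), and the ordinary separation lemma (Lemma \ref{sep PTP}) combine exactly as in the proofs of Lemmas \ref{fme}, \ref{sme} and \ref{tme} to produce the desired conditional bounds. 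Finally, the tilting from $\widetilde{\mathbb{P}}_n$ to $\overline{\mathbb{P}}_n$ only changes the relevant probabilities by bounded factors, since Lemmas \ref{unique opening}--\ref{all good opening} yield $\mathbb{P}(\cup_{j=1}^{5} B_j | S \in \mathfrak{S}_n) = O(n^{-1})$.
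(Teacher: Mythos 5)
Your overall strategy coincides with the paper's: conditional first and second moment bounds given $\mathscr{F}_{i-1}$ (the analogue of Corollary \ref{cme} with an initial configuration, obtained via the separation lemma with initial configuration, Lemma \ref{sep PTP-1}, exactly as in the paper's Lemma \ref{cme-f}), followed by a conditional Paley--Zygmund inequality yielding $c/i$ (the paper's Lemma \ref{exist good box+}). The moment computations, including the $c_0L$ lower bound and the $c_2iL^2$ upper bound, match the paper's.

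However, there is a genuine gap in your final step, the transfer from $\widetilde{\mathbb{P}}_n$ to $\overline{\mathbb{P}}_n$. You assert that the tilting ``only changes the relevant probabilities by bounded factors'' because $\widetilde{\mathbb{P}}_n(\cup_{j=1}^5 B_j)=O(n^{-1})$. This is an \emph{unconditional} bound, whereas the statement to be proved is a conditional bound that must hold for ($\overline{\mathbb{P}}_n$-almost) every realization of $\mathscr{F}_{i-1}$. Since
$\overline{\mathbb{P}}_n(A\mid\mathscr{F}_{i-1})=\widetilde{\mathbb{P}}_n(A\cap\bigcap_jB_j^c\mid\mathscr{F}_{i-1},E)/\widetilde{\mathbb{P}}_n(\bigcap_jB_j^c\mid\mathscr{F}_{i-1},E)$ (with $E$ the $\mathscr{F}_{i-1}$-measurable part of $\bigcap_jB_j^c$), what is actually needed is a bound on $\widetilde{\mathbb{P}}_n(\cup_jB_j\mid\mathscr{F}_{i-1})$ that is $o(1/i)$ \emph{uniformly} over admissible initial configurations; a small unconditional probability only gives such a conditional bound outside an exceptional $\mathscr{F}_{i-1}$-set, which is not enough here. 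This is exactly why the paper's proof of Proposition \ref{exist good box} first sorts out which bad events are already $\mathscr{F}_{i-1}$-measurable (and hence excluded by the conditioning), and then proves, for the remaining future bad events $B_3$ and $B_5$, the conditional estimates $\widetilde{\mathbb{P}}_n(B_5\mid\mathscr{F}_{i-1},E_i^{\#})\le c n^{-1}$ and $\widetilde{\mathbb{P}}_n(B_3\cap B_5^c\mid\mathscr{F}_{i-1},E_i^{\#})\le 2^{-\sqrt n/6}$ via a ratio of a conditional upper bound on $\{S\ \text{good}\}\cap E_{1,j-1}$ and a conditional lower bound on $\{S\ \text{good}\}$ (equations \eqref{eq:e1j}--\eqref{1jl}). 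This conditional control of the bad events, which is the most technical part of the argument, is missing from your proposal and does not follow from Lemmas \ref{unique opening}--\ref{all good opening} alone.
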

We now set $Z_{i}=\mathds{1}\{\widetilde{N}_{4i}\geq C_{1}L\}$. By Proposition \ref{exist good box}, for any $1\leq i\leq[L/8]$, we have
\begin{equation}\label{good box probability}\overline{\mathbb{P}}_{n}(Z_{i}=1|Z_{1},\ldots,Z_{i-1})\geq C_{2}/i.\end{equation}
Note that once there exists $i$, such that $Z_{i}=1$, then $\sum_{i=1}^{[L/2]}\widetilde{N}_{i}\geq C_{1}L$. Therefore, there exist $c_{1},c_{2}>0$ such that
\begin{align*}
&\overline{\mathbb{P}}_{n}\left(\sum_{i}^{[L/2]}\widetilde{N}_{i}\leq C_{1}L\right)\leq\overline{\mathbb{P}}_{n}\left(Z_{i}=0, 1\le i \le [L/8]\right)\\
\leq&\prod_{i=2}^{[L/8]}\left(1-\frac{C_{2}}{i}\right)\leq c_{1}e^{-C_{2}\log(L/8)}\leq c_{2}n^{-C_{2}/2}.
\end{align*}
By Lemmas \ref{unique opening}--\ref{all good opening}, there exist constants $c_{3},c_{4}>0$, such that
\[\widetilde{\mathbb{P}}_{n}(B_{j})\leq c_{3}e^{-c_{4}\sqrt{n}},\quad j=1,2,3,4\]
and
\[\widetilde{\mathbb{P}}_{n}(B_{5})\leq c_{3}n^{-1}.\]
We finally obtain
\[\widetilde{\mathbb{P}}_{n}\left(\sum_{j=1}^{[L/2]}\widetilde{N}_{j}\leq C_{1}L\right)\leq\overline{\mathbb{P}}_{n}\left(\sum_{j=1}^{[L/2]}\widetilde{N}_{j}\leq C_{1}L\right)+\widetilde{\mathbb{P}}_{n}(\cup_{j=1}^{5}B_{j})\leq c_{6}n^{-\gamma},\]
where $\gamma=\min\{1,C_{2}/2\}$, which completes the proof.
\end{proof}
\subsection{Proof of Proposition \ref{exist good box}}\label{subsec:pf-4.13}
In this section, we will prove Proposition \ref{exist good box}. To this end, we need a stronger version of Corollary \ref{cme} by taking the filtration $\mathscr{F}_{i}$ into consideration.
\begin{lemma}\label{cme-f}
For a fixed $n$-box $S$, recall that $\widetilde{\mathbb{P}}(\cdot):=\mathbb{P}(\cdot|S\in\mathfrak{S}_{n}(\delta+2^{-\sqrt{n}}))$, and assume that
\[\mathrm{dist}(S,T),\mathrm{dist}(S,U)\geq 2^{-n+jL}.\]
Then there exist $c,c_{1},c_{2}>0$, such that
\begin{equation}\label{cme0-1}
c_{1}\mathrm{dist}(S,T)^{-2}2^{-2n}\leq\widetilde{\mathbb{P}}_{n}(T\in\mathfrak{S}_{n}|\mathscr{F}_{j-1})\leq c_{2}\mathrm{dist}(S,T)^{-2}2^{-2n},
\end{equation}
and
\begin{equation}\label{cme0-2}
\widetilde{\mathbb{P}}_{n}(T,U\in\mathfrak{S}_{n}|\mathscr{F}_{j-1})\leq cD_{1}^{-2}D_{2}^{-2}2^{-4n},
\end{equation}
where
\[D_{1}=\min\{\mathrm{dist}(S,T),\mathrm{dist}(T,U),\mathrm{dist}(U,S)\},\]
and
\[D_{2}=\max\{\mathrm{dist}(S,T),\mathrm{dist}(T,U),\mathrm{dist}(U,S)\},\] and $\mathscr{F}_{j}$ is the increasing family of $\sigma$-fields defined in \eqref{filtration}.
Moreover,
\begin{equation}\label{cme1}
\widetilde{\mathbb{P}}_{n}(T\in\mathfrak{S}_{n},T\lessdot S|\mathscr{F}_{j-1})\geq c'\mathrm{dist}(S,T)^{-2}2^{-2n}
\end{equation}
for some $c'>0$.
\end{lemma}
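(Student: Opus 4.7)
The plan is to reduce the conditional estimates to the unconditional ones proved in Section~\ref{sec:moment} (Lemmas \ref{fme}, \ref{sme} and Corollary \ref{cme}), using the path decomposition of Section~\ref{subsec:decompose} together with the separation lemmas with initial configurations (Lemma \ref{sep PTP-1}). The key observation is that $\mathscr{F}_{j-1}$ encodes only (i) the bridges $X^{0},X^{i}$ and (ii) the initial portions $W^{i}[0,\tau_{i,(j-1)L}]$ and the ``outer tails'' $W^{i}[\sigma_{i,n-K-L},\tau_{i,n-K}]$; since $T,U$ are at distance at least $2^{-n+jL}$ from $S$, the relevant events concerning $T$ and $U$ live in the annular region $\mathcal{A}(S,2^{-n+(j-1)L},2^{-K-L})$, which is traversed by pieces of $W^{i}$ that are ``free'' given $\mathscr{F}_{j-1}$.

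First I would prove \eqref{cme0-1}. Conditioned on $\mathscr{F}_{j-1}$ and $S\in\mathfrak{S}_{n}(\delta+2^{-\sqrt n})$, the five hitting points $\{W^{i}(\tau_{i,(j-1)L})\}_{1\le i\le 5}$ on $\mathcal{C}_{-n+(j-1)L}(S)$ are given, together with the configuration of the five excursions inside $\mathcal{D}_{-n+(j-1)L}(S)$, which by assumption does not disconnect $\mathcal{C}_{-n}(S)$ from $\mathcal{C}_{-n+(j-1)L}(S)$. By the strong Markov property and Lemma~\ref{lem:harnack}, the five continuations of $W^{i}$ from $\mathcal{C}_{-n+(j-1)L}(S)$ up to $\mathcal{C}_{-n+jL}(S)$ are uniformly equivalent to five independent Brownian motions starting from those given entry points; the non-disconnection event for the full excursions then yields, by the separation lemma with initial configurations (Lemma \ref{sep PTP-1}), a configuration on $\mathcal{C}_{-n+jL}(S)$ which is $\alpha$-nice with constant probability. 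Taking this ``separated'' event as a pivotal base, I would redo the PTP decomposition of Lemma \ref{fme} around $T$: a middle annulus $\mathcal{A}(T,2^{-n+jL}/4,\mathrm{dist}(S,T)/4)$ between $T$ and the scale where it sees $\mathscr{F}_{j-1}$ ``smeared out'', times the local $\mathcal{A}(T,2^{-n},2^{-n+jL}/4)$ piece. Applying Lemmas \ref{sep PTP} and \ref{ni prob} to these annuli (which are disjoint from the data in $\mathscr{F}_{j-1}$) with $\xi(5)=2$ and summing over the finitely many orderings of visits, I obtain the upper bound $c_{2}\,\mathrm{dist}(S,T)^{-2}2^{-2n}$ and, reversing the direction with the lower bound in the separation lemma, the matching lower bound $c_{1}\,\mathrm{dist}(S,T)^{-2}2^{-2n}$.

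For \eqref{cme0-2} I would follow the case analysis in the proof of Lemma \ref{tme} verbatim, replacing each unconditional non-disconnection estimate on an annulus by its conditional version obtained as above (this requires only that the annuli used to control the crossings are disjoint from the sets determined by $\mathscr{F}_{j-1}$, which is guaranteed by $\min\{\mathrm{dist}(S,T),\mathrm{dist}(S,U)\}\ge 2^{-n+jL}$). The estimate $cD_{1}^{-2}D_{2}^{-2}2^{-4n}$ then follows in the same way. For the lower bound \eqref{cme1} with the additional constraint $T\lessdot S$, I would fix the order of the six macroscopic events (three visits to $T$, three visits to $S$) as ``all visits to $T$ precede all visits to $S$'', and construct explicit ``tubes'' in the spirit of the lower bound in Lemma~\ref{sme}, forcing the bridges to stay in tubes and all crossings to be $\alpha$-nice; the conditional lower bounds from the separation lemma with initial configurations then furnish the desired estimate uniformly in $\mathscr{F}_{j-1}$.

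The main technical obstacle is that the conditioning on $\mathscr{F}_{j-1}$ fixes a possibly very rough configuration on $\mathcal{C}_{-n+(j-1)L}(S)$, so one cannot directly quote the unconditional separation lemmas. The whole point of invoking Lemma \ref{sep PTP-1} (and its non-intersection counterpart) is precisely to bridge this gap: given an arbitrary non-disconnecting initial configuration in $\mathcal{D}_{-n+(j-1)L}(S)$, after one extra scale one recovers an $\alpha$-nice configuration with uniform positive probability, after which the standard annular estimates apply. Once this bootstrap is in place, the rest of the proof is a direct transcription of the proofs of Lemmas \ref{fme} and \ref{tme}.
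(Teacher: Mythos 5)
Your overall strategy (reduce to unconditional annulus estimates via the strong Markov property, Lemma \ref{lem:harnack} and the separation lemma with initial configurations, then reassemble as in Lemmas \ref{fme}--\ref{tme}) is the same as the paper's, and your treatment of the upper bounds \eqref{cme0-1}--\eqref{cme0-2} is fine in spirit. However, there is a genuine error in your construction for the lower bound \eqref{cme1}: you propose to realize $\{T\in\mathfrak{S}_n,\,T\lessdot S\}$ by forcing \emph{all three visits to $T$ to precede all three visits to $S$}. This ordering is incompatible with the sharp exponent. Under $\widetilde{\mathbb{P}}_{n}(\cdot\,|\mathscr{F}_{j-1})$ the arm structure of $S$ is already prescribed: the path alternates between $S$ and $\partial\mathcal{B}(S,3\delta/4)$ via the five excursions $W^{1},\ldots,W^{5}$, and $X^{0}$ (which is $\mathscr{F}_{j-1}$-measurable and typically stays outside $\mathcal{D}_{-K-L}(S)\supset T$) cannot be used to visit $T$. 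Hence your ordering forces a single incoming excursion to make five crossings of $\mathcal{A}(T,2^{-n},\delta/2)$ on its own before the first visit to $S$; together with the remaining four arms of $S$, the outer annulus $\mathcal{A}(\,\cdot\,,2\,\mathrm{dist}(S,T),3\delta/4)$ then carries at least nine non-disconnecting crossings, so the scenario costs $(\mathrm{dist}(S,T)/\delta)^{\xi(9)}$ there instead of $(\mathrm{dist}(S,T)/\delta)^{\xi(5)}$. After normalizing by $\mathbb{P}(S\in\mathfrak{S}_{n}(\delta+2^{-\sqrt n})\,|\,\mathscr{F}_{j-1})$, the resulting lower bound is smaller than the required $c'\,\mathrm{dist}(S,T)^{-2}2^{-2n}$ by the factor $(\mathrm{dist}(S,T)/\delta)^{\xi(9)-\xi(5)}$, which degenerates as $\mathrm{dist}(S,T)\to 2^{-n+jL}$. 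The correct construction (as in the lower bound of Lemma \ref{sme}) must \emph{interleave} the visits, e.g.\ $T\rightsquigarrow S\rightsquigarrow\partial\mathcal{B}(S,3\delta/4)\rightsquigarrow S\rightsquigarrow T\rightsquigarrow\partial\mathcal{B}(S,3\delta/4)\rightsquigarrow T\rightsquigarrow S$, so that three of the five arms of $S$ (say $W^{1},W^{4},W^{5}$) are rerouted through $T$ and the crossings of the intermediate and outer annuli are \emph{shared} between the five-arm events at $S$ and at $T$; only then does the extra cost reduce to the five-arm event in $\mathcal{A}(T,2^{-n},\mathrm{dist}(S,T)/4)$, i.e.\ the factor $\mathrm{dist}(S,T)^{-2}2^{-2n}$.

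A secondary point: since $\widetilde{\mathbb{P}}_{n}(\cdot\,|\mathscr{F}_{j-1})$ conditions on the \emph{future} event $\{S\in\mathfrak{S}_{n}(\delta+2^{-\sqrt n})\}$, the continuations of the $W^{i}$ beyond $\mathcal{C}_{-n+(j-1)L}(S)$ are not free Brownian motions under this measure, so you cannot apply Lemma \ref{sep PTP-1} to them directly. You should make explicit the ratio
$\mathbb{P}(\{T\in\mathfrak{S}_n, T\lessdot S\}\cap\{S\in\mathfrak{S}_{n}(\delta+2^{-\sqrt n})\}\,|\,\mathscr{F}_{j-1})\big/\mathbb{P}(S\in\mathfrak{S}_{n}(\delta+2^{-\sqrt n})\,|\,\mathscr{F}_{j-1})$,
estimate numerator and denominator separately under $\mathbb{P}(\cdot\,|\mathscr{F}_{j-1})$ (where the strong Markov property does apply), and verify that the common factors (inner part, outer part, middle crossings) cancel up to constants via the separation lemmas and almost-independence.
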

We will only prove the (more technical) lower bound \eqref{cme1}, which immediately implies the left-hand side of \eqref{cme0-1}. In fact, the proof of upper bound is easier since it does not involve separation lemmas.
\begin{proof}[Proof of \eqref{cme1}] We will conclude the proof by giving upper bound for $\mathbb{P}(S\in\mathfrak{S}_{n}(\delta+2^{-\sqrt{n}})|\mathscr{F}_{j-1})$ and lower bound for $\mathbb{P}(S\in\mathfrak{S}_{n}(\delta+2^{-\sqrt{n}}),T\in\mathfrak{S}_{n},T\lessdot S|\mathscr{F}_{j-1})$. Although the filtration $\mathscr{F}_{j}$ is supported on $V_{\delta}^{\mathrm{PTP}}(S)$, we still work on $\mathbb{P}$ since $S\in\mathfrak{S}_{n}(\delta+2^{-\sqrt{n}})$ also implies $V_{\delta}^{\mathrm{PTP}}(S)$.
\par
Let 
\[d_{0}:=\mathrm{dist}(S,T)\in[2^{-n+m},2^{-n+m+1})\]
for some integer $m\geq jL$ and let $\tau_{i}(\cdot)$ be the first hitting times with respect to $W^{i}$. Recall the notation $\tau_{i,j},\sigma_{i,j}$ from the paragraph above \eqref{filtration}. Similar to the proof of Lemma \ref{sme}, we consider the case that
\[\tau(T,S,\partial\mathcal{B}(S,3\delta/4),S,T,\partial\mathcal{B}(S,3\delta/4),T,S)<\tau_{\mathbb{D}}.\]
We then consider the following events:
\par
($F^{12}$)\quad The union of $W^{i}[0,\tau_{i,m-2}]$, $i=1,\ldots,5$, together with the initial configuration $U_{(j-1)L}$, does not disconnect $\mathcal{C}_{-n}(S)$ from infinity.
\par
($F^{13}$)\quad $F^{12}$ occurs, and $W^{i}[0,\tau_{i,m-2}]$, $i=1,\ldots,5$ are $\alpha$-nice at the end.
\par
($F^{14}$)\quad $W^{1},W^{4}$ and $W^{5}$ intersect $T$ before exit $\mathcal{C}_{-n+m+1}(S)$.
\par
($F^{15}$)\quad $W^{i}[\tau_{i,m+1},\tau_{i}(\mathcal{C}_{-n+m-2}(T))]$ stays in some well-chosen tubes for $i=1,4,5$.
\par
($F^{16}$)\quad The family of curves $W^{i}[\tau_{i}(T,\mathcal{C}_{-n}(T)),\tau_{i}(T,\mathcal{C}_{-n+m-2}(T))]$, $i=1,4,5$ and $W^{i}[\tau_{i}(\mathcal{C}_{-n+m-2}(T)),\tau_{i}(\mathcal{C}_{-n}(T))]$, $i=1,4$ form an $\alpha$-nice configuration in the annulus $\mathcal{A}(T,2^{-n},2^{-n+m-2})$. 
\par
($F^{17}$)\quad $W^{i}[\tau_{i}(T,\mathcal{C}_{-n+m-2}(T)),\tau_{i}(T,\mathcal{C}_{-n+m+1}(S))]$ stays in some well-chosen tube for $i=1,4,5$, and $W^{i}[\tau_{i,m-2},\tau_{i,m+1}]$ stays in some well-chosen tube for $i=2,3$.
\par
($F^{18}$)\quad The union of $W^{i}[\tau_{i,m+1},\tau_{i,n-K-L-2}]$ does not disconnect $\mathcal{C}_{-n}(S)$ from infinity.
\par
($F^{19}$)\quad $W^{i}[\tau_{i,m+1},\tau_{i,n-K-L-2}]$, $i=1,\ldots,5$ form an $\alpha$-nice configuration.
\par
($F^{20}$)\quad $W^{i}[\tau_{i,n-K-L-2},\sigma_{i,n-K-L-1}]$ stay in some well-chosen tube for $i=1,\ldots,5$. 
\par
($F^{21}$)\quad The union of $W^{i}[\sigma_{i,n-K-L-1},\tau_{i,n-K}]$, together with the initial configuration $U_{(j-1)L}$, does not disconnect $\mathcal{C}_{-n}(S)$ from infinity.
\par
($F^{22}$)\quad $F^{21}$ occurs, and $W^{i}[\sigma_{i,n-K-L-1},\tau_{i,n-K}]$, $i=1,\ldots,5$ are $\alpha$-nice at the beginning.
\par
To ensure that $S$ is $(\delta+2^{-\sqrt{n}})$-good, we require that (recall the definition of $X^{2},X^{4}$ from \eqref{decomposition}):
\par
($F^{23}$)\quad Both bridges $X^{2},X^{4}$ intersect $\partial\mathcal{B}(S,\delta+2^{-\sqrt{n}}-2^{-n-1/2})$.
\par
Note that $F^{23}$ is $\mathscr{F}_{j-1}$-measurable, and thus we restrict $\mathscr{F}_{j-1}$ to where $F^{23}$ occurs.
\par
We then have
\[\{S\in\mathfrak{S}_{n}(\delta+2^{-\sqrt{n}})\}\Rightarrow F^{12}\cap F^{18}\cap F^{21}\cap F^{23},\]
and
\[\{S\in\mathfrak{S}_{n}(\delta+2^{-\sqrt{n}}),T\in\mathfrak{S}_{n},T\lessdot S\}\Leftarrow F^{13},F^{14},F^{15},F^{16},F^{17},F^{19},F^{20},F^{22},F^{23}.\]
By Lemma \ref{sep PTP-1}, the separation lemma with initial configuration for PTP, we have
\[\mathbb{P}(F^{13}|\mathscr{F}_{j-1})\geq c_{3}\mathbb{P}(F^{12}|\mathscr{F}_{j-1}).\]
Applying Lemma \ref{sep PTP-1} again, along with inversion invariance of Brownian motion, we have
\[\mathbb{P}(F^{22}|\mathscr{F}_{j-1})\geq c_{4}\mathbb{P}(F^{21}|\mathscr{F}_{j-1}).\]
We now recall the definition of ``almost independence'' in Section \ref{subsec:notation}. Thanks to Lemmas \ref{ac excursions} and \ref{lem:harnack}, conditioned on $\mathscr{F}_{j-1}$, events $F^{12},F^{18},F^{21}$ are almost independent. And $F^{18}$ is almost independent to $\mathscr{F}_{j-1}$. Moreover, $F^{23}$ is measurable with respect to $\mathscr{F}_{j-1}$. Therefore, we have
\[\mathbb{P}(S\in\mathfrak{S}_{n}(\delta+2^{-\sqrt{n}})|\mathscr{F}_{j-1})\leq c_{5}\mathds{1}\{F^{23}\}\mathbb{P}(F^{12}|\mathscr{F}_{j-1})\mathbb{P}(F^{21}|\mathscr{F}_{j-1})\mathbb{P}(F^{18}).\]
Thanks to Lemmas \ref{ac excursions} and \ref{lem:harnack}, conditioned on $\mathscr{F}_{j-1}$, events $F^{13},F^{14}\cap F^{16},F^{19},F^{22}$ are almost independent. Furthermore, events $F^{14}\cap F^{16},F^{19}$ are almost independent of $\mathscr{F}_{j-1}$. Moreover, the joint law of 
\[\{W^{i}[\tau_{i}(\mathcal{C}_{-n+m-2}(T)),\tau_{i}(\mathcal{C}_{-n}(T))], i=1,4\}\]
and
\[\{W^{i}[\tau_{i}(T,\mathcal{C}_{-n}(T)),\tau_{i}(T,\mathcal{C}_{-n+m-2}(T))], i=1,4,5\}\] 
is uniformly equivalent to five independent Brownian motions by Lemma~\ref{lem:harnack}.
By Lemma \ref{sep PTP}, we thus have
\[\mathbb{P}(F^{14}\cap F^{16})\asymp 2^{-2m}.\]
Moreover, similar to the proof of Lemma \ref{sme}, we can draw well-chosen tubes and force these trajectories to pass through tubes with a constant probability cost so that they will not destroy the global opening. Therefore, conditioned on $\mathscr{F}_{i-1}$ and $F^{13}\cap F^{14}\cap F^{16}\cap F^{19}\cap F^{22}$, events $F^{15},F^{17}$ and $F^{20}$ are almost independent and occur with universal positive probability. Again, note that $F^{23}$ is $\mathscr{F}_{j-1}$-measurable, we conclude that
\begin{align*}
&\quad\ \mathbb{P}(S\in\mathfrak{S}_{n}(\delta+2^{-\sqrt{n}}),T\in\mathfrak{S}_{n},T\lessdot S|\mathscr{F}_{j-1})\\&\geq c_{6}\mathds{1}\{F^{23}\}\mathbb{P}(F^{13}|\mathscr{F}_{j-1})\mathbb{P}(F^{22}|\mathscr{F}_{j-1})\mathbb{P}(F^{19})\cdot 2^{-2m}.
\end{align*}
And thus we have
\[\mathbb{P}(S\in\mathfrak{S}_{n}(\delta+2^{-\sqrt{n}}),T\in\mathfrak{S}_{n},T\lessdot S|\mathscr{F}_{j-1})\geq c_{7}2^{-2m}\mathbb{P}(S\in\mathfrak{S}_{n}(\delta+2^{-\sqrt{n}})|\mathscr{F}_{j-1}),\]
which completes the proof.
\end{proof}
The following lemma will be the last step before the proof of Proposition \ref{exist good box}, in which we prove the same inequality as \eqref{eq:exist good box} but for $\widetilde{\mathbb{P}}_{n}$ instead of $\overline{\mathbb{P}}_{n}$.
\begin{lemma}\label{exist good box+}
There exists $c,c'>0$, such that for any large $n$ and any $2\leq i\leq[L/2]$, we have
\[\widetilde{\mathbb{P}}_{n}(\widetilde{N}_{i}\geq c'L|\mathscr{F}_{i-1})\geq c/i.\]
\end{lemma}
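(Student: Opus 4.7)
The plan is to apply the Paley--Zygmund inequality to $\widetilde{N}_i$ under the conditional measure $\widetilde{\mathbb{P}}_n(\,\cdot\,|\mathscr{F}_{i-1})$, using the sharp conditional moment bounds from Lemma~\ref{cme-f}. Specifically, I would first establish the first moment lower bound $\mathbb{E}[\widetilde{N}_i|\mathscr{F}_{i-1}]\geq c_1 L$ and the second moment upper bound $\mathbb{E}[\widetilde{N}_i^2|\mathscr{F}_{i-1}]\leq c_2\, iL^2$, and then conclude via the second-moment method. This is exactly the scheme sketched around \eqref{eq:mm1} in the introduction.

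For the first moment, I would write
\[
\widetilde{N}_i=\sum_{T\prec\mathcal{A}_i}\mathbf{1}\{T\in\mathfrak{S}_n,\ T\lessdot S\}.
\]
Every $T\prec\mathcal{A}_i$ satisfies $\mathrm{dist}(S,T)\geq 2^{-n+iL}$, so the hypothesis of \eqref{cme1} in Lemma~\ref{cme-f} is met with $j=i$, giving
\[
\widetilde{\mathbb{P}}_n(T\in\mathfrak{S}_n,\,T\lessdot S\mid\mathscr{F}_{i-1})\;\geq\;c\,\mathrm{dist}(S,T)^{-2}\,2^{-2n}.
\]
Summing dyadically, each shell $\mathcal{A}(S,2^{-n+\ell},2^{-n+\ell+1})$ contains $\asymp 2^{2\ell}$ such boxes, each contributing $\asymp 2^{-2\ell}$ to the sum, so each shell contributes $\asymp 1$. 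Summing over the $L$ shells with $\ell\in[iL,(i+1)L)$ gives $\mathbb{E}[\widetilde{N}_i|\mathscr{F}_{i-1}]\geq c_1 L$.

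For the second moment, I would apply \eqref{cme0-2} (whose distance hypothesis again holds automatically for $T,U\prec\mathcal{A}_i$) to obtain
\[
\mathbb{E}[\widetilde{N}_i^2\mid\mathscr{F}_{i-1}]\;\leq\;c\sum_{T,U\prec\mathcal{A}_i}D_1^{-2}D_2^{-2}\,2^{-4n}.
\]
Parametrizing by dyadic scales $\mathrm{dist}(S,T)\asymp 2^{-n+\ell}$ and $\mathrm{dist}(T,U)\asymp 2^{-n+k}$, a brief triangle-inequality check gives $D_1 D_2\asymp 2^{-2n+\ell+k}$ both when $k\leq\ell$ (so $\mathrm{dist}(S,U)\asymp 2^{-n+\ell}$, $D_1\asymp 2^{-n+k}$, $D_2\asymp 2^{-n+\ell}$) and when $k>\ell$ (so $\mathrm{dist}(S,U)\asymp 2^{-n+k}$, $D_1\asymp 2^{-n+\ell}$, $D_2\asymp 2^{-n+k}$). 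Since the number of pairs $(T,U)$ in each scale class is $\asymp 2^{2\ell+2k}$, each scale pair contributes $O(1)$. The constraint $T,U\prec\mathcal{A}_i$ forces $\ell\in[iL,(i+1)L)$ and $k\in[0,(i+1)L+1]$, so the number of scale pairs is $O(iL^2)$ and I get $\mathbb{E}[\widetilde{N}_i^2|\mathscr{F}_{i-1}]\leq c_2\,iL^2$.

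To conclude, I would set $c':=c_1/2$ and $\theta:=c'L/\mathbb{E}[\widetilde{N}_i|\mathscr{F}_{i-1}]\in(0,1/2]$. The Paley--Zygmund inequality then yields
\[
\widetilde{\mathbb{P}}_n(\widetilde{N}_i\geq c'L\mid\mathscr{F}_{i-1})\;\geq\;(1-\theta)^2\,\frac{(\mathbb{E}[\widetilde{N}_i\mid\mathscr{F}_{i-1}])^2}{\mathbb{E}[\widetilde{N}_i^2\mid\mathscr{F}_{i-1}]}\;\geq\;\frac{1}{4}\cdot\frac{c_1^2 L^2}{c_2\,iL^2}\;=\;\frac{c_1^2}{4c_2\,i},
\]
which is the desired bound with $c:=c_1^2/(4c_2)$. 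The main technical substance is already carried by Lemma~\ref{cme-f}; beyond that, the only thing to verify is that its distance hypotheses are automatic for $T,U\prec\mathcal{A}_i$, and the dyadic summations on both sides are routine.
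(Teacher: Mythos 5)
Your proposal follows exactly the paper's proof: apply Paley--Zygmund under $\widetilde{\mathbb{P}}_n(\cdot\,|\mathscr{F}_{i-1})$, with the first-moment lower bound $\gtrsim L$ and second-moment upper bound $\lesssim iL^2$ both extracted from Lemma~\ref{cme-f} by dyadic summation. The first-moment computation and the final Paley--Zygmund step are correct as written.

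One step in your second-moment bookkeeping is not right as stated, although the conclusion survives. You claim that a ``brief triangle-inequality check'' gives $D_1D_2\asymp 2^{-2n+\ell+k}$ when $\mathrm{dist}(S,T)\asymp 2^{-n+\ell}$ and $\mathrm{dist}(T,U)\asymp 2^{-n+k}$. This fails near the diagonal $k\approx\ell$: for instance $T$ may sit at scale $\ell=(i+1)L-1$ from $S$ while $U$ sits at scale $iL$ from $S$; then $\mathrm{dist}(T,U)\asymp 2^{-n+\ell}$ so $k\approx\ell$, yet $D_1=\mathrm{dist}(S,U)\asymp 2^{-n+iL}\ll 2^{-n+\min(\ell,k)}$, and the per-pair bound $D_1^{-2}D_2^{-2}2^{-4n}$ is much larger than your claimed $2^{-2\ell-2k}$. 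Consequently the near-diagonal scale classes contribute $O(L)$ each rather than $O(1)$ (one must additionally sum over the scale of the third, unconstrained distance); since there are only $O(L)$ such classes, their total is $O(L^2)\le O(iL^2)$ and the bound $\mathbb{E}[\widetilde{N}_i^2|\mathscr{F}_{i-1}]\lesssim iL^2$ still holds. This is precisely why the paper splits the sum into $|j-k|\ge 2$ and $|j-k|\le 1$ and, in the latter case, sums separately over the dyadic scale of $\mathrm{dist}(T,U)$, picking up the extra factor $(j+2)$ in \eqref{case2}. You should incorporate that case distinction; with it, your argument coincides with the paper's.
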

\begin{proof}
To apply Paley-Zygmund inequality, we need to estimate the lower bound of $\widetilde{\mathbb{E}}_{n}[\widetilde{N}_{i}|\mathscr{F}_{i-1}]$ and the upper bound of $\widetilde{\mathbb{E}}_{n}[\widetilde{N}_{i}^{2}|\mathscr{F}_{i-1}]$. To this end, it suffices to consider $\widetilde{\mathbb{P}}_{n}(T\in\mathfrak{S}_{n},T\lessdot S|\mathscr{F}_{i-1})$ and $\widetilde{\mathbb{P}}_{n}(T,U\in\mathfrak{S}_{n}|\mathscr{F}_{i-1})$ for $T,U\prec\mathcal{A}_{i}$. By Lemma \ref{cme-f}, we have:
\[\widetilde{\mathbb{P}}_{n}(T\in\mathfrak{S}_{n},T\lessdot S|\mathscr{F}_{i-1})\geq c\mathrm{dist}(S,T)^{-2}2^{-2n}\]
and
\[\widetilde{\mathbb{P}}_{n}(T,U\in\mathfrak{S}_{n}|\mathscr{F}_{i-1})\leq c'D_{1}^{-2}D_{2}^{-2}2^{-4n},\]
where 
\[D_{1}=\min\{\mathrm{dist}(S,T),\mathrm{dist}(T,U),\mathrm{dist}(U,S)\},\]
and
\[D_{2}=\max\{\mathrm{dist}(S,T),\mathrm{dist}(T,U),\mathrm{dist}(U,S)\}.\]
We thus have
\begin{equation}\label{cfme+}
\begin{split}
\widetilde{\mathbb{E}}_{n}[\widetilde{N}_{i}|\mathscr{F}_{i-1}]&\geq\sum_{T\in\mathcal{S}_{n},T\prec \mathcal{A}_{i}}\widetilde{\mathbb{P}}_{n}(T\in\mathfrak{S}_{n},T\lessdot S|\mathscr{F}_{i-1})\\
&\geq\sum_{j=iL}^{(i+1)L-1}\sum_{\substack{T\in\mathcal{S}_{n},\\ T\prec\mathcal{A}(x,2^{-n+j},2^{-n+j+1})}}\widetilde{\mathbb{P}}_{n}(T\in\mathfrak{S}_{n},T\lessdot S|\mathscr{F}_{i-1})\\
&\geq\sum_{j=iL}^{(i+1)L-1}2^{2j}\cdot c'2^{2(n-j)}2^{-2n}=c'L.
\end{split}
\end{equation}
We now turn to the conditional second moment, which is more complicated. For any $n$-box $T\prec\mathcal{A}_{i}$, we denote by $T\vartriangleleft j$ if $T$ is located in scale $j$, or more precisely, that $j$ is the minimal integer such that $T\cap\mathcal{A}(x,2^{-n+j},2^{-n+j+1})\neq\varnothing$. We then assume $T\vartriangleleft j$ and $U\vartriangleleft k$. If $|j-k|\geq2$, we then have 
\[D_{1}\asymp2^{-n+\min\{j,k\}}\mbox{ and }D_{2}\asymp2^{-n+\max\{j,k\}}.\] 
And thus we have
\[\widetilde{\mathbb{P}}_{n}(T,U\in\mathfrak{S}_{n}|\mathscr{F}_{i-1})\leq c''2^{4n-2j-2k}2^{-4n}=c''2^{-2(j+k)}.\]
If $|j-k|\leq1$, then $D_{2}\asymp2^{-n+j}$. We now fix $T\vartriangleleft j$, and write
\begin{equation}\label{conditional second moment}\sum_{U\in\mathcal{S}_{n},U\prec\mathcal{A}_{i}}\widetilde{\mathbb{P}}_{n}(T,U\in\mathfrak{S}_{n}|\mathscr{F}_{i-1})=\sum_{k=iL}^{(i+1)L-1}\sum_{\substack{U\in\mathcal{S}_{n},\\ U\vartriangleleft k}}\widetilde{\mathbb{P}}_{n}(T,U\in\mathfrak{S}_{n}|\mathscr{F}_{i-1}).
\end{equation}
We divide the right-hand side of \eqref{conditional second moment} into two parts:  $|k-j|>1$ and $|k-j|\leq1$. For the first part, we have
\begin{equation}\label{case1}\sum_{\substack{U\in\mathcal{S}_{n},\\ U\vartriangleleft k}}\widetilde{\mathbb{P}}_{n}(T,U\in\mathfrak{S}_{n}|\mathscr{F}_{i-1})\leq c''2^{-2(j+k)}\cdot2^{2(-n+k)}/2^{-2n}=c''2^{-2j}.\end{equation}
For the second part, since $\mathrm{dist}(T,U)\leq 2^{-n+j+2}$, we have
\begin{equation}\label{case2}
\begin{split}
\sum_{\substack{U\in\mathcal{S}_{n},\\ U\vartriangleleft k}}\widetilde{\mathbb{P}}_{n}(T,U\in\mathfrak{S}_{n}|\mathscr{F}_{i-1})&\leq\sum_{m=n-j-1}^{n}\sum_{\substack{U\in\mathcal{S}_{n},\\ \mathrm{dist}(T,U)\in[2^{-m},2^{-m+1}]}}c''2^{2m}2^{2(n-j)}2^{-4n}\\
&\leq\sum_{m=n-j-1}^{n}c'''2^{2m}2^{2(n-j)}2^{-4n}\cdot 2^{-2m}/2^{-2n}=c'''(j+2)2^{-2j}.
\end{split}
\end{equation}
Combining \eqref{conditional second moment}--\eqref{case2}, and summing over $j$, we conclude that
\begin{equation}\label{csme+}
\sum_{\substack{T,U\in\mathcal{S}_{n} \\ T,U\prec\mathcal{A}_{i}}}\widetilde{\mathbb{P}}_{n}(T,U\in\mathfrak{S}_{n}|\mathscr{F}_{i-1})\leq\sum_{j=iL}^{(i+1)L-1}\tilde{c}2^{-2j}(j+L+2)\cdot2^{-2(n-j)}/2^{-2n}\leq\tilde{c}'iL^{2}.
\end{equation}
Combining \eqref{cfme+} and \eqref{csme+}, by Paley-Zygmund inequality, for any $\theta\in(0,1)$, we have
\[\widetilde{\mathbb{P}}_{n}(\widetilde{N}_{k}\geq \theta c'L|\mathscr{F}_{i-1})\geq (1-\theta)^{2}\frac{\widetilde{\mathbb{E}}_{n}^{2}[\widetilde{N}_{i}|\mathscr{F}_{i-1}]}{\widetilde{\mathbb{E}}_{n}[\widetilde{N}_{i}^{2}|\mathscr{F}_{i-1}]}\geq\tilde{c}''/i,\]
which completes the proof.
\end{proof}
Finally, thanks to the control of the probability of ``bad events'', we will show that the difference of $\widetilde{\mathbb{P}}_{n}$ and $\overline{\mathbb{P}}_{n}$ is very small, which completes the proof of Proposition \ref{exist good box}.
\begin{proof}[Proof of Proposition \ref{exist good box}]
Note that configurations in $U_{(i-1)L}$ which contradict with events $B_{m}^{c}$, $m=1,\ldots,5$, are excluded once we consider the conditional probability measure $\overline{\mathbb{P}}_{n}(\cdot|\mathscr{F}_{i})$.
Also note that $B_{1},B_{2},B_{4}$ are all $\mathscr{F}_{1}$-measurable, and thus $\overline{\mathbb{P}}_{n}(B_{j}|\mathscr{F}_{i-1})=\mathds{1}\{B_{j}\}=0$ for any $i\geq2$ and $j=1,2,4$. Recall the definition of $\overline{E}_{j}$ from \eqref{eq:barEj}, and let $\widehat{E}_{i}$ be the union of $\overline{E}_{j}$ for $1\leq j\leq i-1$. Then $\widehat{E}_{i}$ is $\mathscr{F}_{(i-1)}$-measurable, and thus $\overline{\mathbb{P}}_{n}(\widehat{E}_{j}|\mathscr{F}_{i-1})=\mathds{1}\{\widehat{E}_{j}\}=1$ for $1\leq j\leq i-1$. Let $W^{m,i}:=W^{m}[0,\tau_{m,(i-1)L}]$. Let $E_{m,j}^{i}$ be the event that $W^{m,i}$ returns to $\mathcal{D}_{-n+jL}(S)$ after the first hitting of $\mathcal{C}_{-n+(j+1)L}(S)$. Let
\[E_{i}^{\dagger}:=\bigcap_{m=1}^{5}\bigcap_{j=1}^{i-2}(E_{m,j}^{i})^{c}.\]
Then $E_{i}^{\dagger}$ is $\mathscr{F}_{(i-1)}$-measurable. Therefore we have $\overline{\mathbb{P}}_{n}(E_{i}^{\dagger}|\mathscr{F}_{i-1})=\mathds{1}\{E_{i}^{\dagger}\}=1$. Recall the definition of $F^{23}$ in the proof of Lemma~\ref{cme-f}. We then let $E_{i}^{\#}$ be the union of $B_{1}^{c},B_{2}^{c},B_{4}^{c},\widehat{E}_{i},E_{i}^{\dagger}$ and $F^{23}$. From the discussion above, we have $\overline{\mathbb{P}}_{n}(E_{i}^{\#}|\mathscr{F}_{i-1})=1$.
\par
Thanks to Lemma \ref{exist good box+}, it suffices to compare the probability measure $\widetilde{\mathbb{P}}_{n}$ and $\overline{\mathbb{P}}_{n}$ on $\mathscr{F}_{i-1}$ for any $i$. 
We have
\begin{align*}
\overline{\mathbb{P}}_{n}(\widetilde{N}_{k}\geq c'L|\mathscr{F}_{i-1})&=\frac{\widetilde{\mathbb{P}}_{n}(\widetilde{N}_{k}\geq c'L,B_{3}^{c}\cap B_5^c|\mathscr{F}_{i-1}, E_{i}^{\#})}{\widetilde{\mathbb{P}}_{n}(B_3^c \cap B_5^c|\mathscr{F}_{i-1}, E_{i}^{\#})}\\
&\geq\widetilde{\mathbb{P}}_{n}(\widetilde{N}_{k}\geq c'L,B_{3}^{c}\cap B_5^c|\mathscr{F}_{i-1}, E_{i}^{\#}).
\end{align*}
We then have
\[\widetilde{\mathbb{P}}_{n}(B_{5}^{c}|\mathscr{F}_{i-1},E_{i}^{\#})=\widetilde{\mathbb{P}}_{n}\left(\cap_{j=i}^{[L/2]}\overline{E}_{j}^{c}|\mathscr{F}_{i-1},E_{i}^{\#}\right).\]
By an argument similar to the proof of Lemma \ref{all good opening}, for any $j\geq i$, we have
\[\widetilde{\mathbb{P}}_{n}(\overline{E}_{j}|\mathscr{F}_{i-1},E_{i}^{\#})\leq c_{1}n^{-2}.\]
By the union bound, we thus conclude that $\widetilde{\mathbb{P}}_{n}(B_{5}|\mathscr{F}_{i-1},E_{i}^{\#})\leq c_{1}n^{-1}$.
\par
It then suffices to control $\widetilde{\mathbb{P}}_{n}(B_{3}\cap B_{5}^{c}|\mathscr{F}_{i-1},E_{i}^{\#})$. On $E_{i}^{\dagger}$, note that if $E_{m,i-1}$ does not occur, then $E_{m,j}$ will not occur for any $j<i-1$. Therefore, we only consider the conditional probability of $E_{m,j}$ when $j\geq i-1$.
\par
For any $j\geq i$, we now consider $\widetilde{\mathbb{P}}_{n}(E_{1,j-1}\cap\overline{E}_{j}^{c}|\mathscr{F}_{i-1},E_{i}^{\#})$. We first note that $\overline{E}_{j}$ is $\mathscr{F}_{j}$-measurable, and thus we have
\begin{equation}\label{eq:e1j}
\widetilde{\mathbb{P}}_{n}(E_{1,j-1}\cap\overline{E}_{j}^{c}|\mathscr{F}_{j},E_{i}^{\#})=\mathds{1}_{\overline{E}_{j}^{c}}\widetilde{\mathbb{P}}_{n}(E_{1,j-1}|\mathscr{F}_{j},E_{i}^{\#})\leq\widetilde{\mathbb{P}}_{n}(E_{1,j-1}|\mathscr{F}_{j},\overline{E}_{j}^{c},E_{i}^{\#}).
\end{equation}
On $E_{1,j-1}$, $W^{1}$ intersects $\mathcal{C}_{-n+(j-1)L}(S)$ after $\tau_{1,jL}$. We now need an upper bound of $\mathbb{P}(\{S\in\mathfrak{S}_{n}(\delta+2^{-\sqrt{n}})\}\cap E_{1,j-1}|\mathscr{F}_{j}, \overline{E}_{j}^{c},E_{i}^{\#})$ and a lower bound of $\mathbb{P}(S\in\mathfrak{S}_{n}(\delta+2^{-\sqrt{n}})|\mathscr{F}_{j}, \overline{E}_{j}^{c},E_{i}^{\#})$. For the first probability, we simply note that on $E_{1,j-1}$, $W^{1}$ will intersect $\mathcal{C}_{-n+(j-1)L}(S)$ after $\tau_{1,jL}$. By the strong Markov property and Lemma~\ref{lem:harnack}, the part of $W^{1}$ from $\mathcal{C}_{-n+(j-1)L+1}(S)$ to $\mathcal{C}_{-n+jL-1}(S)$ is uniformly equivalent to a Brownian motion started uniformly from $\mathcal{C}_{-n+(j-1)L+1}(S)$ and stopped upon reaching $\mathcal{C}_{-n+jL-1}(S)$. 
\par
Note that $\{S\in\mathfrak{S}_{n}(\delta+2^{-\sqrt{n}})\}\cap E_{1,j-1}$ will imply the following two events:
\par
($F^{24}$)\quad $W^{1}[\tau_{1}(\mathcal{C}_{-n+jL}(S),\mathcal{C}_{-n+(j-1)L}(S)),\tau_{1}(\mathcal{C}_{-n+jL}(S),\mathcal{C}_{-n+(j-1)L}(S),\mathcal{C}_{-n+jL-1}(S))]$ does not disconnect $\mathcal{C}_{-n+(j-1)L}(S)$ from infinity.
\par
($F^{25}$)\quad The union of $W^{1}[\tau_{1}(\mathcal{C}_{-n+jL}(S),\mathcal{C}_{-n+(j-1)L}(S),\mathcal{C}_{-n+jL+1}(S)),\tau_{1,n-K-L-1}]$\\ and  $W^{m}[\tau_{m,jL+1},\tau_{m,n-K-L-1}]$, $m=2,\ldots,5$ does not disconnect $\mathcal{C}_{-n+jL+1}(S)$ from infinity.
\par
By Lemma \ref{lem:harnack} and \ref{sep PTP}, and thanks to the strong Markov property, we have
\begin{equation}\label{seij}
\begin{split}
&\mathbb{P}(\{S\in\mathfrak{S}_{n}(\delta+2^{-\sqrt{n}})\}\cap E_{1,j-1}|\mathscr{F}_{j},\overline{E}_{j}^{c},E_{i}^{\#})\\
\leq&\mathbb{P}(F^{24}|\mathscr{F}_{j},\overline{E}_{j}^{c},E_{i}^{\#})\mathbb{P}(F^{25}|\mathscr{F}_{j},\overline{E}_{j}^{c},E_{i}^{\#},F^{24})
\leq c_{2}2^{-L/4}2^{-2(n-(j+1)L)}.
\end{split}
\end{equation}
\par
On the other hand, recall the definition of $\alpha$-nice configuration and $\alpha$-separated event from Section~\ref{subsec:sep}, where $\alpha$ is some appropriately chosen constant. We then consider the following four events:
\par
($F^{26}$)\quad $U_{jL+1}$ does not disconnect $\mathcal{D}_{-n}(S)$ from infinity, and the family of curves $W^{1}[0,\tau_{1,jL+1}]$, $\ldots$, $W^{5}[0,\tau_{5,jL+1}]$ are $\alpha$-separated at the end.
\par
($F^{27}$)\quad $W^{m}[\tau_{m,jL+1},\tau_{m,jL+2}]$  for $m=1,\ldots,5$ stay in some well-chosen tubes.
\par
($F^{28}$)\quad $W^{m}[\tau_{m,jL+2},\tau_{m,n-K-L-1}]$, $m=1,\ldots,5$ form an $\alpha$-nice configuration in the annulus $\mathcal{A}(S,2^{-n+jL+1},2^{-K-L-1})$.
\par
($F^{29}$)\quad $W^{m}[\tau_{m,n-K-L-1},\sigma_{m,n-K-L}]$  for $m=1,\ldots,5$ stay in some well-chosen tubes.
\par
By Lemma~\ref{sep PTP-1}, we have
\[\mathbb{P}(F^{26}|\mathscr{F}_{j},\overline{E}_{j}^{c},E_{i}^{\#})\geq c_{3}n^{-10}.\]
By Lemma~\ref{lem:harnack}, the strong Markov property and Lemma~\ref{sep PTP}, we have:
\[\mathbb{P}(F^{28}|\mathscr{F}_{j},\overline{E}_{j}^{c},E_{i}^{\#},F^{26})\geq c_{4}2^{-2(n-(j+1)L}.\]
By carefully choosing tubes, we can ensure that the conditional probabilities of $F^{27},F^{29}$ are bounded below by some constants. More precisely, we have
\[\mathbb{P}(F^{27}|\mathscr{F}_{j},\overline{E}_{j}^{c},E_{i}^{\#},F^{26},F^{28})\geq c_{5}, \quad \mathbb{P}(F^{29}|\mathscr{F}_{j},\overline{E}_{j}^{c},E_{i}^{\#},F^{26},F^{27},F^{28})\geq c_{6}.\]
Note that the union of $F^{26},F^{27},F^{28},F^{29}$ and $\overline{E}_{j}^{c},E_{i}^{\#}$ will imply $S\in\mathfrak{S}_{n}(\delta+2^{-\sqrt{n}})$, we thus conclude that
\begin{equation}\label{scfj}
\mathbb{P}(S\in\mathfrak{S}_{n}(\delta+2^{-\sqrt{n}})|\mathscr{F}_{j},\overline{E}_{j}^{c},E_{i}^{\#})\geq c_{7}n^{-10}2^{-2(n-(j+1)L)}.
\end{equation}
Dividing \eqref{seij} by \eqref{scfj}, combining with \eqref{eq:e1j}, we have
\begin{equation}\label{1jl}
\widetilde{\mathbb{P}}_{n}(E_{1,j-1}\cap\overline{E}_{j}^{c}|\mathscr{F}_{j},E_{i}^{\#})\leq c_{8}2^{-L/4}n^{10}\leq 2^{-\sqrt{n}/5}.
\end{equation}
Since $j\geq i$, by tower property of conditional expectation and applying union bound, we have
\[\widetilde{\mathbb{P}}_{n}(B_{3}\cap B_{5}^{c}|\mathscr{F}_{i-1},E_{i}^{\#})\leq 2^{-\sqrt{n}/6}.\]
Hence, combining estimates above  with Lemma \ref{exist good box+}, we conclude that
\begin{align*}
\frac{\widetilde{\mathbb{P}}_{n}(\widetilde{N}_{k}\geq c'L,B_{3}^{c}\cap B_{5}^{c}|\mathscr{F}_{i-1},E_{i}^{\#})}{\widetilde{\mathbb{P}}_{n}(B_{3}^{c}\cap B_{5}^{c}|\mathscr{F}_{i-1}, E_{i}^{\#})}&\geq \widetilde{\mathbb{P}}_{n}(\widetilde{N}_{k}\geq c'L,B_{3}^{c},B_{5}^{c}|\mathscr{F}_{i-1},E_{i}^{\#})\\
&\geq c/i-\widetilde{\mathbb{P}}_{n}(B_{3}|\mathscr{F}_{i-1},E_{i}^{\#})-\widetilde{\mathbb{P}}_{n}(B_{5}|\mathscr{F}_{i-1},E_{i}^{\#})\geq c'/i
\end{align*}
where we used $n^{-1},2^{-\sqrt{n}/6}\ll c'/i$ for any $2\le i\le L/2$ if $n$ is large. This completes the proof.
\end{proof}
\subsection{Proof of Theorem \ref{ptp}}\label{subsec:pf-ptp}
Proposition \ref{cgs} and Lemma \ref{delta+ good box} now lead to the proof of Theorem \ref{ptp}.
\begin{proof}[Proof of Theorem \ref{ptp}]
It suffices to prove that  for any $\delta,\iota>0$, the probability that the annulus $\mathcal{A}(0,\iota,1-\iota)$ contains a $\delta$-pioneer triple point ($\delta$-PTP) is 0. Assume on the contrary that this event has probability $c=c(\iota,\delta)>0$. On this event, if a box $S\in\mathcal{S}_n$ contains such a $\delta$-PTP, then $S$ is also a $\delta$-good box in $\mathcal{A}(0,\iota/2,1-\iota/2)$ if $2^{-n}<\iota/2$. Recall $\mathfrak{S}_{n}:=\mathfrak{S}_{n}(\delta)$ from Definition~\ref{gs} for the set of $\delta$-good boxes. 
Therefore, for all $n$ large enough, we have
\begin{equation}\label{eq:E0}
    \mathbb{P}(E_{0})\geq c,
\end{equation}
where
\[E_{0}=E_{0}(n):=\{\#\{S\in\mathfrak{S}_{n}:S\subset\mathcal{A}(0,\iota/2,1-\iota/2)\}\geq1\}.\]
\par
Next, we will give an upper bound on $\mathbb{P}(E_{0})$ which contradicts \eqref{eq:E0}, and conclude the proof.
On the event $E_{0}$, we randomly pick one box $\widetilde{S}$ with respect to the uniform counting measure on all boxes in $\{S\in\mathfrak{S}_{n}:S\subset\mathcal{A}(0,\iota/2,1-\iota/2)\}$. It follows that
\begin{equation}\label{count1}
\begin{split}
\mathbb{P}(E_{0})=\sum_{\substack{S\in\mathcal{S}_{n} \\ S\subset\mathcal{A}(0,\iota/2,1-\iota/2)}}\mathbb{P}(\widetilde{S}=S,E_{0})&=\sum_{\substack{S\in\mathcal{S}_{n} \\ S\subset\mathcal{A}(0,\iota/2,1-\iota/2)}}\mathbb{P}(\widetilde{S}=S|S\in\mathfrak{S}_{n})\mathbb{P}(S\in\mathfrak{S}_{n})\\
&=\sum_{\substack{S\in\mathcal{S}_{n} \\ S\subset\mathcal{A}(0,\iota/2,1-\iota/2)}}\mathbb{E}[(\#\mathfrak{S}_{n})^{-1}|S\in\mathfrak{S}_{n}]\mathbb{P}(S\in\mathfrak{S}_{n}).
\end{split}
\end{equation}
For any $S\in\mathfrak{S}_{n}$, with $\lambda$ from Proposition \ref{cgs}, we have:
\begin{equation}\label{-1 moment}
\mathbb{E}[(\#\mathfrak{S}_{n})^{-1}|S\in\mathfrak{S}_{n}]\leq\frac{1}{\lambda\sqrt{n}}+\mathbb{P}(\#\mathfrak{S}_{n}\leq\lambda\sqrt{n}|S\in\mathfrak{S}_{n}).
\end{equation}
By Proposition \ref{cgs} and Lemma \ref{delta+ good box}, we have
\begin{equation}\label{sum of boxes}
\begin{split}
&\quad\ \mathbb{P}(\#\mathfrak{S}_{n}\leq\lambda\sqrt{n}|S\in\mathfrak{S}_{n})\\
&\leq\mathbb{P}(\#\mathfrak{S}_{n}\leq\lambda\sqrt{n}|S\in\mathfrak{S}_{n}(\delta+2^{-\sqrt{n}}))+\mathbb{P}(S\notin\mathfrak{S}_{n}(\delta+2^{-\sqrt{n}})|S\in\mathfrak{S}_{n})\\
&\leq c'(n^{-\gamma}+2^{-\sqrt{n}}).
\end{split}
\end{equation}
By Lemma \ref{fme} for PTP, we have $\mathbb{P}(S\in\mathfrak{S}_{n})\leq c'(\iota,\delta)2^{-2n}$ for any $S\subset\mathcal{A}(0,\iota/2,1-\iota/2)$. Therefore, combining \eqref{-1 moment} and \eqref{sum of boxes}, and summing over $S$, we have:
\begin{equation}\label{count2}
\sum_{\substack{S\in\mathcal{S}_{n} \\ S\subset\mathcal{A}(0,\iota/2,1-\iota/2)}}\mathbb{E}[(\#\mathfrak{S}_{n})^{-1}|S\in\mathfrak{S}_{n}]\mathbb{P}(S\in\mathfrak{S}_{n})\leq c''\left(\frac{1}{\lambda\sqrt{n}}+n^{-\gamma}+2^{-\sqrt{n}}\right)\leq c''n^{-c'''}.
\end{equation}
Combining \eqref{count1} and \eqref{count2}, we obtain that 
\begin{equation}\label{eq:E0u}
    \mathbb{P}(E_{0})\leq c''n^{-c'''},
\end{equation}
which contradicts \eqref{eq:E0} when $n$ is large. This finishes the proof.
\end{proof}

\section{Non-Existence of Pioneer Double Cut Points}\label{sec:pdcp}
In this section, we will sketch the proof of Theorem \ref{pdcp}. Since the proof structure is almost the same as that of Theorem \ref{ptp}, we will skip many identical arguments. We will deal with the 2D and 3D cases together in this section. Most statements are the same in both cases but their proofs differ slightly. The main technical difference is that 3D Brownian motion is transient, and thus there exists some extra cost to force a 3D Brownian motion to enter a small ball. This fact is reflected in various moment bounds, e.g., the RHS of \eqref{eq:EijE1bound}. We point out that notation in this section is independent from that of the previous section.
\par
Similar to \eqref{l-value}, we still fix $L=[\sqrt{n}]$. Similar to the discussion next to \eqref{delta-value}, we again assume that $K:=-\log_{2}(\delta/2)$ is a positive integer. We also fix $\iota>0$, and assume $S\in\mathcal{S}_{n}$, $S\subset\mathcal{A}(0,\iota,1-\iota)$.
\par
The following proposition, a parallel version of Proposition \ref{cgs}, is the key ingredient of the proof of Theorem \ref{pdcp}.
\begin{prop}\label{cgs12}
For $d=2,3$, there exist $c,\lambda,\gamma>0$, such that for any large $n$, we have
\[\mathbb{P}(\#\mathfrak{S}_{n}(\delta)<\lambda\sqrt{n}|S\in\mathfrak{S}_{n}(\delta+2^{-\sqrt{n}}))\leq cn^{-\gamma},\]
where $\mathfrak{S}_{n}(\delta)$ is the collection of $\delta$-good boxes defined for PDCP in Definition \ref{gs12}.
\end{prop}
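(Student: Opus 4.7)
The plan is to mirror the proof of Proposition~\ref{cgs} step by step, replacing the non-disconnection tools used there with the non-intersection tools appropriate to the PDCP geometry. For a $(\delta+2^{-\sqrt{n}})$-good box $S$, I first decompose the Brownian path $W[0,\mathcal{T}(S)]$ in analogy with \eqref{decomposition}--\eqref{excursion and bridge} into three excursion segments $W^{1},W^{2},W^{3}$ across the annulus $\mathcal{A}(S,2^{-n},\delta/2)$ and complementary bridge segments $X^{0},X^{1},X^{2},X^{3}$: $W^{1}$ is (the time-reversal of) the last inward crossing before the first visit to $S$, while $W^{2}$ and $W^{3}$ are respectively the outward and inward crossings of the macroscopic middle loop between the two visits. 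The analogue of Lemma~\ref{ac excursions} holds: conditionally on the bridges, $(W^{1},W^{2},W^{3})$ is uniformly equivalent to three independent Brownian excursions in $\mathcal{A}(S,2^{-n},\delta/2)$ subject to the non-intersection event $W^{1}\cap(W^{2}\cup W^{3})=\varnothing$, augmented in 2D by the topological separation encoded in condition~(3) of Definition~\ref{gs12}. This exactly captures the configuration governing the intersection exponent $\xi_{[d]}(1,2)$, which together with Lemma~\ref{fme} for PDCP gives $\mathbb{P}(S\in\mathfrak{S}_{n})\asymp 2^{-dn}$.

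I would then reproduce the bad-event hierarchy of Section~\ref{subsec:bad}. The events $B_{1},B_{2}$ require that the inner (resp.\ outer) scale $2^{L}$ contains an ``extra'' non-intersection splitting of the three strands beyond the canonical $\{1\}\,|\,\{2,3\}$ partition. Any such refinement forces the three strands to be pairwise non-intersecting at that scale, an event governed by an exponent strictly larger than $\xi_{[d]}(1,2)$; this yields the same stretched-exponential decay $O(e^{-c\sqrt{n}})$ as in Lemma~\ref{unique opening}. The event $B_{3}$ (an extra inward return of some $W^{i}$ across $L$ scales) is controlled by the strict gaps $\xi_{[d]}(2,2)-\xi_{[d]}(1,2)>0$ and $\xi_{[d]}(1,3)-\xi_{[d]}(1,2)>0$, and $B_{4}$ (bridges leaving their expected dyadic range) by a one-arm estimate, both via Lemma~\ref{sep PDCP} and its 3D analogue from \cite{LV12}. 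Finally, $B_{5}$ (bad intermediate openings, in the sense of \eqref{eq:barEj}) is handled as in Lemma~\ref{all good opening}, producing the $O(n^{-1})$ bound.

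With the filtration
\begin{equation*}
\mathscr{F}_{m}:=\sigma\left(X^{0},\;X^{i},\;W^{i}[0,\tau_{i,mL}],\;W^{i}[\sigma_{i,n-K-L},\tau_{i,n-K}],\;i=1,2,3\right)
\end{equation*}
defined in parallel with \eqref{filtration}, and the tilted measure $\overline{\mathbb{P}}_{n}:=\mathbb{P}(\cdot\,|\,S\in\mathfrak{S}_{n}(\delta+2^{-\sqrt{n}}),\cap_{j=1}^{5}B_{j}^{c})$, I would prove the PDCP analogue of Lemma~\ref{adapted}: on $\cap_{j=1}^{5}B_{j}^{c}$, the count $\widetilde{N}_{i}$ of $\delta$-good PDCP boxes in $\mathcal{A}(S,2^{-n+iL},2^{-n+(i+1)L})$ that are visited (in the PDCP ordering) before $S$ is $\mathscr{F}_{i+3}$-measurable. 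The conditional moment estimates of Section~\ref{subsec:pf-4.13} then go through \emph{mutatis mutandis}, with the five-arm $\alpha$-nice configurations replaced by the $\alpha(1,2)$-nice configurations of Definition~\ref{nice12} and Lemma~\ref{sep PDCP}'s variant with initial configurations (its 3D counterpart being \cite[Lemma 3.2]{LV12}). Paley--Zygmund then yields
\begin{equation*}
\overline{\mathbb{P}}_{n}(\widetilde{N}_{i}\geq C_{1}L\mid\mathscr{F}_{i-1})\geq C_{2}/i,
\end{equation*}
and the product-of-conditional-probabilities argument closes Proposition~\ref{cgs12} exactly as in Section~\ref{subsec:martingale}.

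The main obstacle, as in the PTP case, is the adaptedness statement, with an additional subtlety specific to PDCP arising from the \emph{asymmetry} of the non-intersection constraint: a single ``past'' strand opposes a pair of ``middle'' strands, instead of five symmetric non-disconnecting arms. One must therefore topologically distinguish the past strand from the middle pair, both at $S$ and at any candidate neighbouring good box $T$, in order to make sense of ``$T$ is visited before $S$'' and to recover this ordering from $\mathscr{F}_{i+3}$. In 2D, condition~(3) of Definition~\ref{gs12} supplies precisely the canonical two-component decomposition of $\mathcal{B}(S,\delta-2^{-n-1/2})\setminus W[0,\mathcal{T}(S)]$ needed to make the cut-set argument of Lemma~\ref{adapted} go through; without it, a genuine $\{1\}\,|\,\{2,3\}$ non-intersection pattern could fail to produce a well-defined intermediate-scale cut set. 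In 3D the analogous issue vanishes thanks to the transience of Brownian motion and the generic point-disjointness of three strands crossing a thick annulus, and the proof simplifies accordingly.
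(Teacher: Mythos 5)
Your proposal follows essentially the same route as the paper: the same excursion--bridge decomposition into $W^{1},W^{2},W^{3}$ and $X^{0},\dots,X^{3}$, the same bad-event/filtration/tilted-measure machinery, adaptedness via cut sets (with condition (3) of Definition~\ref{gs12} supplying the two-opening structure in 2D), and Paley--Zygmund plus the product of conditional probabilities. The only discrepancies are cosmetic: a back-crossing adds \emph{two} extra strands, so the relevant exponents for $B_{3}$ are $\xi(3,2)$ and $\xi(1,4)$ rather than $\xi(2,2)$ and $\xi(1,3)$ (and in 3D the paper gets the extra $2^{-L}$ from transience plus mere monotonicity of $\xi_{[3]}$, avoiding any unproven strict gap), and the paper dispenses with the unique-opening events $B_{1},B_{2}$ altogether in the PDCP case.
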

Proposition \ref{cgs12} and a parallel version of Lemma \ref{delta+ good box} (we omit the statement as it is completely identical to the PTP case) will complete the proof of Theorem \ref{pdcp}. We omit this part of proof as it is very similar to the PTP case.
\par
To prove Proposition \ref{cgs12}, we again need to define some bad events. We first state one such event for independent Brownian excursions, showing that conditioned on the non-intersection event, back-crossings can only survive a few scales with high (conditional) probability, which is a parallel version of Lemma \ref{local}.
\par
\begin{lemma}\label{local12}
For $d=2,3$, let $W_{i}$, $i=1,2,3$, be independent Brownian excursions from $\mathcal{C}_{0}$ to $\mathcal{C}_{n}$. Let $E_{1}$ denote the event that $W_{1}\cap(W_{2}\cup W_{3})=\varnothing$. Let $E_{i,j}$ be the event that $W_{i}$ returns to $\mathcal{C}_{jL}$ after hitting $\mathcal{C}_{(j+1)L}$. Then there exists $c$, such that for any integer $j\leq[n/L]-1$, and $i=1,2,3$,
\begin{equation}\label{eq:EijE1bound}
\mathbb{P}(E_{i,j}\cap E_{1})\leq cn^{3(3-d)}2^{-L}2^{-(4-d)n}.
\end{equation}
\end{lemma}    
\begin{proof}
For the $d=2$ case, we only consider $\mathbb{P}(E_{1,j}\cap E_{1})$ for a similar reason as in the proof of Lemma \ref{local}. On the event $E_{1,j}$, we recall stopping times \eqref{decomposing extra crossings}. Then $E_{1,j}\cap E_{1}$ implies the following events:
\par
($F^{1}$) $W_{1}[0,t_{1}]$ and $W_{i}[0,t_{i}]$, $i=2,3$, do not intersect, where $t_{i}$ is the first hitting time of $\mathcal{C}_{jL}$ with respect to $W_{i}$.
\par
($F^{2}$) $W_{i}[t'_{i},\tau_{i}]$, $i=2,3$, and $W_{1}[t'_{1},\tau_{1}]$ do not intersect, where $t'_{i}$ is the first hitting time of $\mathcal{C}_{(j+1)L}$ (specially for $W_{1}$, let $t'_{1}$ stand for the first hitting time of $\mathcal{C}_{(j+1)L}$ after $t_{6}^{1}$) and $\tau_{i}$ is the first hitting time of $\mathcal{C}_{n}$ with respect to $W_{i}$.
\par
($F^{3}$) The union of $W_{1}(t)$, $t\in[t_{2m-1}^{1},t_{2m}^{1}]$, $m=1,2,3$, does not intersect with the union of $W_{i}(t)$, $t\in[t_{1}^{i},t_{2}^{i}]$, $i=2,3$. 
\par
By Lemma \ref{ni prob}, we have 
\[\mathbb{P}(F^{1})\asymp (jL)^{3}2^{-\xi(1,2)jL}.\] 
By the strong Markov property and Lemma~\ref{lem:harnack}, conditioned on $W_{i}[0,t_{i}]$, $i=1,2,3$, the joint law of $W_{i}(t)$, $t\in[t_{1}^{i},t_{2}^{i}]$, $i=1,2,3$, $W_{1}(t)$, $t\in[t_{3}^{1},t_{4}^{1}]$ and $W_{1}(t)$, $t\in[t_{5}^{1},t_{6}^{1}]$ is uniformly equivalent to five independent Brownian motions starting uniformly from one side of the annulus $\mathcal{A}(0,2^{jL+1/3},2^{(j+1)L-1/3})$, and stopped upon hitting the other side of the annulus. Then, by Lemma \ref{sep PDCP}, we have 
\[\mathbb{P}(F^{3}|F^{1})\leq c_{1}2^{-\xi(3,2)L}.\] 
By the strong Markov property and Lemma~\ref{lem:harnack}, conditioned on $W_{1}[0,t_{6}^{1}]$ and $W_{i}[0,t_{2}^{i}]$, $i=2,3$, the joint law of $W_{i}[t'_{i},\tau_{i}]$, $i=1,2,3$ is uniformly equivalent to three independent Brownian motions starting uniformly from $\mathcal{C}_{(j+1)L}$ and stopped upon hitting $\mathcal{C}_{n}$. Then, by Lemma \ref{sep PDCP}, we have 
\[\mathbb{P}(F^{2}|F^{1},F^{3})\leq c_{2}2^{-\xi(1,2)(n-(j+1)L)}.\]
Therefore we conclude that
\begin{equation}\label{local-eq1}
\begin{split}
\mathbb{P}(E_{1,j}\cap E_{1})&\leq\mathbb{P}(F^{1}\cap F^{2}\cap F^{3})\\
&\leq c_{3}(jL)^{3}2^{-\xi(1,2)jL}2^{-\xi(1,2)(n-(j+1)L)}2^{-\xi(3,2)L}\leq c_{3}n^{3}2^{-L}2^{-2n}
\end{split}
\end{equation}
since $\xi(1,2)=2$, while $\xi(3,2)>3$. For the event $E_{m,j}(m=2,3)$, one can perform similar decomposition with respect to $W_{m}$, and a variant of \eqref{local-eq1} follows by replacing the exponent $\xi(3,2)$ by $\xi(1,4)$ in consideration. The result follows since $\xi(1,4)>3$.
\par
Let us now consider the $d=3$ case. We still consider $\mathbb{P}(E_{1,j}\cap E_{1})$ and recall the decomposition and events $F^{1},F^{2},F^{3}$ above. Note that, for a 3D Brownian motion starting from $\mathcal{C}_{r+m}$, the probability that it hits $\mathcal{C}_{m}$ is $\asymp 2^{-r}$. Thanks to the strong Markov property and Lemma~\ref{lem:harnack}, it follows that
\begin{align*}
\mathbb{P}(E_{1,j}\cap E_{1})&\leq\mathbb{P}(F^{1})\mathbb{P}(F^{3}|F^{1})\mathbb{P}(F^{2}|F^{1},F^{3})\\
&\leq c_{4}2^{-\xi_{[3]}(1,2)jL}2^{-\xi_{[3]}(1,2)(n-(j+1)L)}2^{-L}2^{-\xi_{[3]}(3,2)L}\\
&\leq c_{4}2^{-L}2^{-n},
\end{align*}
since $\xi_{[3]}(1,2)=1$, and $\xi_{[3]}(3,2)\geq1$ by monotonicity of intersection exponents, which completes the proof in this case. For $\mathbb{P}(E_{m,j}\cap E_{1})$ with $m=2,3$, the result follows similarly since $\xi_{[3]}(1,4)\geq\xi_{[3]}(1,2)=1$ by monotonicity of intersection exponents.
\end{proof}
As a direct consequence of Lemma \ref{local12}, we have:
\begin{coro}\label{local arm12}
Let $E_{2}$ be the union of $E_{i,j}(i=1,2,3,1\leq j\leq [n/L]-1)$. We then have
\[\mathbb{P}(E_{2}\cap E_{1})\leq c2^{-\sqrt{n}/2}\cdot2^{-(4-d)n}.\]
\end{coro}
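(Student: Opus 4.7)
The plan is to recognize that Corollary \ref{local arm12} is a direct union bound over the family $\{E_{i,j}\}$ combined with Lemma \ref{local12}, and that the only real work is to absorb the polynomial prefactors into a fraction of the $2^{-L}$ decay.

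First, I would write
\[
\mathbb{P}(E_2 \cap E_1) \;\leq\; \sum_{i=1}^{3}\sum_{j=1}^{[n/L]-1} \mathbb{P}(E_{i,j} \cap E_1),
\]
and apply Lemma \ref{local12} to each summand. Since the bound $c\, n^{3(3-d)} 2^{-L} 2^{-(4-d)n}$ is uniform in $i$ and $j$, and the number of pairs $(i,j)$ is at most $3([n/L]-1)$, this yields
\[
\mathbb{P}(E_2 \cap E_1) \;\leq\; 3\Big(\tfrac{n}{L}\Big)\cdot c\, n^{3(3-d)} \cdot 2^{-L} \cdot 2^{-(4-d)n}.
\]

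Next I would plug in $L=[\sqrt{n}]$, so that $n/L = O(\sqrt{n})$ and $2^{-L}\le 2\cdot 2^{-\sqrt{n}}$. This gives a prefactor of the form $C\, n^{1/2}\cdot n^{3(3-d)}\cdot 2^{-\sqrt{n}}$, which is a polynomial in $n$ times an exponentially small factor. Writing $2^{-\sqrt{n}} = 2^{-\sqrt{n}/2}\cdot 2^{-\sqrt{n}/2}$, the second copy of $2^{-\sqrt{n}/2}$ dominates any fixed polynomial in $n$: for every fixed integer $k\ge 0$ there exists $C_k$ with $n^{k}\,2^{-\sqrt{n}/2}\le C_k$ for all $n\ge 1$. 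Taking $k = 1/2 + 3(3-d)$ (which is at most $9/2$ in both relevant dimensions $d=2,3$) absorbs the polynomial and leaves the claimed $c\, 2^{-\sqrt{n}/2}\cdot 2^{-(4-d)n}$.

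I expect no real obstacle here: the only point requiring minimal care is the polynomial-versus-exponential comparison, which is routine. A potential subtlety is the range of $j$ — Lemma \ref{local12} is stated for $j\le [n/L]-1$, which is exactly the range used in the definition of $E_2$, so no boundary adjustment is needed. The constant $c$ in the conclusion will depend on $d$ through the exponent $3(3-d)$, but since $d\in\{2,3\}$ is fixed, this is harmless.
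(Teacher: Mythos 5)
Your proposal is correct and matches the paper's intent: the corollary is stated there as "a direct consequence" of Lemma \ref{local12}, i.e.\ precisely the union bound over the $3([n/L]-1)$ events followed by absorbing the polynomial prefactor $n^{1/2+3(3-d)}$ into half of the $2^{-[\sqrt{n}]}$ decay. The only cosmetic quibble is that your chosen $k=1/2+3(3-d)$ is not an integer, but rounding up fixes this trivially.
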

We recall the path decomposition \eqref{decomposition}--\eqref{excursion and bridge} (note that the decomposition is valid in both $d=2,3$), but we will only consider $X^{0},X^{1},X^{2},X^{3}$ and $W^{1},W^{2},W^{3}$ in the PDCP case. From now on, we write $\mathfrak{S}_{n}:=\mathfrak{S}_{n}(\delta)$ for brevity. Corollary \ref{local arm12} now leads to the following lemma:
\begin{lemma}\label{no extra crossing12}
For $i=1,2,3$ and $1\leq j\leq [3L/4]$, let $\widehat{E}_{i,j}$ denote the event that $W^{i}$ returns to $\mathcal{D}_{jL}$ after hitting $\mathcal{C}_{(j+1)L}$. Let $B_{1}$ be the union of $\widehat{E}_{i,j}$ over all $i$ and $j$. We then have
\[\mathbb{P}(B_{1}|S\in\mathfrak{S}_{n})\leq c2^{-\sqrt{n}/2}.\]
\end{lemma}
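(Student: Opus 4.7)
The plan is to follow the PTP proof of Lemma~\ref{no extra crossing}, replacing Lemma~\ref{local} by Corollary~\ref{local arm12}.

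First, I will decompose $W[0,\mathcal{T}(S)]$ on the event $V_{\delta}^{\mathrm{PDCP}}(S)$ into two bridges and three excursions $W^{1},W^{2},W^{3}$ between $\mathcal{C}_{-n}(S)$ and $\partial\mathcal{B}(S,\delta/2)$, following the obvious truncation of the PTP scheme \eqref{decomposition}--\eqref{excursion and bridge} to the two-visit, one-outer-excursion structure of PDCP. The PDCP counterpart of Lemma~\ref{ac excursions}---a direct adaptation of its proof---shows that, conditional on $V_{\delta}^{\mathrm{PDCP}}(S)$, the joint law of $(W^{1},W^{2},W^{3})$ is uniformly equivalent to that of three independent Brownian excursions in $\mathcal{A}(S,2^{-n},\delta/2)$. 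After labeling the excursions so that $W^{1}\subset W[0,\tau(\mathcal{D}_{-n}(S))]$ (the piece before the first visit of $S$) while $W^{2},W^{3}$ lie inside $W[\tau(S,\mathcal{C}_{-n}(S)),\mathcal{T}(S)]$, condition~(2) of Definition~\ref{gs12} forces the excursion-level non-intersection $W^{1}\cap(W^{2}\cup W^{3})=\varnothing$ on $\{S\in\mathfrak{S}_{n}\}$.

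Writing $\widetilde{E}_{1}$ and $\widetilde{E}_{2}$ for the events $E_{1}$ and $E_{2}$ of Corollary~\ref{local arm12} transported by scaling from $\mathcal{A}(0,1,2^{n-K})$ to $\mathcal{A}(S,2^{-n},2^{-K})$ (recall $\delta/2=2^{-K}$), the uniform equivalence above yields
\begin{align*}
\mathbb{P}(S\in\mathfrak{S}_{n}) &\asymp \mathbb{P}(V_{\delta}^{\mathrm{PDCP}}(S))\cdot \mathbb{P}(\widetilde{E}_{1}),\\
\mathbb{P}(B_{1},S\in\mathfrak{S}_{n}) &\leq c\,\mathbb{P}(V_{\delta}^{\mathrm{PDCP}}(S))\cdot \mathbb{P}(\widetilde{E}_{1}\cap\widetilde{E}_{2}),
\end{align*}
so $\mathbb{P}(B_{1}\mid S\in\mathfrak{S}_{n})\leq c'\,\mathbb{P}(\widetilde{E}_{1}\cap\widetilde{E}_{2})/\mathbb{P}(\widetilde{E}_{1})$. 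Corollary~\ref{local arm12} bounds the numerator by $c\,2^{-\sqrt{n}/2}\cdot 2^{-(4-d)n}$ (the constant $K$ and the polynomial prefactors from the union bound over $O(\sqrt n)$ scales are absorbed into $2^{-\sqrt n/2}$), while Lemma~\ref{ni prob} gives $\mathbb{P}(\widetilde{E}_{1})\asymp n^{3}\cdot 2^{-(4-d)n}$ for the corresponding one-versus-two non-intersection event. The factor $2^{-(4-d)n}$ cancels, leaving $\mathbb{P}(B_{1}\mid S\in\mathfrak{S}_{n})\leq c''\,n^{-3}\cdot 2^{-\sqrt{n}/2}\leq c\,2^{-\sqrt{n}/2}$, as claimed; note that this clean dimension-independent reasoning sidesteps the need to compute $\mathbb{P}(V_{\delta}^{\mathrm{PDCP}}(S))$ separately in $d=2$ and $d=3$.

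The only delicate step is the reduction in the first paragraph---verifying that condition~(2) of Definition~\ref{gs12}, phrased in terms of two \emph{large} segments of $W$, indeed forces the purely excursion-level non-intersection $W^{1}\cap(W^{2}\cup W^{3})=\varnothing$. This amounts to bookkeeping about where each excursion sits relative to the first visit of $S$ and the intermediate outer excursion to $\partial\mathcal{B}(S,\delta-2^{-n-1/2})$, and follows routinely once the stopping-time definitions \eqref{decomposition} are unfolded for the PDCP structure.
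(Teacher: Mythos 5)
Your overall route is the same as the paper's: reduce the good-box event to the excursion-level non-intersection $W^{1}\cap(W^{2}\cup W^{3})=\varnothing$ via the PDCP analogue of Lemma~\ref{ac excursions}, and then invoke Lemma~\ref{local12}/Corollary~\ref{local arm12} for the transported events $\widetilde{E}_{1},\widetilde{E}_{2}$. Your upper bound on the numerator, $\mathbb{P}(B_{1},S\in\mathfrak{S}_{n})\leq c\,\mathbb{P}(V_{\delta}^{\mathrm{PDCP}}(S))\,\mathbb{P}(\widetilde{E}_{1}\cap\widetilde{E}_{2})$, is fine and matches the paper's treatment (in $d=3$ the factor $\mathbb{P}(V_{\delta}^{\mathrm{PDCP}}(S))\asymp 2^{-2n}$ plays the role of the paper's $F^{5}$).

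The gap is in the denominator: the claimed equivalence $\mathbb{P}(S\in\mathfrak{S}_{n})\asymp\mathbb{P}(V_{\delta}^{\mathrm{PDCP}}(S))\cdot\mathbb{P}(\widetilde{E}_{1})$ does not follow from the uniform equivalence of excursion laws, and its lower-bound half is false by polynomial factors in $n$. Uniform equivalence only gives $\mathbb{P}(V_{\delta}^{\mathrm{PDCP}}(S)\cap\{W^{1}\cap(W^{2}\cup W^{3})=\varnothing\})\asymp\mathbb{P}(V_{\delta}^{\mathrm{PDCP}}(S))\,\mathbb{P}(\widetilde{E}_{1})$, but $\{S\in\mathfrak{S}_{n}\}$ is strictly smaller than this event up to constants: conditions (2) and (3) of Definition~\ref{gs12} also constrain the bridges $X^{0},X^{1},X^{2}$ (the full segments $W[0,\tau(\mathcal{D}_{-n}(S))]$ and $W[\tau(S,\mathcal{C}_{-n}(S)),\mathcal{T}(S)]$, not just the excursions), and each such constraint costs further powers of $n^{-1}$. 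Concretely, in $d=2$ one has $\mathbb{P}(V_{\delta}^{\mathrm{PDCP}}(S))\asymp n^{-2}$ and $\mathbb{P}(\widetilde{E}_{1})\asymp n^{3}2^{-2n}$, so your right-hand side is $\asymp n\,2^{-2n}$, whereas Lemma~\ref{fme} for PDCP gives $\mathbb{P}(S\in\mathfrak{S}_{n})\asymp 2^{-2n}$; the same mismatch (there of order $n^{5}$) is visible in the PTP case between \eqref{fme delta+} and \eqref{nd excursion}. The repair is immediate and is exactly what the paper does: take the denominator from Lemma~\ref{fme} for PDCP, $\mathbb{P}(S\in\mathfrak{S}_{n})\asymp 2^{-dn}$, rather than from a factorization; the resulting ratio is still $\leq c\,2^{-\sqrt{n}/2}$ because Corollary~\ref{local arm12} already has enough exponential slack to absorb all polynomial prefactors. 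As written, though, the division step rests on a false asymptotic, and the advertised ``clean cancellation'' of $\mathbb{P}(V_{\delta}^{\mathrm{PDCP}}(S))$ is an artifact of it.
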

\begin{proof}
We first consider the $d=2$ case. Note that $S\in\mathfrak{S}_{n}$ implies that
\par
($F^{4}$) $W^{1}\cap(W^{2}\cup W^{3})=\varnothing$.
\par
By Lemma \ref{fme} for PDCP, we have
\begin{equation}\label{551}
\mathbb{P}(S\in\mathfrak{S}_{n})\asymp 2^{-2n}.
\end{equation}
By Lemmas \ref{ni prob} and \ref{ac excursions}, we have
\begin{equation}\label{f4}
\mathbb{P}(F^{4})\asymp n^{3}2^{-2n}.
\end{equation}
Thanks to Corollary \ref{local arm12} and Lemma \ref{ac excursions}, we conclude that
\begin{equation}\label{553}
\mathbb{P}(B_{1}\cap\{S\in\mathfrak{S}_{n}\})\leq\mathbb{P}(B_{1}\cap F^{4})\leq c\mathbb{P}(\widetilde{E}_{2}\cap\widetilde{E}_{1})\leq c'2^{-\sqrt{n}/2}2^{-2n},
\end{equation}
where $\widetilde{E}_{1},\widetilde{E}_{2}$ are events $E_{1},E_{2}$ from Lemma \ref{local12} and Corollary \ref{local arm12} under suitable translation and rescaling (which maps $\mathcal{A}(0,1,2^{n-K})$ to $\mathcal{A}(S,2^{-n},2^{-K})$). Combining \eqref{551} and \eqref{553}, the claim follows.
\par
For $d=3$, note that $S\in\mathfrak{S}_{n}$ implies $F^{4}$ and the following event: 
\par
($F^{5}$) $V_{\delta}^{\mathrm{PDCP}}(S)$ does occur, which ensures the existence of $W^{1}$ and $W^{3}$. 
\par
Again, by Lemma \ref{fme} for PDCP, we have
\begin{equation}\label{554}\mathbb{P}(S\in\mathfrak{S}_{n})\asymp 2^{-3n}.\end{equation}
By Lemmas \ref{ni prob} and \ref{ac excursions}, we have
\[\mathbb{P}(F^{4}|F^{5})\asymp 2^{-n}.\]
By standard estimate on hitting probability of Brownian motion, we have
\[\mathbb{P}(F^{5})\asymp 2^{-2n}.\]
We thus have
\begin{equation}\label{555}
\begin{split}
\mathbb{P}(B_{1}\cap\{S\in\mathfrak{S}_{n}\})&\leq\mathbb{P}(B_{1}\cap F^{4}\cap F^{5})\\
&=\mathbb{P}(B_{1}\cap F^{4}|F^{5})\mathbb{P}(F^{5})\\
&\leq c''\mathbb{P}(\widetilde{E}_{1}\cap\widetilde{E}_{2})\cdot 2^{-2n}\leq c'''2^{-\sqrt{n}/2}2^{-3n},
\end{split}
\end{equation}
where $\widetilde{E}_{1},\widetilde{E}_{2}$ are events $E_{1},E_{2}$ from Lemma \ref{local12} and Corollary \ref{local arm12} under suitable translation and rescaling (which maps $\mathcal{A}(0,1,2^{n-K})$ to $\mathcal{A}(S,2^{-n},2^{-K})$). Combining \eqref{554} and \eqref{555}, we complete the proof of the $d=3$ case.
\end{proof}
Similar to Lemma \ref{local bridge}, the following lemma says that the probability that bridges $X^{i}$ for $i=0,1,2,3$ cross multiple scales decays exponentially.
\begin{lemma}\label{bridge12}
For $i=0,2$, let $E^{i}$ denote the event that $X^{i}$ intersects $\mathcal{C}_{-K-L}(S)$, and for $i=1,3$, let $E^{i}$ denote the event that $X^{i}$ intersects $\mathcal{C}_{-n+L}(S)$. Then there exist $c,c'$, such that for any $i=0,1,2,3$, we have:
\[\mathbb{P}(S\in\mathfrak{S}_{n},E^{i})\leq ce^{-c'\sqrt{n}}2^{-dn}.\]
\end{lemma}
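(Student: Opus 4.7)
We follow the strategy of Lemma~\ref{local bridge}, treating $i=1$ in detail; the cases $i=0,2,3$ follow analogously ($i=3$ via time-reversal of the Brownian motion, and $i=0,2$ by working in the exterior annulus $\mathcal{A}(S,2^{-K-L+1},2^{-K-1})$ with a symmetric inversion-based path decomposition). Using the PDCP path decomposition analogous to \eqref{decomposition}--\eqref{excursion and bridge} (with three excursions $W^{1},W^{2},W^{3}$ and four bridges $X^{0},X^{1},X^{2},X^{3}$), the event $\{S\in\mathfrak{S}_{n}\}\cap E^{1}$ implies two sub-events: $(F^{6})$ the excursions satisfy $W^{1}\cap(W^{2}\cup W^{3})=\varnothing$ and, in $d=2$, condition (3) of Definition~\ref{gs12} holds; and $(F^{7})$ the bridge $X^{1}$ reaches $\mathcal{C}_{-n+L}(S)$. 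By Lemma~\ref{ni prob} (via Lemma~\ref{ac excursions}) together with Lemma~\ref{fme} for PDCP, $\mathbb{P}(F^{6})\asymp n^{3}\cdot 2^{-dn}$; and by Lemma~\ref{ac excursions}, conditionally on $W^{1},W^{2},W^{3}$ and on the bridge endpoints $W(t_{1}),W(s_{2})\in\mathcal{C}_{-n}(S)$, the bridge $X^{1}$ is distributed as a Brownian bridge in $\mathcal{B}(S,\delta-2^{-n-1/2})$ conditioned to hit $S$.

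In dimension $d=3$, the transience of the Brownian motion and Gambler's ruin with harmonic function $|x|^{-1}$ give that such a conditioned bridge reaches $\mathcal{C}_{-n+L}(S)$ with probability $\asymp 2^{-L}$, so $\mathbb{P}(F^{7}\mid F^{6})\leq c\,2^{-L}$. In dimension $d=2$, the direct harmonic-measure estimate only yields $\mathbb{P}(F^{7}\mid F^{6})=O(1/L)$, which is insufficient; to obtain the required exponential decay we invoke condition (3) of Definition~\ref{gs12}, which enforces a strong (two-openings) non-disconnection structure on $W[0,\mathcal{T}(S)]\supset X^{1}$. Adapting the recipe of Lemma~\ref{local bridge}, the portion of $X^{1}$ between its first hits of $\mathcal{C}_{-n+1}(S)$ and $\mathcal{C}_{-n+L-1}(S)$ is uniformly equivalent to a Brownian motion crossing $\mathcal{A}(S,2^{-n+1},2^{-n+L-1})$, and together with the existing arms it forms a non-disconnecting configuration, so the separation lemma (Lemma~\ref{sep PTP}) yields $\mathbb{P}(F^{7}\mid F^{6})\leq c\,2^{-cL}$.

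Combining these estimates and using $L=[\sqrt{n}]$ gives
\[\mathbb{P}(\{S\in\mathfrak{S}_{n}\}\cap E^{1})\leq \mathbb{P}(F^{6})\cdot \mathbb{P}(F^{7}\mid F^{6})\leq c'n^{3}\cdot 2^{-dn}\cdot 2^{-c''L}\leq c'''e^{-c''''\sqrt{n}}\cdot 2^{-dn}.\]
The main technical obstacle lies in the 2D case: the excursion of $X^{1}$ to $\mathcal{C}_{-n+L}(S)$ may occur outside both packets $W[0,\tau(\mathcal{D}_{-n}(S))]$ and $W[\tau(v_{1},\mathcal{C}_{-n}(S)),\mathcal{T}(S)]$ of Definition~\ref{gs12}(2), so the PDCP non-intersection condition alone is insufficient; one must use condition (3) to constrain the topology of the full path and carefully apply the separation lemma in the sub-annulus to obtain the exponential decay.
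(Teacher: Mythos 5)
Your proposal is correct and follows essentially the same route as the paper: transience/Gambler's ruin in 3D, and in 2D the observation that the opening condition (3) of Definition~\ref{gs12} forces the bridge $X^{1}$ to cross the $L$ dyadic scales without disconnecting, so the one-arm non-disconnection bound $2^{-\xi(1)L}=2^{-L/4}$ applies; combined with the non-intersection estimate for the excursions this gives the claimed bound. The only presentational caution is to keep condition (3) out of the conditioning event when you invoke Lemma~\ref{ac excursions} — i.e.\ phrase the implied event for $X^{1}$ as ``$X^{1}$ hits $\mathcal{C}_{-n+L}(S)$ and does not disconnect'' and bound its conditional probability given the excursions and endpoints, exactly as the paper does — rather than conditioning on condition (3) and then treating $X^{1}$ as a free bridge.
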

\begin{proof}
We first consider the $d=2$ case. Without loss of generality, we only give a proof for the event $E^{1}$ by considering $X^{1}$. On $\{S\in\mathfrak{S}_{n}\}$, if $X^{1}$ disconnect $\mathcal{C}_{-n}(S)$ from $\mathcal{C}_{-n+L}(S)$, then $X^{1}$ must intersect all three excursions, which destroy global openings. Note that $\{S\in\mathfrak{S}_{n}\}\cap E^{1}$ implies $F^{4}$ and the following event:
\par
($F^{6}$)\quad $X^{1}\cap\mathcal{C}_{-n+L}(S)\neq\varnothing$, and $X^{1}$ does not disconnect $\mathcal{C}_{-n}(S)$ from $\mathcal{C}_{-n+L}(S)$.
\par
By Lemma \ref{ac excursions}, conditioned on $W^{1},W^{2},W^{3}$, $X^{1}$ is distributed as Brownian bridge of variable duration from $W(t_{1})$ to $W(s_{2})$ conditioned on hitting $S$. Therefore, the law of $X^{1}$ between its first hitting of $\mathcal{C}_{-n+1}(S)$ and $\mathcal{C}_{-n+L-1}(S)$ is uniformly equivalent to a Brownian motion. By bounds of non-disconnecting probability of one Brownian motion (Definition \ref{de} with $k=1$), we have
\[\mathbb{P}(F^{6}|F^{4})\leq c2^{-L/4}.\]
Combining this and \eqref{f4}, we complete the proof of the $d=2$ case.
\par
For the $d=3$ case, we consider a similar decomposition. Note that for a 3D Brownian motion starting from $\mathcal{C}_{-n+L}(S)$, the probability that it hits $\mathcal{C}_{-n}(S)$ is $\asymp 2^{-L}$, we thus conclude that
\[\mathbb{P}(S\in\mathfrak{S}_{n},E^{i})\leq c'2^{-L}2^{-3n},\]
which completes the proof.
\end{proof}
Recall that $K=-\log_{2}(\delta/2)$ is a positive integer. Similar to \eqref{filtration} and \eqref{eq:U_m} in the previous section, we now define an increasing family of $\sigma$-fields
\begin{equation}\label{filtration12}\mathscr{F}_{m}:=\sigma\left(X^{i},\;i=0,1,2,3,\;W^{j}[0,\tau_{j,mL}],\;W^{j}[\sigma_{j,n-K-L},\tau_{j,n-K}],\;j=1,2,3\right)\end{equation}
and let
\[U_{m}:=\left\{X^{i},\;i=0,1,2,3,\;W^{j}[0,\tau_{j,m}],\;W^{j}[\sigma_{j,n-K-L},\tau_{j,n-K}],\;j=1,2,3\right\}\]
be the collection of curves in consideration. 
\par
For any $j\leq L/2$, if $d=2$, let $\widetilde{E}_{j}^{2}$ be the event that $U_{jL+1}$ does not disconnect $\mathcal{C}_{-n}(S)$ from infinity, and there exist two different connected components of the complement of $U_{jL+1}$ in $\mathcal{D}_{-n+jL+1}(S)$, such that both components intersect $\mathcal{D}_{-n}(S)$ and are connected to infinity. If $d=3$, let $\widetilde{E}_{j}^{3}$ be the event that $W^{1}[0,\tau_{1,jL+1}]$ does not intersect with $W^{2}[0,\tau_{2,jL+1}]\cup W^{3}[0,\tau_{3,jL+1}]$.
\par
Then, for $d=2,3$, it is clear that $\mathbb{P}(\widetilde{E}_{j}^{d}|U_{jL})$ is a random variable in $\mathscr{F}_{j}$. 
The following lemma shows that the opening on each scale are expected to be not very bad, which is parallel to Lemma \ref{all good opening} and thus we omit the proof here. For $1\leq j\leq[3L/4]$, Let 
\[\overline{E}_{j}^{d}=\{\mathbb{P}(\widetilde{E}_{j}^{d}|U_{jL})\leq n^{-10}\}\]
\begin{lemma}\label{all good opening12-2}
Then there exists $c>0$, such that for any $1\leq j\leq [3L/4]$, we have
\[\mathbb{P}(\overline{E}_{j}^{d}\cap\{S\in\mathfrak{S}_{n}\})\leq cn^{-4}2^{-dn}.\]
Consequently, let $B_{3}=B_{3}(d)$ be the union of $\overline{E}_{j}^{d}\ (1\leq j\leq [3L/4])$, then 
\[\mathbb{P}(B_{3}|S\in\mathfrak{S}_{n})\leq cn^{-3}.\]
\end{lemma}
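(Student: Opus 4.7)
The plan is to mirror the proof of Lemma \ref{all good opening} for PTP, with the non-intersection exponent $\xi(1,2)=2$ (in 2D) or $\xi_{[3]}(1,2)=1$ (in 3D) playing the role that $\xi(5)=2$ played there. Specifically, conditional on $V_{\delta}^{\mathrm{PDCP}}(S)$, the event $\{S\in\mathfrak{S}_n\}$ can be decomposed across the successive scales around $S$, and a bad conditional opening at scale $jL+1$ must be paid for by the definition of $\overline{E}_j^d$.

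I would introduce four events describing the good behavior of the excursions $W^1,W^2,W^3$ across nested annuli centered at $S$:
\begin{itemize}
\item $F^{8}$: the packets $W^{1}[0,\tau_{1,jL}]$ and $W^{2}[0,\tau_{2,jL}]\cup W^{3}[0,\tau_{3,jL}]$ are disjoint (and, in 2D, realize the two-component topology of condition (3) of Definition \ref{gs12} up to scale $jL$).
\item $F^{9}$: the corresponding event at scale $jL+1$, which by construction coincides with $\widetilde{E}_j^{d}$.
\item $F^{10}$: non-intersection of $W^{1}[\tau_{1,jL+2},\tau_{1,n-K-L-1}]$ and $\bigcup_{i=2,3}W^{i}[\tau_{i,jL+2},\tau_{i,n-K-L-1}]$ in the middle annulus $\mathcal{A}(S,2^{-n+jL+2},2^{-K-L-1})$.
\item $F^{11}$: non-intersection of $W^{1}[\sigma_{1,n-K-L},\tau_{1,n-K}]$ and $\bigcup_{i=2,3}W^{i}[\sigma_{i,n-K-L},\tau_{i,n-K}]$.
\end{itemize}
Then $\{S\in\mathfrak{S}_n\}\subseteq F^{8}\cap F^{9}\cap F^{10}\cap F^{11}$. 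By Lemma \ref{ac excursions} and the excursion bound of Lemma \ref{ni prob} (augmented, in 2D, by the observation that forcing the two-component topology costs only a constant factor beyond non-intersection, as in the lower bound of Lemma \ref{fme} for PDCP), one obtains $\mathbb{P}(F^{8}\cap F^{11})\leq c_1 n^{C}2^{-d\cdot(j+1)L}$ where the exponent $d$ coincides with $\xi(1,2)$ in 2D and with $\xi_{[3]}(1,2)$ in 3D. By definition of $\overline{E}_j^d$ and the fact that $F^{8}\cap F^{11}$ is $\mathscr{F}_{j}$-measurable, $\mathbb{P}(\overline{E}_j^d\cap F^{9}\mid F^{8}\cap F^{11})\leq n^{-10}$. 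Finally, conditioning on $U_{jL+1}$ and applying Lemmas \ref{lem:harnack}, \ref{last exit} to identify the law of $W^{i}[\tau_{i,jL+2},\tau_{i,n-K-L-1}]$ with a collection of independent Brownian motions from the inner boundary to the outer boundary of the middle annulus, Lemma \ref{sep PDCP} gives $\mathbb{P}(F^{10}\mid F^{8},F^{9},F^{11},\overline{E}_j^{d})\leq c_{2}2^{-d(n-K-(j+1)L)}$. In the 3D case, I additionally include the hitting contributions of the bridges $X^{1},X^{3}$, which yield a factor $2^{-2n}$ accounting for the difference between $2^{-2n}$ and $2^{-3n}$ in $\mathbb{P}(S\in\mathfrak{S}_n)$. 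Multiplying the three factors and absorbing $2^{CK}$ into a constant gives $\mathbb{P}(\overline{E}_j^{d}\cap\{S\in\mathfrak{S}_n\})\leq cn^{-4}2^{-dn}$, and a union bound over $j\in[1,[3L/4]]$ combined with $\mathbb{P}(S\in\mathfrak{S}_n)\asymp 2^{-dn}$ from Lemma \ref{fme} for PDCP yields $\mathbb{P}(B_{3}\mid S\in\mathfrak{S}_n)\leq cn^{-3}$.

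The main obstacle is the 2D case, where the mere non-intersection of the excursions is not enough: one needs the additional topological constraint (the two-component condition) to be preserved across scales. To handle this, I would observe that on the event $F^{8}\cap F^{11}$, imposing the two-component structure only decreases the probability by a bounded factor (by Lemma~\ref{gnd prob} and a separation/attach argument analogous to the lower bound proofs in Section \ref{sec:moment}), so the order of magnitude remains $n^{C}2^{-2(j+1)L}$ as required; thus the estimate above still goes through. The 3D case is considerably cleaner because transience renders the topological constraint vacuous, and only pure non-intersection estimates via $\xi_{[3]}(1,2)=1$ are needed.
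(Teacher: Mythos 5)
Your proposal is correct and follows essentially the same route as the paper, which omits the proof of this lemma precisely because it is the scale-by-scale argument of Lemma \ref{all good opening} with the five non-disconnecting excursions replaced by the three excursions $W^1,W^2,W^3$ and $\xi(5)$ replaced by $\xi_{[d]}(1,2)$ (the polynomial gain $n^{-4}$ instead of $n^{-2}$ coming from $(jL)^3L^3\cdot n^{-10}$ rather than $(jL)^5L^5\cdot n^{-10}$). One small bookkeeping remark: in your intermediate bound the exponent is $\xi_{[3]}(1,2)=1$ rather than the spatial dimension $d=3$, and the missing factor $2^{-2n}$ in 3D is most cleanly attributed to $\mathbb{P}(V_{\delta}^{\mathrm{PDCP}}(S))\asymp 2^{-2n}$ (the cost of the two inward crossings guaranteeing the existence of $W^1$ and $W^3$, as in the proof of Lemma \ref{no extra crossing12}) rather than to the bridges $X^1,X^3$ themselves, but this does not affect the conclusion.
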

By the PDCP version of Lemma \ref{delta+ good box}, we can show that Lemmas~\ref{no extra crossing12}--\ref{all good opening12-2} still hold when we replace $\mathfrak{S}_{n}=\mathfrak{S}_{n}(\delta)$ by $\mathfrak{S}_{n}(\delta+2^{\sqrt{n}})$.
\par
For $S\in\mathfrak{S}_{n}(\delta+2^{\sqrt{n}})$ and any integer $1\leq i\leq[L/2]$, we write 
\[\mathcal{A}_{i}:=\mathcal{A}(S,2^{-n+iL},2^{-n+(i+1)L})\]
for short, and use $N_{i}$ to denote the number of $\delta$-good boxes in $\mathcal{A}_{i}$. For $T\in\mathfrak{S}_{n}(\delta)$, if
\[\tau(T,\mathcal{C}_{-K}(S),T)<\tau(S,\mathcal{C}_{-K}(S),S),\]
we say $T$ is ``globally'' visited twice  before the second global visit of $S$, which we denote by $T\lessdot_{2}S$. Let $\widetilde{N}_{i}$ stand for the number of good boxes $T$ such that $T\prec\mathcal{A}_{i}$ and $T\lessdot_{2}S$.
\par
Our next lemma shows that $\widetilde{N}_{i}$ is ``almost'' adapted to $\mathscr{F}_{i}$ under the probability measure $\mathbb{P}(\cdot|S\in\mathfrak{S}_{n})$. More precisely, we have:
\begin{lemma}\label{adapted12}
For any integer $1\leq i\leq[L/2]$, on the event $\{S\in\mathfrak{S}_{n}(\delta+2^{\sqrt{n}})\}\cap(\cap_{j=1}^{3}B_{j}^{c})$, whether an $n$-box $T\prec\mathcal{A}_{i}$ is $\delta$-good and satisfies $T\lessdot_{2}S$ can be fully determined by $U_{(i+3)L}$.
\end{lemma}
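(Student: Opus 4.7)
The plan is to follow the strategy of Lemma \ref{adapted} with modifications tailored to the non-intersection structure of PDCP. The key claim is that, under $\cap_{j=1}^{3}B_{j}^{c}$, the portion of the Brownian path relevant for deciding ``$T$ is $\delta$-good'' and ``$T\lessdot_{2}S$'' can be reconstructed from $U_{(i+3)L}$.

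First I would localize the path components. Under $B_{2}^{c}$, the inner bridges $X^{1},X^{3}$ stay inside $\mathcal{D}_{-n+L}(S)$ and the outer bridges $X^{0},X^{2}$ stay outside $\mathcal{B}(S,2^{-K-L})$. Since $L=[\sqrt{n}]$ and $(i+1)L\ll n-K-L$ for $i\leq L/2$ and $n$ large, we have $\mathcal{A}_{i}\subset\mathcal{D}_{-K-L}(S)\setminus\mathcal{D}_{-n+L}(S)$ for every $i\geq 1$, so no bridge visits $\mathcal{A}_{i}$ and only the excursions $W^{1},W^{2},W^{3}$ can hit $T$. Next, under $B_{1}^{c}$, each $W^{j}$ does not return from $\mathcal{C}_{-n+(j'+1)L}(S)$ to $\mathcal{D}_{-n+j'L}(S)$ for any $j'\leq 3L/4$; hence the curve $W^{j}[0,\tau_{j,(i+3)L}]$ recorded in $U_{(i+3)L}$ coincides with the full trace of $W^{j}$ in $\mathcal{D}_{-n+(i+2)L}(S)$, which contains $\mathcal{A}_{i}$.

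With this, conditions (1) and (2) in Definition \ref{gs12} are directly checkable from $U_{(i+3)L}$. For (1), one identifies which excursions visit $T$ and then, using the recorded exits to $\mathcal{C}_{-n+(i+3)L}(S)$ together with the reappearance of the same excursions later on, verifies that the path between the two visits of $T$ travels distance $\delta-2^{-n-1/2}$ from the centre of $T$; again $B_{1}^{c}$ prevents the path from sneaking back into $\mathcal{D}_{-n+(i+2)L}(S)$ prematurely. For (2), the two segments required to be disjoint are entirely contained in $\mathcal{D}_{-n+(i+2)L}(S)$, so the check is local. The ordering $T\lessdot_{2}S$ is read off from the time labels in the decomposition.

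The main technical obstacle is the global topological condition (3) in the 2D case, which demands two disjoint components of $\mathcal{B}(T,\delta-2^{-n-1/2})\setminus W[0,\mathcal{T}(T)]$ that both meet $\mathcal{D}_{-n}(T)$ and extend to infinity. To handle it I would adapt the cut-set and opening argument from the proof of Lemma \ref{adapted}, exploiting the PDCP bipartition $\{W^{1}\}$ vs $\{W^{2},W^{3}\}$: the non-intersection of these two packets forces the existence of two distinguished gaps between them on every annular scale. Under $B_{1}^{c}\cap B_{2}^{c}$ the cut sets of these gaps on $\mathcal{C}_{-n+(i+2)L}(S)$ are $\mathscr{F}_{(i+3)}$-measurable, and condition (3) then reduces to checking, via the localized bridges and the outer end-segments $W^{j}[\sigma_{j,n-K-L},\tau_{j,n-K}]$ in $U_{(i+3)L}$, that both gaps extend all the way to infinity without being closed off by any path portion outside $\mathcal{D}_{-n+(i+2)L}(S)$. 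This topological bookkeeping is the direct analog of the unique-opening step in the PTP case and is where the proof requires the most care.
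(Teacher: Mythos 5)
Your overall architecture is the right one and matches what the paper intends (the paper omits the proof, indicating only that one repeats the geometric argument of Lemma \ref{adapted} with two global openings in place of one in 2D, and reduces to pure non-intersection in 3D). Your localization of the bridges and excursions via $B_{1}^{c}$ and $B_{2}^{c}$, and your plan for condition (3) via two cut sets coming from the bipartition $\{W^{1}\}$ versus $\{W^{2},W^{3}\}$, are both in the spirit of the intended argument.

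However, your treatment of condition (2) of Definition \ref{gs12} contains a genuine error. You assert that ``the two segments required to be disjoint are entirely contained in $\mathcal{D}_{-n+(i+2)L}(S)$, so the check is local.'' This is false: the first segment $W[0,\tau(\mathcal{D}_{-n}(T))]$ begins at the origin (far from $S$), and the second segment $W[\tau(T,\mathcal{C}_{-n}(T)),\mathcal{T}(T)]$ must travel to distance $\delta-2^{-n-1/2}$ from the centre of $T$, hence traverses every annular scale out to $\partial\mathcal{B}(S,\delta/2)$ and beyond; in particular both segments contain long stretches of the excursions in the region $\mathcal{A}(S,2^{-n+(i+3)L},2^{-K-L})$, whose traces are \emph{not} recorded in $U_{(i+3)L}$. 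The correct argument is to split the intersection check at $\mathcal{C}_{-n+(i+2)L}(S)$: inside, the full traces are determined by $U_{(i+3)L}$ under $\cap_j B_j^c$, as you say; outside, one must show that the far portions of the two segments lie respectively in $X^{0}\cup W^{1}$ and in $W^{2}\cup X^{2}\cup W^{3}$ (this uses $B_{1}^{c}$ to rule out extra crossings and $B_{2}^{c}$ to confine the bridges, together with a short case analysis on which excursion first hits $\mathcal{D}_{-n}(T)$ and the fact that $T\lessdot_{2}S$ forces the second visit of $T$ to occur on $W^{3}$), so that their disjointness away from $S$ is already guaranteed by condition (2) for $S$ on the conditioning event $\{S\in\mathfrak{S}_{n}(\delta+2^{-\sqrt{n}})\}$ rather than read off from $U_{(i+3)L}$. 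This step is not cosmetic: in the $d=3$ case condition (3) is absent and condition (2) is the \emph{only} non-trivial condition, so as written your argument proves nothing there. Once this is repaired, the rest of the proposal goes through along the lines you describe.
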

As a direct consequence, $\widetilde{N}_{i}$ is $\mathscr{F}_{(i+3)}$-measurable under the probability measure 
\begin{equation}\label{conditional measure12}
\overline{\mathbb{P}}_{n}(\cdot):=\mathbb{P}(\cdot|S\in\mathfrak{S}_{n}(\delta+2^{-\sqrt{n}}),\cap_{j=1}^{3}B_{j}^{c}).
\end{equation}
We omit the proof here since it follows from a similar geometrical observation as in the proof of Lemma \ref{adapted}. When $d=2$, the only difference is that there are two different global openings here. When $d=3$, it suffices to consider the non-intersection of paths when we determine whether $T$ is $\delta$-good, which is even simpler than the 2D case.
\par
Parallel to Proposition \ref{exist good box}, we can prove the following proposition, in which we use the separation lemma with initial configuration for PDCP as the key ingredient.
\begin{prop}\label{exist good box12}
There exist $C_{1},C_{2}$, such that for any large $n$ and any $2\leq i\leq[L/2]$, we have
\[\overline{\mathbb{P}}_{n}(\widetilde{N}_{i}\geq C_{1}L|\mathscr{F}_{i-1})\geq C_{2}/i,\]
where $\overline{\mathbb{P}}_{n}$ is defined in \eqref{conditional measure12}.
\end{prop}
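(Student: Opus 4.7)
The plan is to transcribe the three-step strategy from the proof of Proposition \ref{exist good box} into the PDCP setting, with non-disconnection ingredients systematically replaced by non-intersection analogs. Throughout, $d \in \{2,3\}$ denotes the ambient dimension, and we recall $\widetilde{\mathbb{P}}_n = \mathbb{P}(\cdot \mid S \in \mathfrak{S}_n(\delta+2^{-\sqrt{n}}))$.

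First, I would establish the PDCP analog of Lemma \ref{cme-f}: for any $2 \leq j \leq [L/2]$ and any $n$-boxes $T,U \prec \mathcal{A}_j$,
\begin{align*}
\widetilde{\mathbb{P}}_n(T \in \mathfrak{S}_n,\, T \lessdot_2 S \mid \mathscr{F}_{j-1}) &\geq c_1 \mathrm{dist}(S,T)^{-d} 2^{-dn}, \\
\widetilde{\mathbb{P}}_n(T, U \in \mathfrak{S}_n \mid \mathscr{F}_{j-1}) &\leq c_2 D_1^{-d} D_2^{-d} 2^{-2dn},
\end{align*}
with the notation of Lemma \ref{tme}. These bounds follow from the same path decomposition as Lemma \ref{cme-f}: a chain of annular pieces is concatenated at the scales $2^{-n+(j-1)L}$, $\mathrm{dist}(S,T)$, $2^{-n}$ near $T$, and $2^{-K-L}$ near the inner global scale. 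The PTP separation lemma (Lemma \ref{sep PTP}) is replaced by Lemma \ref{sep PDCP}; the exponents match the target scaling because $\xi(1,2) = 2 = d$ in 2D and $\xi_{[3]}(1,2) = 1 = d-2$ in 3D, playing the same role as $\xi(5)=2$ did in the PTP case. To handle the initial configuration in $\mathscr{F}_{j-1}$, I would invoke the non-intersection separation lemma with initial conditions (\cite[Lemma 3.4]{Law95} for $d=2$ and \cite[Lemma 3.2]{LV12} for $d=3$), which upgrades any configuration compatible with $S \in \mathfrak{S}_n(\delta+2^{-\sqrt{n}})$ into an $\alpha$-nice-at-the-end one at unit cost.

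Second, with these conditional moment bounds in hand, a Paley--Zygmund computation identical to Lemma \ref{exist good box+} yields
\[
\widetilde{\mathbb{E}}_n[\widetilde{N}_i \mid \mathscr{F}_{i-1}] \geq c_3 L, \qquad \widetilde{\mathbb{E}}_n[\widetilde{N}_i^2 \mid \mathscr{F}_{i-1}] \leq c_4 i L^2,
\]
so $\widetilde{\mathbb{P}}_n(\widetilde{N}_i \geq C_1 L \mid \mathscr{F}_{i-1}) \geq c_5/i$. Third, I would transfer from $\widetilde{\mathbb{P}}_n$ to $\overline{\mathbb{P}}_n$ using the three bad events: $B_2$ (bridges crossing multiple scales, Lemma \ref{bridge12}) is $\mathscr{F}_1$-measurable and thus automatically excluded by conditioning on $\mathscr{F}_{i-1}$; $B_3$ (Lemma \ref{all good opening12-2}) is controlled by the same $\mathscr{F}_j$-measurability/future-prediction argument used for $B_5$ in the proof of Proposition \ref{exist good box}; and $B_1$ (extra back-crossings, Lemma \ref{no extra crossing12}) requires the PDCP counterpart of the estimate \eqref{1jl}: at each future scale $j \geq i$, the ratio of the probability of a back-crossing compatible with $S \in \mathfrak{S}_n$ to the probability of $S \in \mathfrak{S}_n$ is bounded by $c \cdot 2^{-L/4} n^C$, using Lemma \ref{sep PDCP} in 2D and elementary 3D hitting estimates otherwise.

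The main obstacle I anticipate is the $d=2$ conditional moment lower bound, and specifically clause (3) of Definition \ref{gs12}. When one plants a second visit at $T$ while preserving the PDCP structure at $S$, the tube routing must simultaneously create two distinct connected components of $\mathcal{B}(T,\delta-2^{-n-1/2}) \setminus W[0,\mathcal{T}(T)]$ that both reach $\mathcal{D}_{-n}(T)$ and connect to infinity. This topological constraint is absent in 3D (where only non-intersection is required) and in the PTP case (where a single non-disconnected opening suffices). The workaround is to choose the tubes so that they respect the two global ``lanes'' naturally produced by the $\alpha(1,2)$-nice configuration of $W^1$ versus $W^2 \cup W^3$ near $S$, and to verify that the analogous two-lane structure is inherited at $T$ by applying Lemma \ref{sep PDCP} with suitably pinned boundary data for the planted excursions near $T$.
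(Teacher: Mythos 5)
Your proposal is correct and follows essentially the same route as the paper, which itself only sketches this proposition as a direct transcription of Proposition \ref{exist good box} with Lemma \ref{sep PTP} replaced by Lemma \ref{sep PDCP} (together with its initial-configuration version) and with the bad events $B_1,B_2,B_3$ of Section \ref{sec:pdcp} playing the roles of $B_3,B_4,B_5$ from the PTP argument. Your extra attention to condition (3) of Definition \ref{gs12} in the $d=2$ case is a genuine point the paper leaves implicit, and your two-lane tube construction is consistent with how the paper treats the two global openings in Lemma \ref{adapted12}.
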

Proposition \ref{exist good box12} now leads to the proof of Proposition \ref{cgs12}, and we omit details here since it can be proved in the same fashion as Proposition \ref{cgs}. Once we complete the proof of Proposition \ref{cgs12}, the proof of Theorem \ref{pdcp} follows by the same argument as the proof of Theorem \ref{ptp}.

\section{Non-Existence of Boundary Double Points on Critical Brownian Loop-Soup Clusters}\label{sec:bdp}
In this section, we will prove Theorem \ref{bdp}. Again, we will skip many identical arguments that already appeared in the proof of Theorem \ref{ptp}. We point out that notation in this section is again independent from the previous ones. We still let $L=[\sqrt{n}]$ and assume that $K:=-\log_{2}(\delta/2)$ is a positive integer. We also fix $\iota>0$, and assume $S\in\mathcal{S}_{n}$, $S\subset\mathcal{A}(0,\iota,1-\iota)$.
\par
As will be discussed in the proof of Theorem \ref{bdp}, we will focus on double points which are generated by one single loop. All lemmas and propositions have parallel versions when we consider double points visited by two different loops. 

We write $\Gamma_{0}$ for the Brownian loop soup in the unit disc with intensity $1$, and $\gamma$ for a Brownian loop independently sampled from $\mu_{\iota}^{\#}$ (recall \eqref{eq:mu_iota} for its definition). 
Recall that $\mathfrak{S}_{n}(\delta)$ is the set of $\delta$-good boxes from Definition~\ref{gs4}, and recall moment bounds in Lemmas \ref{fme}, \ref{sme} and \ref{tme} for BDP. 
The following proposition is parallel to Propositions \ref{cgs} and \ref{cgs12}, leading to the proof of Theorem \ref{bdp}.
\begin{prop}\label{cgs4}
There exist $c,\lambda,\nu>0$, such that for any large $n$,
\[\mathbb{P}(\#\mathfrak{S}_{n}<\lambda\sqrt{n}|S\in\mathfrak{S}_{n}(\delta+2^{-\sqrt{n}}))\leq cn^{-\nu}.\]
\end{prop}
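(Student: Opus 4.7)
The strategy mirrors the proof of Proposition \ref{cgs}, recast for the loop $\gamma$ (sampled from $\mu_\iota^\#$) together with the independent background loop soup $\Gamma_0$. Following Definition \ref{gs4}, on $\{S \in \mathfrak{S}_n\}$ the loop $\gamma$ makes four crossings of the annulus $\mathcal{A}(S, 2^{-n}, \delta/2)$: an ``entrance'' before visit 1, an exit after visit 1, a return before visit 2, and a final exit after visit 2. Writing these four crossings as $W^1,\ldots,W^4$ and the two complementary ``bridges'' inside $\mathcal{B}(S,\delta/2)$ (respectively outside) as $X^1, X^2$, we obtain a BDP analog of Lemma \ref{ac excursions}: the joint law of $(W^1,\ldots,W^4)$ is uniformly equivalent to four independent Brownian excursions, with the standard conditional description for bridges given the excursion endpoints. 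This is consistent with the fact that the baseline asymptotic is $\asymp n^4 e^{-2n} = n^4 2^{-\xi_1(4) n}$ from Lemma \ref{gnd prob}.

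I would next introduce the analogs of the bad events $B_1,\ldots,B_5$ from Section \ref{subsec:bad}: uniqueness of the generalized opening at the innermost and outermost scales ($B_1, B_2$), no macroscopic back-crossings of the $W^j$'s ($B_3$), bridges stay local ($B_4$), and no ``atypically bad'' opening at any intermediate scale ($B_5$). The required estimates follow from Lemmas \ref{gnd prob} and \ref{sep BDP} together with the strict gaps $\xi_1(k) > \xi_1(4) = 2$ for $k \geq 5$; for instance, the analog of Lemma \ref{unique} leverages that splitting the four excursions (each enlarged by the loop-soup clusters it meets) into two packets that do not merge across scale $j$ costs $\asymp 2^{-(\xi_1(k) + \xi_1(4-k)) j}$ with $k \in \{1,2,3\}$, all of which yields a quantitative exponential gap over $\xi_1(4) = 2$.

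With the bad events controlled, I would define the filtration
\begin{equation*}
\mathscr{F}_m := \sigma\Bigl(X^1, X^2,\; W^j[0,\tau_{j,mL}],\; W^j[\sigma_{j,n-K-L},\tau_{j,n-K}],\; j=1,\ldots,4;\; \Gamma_0 \cap \mathcal{D}_{-n+mL}(S)\Bigr),
\end{equation*}
and prove the BDP analog of Lemma \ref{adapted}: on $\cap_j B_j^c$, the count $\widetilde{N}_i$ of good boxes $T \prec \mathcal{A}_i$ that are globally visited by $\gamma$ before $S$ is $\mathscr{F}_{(i+3)}$-measurable under the tilted probability $\overline{\mathbb{P}}_n := \mathbb{P}(\cdot \mid S \in \mathfrak{S}_n(\delta + 2^{-\sqrt{n}}),\; \cap_j B_j^c)$. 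The conditional moment bound $\overline{\mathbb{P}}_n(\widetilde{N}_i \geq C_1 L \mid \mathscr{F}_{i-1}) \geq C_2/i$ then follows by copying Proposition \ref{exist good box}: the key ingredient is the BDP version of Lemma \ref{cme-f}, built on the generalized separation lemma with initial configuration referenced at the end of Section \ref{subsec:sep}. The proposition then follows exactly as in Section \ref{subsec:martingale}, by chaining $Z_i := \mathds{1}\{\widetilde{N}_{4i} \geq C_1 L\}$ via conditional Borel--Cantelli and adding the $O(n^{-1})$ bad-event budget.

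The main obstacle, absent in the pure Brownian cases of Theorems \ref{ptp}--\ref{pdcp}, is handling $\Gamma_0$ coherently inside the filtration. Loops in $\Gamma_0$ straddling the circle $\mathcal{C}_{-n+mL}(S)$ can fuse clusters inside $\mathcal{D}_{-n+mL}(S)$ with clusters outside, so the loop-soup information inside the growing disc alone is not Markovian and the generalized opening revealed up to step $m$ can be disturbed by future loops. Resolving this requires augmenting $\mathscr{F}_m$ with the (finitely many, modulo macroscopic cutoffs) loops that straddle the relevant circles, then applying the restriction/decomposition property of the Brownian loop soup from \cite{LW04} and the generalized separation lemma (Lemma \ref{sep BDP}) with this enriched initial configuration, to restore the almost-independence across successive annuli that drives the Paley--Zygmund step. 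Once this loop-soup bookkeeping is in place, every other step of Section \ref{sec:ptp} transfers verbatim with $\xi$ replaced by $\xi_1$.
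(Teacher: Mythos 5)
Your overall architecture matches the paper's: decompose $\gamma$ into four excursions and bridges, define bad events, build a filtration containing the loop soup restricted to growing discs, prove adaptedness and conditional moment bounds, and chain via Paley--Zygmund. However, there is a genuine gap in how you handle the loop soup, which is precisely the new difficulty of this case. Your proposed fix --- augmenting $\mathscr{F}_m$ with the loops that straddle the circles $\mathcal{C}_{-n+mL}(S)$ --- does not work: condition (2) of Definition~\ref{gs4} involves the \emph{cluster} of $\{\gamma\}\cup\Gamma_0$ containing $\gamma$, and cluster connectivity across a circle can be created by chains of arbitrarily small loops none of which individually straddles the circle by a macroscopic amount, so revealing straddling loops does not determine which boxes are good. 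What the paper does instead is introduce an additional bad event $B_5$ (Lemma~\ref{local cluster}): no cluster of $\Gamma_0$ near $S$ crosses $L$ dyadic scales without disconnecting $S$. On $B_5^c$, every cluster meeting $\mathcal{D}_{-n+(i+3)L}(S)$ is made of loops fully contained in $\mathcal{D}_{-n+(i+4)L}(S)$, hence is $\Gamma_{-n+(i+4)L}(S)$-measurable (whence the shift $i+4$ rather than your $i+3$). Controlling $\mathbb{P}(B_5\,|\,S\in\mathfrak{S}_n)$ requires a genuinely new input absent from the PTP/PDCP sections: the two-arm estimate for simple CLE$_4$ clusters from \cite{GNQ24+} with exponent $\alpha_2(4)=1/2$, combined with the four-crossing cost $\xi(4)$ across the same scales, using $2-\xi(4)-1/2<0$. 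This does not follow by ``replacing $\xi$ by $\xi_1$.''

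A secondary but quantitatively important slip: for the analogue of Lemma~\ref{unique} you price the splitting of the four excursions into two non-merging packets at $2^{-(\xi_1(k)+\xi_1(4-k))j}$. Since $\xi_1(k)=k/2$ at $\mathbf c=1$, this equals $2^{-2j}=2^{-\xi_1(4)j}$ for every $k$, i.e.\ it yields \emph{no} gap and the bad-event bound would fail. The correct cost is the ordinary Brownian non-intersection exponent between the two packets, $\xi(k,4-k)>5/2$ for $k=1,2,3$ (Lemma~\ref{ni prob}), paired with the generalized non-disconnection cost $\xi_1(4)=2$ on the remaining scales, exactly as in Lemma~\ref{unique4}. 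Minor bookkeeping: the loop decomposition produces four bridges (two through $S$, one through $\partial\mathcal{B}(S,\delta-2^{-n-1/2})$, and one closing the loop at the root), not two; and in the BDP case the paper works with $N_i$ directly, without the ordering $T\lessdot S$ you import from the PTP argument.
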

We also have a parallel version of Lemma \ref{delta+ good box} for BDP that controls the difference between $\mathfrak{S}_{n}(\delta+2^{-\sqrt{n}})$ and $\mathfrak{S}_{n}(\delta)$. We omit the statement and proof.
\par
Recall that $\Gamma$ is a Brownian loop soup in the whole plane with intensity 1. The following two lemmas characterize the global opening structure and the extra crossing structure, which are parallel versions of Lemmas \ref{unique} and \ref{local}. Let $W_{i}$, $i=1,2,3,4$ be 4 independent Brownian excursions in $\mathcal{A}(0,1,2^n)$ which start uniformly on $\mathcal{C}_{0}$, and stop upon reaching $\mathcal{C}_{n}$. Let $\Gamma_{r}$ stand for the collection of loops in $\Gamma$ which are contained in $\mathcal{B}(0,r)$. Let $\widetilde{W}_{i}$ stand for the union of $W_{i}$ and clusters of loops in $\Gamma_{2^{n}}\backslash\Gamma_{1}$ that it intersects. Let $E_{1}$ be the event that the union of $\widetilde{W}_{i}$ for $i=1,2,3,4$ does not disconnect $\mathcal{C}_{0}$ from infinity. Recall the definition of $\sigma_{i}^{j}$ and $\tau_{i}^{j}$ above Lemma~\ref{unique}. For any $j\leq n/2$, let $E_{2}=E_{2}(j)$ (resp.\ $E_{3}=E_{3}(j)$) be the event that there exists non-empty $A\subsetneq\{1,2,3,4\}$, such that packets of Brownian excursions $\{W_{i}[0,\tau_{i}^{j}]:i\in A\}$ and $\{W_{i}[0,\tau_{i}^{j}]:i\in A^{c}\}$ do not intersect (resp.\ $\{W_{i}[\sigma_{i}^{n-j},\tau_{i}^{n}]:i\in A\}$ and $\{W_{i}[\sigma_{i}^{n-j},\tau_{i}^{n}]:i\in A^{c}\}$ do not intersect). 
\par
In a similar way as for Lemma \ref{unique}, we can obtain the following result, where inputs are now Lemmas~\ref{gnd prob} and \ref{sep BDP}. 
\begin{lemma}\label{unique4}
There exists $c>0$, such that for $m=2,3$, $n\ge 1$ and $j\le n/2$,
\[\mathbb{P}(E_{m}(j)\cap E_{1})\leq cn^{8}2^{-2n-j/2}.\]
\end{lemma}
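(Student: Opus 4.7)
My plan is to follow the structure of the proof of Lemma~\ref{unique}, replacing its classical inputs by their generalized counterparts: the generalized non-disconnection estimate from Lemma~\ref{gnd prob} at intensity $\mathbf{c}=1$ in place of the standard non-disconnection estimate, and the intersection-exponent bound from Lemma~\ref{ni prob} for the inner non-intersection event. I will focus on $m=2$; the case $m=3$ follows by applying the conformal inversion $z\mapsto 2^n/z$, which preserves both the trace of the Brownian excursion measure and the Brownian loop measure (and hence the law of the loop soup) up to scaling, and which swaps $E_2(j)$ with $E_3(j)$.

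The event $E_2(j)\cap E_1$ entails the conjunction of two sub-events localized at disjoint scales. Let $F^2$ be the event that for some nonempty $A\subsetneq\{1,2,3,4\}$ the two families $\{W_i[0,\tau_i^j]:i\in A\}$ and $\{W_i[0,\tau_i^j]:i\in A^c\}$ are disjoint in $\mathcal{A}(0,1,2^j)$. By Lemma~\ref{ni prob} and a union bound over the $14$ nonempty proper subsets $A$ of $\{1,2,3,4\}$,
\[
\mathbb{P}(F^2)\leq c\,j^{4}\,2^{-j\xi_{\min}},
\]
where $\xi_{\min}:=\min_{1\leq i\leq 3}\xi(i,4-i)$. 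A direct computation from~\eqref{ie value} yields $\xi(1,3)=(13+\sqrt{73})/8$ and $\xi(2,2)=35/12$, so that $\xi_{\min}=\xi(1,3)$ and $\xi_{\min}-2=(\sqrt{73}-3)/8>1/2$. Next, let $F^1$ denote the event that the union of $W_i[\sigma_i^{j+1},\tau_i^n]$, $i=1,\ldots,4$, together with the clusters of loops fully contained in $\mathcal{B}(0,2^n)\setminus\overline{\mathcal{B}(0,2^{j+1})}$ that these tails meet, does not disconnect $\mathcal{C}_{j+1}$ from infinity. A standard path-connectivity argument shows $E_1\subset F^1$: any continuous path from $\mathcal{C}_0$ to $\infty$ avoiding the (larger) set $\bigcup_i\widetilde{W}_i$ contains, after its last crossing of $\mathcal{C}_{j+1}$, a sub-path from $\mathcal{C}_{j+1}$ to $\infty$ that also avoids the (smaller) set appearing in the definition of $F^1$.

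By the strong Markov property combined with the last-exit decomposition (Lemma~\ref{last exit}) and the Harnack-type Lemma~\ref{lem:harnack}, conditional on $(W_i[0,\tau_i^j])_{i=1}^4$ the tails $(W_i[\sigma_i^{j+1},\tau_i^n])_{i=1}^4$ are uniformly equivalent to four independent Brownian excursions in $\mathcal{A}(0,2^{j+1},2^n)$. Moreover, the restriction of the Brownian loop soup to $\mathcal{B}(0,2^n)\setminus\overline{\mathcal{B}(0,2^{j+1})}$ is independent of everything in $\mathcal{B}(0,2^{j+1})$ by the restriction property of Poisson point processes, and therefore of the inner configuration. Applying Lemma~\ref{gnd prob} with $\mathbf{c}=1$ and $k=4$, together with $\xi_1(4)=2$ from~\eqref{gde value}, yields
\[
\mathbb{P}(F^1\mid F^2)\leq c'\,(n-j-1)^{4}\,2^{-2(n-j-1)}.
\]
Multiplying the two bounds and using $j,n-j-1\leq n$,
\[
\mathbb{P}(E_2(j)\cap E_1)\leq \mathbb{P}(F^1\cap F^2)\leq c''\,n^{8}\,2^{-2n-j(\xi_{\min}-2)}\leq c''\,n^{8}\,2^{-2n-j/2},
\]
as required.

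The main technical care lies in decoupling the inner and outer scales: loops in $\Gamma_{2^n}\setminus\Gamma_1$ which straddle $\mathcal{C}_{j+1}$, or which are fully contained in $\mathcal{B}(0,2^{j+1})\setminus\overline{\mathcal{B}(0,1)}$, could in principle couple the two scales and obstruct a clean product estimate. I sidestep this by defining $F^1$ using only those loops fully contained in the outer annulus $\mathcal{B}(0,2^n)\setminus\overline{\mathcal{B}(0,2^{j+1})}$, which are genuinely independent of the inner configuration by the restriction property, while the verification $E_1\subset F^1$ via the path-connectivity argument above ensures that this weakening still preserves enough information to close the bound.
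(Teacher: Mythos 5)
Your proof follows the paper's argument for Lemma~\ref{unique4} essentially verbatim: the same splitting into an inner non-intersection event $F^{2}$ (bounded via Lemma~\ref{ni prob} together with $\xi(1,3),\xi(2,2)>5/2$) and an outer generalized non-disconnection event $F^{1}$ (bounded via Lemma~\ref{gnd prob} with $\xi_{1}(4)=2$), glued by the strong Markov property and the last-exit decomposition, with $m=3$ handled by inversion. The one place you deviate is the choice of loops attached to the tails in $F^{1}$: you keep only the loops \emph{fully contained} in the open annulus $\mathcal{B}(0,2^{n})\setminus\overline{\mathcal{B}(0,2^{j+1})}$, whereas Lemma~\ref{gnd prob} (and the paper's $F^{1}$) uses $\Gamma_{2^{n}}\setminus\Gamma_{2^{j+1}}$, which also contains the loops straddling $\mathcal{C}_{j+1}$. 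Since your collection is smaller, your $F^{1}$ is a \emph{larger} event, so the upper bound of Lemma~\ref{gnd prob} does not literally apply to it; moreover the decoupling you were worried about is a non-issue, because the loop soup is independent of the excursions, so conditioning on $F^{2}$ (which involves only $W_{i}[0,\tau_{i}^{j}]$) does not affect any loops. Reverting to the convention $\Gamma_{2^{n}}\setminus\Gamma_{2^{j+1}}$ --- for which your inclusion argument $E_{1}\subset F^{1}$ works equally well --- removes this mismatch, and the rest of your estimate goes through unchanged.
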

\begin{proof}
We will only prove the $m=2$ case and the $m=3$ case follows from inversion invariance. Note that $E_{2}\cap E_{1}$ implies the following two events:
\par
($F^{1}$) The union of $\widetilde{W}_{i}[\sigma_{i}^{j+1},\tau_{i}^{n}]$, $i=1,\ldots,4$, does not disconnect $\mathcal{C}_{j+1}$ from infinity, where $\widetilde{W}_{i}[\sigma_{i}^{j+1},\tau_{i}^{n}]$ denotes the union of $W_{i}[\sigma_{i}^{j+1},\tau_{i}^{n}]$ and loop clusters in $\Gamma_{2^{n}}\backslash\Gamma_{2^{j+1}}$ that it intersects.
\par
($F^{2}$) For some non-empty $A\subsetneq\{1,2,3,4\}$, $\{W_{i}[0,\tau_{i}^{j}], i\in A\}$ and $\{W_{i}[0,\tau_{i}^{j}], i\in A^{c}\}$ do not intersect.
\par
By the strong Markov property and \cite[Lemma 2.15]{GLQ22}, conditioned on $W_{i}[0,\tau_{i}^{j}]$, $i=1,2,3,4$, the joint law of $W_{i}[\sigma_{i}^{j+1},\tau_{i}^{n}]$, $i=1,2,3,4$, is uniformly equivalent to the joint law of four independent Brownian excursions starting uniformly from $\mathcal{C}_{j+1}$ and stopped upon hitting $\mathcal{C}_{n}$. By Lemma \ref{gnd prob}, since $\xi_1(4)=2$ by \eqref{gde value}, it follows that
\[\mathbb{P}(F^{1}|F^{2})\leq c_{1}(n-j)^{4}2^{-2(n-j)}.\] 
Now, for all three cases $|A|=i$, $i=1,2,3$, in the annulus $\mathcal{A}(0,1,2^{j})$, $F^{2}$ implies the non-intersection event of $i$ Brownian excursions and the other $(4-i)$ Brownian excursions. Since $\xi(1,3)>5/2$ and $\xi(2,2)>5/2$, by Lemma \ref{ni prob}, we have 
\[\mathbb{P}(F^{2})\leq c_{2}j^{4}2^{-5j/2}.\] 
We thus have 
\[\mathbb{P}(F^{1}\cap F^{2})\leq c_{3}(n-j)^{4}j^{4}2^{-2n-j/2}\leq c_{3}n^{8}2^{-2n-j/2},\] 
which completes the proof.
\end{proof}
\begin{lemma}\label{local4}
For $i=1,2,3,4$, let $E_{i,j}$ be the event that $W_{i}$ returns to $\mathcal{C}_{jL}$ after hitting $\mathcal{C}_{(j+1)L}$. Then there exists $c$, such that for any $n\ge 1$, $1\leq j\leq[3L/4]$, and $i\in\{1,2,3,4\}$, we have
\[\mathbb{P}(E_{i,j}\cap E_{1})\leq cn^{4}2^{-L}\cdot2^{-2n}.\]
Moreover, letting $E_{4}$ be the union of $E_{i,j}$ over all $i$ and $j$, we have
\[\mathbb{P}(E_{4}\cap E_{1})\leq ce^{-c'\sqrt{n}}2^{-2n}.\]
\end{lemma}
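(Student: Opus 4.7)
The plan is to follow the strategy of the proof of Lemma \ref{local}, replacing the classical disconnection exponents by the generalized ones from \eqref{gde value}. Since $\xi_{1}(k)=k/2$, we have in particular $\xi_{1}(4)=2$ and $\xi_{1}(6)-\xi_{1}(4)=1$; these two identities will produce the $2^{-2n}$ and the $2^{-L}$ factors respectively in the claimed bound. By symmetry we may assume $i=1$, and by the inversion invariance of Brownian excursions (as used in the proof of Lemma \ref{unique4}) we may also assume that $W_{1}$ first reaches $\mathcal{C}_{(j+1)L}$ and then returns to $\mathcal{C}_{jL}$ before its final exit through $\mathcal{C}_{n}$. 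I would then introduce stopping times analogous to \eqref{461}--\eqref{463}: for $i=1,2,3,4$, set $t_{1}^{i}:=\tau_{i}(\mathcal{C}_{jL+1/3})$, $t_{2}^{i}:=\tau_{i}(\mathcal{C}_{(j+1)L-1/3})$, $t'_{i}:=\tau_{i}(\mathcal{C}_{jL})$, $t''_{i}:=\sigma_{i}(\mathcal{C}_{(j+1)L})$, and for $W_{1}$ also the four ``extra-crossing'' stopping times $t_{3}^{1},t_{4}^{1},t_{5}^{1},t_{6}^{1}$ defined exactly as in \eqref{decomposing extra crossings}.

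Next, I would use the restriction property of the Brownian loop soup to partition the loops of $\Gamma$ that lie in $\mathcal{D}_{n}\setminus\mathcal{D}_{1}$ into three independent families: $\Gamma^{\mathrm{in}}$, the loops contained in $\mathcal{D}_{jL+1/3}$; $\Gamma^{\mathrm{out}}$, the loops contained in $\mathcal{D}_{n}\setminus\overline{\mathcal{D}_{(j+1)L-1/3}}$; and $\Gamma^{\mathrm{mid}}$, the remaining loops. The event $E_{1,j}\cap E_{1}$ is contained in the intersection of three ``scale-separated'' generalized non-disconnection events: $(F^{3})$ the union of $W_{i}[0,t'_{i}]$ for $i=1,2,3,4$, together with the clusters of $\Gamma^{\mathrm{in}}$ they meet, does not disconnect $\mathcal{C}_{0}$ from $\mathcal{C}_{jL}$; $(F^{5})$ in the annulus $\mathcal{A}(0,2^{jL+1/3},2^{(j+1)L-1/3})$, the four crossings $W_{i}[t_{1}^{i},t_{2}^{i}]$ together with the two extra crossings $W_{1}[t_{3}^{1},t_{4}^{1}]$ and $W_{1}[t_{5}^{1},t_{6}^{1}]$, enlarged by the clusters they form with $\Gamma^{\mathrm{mid}}$, do not disconnect $\mathcal{C}_{jL+1/3}$ from $\mathcal{C}_{(j+1)L-1/3}$; $(F^{4})$ the union of $W_{i}[t''_{i},\tau_{i}]$ for $i=1,2,3,4$, together with the clusters of $\Gamma^{\mathrm{out}}$ they meet, does not disconnect $\mathcal{C}_{(j+1)L}$ from $\mathcal{C}_{n}$.

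I would then bound the three events in turn. Lemma \ref{gnd prob} directly gives $\mathbb{P}(F^{3})\le c_{1}(jL)^{4}2^{-\xi_{1}(4)jL}$. Conditionally on $F^{3}$, the entry points of the $W_{i}$ on $\mathcal{C}_{jL+1/3}$ and the starting points of the two extra crossings of $W_{1}$ are uniformly equivalent to independent uniform points on the relevant circles (by the strong Markov property and Lemma \ref{lem:harnack}). Applying the generalized separation lemma \ref{sep BDP} to the resulting six-arm configuration in the intermediate annulus, together with the independent loops of $\Gamma^{\mathrm{mid}}$, one obtains $\mathbb{P}(F^{5}\mid F^{3})\le c_{2}2^{-\xi_{1}(6)L}$. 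Finally, by the strong Markov property, the independence of $\Gamma^{\mathrm{out}}$ from $\Gamma^{\mathrm{in}}\cup\Gamma^{\mathrm{mid}}$, and Lemma \ref{gnd prob} applied in the outer annulus, $\mathbb{P}(F^{4}\mid F^{3}\cap F^{5})\le c_{3}(n-(j+1)L)^{4}2^{-\xi_{1}(4)(n-(j+1)L)}$. Multiplying the three bounds and using $\xi_{1}(4)=2$ together with $\xi_{1}(6)-\xi_{1}(4)=1$, the product telescopes to a bound of the form $c\,n^{C}\,2^{-L}\,2^{-2n}$ for some fixed $C$, which yields the first claim (the polynomial factor is absorbed into the constant, or into the exponentially small factor of the second claim). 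The second claim then follows from a union bound over the $4\cdot[3L/4]=O(\sqrt{n})$ pairs $(i,j)$: since $L=[\sqrt{n}]$ we have $n^{C}2^{-L}\le e^{-c''\sqrt{n}}$ for large $n$.

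The main obstacle I anticipate is the careful treatment of $F^{5}$ in the presence of the middle loops $\Gamma^{\mathrm{mid}}$. Although $\Gamma^{\mathrm{mid}}$ is independent of the boundary data produced by $F^{3}$ and of $\Gamma^{\mathrm{in}}\cup\Gamma^{\mathrm{out}}$, its loops can chain together pieces of the six arms, so $F^{5}$ is not literally a six-arm Brownian non-disconnection event. The resolution is that the generalized exponent $\xi_{1}(6)$ is defined so as to include precisely such an independent loop soup with intensity $\mathbf{c}=1$; consequently the separation lemma \ref{sep BDP} applies verbatim to the enlarged object, and the $1/3$-buffer in the scales $jL\pm 1/3$, $(j+1)L\mp 1/3$ allows Lemma \ref{lem:harnack} to replace conditional boundary laws by uniform ones. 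A secondary bookkeeping point is that the partition $\Gamma=\Gamma^{\mathrm{in}}\cup\Gamma^{\mathrm{mid}}\cup\Gamma^{\mathrm{out}}$ does not lose any cluster connectivity present in the original loop soup, so replacing the global non-disconnection event $E_{1}$ by the three scale-separated events above is indeed an upper bound.
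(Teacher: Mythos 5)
Your proposal is correct and follows essentially the same route as the paper's proof: the same stopping-time decomposition into inner, middle (with the two extra crossings of $W_1$), and outer pieces, the same three scale-separated generalized non-disconnection events bounded via Lemma \ref{gnd prob} and Lemma \ref{sep BDP}, and the same telescoping using $\xi_1(4)=2$ and $\xi_1(6)-\xi_1(4)=1$. Your explicit restriction-property partition of the loop soup into $\Gamma^{\mathrm{in}},\Gamma^{\mathrm{mid}},\Gamma^{\mathrm{out}}$ is a harmless (and slightly more careful) way of making rigorous the independence that the paper leaves implicit.
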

\begin{proof}
It suffices to consider the $i=1$ case. On the event $E_{1,j}$, we recall stopping times \eqref{461}--\eqref{463}. Then $E_{1,j}\cap E_{1}$ implies the following events:
\par
($F^{3}$) The union of $\widetilde{W}_{i}[0,t_{i}]$, $i=1,\ldots,5$, does not disconnect $\mathcal{D}_{0}$ from infinity.
\par
($F^{4}$) The union of $\widetilde{W}_{i}[t'_{i},\tau_{i}]$, $i=1,\ldots,5$, does not disconnect $\mathcal{C}_{(j+1)L}$ from infinity.
\par
($F^{5}$) The union of $\widetilde{W}_{i}(t)$, $t\in[t_{1}^{i},t_{2}^{i}]$, $i=1,\ldots,4$, $\widetilde{W}_{1}(t)$, $t\in[t_{3}^{1},t_{4}^{1}]$  and $\widetilde{W}_{1}(t)$, $t\in[t_{5}^{1},t_{6}^{1}]$ does not disconnect $\mathcal{C}_{jL+1/3}$ from infinity.
\par
By Lemma \ref{gnd prob}, we have
\[\mathbb{P}(F^{3})\leq c_{1}(jL)^{4}2^{-\xi_{1}(4)jL}.\]
By the strong Markov property and Lemma \ref{sep BDP}, we have
\[\mathbb{P}(F^{5}|F^{3})\leq c_{2}2^{-\xi_{1}(6)L}.\]
By the strong Markov property, Lemmas \ref{last exit} and \ref{sep BDP}, we have
\[\mathbb{P}(F^{4}|F^{3},F^{5})\leq c_{3}2^{-\xi_{1}(4)(n-(j+1)L)}.\]
Therefore we conclude that
\[\mathbb{P}(E_{1,j}\cap E_{1})\leq\mathbb{P}(F^{3}\cap F^{4}\cap F^{5})\leq c_{4}(jL)^{4}2^{-(\xi_{1}(6)-\xi_{1}(4))L}\cdot2^{-\xi_{1}(4)n}\leq c_{4}n^{4}2^{-L}\cdot2^{-2n},\]
since $\xi_{1}(6)=3,\xi_{1}(4)=2$, which completes the proof.
\end{proof}
Recall parameters $\iota\in (0,1)$, $\delta\in (0,\iota/2)$ and $S\subset\mathcal{A}(0,\iota,1-\iota)$.
Also recall that $\gamma$ is sampled from $\mu_{\iota}^{\#}$ (see \eqref{eq:mu_iota}). Let $re^{i\theta}$ be the point on $\gamma$ which is farthest from $0$. In the following, we further condition on $r$ and $\theta$. Given $r$ and $\theta$, $\gamma$ is now a Brownian loop sampled according to the bubble measure $\mu_{\mathcal{B}(0,r)}^{\mathrm{bub}}(re^{i\theta})$ conditioned on $\mathrm{diam}(\gamma)>\iota/2$. Note that normalized constants here are uniform in $r,\theta$ for given $\iota$, so it is safe to work with the normalized conditional bubble measure. Below, all constants can be made uniform in $r\in [\iota,1-\iota]$ and $\theta\in[0,2\pi]$.
\par
If $S$ is close to the root of $\gamma$, we can adapt our arguments below by rerooting $\gamma$ to the farthest point from $S$. Hence, it is safe to consider only the case that $\mathrm{dist}(re^{i\theta},S)\ge\delta$ below.
\par
We now recall the excursion-bridge decomposition of a Brownian loop $\gamma$ in \cite[Section 5.2]{GLQ22}. Let $u_{1}=\tau(\partial\mathcal{B}(S,\delta-2^{-n-1/2}))$, and
\begin{equation}\label{decomposition of loop}
\begin{split}
&v_{i}=\tau_{\gamma}(u_{i},S),\quad u_{i+1}=\tau_{\gamma}(v_{i},\partial\mathcal{B}(S,\delta-2^{-n-1/2})),\\
&s_{2i-1}=\sup\{t<v_{2i-1}:\gamma_{t}\in\partial\mathcal{B}(S,\delta/2)\},\quad t_{2i-1}=\tau_{\gamma}(s_{2i-1},\mathcal{C}_{-n}(S)),\\
&s_{2i}=\sup\{t<u_{2i+1}:\gamma_{t}\in\mathcal{C}_{-n}(S)\},\quad t_{2i}=\tau_{\gamma}(s_{2i},\partial\mathcal{B}(S,\delta/2)).\\
\end{split}
\end{equation}
We then define $W^{1}=\gamma[s_{1},t_{1}]$, and for $i=1,2,3$ and $j=1,2$,
\begin{equation}\label{excursion and bridge of loop}
X^{i}=W[t_{i},s_{i+1}],\quad W^{2j-1}=(W[s_{2j-1},t_{2j-1}])^{\mathcal{R}},\quad W^{2j}=W[s_{2j},t_{2j}].
\end{equation}
Finally, we let $X^{4}$ be the concatenation of $\gamma[t_{4},t_{\gamma}]$ (recall that $t_{\gamma}$ is the duration of $\gamma$) and $\gamma[0,s_{1}]$.
\par
Thanks to \cite[Lemma 5.4]{GLQ22}, the joint law of $W^{1},\ldots,W^{4}$ is uniformly equivalent to that of four independent Brownian excursions. Therefore, results in Lemmas \ref{unique4} and \ref{local4} can be directly applied to events with respect to $W^{1},\ldots,W^{4}$ after suitable rescaling. More precisely, we have
\begin{lemma}\label{unique opening4}
Let $B_{1}$ (resp.\ $B_{2}$) stand for the event that there exists non-empty $A\subsetneq\{1,2,3,4\}$, such that $\{W^{i}:i\in A\}$ and $\{W^{i}:i\in A^{c}\}$ do not intersect before their first hitting at $\mathcal{C}_{-n+L}(S)$ (resp.\ after their last hitting at $\mathcal{C}_{-K-L}(S)$). Then there exist  $c,c'$, such that
\[\mathbb{P}(B_{1}\cup B_{2}|S\in\mathfrak{S}_{n})\leq ce^{-c'\sqrt{n}},\]
where we use $\mathbb{P}$ to denote the product probability measure of $\mu_{\iota}^{\#}$ and an independent critical Brownian loop soup from now on.
\end{lemma}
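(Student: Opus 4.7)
The plan is to follow the same template as Lemma \ref{unique opening} in the PTP case, using Lemma \ref{unique4} as the replacement input. The two main ingredients will be the matching first-moment bound $\mathbb{P}(S\in\mathfrak{S}_n)\asymp 2^{-2n}$ from Lemma \ref{fme} for BDP (which holds because $\xi_1(4)=2$), and the uniform equivalence between the joint law of the excursions $W^1,\ldots,W^4$ and that of four independent Brownian excursions in $\mathcal{A}(S,2^{-n},2^{-K})$, as provided by \cite[Lemma 5.4]{GLQ22}.

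First I would observe that $\{S\in\mathfrak{S}_n\}$ forces condition (2) of Definition \ref{gs4}, namely that $\widetilde{\gamma}$, the cluster in $\{\gamma\}\cup\Gamma_0$ containing $\gamma$, does not disconnect $S$ from infinity. Write $\widetilde{W}^i$ for the union of $W^i$ with all clusters of $\Gamma_0$ that it meets (after the appropriate rescaling sending $\mathcal{A}(S,2^{-n},2^{-K})$ to $\mathcal{A}(0,1,2^{n-K})$). Then $\{S\in\mathfrak{S}_n\}$ implies that $\bigcup_{i=1}^4\widetilde{W}^i$ does not disconnect $\mathcal{C}_{-n}(S)$ from infinity, which is exactly the rescaled analogue of the event $E_1$ appearing in Lemma \ref{unique4}.

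Next, I would note that on $B_1\cup B_2$, one obtains precisely the rescaled analogue of $E_2(L)\cup E_3(L)$: for some non-empty $A\subsetneq\{1,2,3,4\}$, the subcollections $\{W^i:i\in A\}$ and $\{W^i:i\in A^c\}$ do not intersect before first hitting $\mathcal{C}_{-n+L}(S)$ (or after last hitting $\mathcal{C}_{-K-L}(S)$). Using the uniform equivalence from \cite[Lemma 5.4]{GLQ22} to transfer the estimate from independent Brownian excursions to the excursions $W^i$, and applying Lemma \ref{unique4} with $j=L=[\sqrt{n}]$, we get
\[
\mathbb{P}\bigl((B_1\cup B_2)\cap\{S\in\mathfrak{S}_n\}\bigr)\leq c\,n^{8}\,2^{-L/2}\cdot 2^{-2n}.
\]
Dividing by $\mathbb{P}(S\in\mathfrak{S}_n)\asymp 2^{-2n}$ from Lemma \ref{fme} for BDP yields
\[
\mathbb{P}(B_1\cup B_2\mid S\in\mathfrak{S}_n)\leq c'\,n^{8}\,2^{-\sqrt{n}/2}\leq c''e^{-c'''\sqrt{n}},
\]
which is the desired bound.

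The only non-routine step is ensuring the uniform equivalence works after incorporating the independent loop soup $\Gamma_0$ in the definition of $\widetilde{\gamma}$ rather than a fresh loop soup attached to independent excursions. This is however precisely what is packaged in Lemma \ref{unique4} through the inclusion of loops in $\Gamma_{2^n}\setminus\Gamma_1$ attached to the excursions, and the rescaling maps $\Gamma_0\cap\mathcal{A}(S,2^{-n},2^{-K})$ to a loop soup in the appropriate annulus after rescaling; the restriction property of the Brownian loop soup and the absolute continuity statement in \cite[Lemma 5.4]{GLQ22} together make this passage straightforward. I expect this bookkeeping to be the only subtlety, with everything else being a direct transcription of the PTP argument in Lemma \ref{unique opening}.
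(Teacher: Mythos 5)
Your proposal is correct and follows exactly the route the paper takes (the paper in fact only sketches this lemma, stating that Lemma \ref{unique4} applies directly to $W^1,\ldots,W^4$ after rescaling via the uniform equivalence of \cite[Lemma 5.4]{GLQ22}, combined with $\mathbb{P}(S\in\mathfrak{S}_n)\asymp 2^{-2n}$ from Lemma \ref{fme} for BDP, mirroring the proof of Lemma \ref{unique opening}). Your bookkeeping, including the observation that $\{S\in\mathfrak{S}_n\}$ implies the rescaled non-disconnection event $E_1$ because the relevant set is contained in $\widetilde{\gamma}$, and the final bound $n^8 2^{-L/2}\le ce^{-c'\sqrt{n}}$ with $j=L$, is accurate.
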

\begin{lemma}\label{no extra crossing4}
For $i=1,2,3,4$, let $E_{i,j}$ stand for the event that $W^{i}$ returns to $\mathcal{C}_{-n+jL}(S)$ after hitting $\mathcal{C}_{-n+(j+1)L}(S)$. Let
\[B_{3}:=\bigcup_{i=1}^{4}\bigcup_{j=1}^{[3L/4]}E_{i,j}.\]
Then, there exist $c,c'$, such that
\[\mathbb{P}(B_{3}|S\in\mathfrak{S}_{n})\leq ce^{-c'\sqrt{n}}.\]
\end{lemma}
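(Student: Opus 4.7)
The plan is to mirror the proof of Lemma \ref{no extra crossing} from the PTP case, substituting the five-excursion estimate with the four-excursion analog that has just been established as Lemma \ref{local4}. By Lemma \ref{fme} for BDP, we have $\mathbb{P}(S\in\mathfrak{S}_{n})\asymp 2^{-2n}$, so it suffices to establish the unconditional bound
\[
\mathbb{P}(B_{3},\,S\in\mathfrak{S}_{n})\leq c\,e^{-c'\sqrt{n}}\cdot 2^{-2n}.
\]

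First I would observe that on $\{S\in\mathfrak{S}_{n}\}$ the cluster $\widetilde{\gamma}$ does not disconnect $S$ from infinity (by Definition \ref{gs4}(2)), and in particular the union of $W^{1},\ldots,W^{4}$ together with the loop clusters of $\Gamma_{\mathcal{D}_{-K}(S)}\setminus\Gamma_{\mathcal{D}_{-n}(S)}$ that they intersect does not disconnect $\mathcal{C}_{-n}(S)$ from infinity. This is exactly the rescaled version of the event $E_{1}$ appearing in Lemma \ref{local4}, with the annulus $\mathcal{A}(0,1,2^{n-K})$ translated and scaled to $\mathcal{A}(S,2^{-n},2^{-K})$.

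Second, by the excursion-bridge decomposition \eqref{excursion and bridge of loop} together with \cite[Lemma 5.4]{GLQ22} (the BDP counterpart of Lemma \ref{ac excursions}), the joint law of $W^{1},\ldots,W^{4}$ is uniformly equivalent to that of four independent Brownian excursions in $\mathcal{A}(S,2^{-n},\delta/2)$. Under this uniform equivalence, the event $B_{3}$ is precisely the translation-and-rescaling of the event $E_{4}$ from Lemma \ref{local4}. Applying that lemma therefore yields
\[
\mathbb{P}(B_{3},\,S\in\mathfrak{S}_{n})\leq c_{1}\,\mathbb{P}\bigl(\widetilde{E}_{4}\cap\widetilde{E}_{1}\bigr)\leq c_{2}\,e^{-c'\sqrt{n}}\cdot 2^{-2n},
\]
where $\widetilde{E}_{1},\widetilde{E}_{4}$ are the rescaled versions of $E_{1},E_{4}$. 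Dividing by the lower bound on $\mathbb{P}(S\in\mathfrak{S}_{n})$ from Lemma \ref{fme} then completes the proof.

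The one point to verify carefully is that the loop clusters attached to each $W^{i}$ appearing in the ``local'' event $\widetilde{E}_{1}$ (which comes from the independent loop soup $\Gamma$ in Lemma \ref{local4}) match up with the loop clusters attached to $\gamma$ in $\Gamma_{0}$ inside the annulus $\mathcal{A}(S,2^{-n},2^{-K})$; this is immediate from the restriction property of the Brownian loop soup, since both sides are equipped with a PPP of loops in the same annulus. There is no genuine obstacle beyond bookkeeping: the key analytic content, namely $\xi_{1}(6)-\xi_{1}(4)=1$ together with Lemma \ref{gnd prob} and Lemma \ref{sep BDP}, has already been packaged into Lemma \ref{local4}.
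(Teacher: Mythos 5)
Your proposal is correct and follows exactly the route the paper intends: the paper omits the proof of this lemma precisely because, as it states just above, the uniform equivalence of $W^{1},\ldots,W^{4}$ to four independent Brownian excursions (via \cite[Lemma 5.4]{GLQ22}) lets one apply the rescaled Lemma \ref{local4} directly, and then divide by the first-moment lower bound from Lemma \ref{fme} for BDP. Your additional check that the loop clusters match up via the restriction property is a reasonable piece of bookkeeping consistent with the argument already used in the PTP analogue (Lemma \ref{no extra crossing}).
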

Our next lemma concerns the probability that ``bridges'' $X^{i}$'s cross multiple scales. We show that such probability decays exponentially with respect to the number of scales, parallel to Lemma \ref{local bridge}. We hence omit the proof.
\begin{lemma}\label{local bridge4}
For $i=2,4$, let $E^{i}$ stand for the event that $X^{i}$ intersects $\mathcal{C}_{-K-L}(S)$, and for $i=1,3$, let $E^{i}$ stand for the event that $X^{i}$ intersects $\mathcal{C}_{-n+L}(S)$. Then there exist $c,c'>0$, such that for any $i=1,2,3,4$, we have:
\[\mathbb{P}(S\in\mathfrak{S}_{n},E^{i})\leq ce^{-c'\sqrt{n}}2^{-2n}.\]
Consequently, let $B_{4}$ be the union of $E^{i}$, then there exists $c''>0$, such that
\[\mathbb{P}(B_{4}|S\in\mathfrak{S}_{n})\leq c''e^{-c'\sqrt{n}}.\]
\end{lemma}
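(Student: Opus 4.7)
My plan is to follow closely the strategy of Lemma \ref{local bridge}, with appropriate modifications for the loop-soup setting. By the inversion invariance of Brownian motion and the symmetric structure of the decomposition \eqref{excursion and bridge of loop}, it will suffice to bound $\mathbb{P}(S\in\mathfrak{S}_n,E^1)$; the remaining cases $i=2,3,4$ follow by parallel arguments. The core idea is that on the target event, the four excursions (together with the loops of $\Gamma_0$ in the cluster of $\gamma$) must achieve a generalized non-disconnection whose probability, by Lemma \ref{gnd prob}, is of order $n^4 2^{-2n}$, while forcing the bridge $X^1$ to additionally traverse $L$ dyadic scales and not disconnect $\mathcal{C}_{-n}(S)$ from the outer boundary of the intermediate annulus contributes an extra factor of order $2^{-L\xi(1)}=2^{-L/4}$.

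Concretely, I would decompose $\{S\in\mathfrak{S}_n\}\cap E^1$ into two events. Write $\widetilde{W}^i$ for $W^i$ together with the loops in $\Gamma_0$ lying in the same cluster as $\gamma$. Let $F^6$ be the event that $\bigcup_{i=1}^{4}\widetilde{W}^i$ does not disconnect $\mathcal{C}_{-n}(S)$ from infinity, and $F^7$ the event that $X^1$ intersects $\mathcal{C}_{-n+L}(S)$ and does not, by itself, disconnect $\mathcal{C}_{-n}(S)$ from $\mathcal{C}_{-n+L-1}(S)$. Both events are implied by $\{S\in\mathfrak{S}_n\}\cap E^1$: the first is a weakening of the non-disconnection of $\widetilde{\gamma}$, and the second uses the inclusion $X^1\subset\widetilde{\gamma}$, so any disconnection by $X^1$ alone would immediately contradict $\{S\in\mathfrak{S}_n\}$. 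I then invoke \cite[Lemma 5.4]{GLQ22} to assert that the joint law of $W^1,\dots,W^4$ is uniformly equivalent to four independent Brownian excursions in $\mathcal{A}(S,2^{-n},\delta/2)$, so Lemma \ref{gnd prob} with $\mathbf{c}=1$, $k=4$ and $\xi_1(4)=2$ yields $\mathbb{P}(F^6)\asymp n^4 2^{-2n}$.

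For the conditional estimate of $F^7$, I again appeal to \cite[Lemma 5.4]{GLQ22}: conditionally on the endpoints of $W^1,\dots,W^4$, the bridge $X^1$ is distributed as a Brownian bridge conditioned to hit $S$, and hence its segment between the first hit of $\mathcal{C}_{-n+1}(S)$ and the first hit of $\mathcal{C}_{-n+L-1}(S)$ is uniformly equivalent to an unconditioned Brownian motion crossing $\mathcal{A}(S,2^{-n+1},2^{-n+L-1})$. Lemma \ref{sep PTP} with $k=1$ then gives $\mathbb{P}(F^7\mid F^6)\leq c\,2^{-L/4}$. Chaining,
\[
\mathbb{P}(S\in\mathfrak{S}_n,E^1)\leq\mathbb{P}(F^6)\,\mathbb{P}(F^7\mid F^6)\leq c\,n^4\,2^{-2n}\cdot 2^{-L/4}\leq c'\,e^{-c''\sqrt{n}}\,2^{-2n},
\]
and the claim for $B_4$ follows by a union bound over $i\in\{1,2,3,4\}$. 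The main minor obstacle I foresee concerns the outward bridges $X^2$ and $X^4$, whose event is formulated at the scale $\mathcal{C}_{-K-L}(S)$ rather than $\mathcal{C}_{-n+L}(S)$; but after an inversion argument this reduces to the same Brownian-bridge-versus-Brownian-motion comparison, now in the rescaled annulus $\mathcal{A}(S,2^{-K-L+1},2^{-K-1})$, still with the $\xi(1)=1/4$ penalty per scale.
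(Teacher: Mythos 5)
Your argument is correct and is exactly the adaptation of the proof of Lemma \ref{local bridge} that the paper has in mind when it omits this proof: the same two-event decomposition ($F^6$ giving $n^4 2^{-2n}$ via the uniform equivalence from \cite[Lemma 5.4]{GLQ22} and Lemma \ref{gnd prob} with $\xi_1(4)=2$, and $F^7$ giving the $2^{-L/4}$ penalty from the one-path non-disconnection exponent), followed by a union bound and division by $\mathbb{P}(S\in\mathfrak{S}_n)\asymp 2^{-2n}$ from Lemma \ref{fme}. The only point worth spelling out is that $X^4$ contains the root of $\gamma$, but since the root lies at distance at least $\delta$ from $S$ this does not affect the crossing estimate in $\mathcal{A}(S,2^{-K-L},2^{-K})$.
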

We additionally need the following lemma dealing with the probability that there exists a loop-soup cluster that crosses multiple scales without disconnecting $\mathcal{C}_{-n}(S)$ from infinity. We show that such probability decays exponentially with respect to the number of scales.
\par
For any $1\leq i<n-K$ and $j\geq L$ such that $i+j\leq n-K-1$, let $\widetilde{E}_{i,j}$ be the event that there exists a loop cluster $\mathcal{K}$ of $\Gamma_{0}$, such that 
\begin{enumerate}
\item $\mathcal{K}$ does not disconnect $S$ from infinity,
\item $\mathcal{K}\cap\mathcal{C}_{-n+i}(S)\neq\varnothing$,
\item $\mathcal{K}\cap\mathcal{C}_{-n+i+j}(S)\neq\varnothing$,
\item $\mathcal{K}\cap\mathcal{C}_{-n+i-1}(S)=\varnothing$, and 
\item $\mathcal{K}\cap\mathcal{C}_{-n+i+j+1}(S)=\varnothing$.
\end{enumerate} If $i=0$ (resp.\ $i+j=N-K$), define $\widetilde{E}_{i,j}$ similarly  without the fourth (resp.\ fifth) condition.
\begin{lemma}\label{local cluster}
There exist $c,c'>0$, such that
\[\mathbb{P}(\widetilde{E}_{i,j}\cap\{S\in\mathfrak{S}_{n}\})\leq ce^{-c'j}2^{-2n}.\]
Consequently, let $B_{5}$ be the union of $\widetilde{E}_{i,j}$, which stands for the event that there exists a cluster $\mathcal{K}$ and an integer $0\leq i\leq n-K-L$, such that $\mathcal{K}\cap\mathcal{C}_{-n+i}(S)\neq\varnothing,\mathcal{K}\cap\mathcal{C}_{-n+i+L}(S)\neq\varnothing$. Then there exist $c,c'$, such that
\[\mathbb{P}(B_{5}|S\in\mathfrak{S}_{n})\leq ce^{-c''\sqrt{n}}.\]
\end{lemma}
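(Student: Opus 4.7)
The plan is to decompose the event into contributions from three concentric regions and to interpret the extra cluster $\mathcal{K}$ as a fifth non-disconnecting arm in the middle annulus, producing an additional exponential decay factor $2^{-j(\xi_{1}(5)-\xi_{1}(4))}=2^{-j/2}$ on top of the standard $2^{-2n}$ bound for $\mathbb{P}(S\in\mathfrak{S}_{n})$.

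First I observe that on $\{S\in\mathfrak{S}_{n}\}$ the $\gamma$-cluster $\widetilde{\gamma}$ stretches from $\mathcal{C}_{-n}(S)$ out to $\partial\mathcal{B}(S,\delta-2^{-n-1/2})$, and since $2^{-n+i+j+1}\leq 2^{-K}=\delta/2<\delta-2^{-n-1/2}$, $\widetilde{\gamma}$ necessarily crosses $\mathcal{C}_{-n+i+j+1}(S)$; hence $\widetilde{\gamma}$ cannot itself serve as $\mathcal{K}$, and on $\widetilde{E}_{i,j}\cap\{S\in\mathfrak{S}_{n}\}$ the cluster $\mathcal{K}$ must be disjoint from $\widetilde{\gamma}$. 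Using Lemmas~\ref{last exit} and~\ref{lem:harnack}, I will split the configuration using the Markov property at scales $-n+i-1$ and $-n+i+j+1$. In the inner region $\mathcal{D}_{-n+i-1}(S)$, the initial segments of $W^{1},\ldots,W^{4}$, together with the loops of $\Gamma_{0}$ in this disc, must not disconnect $\mathcal{C}_{-n}(S)$ from $\mathcal{C}_{-n+i-1}(S)$; by Lemma~\ref{gnd prob} this costs $\lesssim i^{C}\cdot 2^{-2(i-1)}$ (the case $i=0$ being trivial). Analogously, the outer region between $\mathcal{C}_{-n+i+j+1}(S)$ and $\mathcal{C}_{-K}(S)$ contributes $\lesssim n^{C}\cdot 2^{-2(n-K-i-j-1)}$.

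The heart of the argument is the middle annulus $\mathcal{A}_{\mathrm{mid}}:=\mathcal{A}(S,2^{-n+i-1},2^{-n+i+j+1})$, where I must bound the probability that (a) the four crossings of $W^{1},\ldots,W^{4}$, (b) an extra cluster $\mathcal{K}$ of $\Gamma_{0}$ disjoint from $\widetilde{\gamma}$ which crosses from $\mathcal{C}_{-n+i}(S)$ to $\mathcal{C}_{-n+i+j}(S)$, and (c) the remaining loops in $\Gamma_{0}\cap\mathcal{A}_{\mathrm{mid}}$, together fail to disconnect. Because $\mathcal{K}$ is connected and crosses $\mathcal{A}_{\mathrm{mid}}$, it contains a simple curve $\eta$ from inner to outer boundary that is disjoint from all $W^{k}$; this $\eta$ plays the role of a fifth non-disconnecting arm, and the target bound in $\mathcal{A}_{\mathrm{mid}}$ is $\lesssim j^{C}\cdot 2^{-j\xi_{1}(5)}=j^{C}\cdot 2^{-5j/2}$ via a five-arm version of Lemma~\ref{sep BDP}. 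Multiplying the three contributions and using $\xi_{1}(k)=k/2$ gives $\lesssim n^{C}\cdot 2^{-2(n-K)-j/2}$, which for $j\geq L=[\sqrt{n}]$ is bounded by $c\,e^{-c'j}\cdot 2^{-2n}$. A union bound over $0\leq i\leq n-K-L$ and $j\geq L$ then yields the stated estimate for $B_{5}$, after dividing by $\mathbb{P}(S\in\mathfrak{S}_{n})\asymp 2^{-2n}$.

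The main technical obstacle is making the spine argument in $\mathcal{A}_{\mathrm{mid}}$ rigorous, since $\eta$ is a functional of the random loop soup rather than an independent Brownian excursion, so Lemma~\ref{sep BDP} does not apply verbatim. I plan to circumvent this by a Campbell-type calculation: enumerate $\mathcal{K}$ through a minimal chain of pairwise-intersecting loops $\ell_{1},\ldots,\ell_{m}$ in $\Gamma_{0}$ joining the two boundaries of $\mathcal{A}_{\mathrm{mid}}$, apply the first-moment formula for the Poisson process $\Gamma_{0}$ to replace each $\ell_{k}$ by an integral against the loop measure~\eqref{loop measure}, and estimate each loop-chain configuration by combining the usual four-arm generalized non-disconnection exponent with the extra Brownian-like crossings contributed by the links, in the spirit of the decomposition used in Lemma~\ref{unique4}. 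Summability over chain length is ensured by the per-scale gain $2^{-(\xi_{1}(5)-\xi_{1}(4))}=2^{-1/2}$, so the overall middle contribution is indeed $\lesssim j^{C}\cdot 2^{-5j/2}$, as required.
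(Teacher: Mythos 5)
Your three-region decomposition (inner disc, middle annulus $\mathcal{A}(S,2^{-n+i-1},2^{-n+i+j+1})$, outer annulus) and the treatment of the inner and outer pieces via Lemma~\ref{gnd prob} match the paper's proof. The genuine gap is in the middle annulus. You propose to treat the crossing cluster $\mathcal{K}$ as a fifth non-disconnecting arm and claim a bound $j^{C}2^{-\xi_{1}(5)j}=j^{C}2^{-5j/2}$, to be justified by a Campbell/first-moment enumeration of $\mathcal{K}$ through chains of loops. This does not work as described: (a) Lemma~\ref{sep BDP} and the generalized exponent $\xi_{1}(5)$ concern five \emph{independent Brownian} arms together with an independent loop soup, whereas your fifth arm is a functional of the very loop soup appearing in the non-disconnection event, so there is no independence to exploit and no separation lemma to invoke; (b) at the critical intensity $\mathbf{c}=1$, first-moment chain enumerations of cluster crossings are exactly the kind of argument that fails — the polynomial decay $2^{-j/2}$ of the cluster-crossing probability is a deep fact about CLE$_4$ arm exponents, not something recoverable from the loop measure by summing over chains; and (c) your statement that ``summability over chain length is ensured by the per-scale gain $2^{-1/2}$'' conflates the number of loops in a chain with the number of scales crossed, and gives no actual control.

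The paper circumvents precisely this difficulty by \emph{decoupling}: in the middle annulus it requires only that the four Brownian crossings $W^{k}[\tau_k(\mathcal{C}_{-n+i}(S)),\tau_k(\mathcal{C}_{-n+i+j}(S))]$ do not disconnect — \emph{without} the loop soup — which costs $j^{4}2^{-\xi(4)j}$ with the ordinary Brownian exponent $\xi(4)\approx1.547$, and then multiplies by the probability $\mathbb{P}(\widetilde{E}_{i,j})\leq c\,2^{-\alpha_{2}(4)j}$ that a cluster confined to the middle annulus crosses it, quoting the SLE$_4$ two-arm exponent $\alpha_{2}(4)=1/2$ from \cite[Proposition 3.2]{GNQ24+}; these two events are independent because the first involves no loops and the clusters used in the inner/outer regions live in disjoint annuli. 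The conclusion rests on the (barely positive) margin $\xi(4)+\tfrac12-2>0$, not on $\xi_{1}(5)-\xi_{1}(4)=\tfrac12$. To repair your argument you would need either to import the CLE$_4$ arm-exponent input as the paper does, or to prove a genuinely new ``four Brownian arms plus one cluster arm'' estimate, which is not supplied by any lemma in the paper.
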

\begin{proof}
We will prove the lemma for $i\geq1$ and $i+j\leq N-K-1$, as the proof of other cases is very similar (and even simpler). Let $\tau_{k}(A)$ be the first hitting time of $A$ with respect to $W^{k}$. Let $\Gamma_{r}(S)$ be the collection of loops in $\Gamma$ which is fully contained in $\mathcal{D}_{r}(S)$. On the event $\widetilde{E}_{i,j}\cap\{S\in\mathfrak{S}_{n}\}$, the following events occur:
\par
($F^{6}$) The union of $\widetilde{W}^{k}[0,\tau_{k}(\mathcal{C}_{-n+i-1}(S))]$, $k=1,2,3,4$, (where $\widetilde{W}^{k}$ is the union of $W^{k}$ together with loop-soup clusters of $\Gamma_{-n+i}(S)\backslash\Gamma_{-n}(S)$) does not disconnect $\mathcal{C}_{-n}(S)$ from infinity.
\par
($F^{7}$) The union of $W^{k}[\tau_{k}(\mathcal{C}_{-n+i}(S)),\tau_{k}(\mathcal{C}_{-n+i+j}(S))]$, $k=1,2,3,4$, does not disconnect $\mathcal{C}_{-n+i}(S)$ from infinity.
\par
($F^{8}$) The union of $\widetilde{W}^{k}[\tau_{k}(\mathcal{C}_{-n+i+j+1}(S)),\tau_{k}(\mathcal{C}_{-K}(S))](k=1,2,3,4)$ (where $\widetilde{W}^{k}$ is the union of $W^{k}$ together with loop-soup clusters of $\Gamma_{-K}(S)\backslash\Gamma_{-n+i+j-1}(S)$) does not disconnect $\mathcal{C}_{-n+i+j+1}(S)$ from infinity. 
\par
By \cite[Lemma 5.4]{GLQ22} and Lemma \ref{gnd prob}, we have
\[\mathbb{P}(F^{6})\leq c_{1}i^{4}2^{-\xi_{1}(4)i}.\]
By the strong Markov property, Lemmas~\ref{lem:harnack}, \ref{gnd prob} and \cite[Lemma 5.4]{GLQ22}, we have
\[\mathbb{P}(F^{7}|F^{6})\leq c_{2}j^{4}2^{-\xi(4)j}\]
and
\[\mathbb{P}(F^{8}|F^{6},F^{7})\leq c_{3}(n-i-j)^{4}2^{-\xi_{1}(4)(n-i-j)}.\]
By \cite[Proposition 3.2]{GNQ24+}, we have
\[\mathbb{P}(\widetilde{E}_{i,j})\leq c_{4}2^{-\alpha_{2}(4)j},\]
where $\alpha_{2}(4)=1/2$ is the two-arm exponent of $\mathrm{SLE}_{4}$ \cite{MR2153402}.
\par
Note that $\widetilde{E}_{i,j}$ is independent of $F^{6},F^{7},F^{8}$, we thus conclude that
\[\mathbb{P}(\widetilde{E}_{i,j}\cap\{S\in\mathfrak{S}_{n}\})\leq\mathbb{P}(\widetilde{E}_{i,j})\mathbb{P}(F^{6}\cap F^{7}\cap F^{8})\leq c_{5}n^{12}2^{-2n}2^{(2-\xi(4)-\alpha_{2}(4))j}.\]
The result therefore follows since $2-\xi(4)-1/2<0$.
\end{proof}
With a slight abuse of notation, we still write $\tau_{i,j}$ (resp.\ $\sigma_{i,j}$) for the first (resp.\ last) hitting time of $\mathcal{C}_{-n+j}(S)$ with respect to $W^{i}$.
We then define an increasing family of $\sigma$-algebras as follows:
\begin{equation}\label{filtration4}
\mathscr{F}_{m}:=\sigma\left(X^{j},\;W^{j}[0,\tau_{j,mL}],\;W^{j}[\tau_{j,n-K-L},\sigma_{j,n-K}],\;j=1,2,3,4,\;\Gamma_{-n+mL}(S)\right).
\end{equation}
Denote the associated collection of curves by
\[U_{m}:=\left\{X^{j},\;W^{j}[0,\tau_{j,m}],\;W^{j}[\tau_{j,n-K-L},\sigma_{j,n-K}],\;j=1,2,3,4\right\}.\]
Moreover, we write
\[\overline{U}_{m}:=U_m\cup\Gamma_{-n+m}(S)\]
for the collection of curves and loops.
\par
For any $j\leq[L/2]$, let $\widetilde{E}_{j}$ be the event that $\overline{U}_{jL+1}$ does not disconnect $\mathcal{C}_{-n}(S)$ from infinity. Then $\mathbb{P}(\widetilde{E}_{j}|\overline{U}_{jL})$ is a random variable in $\mathscr{F}_{j}$. Let
\[\overline{E}_{j}:=\{\mathbb{P}(\widetilde{E}_{j}|U_{jL})\leq n^{-10}\}.\]
The following lemma shows that the opening on each scale is expected to be not very bad, which is parallel to Lemma \ref{all good opening}, and thus we omit the proof here. 
\begin{lemma}\label{all good opening4}
There exists $c>0$, such that for any $1\leq j\leq[3L/4]$, we have
\[\mathbb{P}(\overline{E}_{j},S\in\mathfrak{S}_{n})\leq cn^{-4}2^{-2n}.\]
Moreover, let $B_{6}$ be the union of $\overline{E}_{j}(1\leq j\leq[L/2])$, then 
\[\mathbb{P}(B_{6}|S\in\mathfrak{S}_{n})\leq c'n^{-3}\]
for some $c'>0$.
\end{lemma}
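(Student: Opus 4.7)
My plan is to mirror the proof of Lemma \ref{all good opening} for the PTP case, replacing the five non-disconnecting Brownian excursions with four Brownian excursions $W^{1},\ldots,W^{4}$ plus the independent Brownian loop soup $\Gamma_{0}$ of intensity $1$, and replacing the classical exponent $\xi(5)=2$ with the generalized exponent $\xi_{1}(4)=2$. The key inputs are Lemma \ref{gnd prob} (generalized non-disconnection probability with $\mathbf{c}=1$), Lemma \ref{sep BDP} (the corresponding separation lemma), the uniform equivalence of $W^{1},\ldots,W^{4}$ with four independent Brownian excursions from \cite[Lemma~5.4]{GLQ22}, and the restriction property of the loop soup, which ensures that loops fully contained in $\mathcal{D}_{-n+jL}(S)$ are independent of those outside.

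For each fixed $1\le j\le[3L/4]$, I will decompose $\{S\in\mathfrak{S}_{n}\}$ into four generalized non-disconnection events for $\mathcal{C}_{-n}(S)$: an early event $F^{9}$ saying that $\overline{U}_{jL}$ does not disconnect $\mathcal{C}_{-n}(S)$ from infinity; the analogous event $F^{10}$ at the next scale $\overline{U}_{jL+1}$; a middle event $F^{11}$ about $\bigcup_{i=1}^{4}W^{i}[\tau_{i,jL+2},\tau_{i,n-K-L-1}]$ together with the loops in $\Gamma_{-K-L-1}(S)\setminus\Gamma_{-n+jL+2}(S)$; and a late event $F^{12}$ about $\bigcup_{i=1}^{4}W^{i}[\sigma_{i,n-K-L},\tau_{i,n-K}]$ together with the loops in $\Gamma\setminus\Gamma_{-K-L}(S)$. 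Using independence of the loop soup across disjoint shells together with Lemma \ref{gnd prob} and $\xi_{1}(4)=2$, I expect the early and late pieces to satisfy
\[
\mathbb{P}(F^{9}\cap F^{12})\asymp (jL)^{4}L^{4}\,2^{-2(j+1)L}\le c_{1}n^{6}\,2^{-2(j+1)L}.
\]
The definition of $\overline{E}_{j}$ (combined with the measurability of $F^{9},F^{12}$ with respect to the appropriate enlarged $\sigma$-algebra) gives $\mathbb{P}(\overline{E}_{j}\cap F^{10}\mid F^{9},F^{12})\le c_{2}n^{-10}$, while Lemma \ref{sep BDP} applied in the annulus $\mathcal{A}(S,2^{-n+jL+1},2^{-K-L-1})$ yields $\mathbb{P}(F^{11}\mid F^{9},F^{10},F^{12},\overline{E}_{j})\le c_{3}\,2^{-2(n-K-(j+1)L)}$. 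Multiplying these three estimates produces $n^{6}\cdot n^{-10}\cdot 2^{-2n+O(1)}=cn^{-4}2^{-2n}$, which is the first bound. A union bound over $1\le j\le[L/2]$ together with $\mathbb{P}(S\in\mathfrak{S}_{n})\asymp 2^{-2n}$ (Lemma \ref{fme} for BDP) then delivers $\mathbb{P}(B_{6}\mid S\in\mathfrak{S}_{n})\le c'L\cdot n^{-4}\le c''n^{-3}$.

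The main technical subtlety I anticipate is the handling of the loop soup inside the filtration: the conditioning in $\overline{E}_{j}=\{\mathbb{P}(\widetilde{E}_{j}\mid U_{jL})\le n^{-10}\}$ uses only the path data $U_{jL}$, whereas the event $F^{10}$ to be paired with it naturally depends on the enlarged object $\overline{U}_{jL+1}$. The restriction property of the loop soup cleanly resolves this: conditioning on $\Gamma_{-n+jL}(S)$ leaves the joint law of $W^{1},\ldots,W^{4}$ outside $\mathcal{D}_{-n+jL}(S)$ and the loops in $\Gamma\setminus\Gamma_{-n+jL}(S)$ unchanged, so up to universal constants one may freely pass between $U_{jL}$-conditioning and $\overline{U}_{jL}$-conditioning. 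Once this decoupling is in place, the proof becomes a direct translation of the PTP argument; the improvement in the polynomial rate from $n^{-2}$ there to $n^{-4}$ here is purely arithmetic, reflecting that $(jL)^{4}L^{4}\le n^{6}$ is smaller than $(jL)^{5}L^{5}\le n^{15/2}$ while the $n^{-10}$ slack in the definition of $\overline{E}_{j}$ is the same in both settings.
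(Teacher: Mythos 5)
Your proposal is correct and follows exactly the route the paper intends: the paper explicitly omits this proof as being ``parallel to Lemma \ref{all good opening}'', and your argument is precisely that translation, with the four-event decomposition, the replacement of $\xi(5)=2$ by $\xi_{1}(4)=2$ via Lemmas \ref{gnd prob} and \ref{sep BDP}, and the correct arithmetic $(jL)^{4}L^{4}\le n^{6}$ yielding the improved rate $n^{-4}$. Your remark on reconciling the $U_{jL}$- versus $\overline{U}_{jL}$-conditioning through the restriction property of the loop soup addresses the only genuinely new point in the BDP setting.
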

By the BDP version of Lemma \ref{delta+ good box}, we can show that Lemmas~\ref{unique opening4}--\ref{all good opening4} still hold with $\mathfrak{S}_{n}(\delta+2^{-\sqrt{n}})$ in place of $\mathfrak{S}_{n}=\mathfrak{S}_{n}(\delta)$.
\par
For $S\in\mathfrak{S}_{n}(\delta+2^{-\sqrt{n}})$, and any integer $1\leq i\leq[L/2]$, we write
\[\mathcal{A}_{i}:=\mathcal{A}(S,2^{-n+iL},2^{-n+(i+1)L})\]
for short, and use $N_{i}$ to denote the number of $\delta$-good boxes $\prec\mathcal{A}_{i}$ (see Section \ref{subsec:notation}). The next lemma shows that $N_{i}$ is ``almost'' adapted to $\mathscr{F}_{i}$ under the probability measure $\mathbb{P}(\cdot|S\in\mathfrak{S}_{n})$, which is parallel to Lemmas \ref{adapted} and \ref{adapted12}.
\begin{lemma}\label{adapted4}
For any integer $1\leq i\leq[L/2]$, on the event $\{S\in\mathfrak{S}_{n}(\delta+2^{\sqrt{n}})\}\cap(\cap_{j=1}^{6}B_{j}^{c})$, whether a box $T\prec\mathcal{A}_{i}$ is $\delta$-good can be fully determined by $\overline{U}_{(i+4)L}$.
\par
As a direct consequence, $N_{i}$ is $\mathscr{F}_{(i+4)}$-measurable under the probability measure $\overline{\mathbb{P}}_{n}$, which is defined by
\begin{equation}\label{conditional measure4}
\overline{\mathbb{P}}_{n}(\cdot):=\mathbb{P}(\cdot|S\in\mathfrak{S}_{n}(\delta+2^{-\sqrt{n}}),\cap_{j=1}^{6}B_{j}^{c}).
\end{equation}
\end{lemma}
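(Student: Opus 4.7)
The plan is to follow closely the geometric argument used in the PTP analogue (Lemma~\ref{adapted}), with the main novelty being a uniform localization of the loop-soup clusters provided by $B_5^c$. On $\cap_{j=1}^{4}B_j^c$ I would first extract the structure of the excursions and bridges exactly as in the PTP case: $B_1^c\cap B_2^c$ guarantees that the packet $\{W^1,\ldots,W^4\}$ has a unique global opening in $\mathcal{A}(S,2^{-n},2^{-K})$; $B_3^c$ ensures that no $W^j$ returns to $\mathcal{C}_{-n+(i+3)L}(S)$ after hitting $\mathcal{C}_{-n+(i+4)L}(S)$, so the full piece $W^j\cap\mathcal{D}_{-n+(i+3)L}(S)$ is already encoded in $W^j[0,\tau_{j,(i+4)L}]\in U_{(i+4)L}$; and $B_4^c$ confines the bridges $X^1,\ldots,X^4$ to the complement of the annulus $\mathcal{A}(S,2^{-n+L},2^{-K-L})$ which contains $\mathcal{A}_i$. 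Up to this point everything is a transcription of the PTP argument with $k=4$ arms instead of $5$.

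The next step is to use $B_5^c$ to tame the loop-soup contribution to $\widetilde{\gamma}$. By definition, $B_5^c$ forbids any cluster of $\{\gamma\}\cup\Gamma_0$ from meeting simultaneously $\mathcal{C}_{-n+m}(S)$ and $\mathcal{C}_{-n+m+L}(S)$ for any $m$; combined with the connectedness of clusters and an intermediate-value argument applied to the radial distance, this forces every cluster to have radial extent strictly less than $L$ scales around $S$. Consequently, any loop $\ell\in\Gamma_0$ whose cluster meets $\mathcal{A}_i$ is itself contained in $\mathcal{D}_{-n+(i+4)L}(S)$, hence belongs to $\Gamma_{-n+(i+4)L}(S)\subset\overline{U}_{(i+4)L}$. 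In other words, the portion of $\widetilde{\gamma}$ that can possibly influence whether a box $T\prec\mathcal{A}_i$ is $\delta$-good is entirely encoded in $\overline{U}_{(i+4)L}$.

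With these preparations, I would verify the two defining conditions of a $\delta$-good box. The two-visit condition (1) depends only on the trajectory of $\gamma$ in a neighborhood of $T$, hence is $\mathscr{F}_{(i+4)}$-measurable by the first paragraph. For the non-disconnection condition (2), I would adapt the cut-set argument of Lemma~\ref{adapted}: define $\mathcal{O}_{i+3}$ as the cut set of $\widetilde{\gamma}$ on $\mathcal{C}_{-n+(i+3)L}(S)$ (see Section~\ref{subsec:notation}). Since $S\in\mathfrak{S}_n(\delta+2^{-\sqrt{n}})$ prevents global disconnection of $\mathcal{D}_{-n}(S)$ by $\widetilde{\gamma}$, and since adding loops to an already unique excursion opening cannot create new openings, $\mathcal{O}_{i+3}$ is a single continuous arc and is $\mathscr{F}_{(i+4)}$-measurable. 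A trapping-region argument identical in spirit to the one in the proof of Lemma~\ref{adapted} then shows that $\mathcal{D}_{-n}(T)$ is not disconnected from infinity by $\widetilde{\gamma}$ if and only if there exists a continuous path in $\mathcal{D}_{-n+(i+3)L}(S)$ from $\mathcal{C}_{-n}(T)$ to $\mathcal{O}_{i+3}$ avoiding $\widetilde{\gamma}\cap\mathcal{D}_{-n+(i+4)L}(S)$, and this last condition is again read entirely from $\overline{U}_{(i+4)L}$.

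The main obstacle will be the trapping-region step once loops are added. One must exclude the scenario in which a chain of overlapping loops in $\Gamma_0$ hooks $\gamma$ near $\mathcal{A}_i$ to a disconnecting obstacle lying outside $\mathcal{D}_{-n+(i+4)L}(S)$, thereby creating a disconnection invisible from $\overline{U}_{(i+4)L}$. This is precisely the scenario excluded by $B_5^c$ together with cluster connectedness, but the care-demanding point is to formalize, in the loop-soup setting, the planar-topology argument underlying the PTP proof; in particular, one must re-verify the uniqueness and the continuity of $\mathcal{O}_{i+3}$ after merging the cluster of loops back into $\gamma$.
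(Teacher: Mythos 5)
Your outline follows the paper's proof of Lemma~\ref{adapted4}: same reading of $B_1^c\cap B_2^c$ (unique opening), $B_3^c$ (no back-crossings of the $W^j$), $B_4^c$ (bridges confined away from $\mathcal{A}_i$), and $B_5^c$ (no loop cluster crosses $L$ scales), followed by a cut-set/trapping argument as in Lemma~\ref{adapted}. However, the one step where the BDP case genuinely differs from the PTP case is precisely the step you flag but do not carry out, and there is a measurability subtlety you gloss over. You define $\mathcal{O}_{i+3}$ as the cut set of $\widetilde{\gamma}$; but $\widetilde{\gamma}\cap\mathcal{D}_{-n+(i+3)L}(S)$ is \emph{not} determined by $\overline{U}_{(i+4)L}$, because a cluster straddling the outer scales can be hooked into $\widetilde{\gamma}$ by the portions $W^m[\tau_{m,(i+4)L},\cdot]$ that are invisible at time $(i+4)L$. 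The paper therefore defines the cut set with respect to $\widetilde{U}_{(i+4)L}\cup\gamma$, where $\widetilde{U}_{(i+4)L}$ attaches only those cluster pieces actually met by the \emph{truncated} curves, and explicitly discards clusters that might be visited by later extensions. Your characterization must be stated for this measurable object, not for $\widetilde{\gamma}$, and the ``if and only if'' then has to be re-proved for it.

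Concretely, the ``only if'' direction requires the following new case, absent from Lemma~\ref{adapted}: an endpoint $P_k$ of the cut set $\mathcal{O}'$ of the putative second escape component may lie on a loop cluster $\mathfrak{K}$ rather than on an excursion $W^{j_k}$. The paper's resolution is to show $\mathfrak{K}\subset\mathfrak{D}$ (the trapping domain bounded by $\eta_1,\eta_2,\eta_3$ and $\mathcal{C}_{-n+(i+1)L}(S)$): on $B_5^c$ the cluster $\mathfrak{K}$ cannot reach $\mathcal{C}_{-n+(i+1)L}(S)$ nor escape past $\mathcal{C}_{-n+(i+2)L}(S)$, and it cannot cross $\eta_1,\eta_2,\eta_3$ since $\mathfrak{K}\subset\widetilde{\gamma}$; hence the excursion $W^{j_k}$ that hooks $\mathfrak{K}$ into $\widetilde{\gamma}$ must enter $\mathfrak{D}\cap\mathcal{A}(S,2^{-n+(i+2)L},2^{-n+(i+3)L})$ and is then trapped in $\mathfrak{D}$ by $B_3^c$, contradicting that all $W^m$ terminate on $\mathcal{C}_{-K}(S)$. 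Without this argument (and the companion check that $\mathcal{O}_{i+3}$ remains a single arc after merging cluster pieces into the excursion packet), the adaptedness claim is asserted rather than proved. Your appeal to ``cluster connectedness plus $B_5^c$'' points at the right ingredients, but the lemma's content \emph{is} this topological verification, so it cannot be left as an unformalized obstacle. A minor additional imprecision: condition (1) of Definition~\ref{gs4} is not a local property of $\gamma$ near $T$ (it requires a macroscopic excursion of radius $\delta-2^{-n-1/2}$ between the two visits); the correct measurable reformulation is that $T$ is visited by two \emph{different} truncated excursions $W^m[0,\tau_{m,(i+4)L}]$, which forces the intermediate bridge to $\partial\mathcal{B}(S,\delta+2^{-\sqrt{n}}-2^{-n-1/2})$ and uses the $(\delta+2^{-\sqrt{n}})$-goodness of $S$.
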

\begin{proof}
The proof is based on a similar geometrical observation as in the proof of Lemma \ref{adapted}. On $V_{\delta}^{\mathrm{BDP}}(S)$, we can naturally define ``excursions'' $W^{m}$ and ``bridges'' $X^{m}$.
\par
Similar to the proof of Lemma \ref{adapted}, on $B_{1}^{c}\cap B_{2}^{c}$, openings $\mathcal{O}(U_{L},\mathcal{A}(S,2^{-n},2^{-n+L}))$ and $\mathcal{O}(U_{L},\mathcal{A}(S,2^{-K-L},2^{-K}))$ both contain only one connected component. Recall that $\gamma$ is the Brownian loop that we consider throughout this section, which is sampled according to $\mu_{\iota}^{\#}$; see Definition \ref{gs4}. Therefore, the opening $\mathcal{O}(\widetilde{\gamma},\mathcal{A}(S,2^{-n},2^{-K}))$ has a unique connected component, where $\widetilde{\gamma}$ is the union of $\gamma$ together with loop clusters in $\Gamma_0$ that it intersects.
\par
Now, on the event $B_{3}^{c}$, the path configuration in $\mathcal{D}_{-n+(i+3)L}(S)$ is fully determined by $U_{(i+4)L}$. On the event $B_{5}^{c}$, loop clusters of $\Gamma_{0}$ that intersect $\mathcal{D}_{-n+(i+3)L}(S)$ is fully determined by $U_{(i+4)L}$ since there is no loop cluster surrounding $\mathcal{D}_{-n}(S)$ on the event $\{S\in\mathfrak{S}_{n}\}$.
\par
We then consider outer boundaries of loop-soup clusters in $\mathcal{A}(S,2^{-n},2^{-n+(i+3)L})$, which is fully determined by $\overline{U}_{(i+4)L}$ on $B_{5}^{c}$. If a cluster is not visited by $U_{(i+4)L}$, then it will not be taken into consideration when we determine the ``opening'' even if the corresponding cluster can be visited by the extension of $W^{m}$. More precisely, we use $\widetilde{U}_{(i+4)L}$ to denote the union of $U_{(i+4)L}$ together with (pieces of) loop clusters in $\mathcal{A}(S,2^{-n},2^{-n+(i+3)L})$ that it intersects. Then, the opening 
\[\mathcal{O}_{i+3}:=\mathcal{O}(\widetilde{U}_{(i+4)L}\cup\gamma,\mathcal{A}(S,2^{-n},2^{-K}),2^{-n+(i+3)L})\]
is fully determined by $\overline{U}_{(i+4)L}$. Moreover, $\mathcal{O}_{i+3}$ is indeed a continuous arc on $\mathcal{C}_{-n+(i+3)L}(S)$, following from a geometrical observation.
\par
Whether a box $T\prec\mathcal{A}_{i}$ is $\delta$-good will be determined as follows. First of all, we check whether $T$ is visited by two different $W^{m}[0,\tau_{m,(i+4)L}]$, which is fully determined by $U_{(i+4)L}$. If this holds, we claim that $T$ is good \textit{if and only if} $\mathcal{C}_{-n}(T)$ is not disconnected from $\mathcal{O}_{i+3}$ by $\widetilde{U}_{(i+4)L}$ in $\mathcal{D}_{-n+(i+3)L}(S)$, i.e., there exists a continuous curve $\eta=\eta(t)$, $t\in[0,1]$, such that $\eta(0)\in\mathcal{C}_{-n}(T)$, $\eta(1)\in\mathcal{O}_{i+3}$, $\eta\subset\mathcal{D}_{-n+(i+3)L}(S)$ and $\eta\cap(\gamma\cup\widetilde{U}_{(i+4)L})=\varnothing$.
\par
The ``if'' direction is trivial. For the ``only if'' direction (see Figure \ref{fig_adapted2} for an illustration of various objects), suppose that $T$ is good and $\mathcal{C}_{-n}(T)$ is disconnected from $\mathcal{O}_{i+3}$ in $\mathcal{D}_{-n+(i+3)L}(S)$. Therefore there exists at least one connected component of $\mathcal{D}_{-K}(S)\backslash\widetilde{\gamma}$ that intersects $\mathcal{C}_{-n}(T)$ and $\mathcal{C}_{-K}(S)$, denoted by $\mathcal{K}$. Define $\mathcal{O}'$ as the cut set of $\mathcal{K}$ on $\mathcal{C}_{-n+(i+3)L}(S)$ (with respect to the general annulus $\mathcal{D}_{-n}(S)\backslash\mathcal{D}_{-n}(T)$), we then have $\mathcal{O}_{i+3}\cap\mathcal{O}'=\varnothing$, and $\mathcal{O}'$ is a continuous arc on $\mathcal{C}_{-n+(i+3)L}(S)$. We now focus on end points of $\mathcal{O}'$ and $\mathcal{O}_{i+3}$, namely $P_{1},P_{2}$ and $P_{3},P_{4}$. For $k=1,2,3,4$, we assume that $P_{k}$ is visited by $\widetilde{W}^{j_{k}}$ (where $\widetilde{W}^{m}$ is the union of $W^{m}$ and loop clusters in $\Gamma_{0}$ that it intersects).
\par
As in the proof of Lemma \ref{adapted}, there exist continuous curves $\eta_{1},\eta_{2}$, such that $\eta_{1}$ starts from $\mathcal{C}_{-n}(S)$, passes through $\mathcal{O}_{i+3}$ and stops on $\mathcal{C}_{-K}(S)$, with $\eta_{1}\cap\widetilde{\gamma}=\varnothing$, and $\eta_{2}$ starts from $\mathcal{C}_{-n}(T)$, passes through $\mathcal{O}'$ and stops on $\mathcal{C}_{-K}(S)$, with $\eta_{2}\cap\widetilde{\gamma}=\varnothing$. On $B_{2}^{c}$, there exists a continuous curve $\eta_{3}$ in $\mathcal{A}(S,2^{-n+(i+3)L},2^{-K})$, such that $\eta_{3}$ connects $\eta_{1},\eta_{2}$ without intersecting $\widetilde{\gamma}$. We now consider the bounded domain $\mathfrak{D}$ enclosed by $\eta_{1},\eta_{2},\eta_{3}$ and $\mathcal{C}_{-n+(i+1)L}(S)$. Note that there exists $k$, such that $P_{k}\in\mathfrak{D}$. If $P_{k}$ is visited by $W^{j_{k}}$, then $W^{j_{k}}$ cannot escape from $\mathfrak{D}$ after hitting $P_{k}$ on the event $B_{3}^{c}$, which yields a contradiction since all $W^{m}$ end at $\mathcal{C}_{-K}(S)$. Therefore $P_{k}$ is visited by a loop cluster $\mathfrak{K}$ that intersects $W^{j_{k}}$. Note that on $B_{5}^{c}$, $\mathfrak{K}$ cannot intersect $\mathcal{C}_{-n+(i+1)L}(S)$. Also, we have $\mathfrak{K}\cap\eta_{l}=\varnothing,\ l=1,2,3$ since $\mathfrak{K}\subset\widetilde{\gamma}$. We thus conclude that $\mathfrak{K}\subset\mathfrak{D}$. Since the loop cluster $\mathfrak{K}$ cannot intersect $\mathcal{D}_{-n+(i+2)L}(S)$ on the event $B_{5}^{c}$, $W^{j_{k}}$ must pass through a point in $\mathfrak{D}\cap\mathcal{A}(S,2^{-n+(i+2)L},2^{-n+(i+3)L})$. Therefore, $W^{j_{k}}$ cannot escape from $\mathfrak{D}$ on the event $B_{3}^{c}$, which also yields a contradiction. This completes the proof of the claim.
\begin{figure}[t]
\centering
\includegraphics[width=7cm]{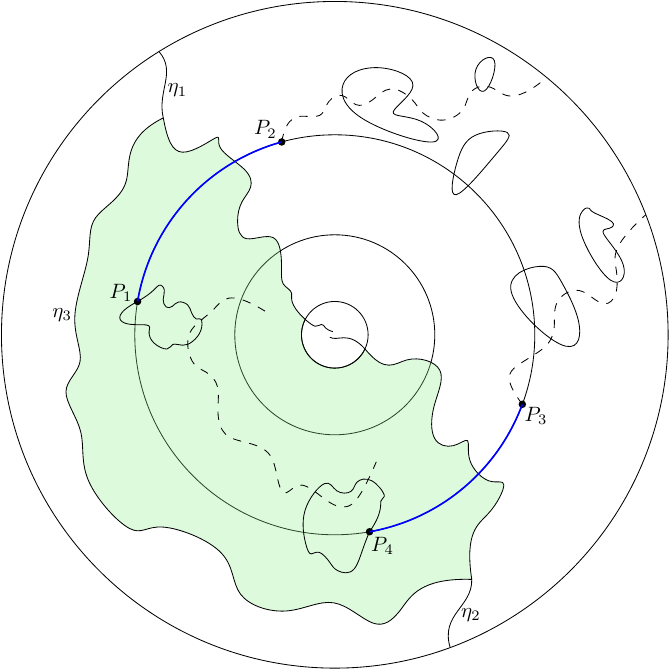}
\caption{\small An illustration for the proof of ``only if'' direction. Dashed curves are (pieces of) $W^{m}$. $\mathcal{O}_{i+2},\mathcal{O}'$ are in blue. The four circles in this figure are $\mathcal{C}_{-n+(i+1)L}(S)$, $\mathcal{C}_{-n+(i+2)L}(S)$, $\mathcal{C}_{-n+(i+3)L}(S)$ and $\mathcal{C}_{-K}(S)$. Then, after hitting $P_{1}$ or the loop cluster passing through $P_{1}$, $W^{j_{1}}$ is trapped in the bounded domain $\mathfrak{D}$.}
\label{fig_adapted2}
\end{figure}
\par
Finally, note that $\mathrm{dist}(S,T)\leq 2^{-n+(i+1)L}<2^{-\sqrt{n}}$, hence $\{S\in\mathfrak{S}_{n}(\delta+2^{-\sqrt{n}})\}$ implies that $\{T\in\mathfrak{S}_{n}(\delta)\}$. Note that everything mentioned above is fully determined by $\overline{U}_{(i+4)L}$. This completes the proof.
\end{proof}
Similar to Propositions \ref{exist good box} and \ref{exist good box12}, we can show that:
\begin{prop}\label{exist good box4}
There exist $C_{1},C_{2}$, such that for any large $n$ and for any $2\leq i\leq[L/2]$, we have
\[\overline{\mathbb{P}}_{n}(N_{i}\geq C_{1}L|\mathscr{F}_{i-1})\geq C_{2}/i,\]
where $\overline{\mathbb{P}}_{n}$ is defined in \eqref{conditional measure4}.
\end{prop}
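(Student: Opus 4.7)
The plan is to follow the template of Proposition~\ref{exist good box} verbatim, adapted to the BDP setting where $\mathscr{F}_m$ also records the loops in $\Gamma_{-n+mL}(S)$. The strategy splits into three steps: (i) establish conditional first and second moment estimates for $N_i$ given $\mathscr{F}_{i-1}$ under $\widetilde{\mathbb{P}}_n$, analogous to Lemma~\ref{cme-f}; (ii) apply the Paley-Zygmund inequality to conclude $\widetilde{\mathbb{P}}_n(N_i\geq c'L|\mathscr{F}_{i-1})\geq c''/i$, analogous to Lemma~\ref{exist good box+}; and (iii) compare $\widetilde{\mathbb{P}}_n$ with $\overline{\mathbb{P}}_n$ by bounding the conditional probabilities of the bad events $B_3, B_5, B_6$, noting that $B_1, B_2, B_4$ are $\mathscr{F}_1$-measurable and hence vanish under $\overline{\mathbb{P}}_n(\cdot|\mathscr{F}_{i-1})$ for $i\geq 2$.

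For step (i), the target estimates are, for $T, U\in\mathcal{S}_n$ with $\mathrm{dist}(S,T),\mathrm{dist}(S,U)\geq 2^{-n+jL}$,
\begin{align*}
&c_1\mathrm{dist}(S,T)^{-2}2^{-2n}\leq\widetilde{\mathbb{P}}_n(T\in\mathfrak{S}_n|\mathscr{F}_{j-1})\leq c_2\mathrm{dist}(S,T)^{-2}2^{-2n},\\
&\widetilde{\mathbb{P}}_n(T,U\in\mathfrak{S}_n|\mathscr{F}_{j-1})\leq c D_1^{-2}D_2^{-2}2^{-4n}.
\end{align*}
Following the scheme of Lemma~\ref{cme-f}, I would decompose the loop $\gamma$ into bridges and excursions relative to a nested family of annuli around both $S$ and $T$, construct well-chosen tubes that force the bridges to be localised with constant probability cost, and invoke the generalised non-disconnection separation lemma (Lemma~\ref{sep BDP}) with initial configuration. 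The initial configuration $\overline{U}_{(j-1)L}$ now comprises both path segments and the loops in $\Gamma_{-n+(j-1)L}(S)$, all of which contribute to the generalised non-disconnection event via the generalised exponent $\xi_1(4)=d=2$; this identity is what produces the sharp order $\mathrm{dist}(S,T)^{-2}2^{-2n}$, paralleling the role of $\xi(5)=2$ in the PTP case. Step (ii) is then formal: the scale-stratified sums in \eqref{cfme+} and \eqref{csme+} carry over verbatim to give $\widetilde{\mathbb{E}}_n[N_i|\mathscr{F}_{i-1}]\geq cL$ and $\widetilde{\mathbb{E}}_n[N_i^2|\mathscr{F}_{i-1}]\leq c'iL^2$. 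Step (iii) uses the exponential-in-$L$ bounds from Lemmas~\ref{no extra crossing4} and~\ref{local cluster}, the $n^{-2}$ opening bound from Lemma~\ref{all good opening4}, and a lower bound on $\mathbb{P}(S\in\mathfrak{S}_n(\delta+2^{-\sqrt{n}})|\mathscr{F}_j,\overline{E}_j^c,E_i^\#)$ obtained from Lemma~\ref{sep BDP} with initial configuration, exactly as at the end of the proof of Proposition~\ref{exist good box}. This yields $\widetilde{\mathbb{P}}_n(B_3\cup B_5\cup B_6|\mathscr{F}_{i-1},E_i^\#)=o(1/i)$, which is enough to conclude.

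The main obstacle is the lower bound in step (i), the analogue of \eqref{cme1}. The loop $\gamma\sim\mu_\iota^\#$ must be forced to visit $S$ and $T$ twice while neither box is disconnected from infinity by the cluster of $\{\gamma\}\cup\Gamma_0$ that contains $\gamma$, starting from an initial configuration $\overline{U}_{(j-1)L}$ that mixes excursion/bridge segments with loops of $\Gamma_0$. Unlike the PTP and PDCP cases, the tubes must be chosen so that no new loop-cluster disconnection is inadvertently created at any intermediate scale; this requires the full generalised separation-with-initial-configuration apparatus from \cite{GLQ22} and a more delicate geometric construction tracking the openings of both the excursion paths and the loop-soup clusters simultaneously.
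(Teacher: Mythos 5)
Your proposal is correct and follows essentially the same route as the paper, which in fact omits the proof of this proposition and explicitly defers to the template of Propositions~\ref{exist good box} and~\ref{exist good box12}: a BDP analogue of Lemma~\ref{cme-f} via the generalized separation lemma with initial configuration and $\xi_{1}(4)=2$, Paley--Zygmund, and then the tilted-measure comparison handling $B_{3},B_{5},B_{6}$ conditionally while the $\mathscr{F}_{1}$-measurable bad events drop out.
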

With all ingredients ready, Proposition \ref{cgs4} can be proved in the same fashion as Propositions \ref{cgs} and \ref{cgs12}, and hence we omit details here.
\par
Proposition \ref{cgs4} now leads to the proof of Theorem \ref{bdp}.
\begin{proof}[Proof of Theorem \ref{bdp}]
We start by proving that there does not exist boundary double points on the outer boundary of outermost clusters. We first show the non-existence of such BDPs visited by a single loop.
\par
By decompositions of loop measures given in \cite[Section 4.2]{LW04}, $\mu_{\iota}$ (see \eqref{eq:mu_iota}) is just $\mu_{\mathbb{D}}^{\mathrm{loop}}$ (the Brownian loop measure in $\mathbb{D}$; see Section~\ref{subsec:bls}) restricted to loops $\gamma$ that satisfy $\sup_{0\le t\le t_{\gamma}}|\gamma_t|\in (\iota,1-\iota)$ and $\text{diam}(\gamma)>\iota/2$. Let $\Gamma^{\iota}$ denote loops in $\Gamma_{0}$ that satisfy the previous condition. Then, $\Gamma^{\iota}$ can be sampled by the following procedure: we first sample a Poisson random variable $\mathcal{N}_{\iota}$ with parameter $|\mu_{\iota}|$, and we then sample $\gamma_{i} (1\leq i\leq\mathcal{N}_{\iota})$ independently according to $\mu_{\iota}^{\#}$. Conditioned on $\{\mathcal{N}_{\iota}\geq1\}$ (which happens with probability $1-o(1)$, where $o(1)$ tends to 0 as $\iota\to0$), the law of $(\gamma_{1},\Gamma_{0}\backslash\{\gamma_{1}\})$ is uniformly equivalent to the law of $(\xi,\Gamma_{0})$, where $\xi$ is an independent sample from $\mu_{\iota}^{\#}$.
\par
Therefore, it is enough to work under the setup introduced at the beginning of this section. More precisely, we only need to show that for all $\iota\in(0,1/2)$ and $\delta\in (0,\iota/2)$, if we sample $\gamma$ according to $\mu_{\iota}^{\#}$ and take an independent sample $\Gamma_0$ of Brownian loop soup with critical intensity $1$ in $\mathbb D$, then there is no BDP produced by $\gamma$ inside $\Gamma_0$. Similar to the proof of Theorem \ref{ptp}, it further reduces to the proof of that the probability of existence of a $\delta$-good box is $O(n^{-c})$ for some constant $c>0$. With Proposition \ref{cgs4} as an input, we can show it immediately, and thus complete the proof of this case. The case of BDP visited by two different loops can be handled similarly, so we omit its proof. 
Finally, following exactly the same argument as in \cite[Section 7]{GLQ22}, which deals with the Hausdorff dimension of multiple points on the boundaries of Brownian loop-soup clusters, we can rule out the existence of boundary double points on all outermost clusters, and then all clusters (possibly not outermost). This concludes the proof of Theorem \ref{bdp}.
\end{proof}

%
%

\begin{acks}[Acknowledgments]
Part of YG's and WQ's work was done while working at City University of Hong Kong.
WQ is on leave from CNRS, Laboratoire de Math\'ematiques d'Orsay, Universit\'e Paris-Saclay.
The authors thank two anonymous referees for helpful and detailed comments.
\end{acks}
\begin{funding}
YG and WQ are supported by National Key R\&D Program of China (No.\ 2023YFA1010700) and a grant from City University of Hong Kong (Project No.\ 7200745). XL and RL are supported by National Key R\&D Program of China (No.\ 2021YFA1002700 and No.\ 2020YFA0712900) and NSFC (No.\ 12071012).
WQ is further supported by a GRF grant from the
Research Grants Council of the Hong Kong SAR (project 11305823).
\end{funding}

\end{document}